\documentclass[reqno,english,11pt]{amsart}
\usepackage[margin=1in]{geometry}
\usepackage[latin9]{inputenc}
\synctex=-1
\usepackage{wasysym}
\usepackage[colorlinks=true]{hyperref}
\usepackage{textcomp}
\usepackage{amstext}
\usepackage{amsthm}
\usepackage{amssymb}
\usepackage{hyperref}
\usepackage{cite}
\usepackage{url}
\usepackage{xcolor}
\makeatletter

\newtheorem{theorem}{Theorem}[section]
\newtheorem{lemma}[theorem]{Lemma}
\newtheorem{proposition}[theorem]{Proposition}
\newtheorem{corollary}[theorem]{Corollary}
\theoremstyle{remark}
\newtheorem{observation}[theorem]{Remark}

\numberwithin{equation}{section}

\newcommand{\be}{\begin{equation}}
\newcommand{\ee}{\end{equation}}
\newcommand{\lb}{\label}
\newcommand{\dd}{\,d}
\newcommand{\les}{\lesssim}
\newcommand{\ges}{\gtrsim}
\newcommand{\ov}{\overline}
\newcommand{\R}{\mathbb R}
\newcommand{\Z}{\mathbb Z}

\newcommand{\B}{\mathfrak B}
\renewcommand{\S}{\mathcal S}
\newcommand{\mc}{\mathcal}
\DeclareMathOperator{\re}{Re}
\renewcommand{\Re}{\re}
\DeclareMathOperator{\im}{Im}

\DeclareMathOperator{\sgn}{sign}
\DeclareMathOperator{\Res}{Res}

\title[Strichartz estimates for Klein--Gordon]{Strichartz estimates for the Klein--Gordon equation in $\R^{3+1}$}
\author{Marius Beceanu}
\address{University at Albany SUNY, Department of Mathematics and Statistics, Earth Science 110, Albany, NY, 12222, USA}
\email{mbeceanu@albany.edu}
\author{Gong Chen}
\address{University of Toronto, Department of Mathematics, 40 St George Street Rm 6290, Toronto, ON M5S 2E4, Canada}
\email{gc@math.toronto.edu}
\date{\today}
\thanks{The first author was partially supported by NSF grant DMS-1700293 during the preparation
of this work. The first author  thanks the University of Chicago for its hospitality during the summer of
2017.}
\thanks{The second author thanks Department of Mathematics and Statistics, University at Albany SUNY  for its hospitality during the springs of
2018 and 2019.}
\usepackage{mathtools}

\begin{document}
	\maketitle
	
	\begin{abstract} In this paper we prove 	 standard and reversed Strichartz estimates for the Klein--Gordon equation in $\R^{3+1}$.  Instead of the Fourier theory, our analysis is based on  fundamental solutions of the free equations and  fractional integrations. In the final part of this paper,  we apply Strichartz estimates in the study of a semilinear Klein--Gordon equation.
	\end{abstract}
	
	\tableofcontents

	\section{Introduction}
	\subsection{The linear Klein--Gordon equation}
	In this paper we establish some new properties of the Klein--Gordon equation in $\R^{d+1}$, 
	\begin{equation}
	u_{tt} - \Delta u + m^2 u + V u = F,\ u(0)=u_0,\ u_t(0)=u_1.\lb{kg}
	\end{equation}
	We shall focus on the three-dimensional case $d=3$. Equation (\ref{kg}) is one of the best-known equations of mathematical physics, describing the behavior of a relativistic particle (boson) of rest mass $m$ and spin zero. Here $V$ is a scalar potential; we shall call the equation free if $V=0$ and perturbed otherwise.
	
	Equation (\ref{kg}) is implied by Dirac's equation, just as the wave equation is implied by Maxwell's equations. It reduces to the wave equation in the case $m=0$, but has different properties from the latter if $m \ne 0$. Assuming $m \ne 0$, with no loss of generality we shall take $m=1$.

		In this paper we will prove several dispersive and decay estimates for the linear Klein--Gordon equation in $\R^{3+1}$, both with and without potentials, inspired by the work Beceanu-Goldberg \cite{becgol} for wave equations. Then we will illustrate some nonlinear applications of  these estimates to semilinear problems.

	\subsection{The free equation}
	
	The free inhomogenous linear Klein--Gordon equation in $\R^{d+1}$ is
	\be\lb{fkg}
	u_{tt} - \Delta u + u = F,\ u(0)=u_0,\ u_t(0)=u_1.
	\ee
	With a time-independent scalar potential, the equation becomes
	\be\lb{pkg}
	u_{tt} - \Delta u + u + V(x) u = F,\ u(0)=u_0,\ u_t(0)=u_1.
	\ee
	The Klein--Gordon equation is not scaling-invariant, behaving either like the wave or like the Schr\"{o}dinger equation in different regimes.

	The solution to equation (\ref{fkg}) is given by the Duhamel formula
	\be\lb{duhamel}
	u(t) = \cos(t \sqrt{-\Delta+1}) u_0 + \frac {\sin(t\sqrt{-\Delta+1})}{\sqrt{-\Delta+1}} u_1 + \int_0^t \frac {\sin((t-s)\sqrt{-\Delta+1})}{\sqrt{-\Delta+1}} F(s) \dd s.
	\ee
	
	The sine propagator for the three-dimensional Klein--Gordon equation has the kernel
	\begin{equation}\label{eq:kgsplitfree}
	\frac {\sin(t\sqrt{-\Delta+1})}{\sqrt{-\Delta+1}} = \frac {\sgn t} {4\pi r} \delta_r(\tau) - \frac {\sgn t} {4\pi} \chi_{|t| \geq r} \frac {J_1(\sqrt{t^2-r^2})} {\sqrt{t^2-r^2}},
	\end{equation}
	where $r=|x|$.
	
	Note that it is made of two parts, namely the free wave propagator plus an extra term specific to the Klein--Gordon equation. The latter oscillates, with a leading behavior of
	$$
	\frac {\sin \sqrt{t^2-r^2}}{(t^2-r^2)^{3/4}}.
	$$
	The problem is that the $L^1_t$ norm of this component is only of size $r^{-1/2}$, which is different from the signed measure norm of the free wave part:
	$$
	\bigg\|\frac {\sin \sqrt{t^2-r^2}}{(t^2-r^2)^{3/4}}\bigg\|_{L^1_t} \sim r^{-1/2},\ \Big\|\frac 1 r \delta_r(t)\Big\|_{\mc M_t} \sim \frac 1 r.
	$$
	Thus, the sine propagator appears to have not one specific scaling, but several. This intuition is in fact confirmed by our computations.
	
	\subsection{The perturbed equation}
	The Hamiltonian of the free equation, $-\Delta+1$, consists of two terms that scale differently, namely $-\Delta$ and $1$. Thus, the scalar potential should scale like either or have some intermediate behavior.
	
	Hence, in $\R^{d+1}$ it is natural to consider potentials $V \in L^p$, $d/2 \leq p \leq \infty$, or more generally in the corresponding Lorentz spaces; see Bergh--L\"{o}fstr\"{o}m \cite{bergh} for the definition and properties of the latter.
	
	In this paper we consider real-valued, short-range scalar potentials $V \in L^{3/2, 1}$ on $\R^3$. Then $H$ has no positive energy bound states and at most discretely many negative energy bound states (which can, however, accumulate toward the edge of the essential spectrum).

		The properties of the solution to the equation \eqref{pkg} largely depend on those of the Hamiltonian $H=-\Delta+V$. Exponentially growing modes correspond to eigenvalues below $-1$, while eigenvalues between $-1$ and $0$ give rise to oscillating modes that neither grow nor disperse. Only the continuous spectrum portion of the solution is dispersive.  Obtaining the optimal dispersion rate also requires the absence of threshold bound states, since these disperse at a different rate, see Beceanu \cite{Bec} for the Schr\"odinger problem.

	Henceforth we shall assume that $H=-\Delta+V$ has no zero energy bound states. This also simplifies the analysis of the point spectrum, guaranteeing at most finitely many eigenvalues.
	
	\subsection{Main results}
	
	It is well-known that one important tool in understanding both the linear and the nonlinear equations (\ref{kg}) and (\ref{kgnl}) below consists in Strichartz estimates -- evaluating the solution in mixed spacetime Sobolev norms.	Our main results in this paper are standard Strichartz estimates which measure the solution in the $L^p_t L^q_x$ norm and as well as reversed Strichartz estimates involving $L^q_x L^p_t$ norms.

	In order to analyze the Klein--Gordon evolution operator, as in the free setting, \eqref{eq:kgsplitfree}, we split it into two parts, the wave evolution operator and the ``Bessel'' part, the difference between the two.
	
	These two operators have different properties. The wave evolution operator was studied in \cite{becgol}. Our main result for the ``Bessel'' part of the evolution is as follows:
	\begin{theorem}[Theorem \ref{SBH}] Consider the Klein--Gordon equation in $\R^{3+1}$ with Hamiltonian $H=-\Delta+V$, $V \in L^{3/2, 1}$, such that $H$ has no threshold bound states (eigenstates or resonance \footnote{ Recall that $\psi$
			is a resonance of $H$ at $0$ if it is a distributional solution
			of the equation $H\psi=0$ which belongs to the space $L^{2}\left(\left\langle x\right\rangle ^{-\sigma}dx\right):=\left\{ f:\,\left\langle x\right\rangle ^{-\sigma}f\in L^{2}\right\} $
			for any $\sigma>\frac{1}{2}$, but not for $\sigma=\frac{1}{2}.$}). Then the difference of kernels (the Bessel part)
		$$
		S_B^H(t) = \frac {\sin(t\sqrt{H+1})P_c}{\sqrt{H+1}} - \frac {\sin(t\sqrt{H})P_c}{\sqrt{H}}
		$$
		is bounded in $L^\infty_{x, y, t}$ and
		$$
		\|S_B^H(t)\|_{L^1_t} \les |x-y|^{-1/2},\ \|S_B^H(t)\|_{L^{4/3, \infty}_t} \les |x-y|^{-3/4},\ \|S_B^H(t)\|_{L^\infty_t} \les 1.
		$$
		Consequently $S_B^H$ is dominated by a convolution operator, with convolution kernel in $L^{6, \infty}_x L^1_t$, $L^{4, \infty}_x L^{4/3, \infty}_t$, and $L^\infty_{x, t}$.
	\end{theorem}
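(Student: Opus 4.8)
The plan is to reduce the perturbed estimate to the free one via the resolvent identity and a Wiener-type absolutely convergent expansion. First I would write the perturbed Bessel kernel $S_B^H(t)$ through the spectral representation
$$
\frac{\sin(t\sqrt{H+1})P_c}{\sqrt{H+1}} = \frac{1}{2\pi i}\int_0^\infty \frac{\sin(t\sqrt{\lambda+1})}{\sqrt{\lambda+1}}\,\big[R_V(\lambda+i0)-R_V(\lambda-i0)\big]\dd\lambda
$$
and similarly for $\frac{\sin(t\sqrt H)P_c}{\sqrt H}$, using the limiting absorption principle for $H=-\Delta+V$ with $V\in L^{3/2,1}$ and no threshold bound states, so that $R_V(\lambda\pm i0)$ exists as a bounded operator between appropriate weighted spaces. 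Subtracting, the Bessel part is the integral of $\big[\tfrac{\sin(t\sqrt{\lambda+1})}{\sqrt{\lambda+1}}-\tfrac{\sin(t\sqrt\lambda)}{\sqrt\lambda}\big]$ against the spectral density; this is exactly the multiplier that, for $V=0$, produces the free Bessel kernel already analyzed in \eqref{eq:kgsplitfree}.

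Next I would insert the symmetric resolvent expansion $R_V = R_0 - R_0 V R_0 + R_0 V R_V V R_0$ (the standard Born series truncated once, with the tail controlled by the limiting absorption estimate). The leading $R_0$ term reproduces the free bound from the splitting \eqref{eq:kgsplitfree}: the $\delta_r$ part cancels between the Klein–Gordon and wave pieces, and the remaining genuinely ``Bessel'' term $\chi_{|t|\ge r}\tfrac{J_1(\sqrt{t^2-r^2})}{\sqrt{t^2-r^2}}$ obeys precisely $\|\cdot\|_{L^1_t}\lesssim r^{-1/2}$, $\|\cdot\|_{L^{4/3,\infty}_t}\lesssim r^{-3/4}$, $\|\cdot\|_{L^\infty_t}\lesssim 1$ by the asymptotics of $J_1$ (decay like $(t^2-r^2)^{-3/4}$ at infinity, boundedness near $t=r$). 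For the correction terms I would use that $V\in L^{3/2,1}$ convolved against the free kernels in $L^{6,\infty}_xL^1_t$ etc.\ reproduces kernels in the same mixed spaces — this is where the Lorentz exponents $L^{6,\infty}$, $L^{4,\infty}$ are chosen so that the relevant convolution $L^{3/2,1}\ast L^{p,\infty}\hookrightarrow L^{p',?}$ via Young's inequality in Lorentz spaces closes.

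The $L^\infty_{x,y,t}$ boundedness I would handle separately and first, since it is the cleanest: near the light cone the Klein–Gordon and wave singularities coincide and cancel in the difference, while away from it both kernels are smooth; the perturbed version follows from the resolvent expansion together with the fact that $V R_0$ is bounded on the relevant spaces under the no-threshold-resonance hypothesis. The ``consequently'' clause is then immediate: a kernel $K(x,y,t)$ with $\|K\|_{L^1_t}\lesssim|x-y|^{-1/2}$ is dominated by convolution with $|x|^{-1/2}\in L^{6,\infty}_x$ (since $|x|^{-1/2}$ lies in weak-$L^6$ on $\R^3$), and analogously $|x|^{-3/4}\in L^{4,\infty}_x$, while the $L^\infty$ bound is trivial.

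The main obstacle I expect is the bookkeeping in the resolvent expansion: one must verify that each term — especially the resolvent tail $R_0 V R_V V R_0$ — maps into kernels lying in *all three* mixed-norm spaces simultaneously and uniformly, which requires combining the limiting absorption principle (to control $R_V(\lambda\pm i0)$ in weighted $L^2$ uniformly in $\lambda$, including at the threshold $\lambda=0$ where the no-resonance hypothesis is essential) with a careful Lorentz-space Young's inequality argument to propagate the free bounds through each $V$-convolution without losing the endpoint exponents. The threshold analysis is the delicate point, since the $\sqrt H$ propagator (the wave part, whose low-frequency behavior is governed by $R_0(\lambda)$ near $\lambda=0$) and the $\sqrt{H+1}$ propagator treat the bottom of the spectrum differently, and it is precisely in their *difference* that one hopes for extra cancellation.
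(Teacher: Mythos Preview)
Your starting point --- the spectral representation of $S_B^H$ as an integral of the Bessel multiplier against the jump of $R_V$ --- is the same as the paper's. But the route you propose from there, a Born expansion $R_V = R_0 - R_0 V R_0 + R_0 V R_V V R_0$ followed by Lorentz--Young bookkeeping, has a genuine gap. The correction terms you get are integrals like $\int \tilde m(\lambda)\,R_0(\lambda)VR_0(\lambda)\,d\lambda$ with both resolvent factors at the \emph{same} spectral value $\lambda$; this does not factor as a convolution (in $x$ or in $t$) of the free Bessel kernel with $V$, so the sentence ``$V\in L^{3/2,1}$ convolved against the free kernels in $L^{6,\infty}_xL^1_t$ reproduces kernels in the same mixed spaces'' is not describing the object you actually have. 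Even if one could force such a structure, the numerology does not close: to bound $\int |x-z|^{-1/2}|V(z)||z-y|^{-1/2}\,dz$ by $C|x-y|^{-1/2}$ you would need $V$ paired against $|x|^{-1/2}\in L^{6,\infty}$, which asks for $V\in L^{6/5,1}$, not $L^{3/2,1}$.

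The paper takes a different and more direct route. After writing $S_B^H$ spectrally and integrating by parts in $\lambda$, it uses the identity $\mathcal F_\lambda[\cos(t\sqrt{\lambda^2+1})-\cos(t\lambda)](\tau) = -2|t|\,\chi_{|\tau|\le|t|}\,J_1(\sqrt{t^2-\tau^2})/\sqrt{t^2-\tau^2}$ to obtain
\[
S_B^H(x,y,t)\;=\;-\frac{\sgn t}{\pi}\int_{-|t|}^{|t|}\frac{J_1(\sqrt{t^2-\tau^2})}{\sqrt{t^2-\tau^2}}\,\big(\partial_\lambda R_{V+}(\lambda^2)P_c\big)^\vee(x,y,\tau)\,d\tau,
\]
i.e.\ the perturbed Bessel kernel is an average over $\tau$ of the \emph{free} Bessel profile, weighted by the inverse Fourier transform (in $\lambda$) of $\partial_\lambda R_{V+}P_c$. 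The crucial input --- which replaces your Born expansion entirely --- is the bound $\sup_{x,y}\|(\partial_\lambda R_{V+}(\lambda^2)P_c)^\vee(x,y,\cdot)\|_{L^1_\tau}<\infty$ from \cite{becgol}, which is equivalent to $\tau\,\frac{\sin(\tau\sqrt H)P_c}{\sqrt H}(x,y)\in L^1_\tau$ uniformly. This immediately gives $L^\infty_{x,y,t}$. For the $|x-y|$--dependent bounds one then splits into $\tau\ge|x-y|$ (where finite speed of propagation localizes the weight and the free profile bound $\|J_1(\sqrt{t^2-\tau^2})/\sqrt{t^2-\tau^2}\|_{L^1_t}\lesssim\tau^{-1/2}$ applies with $\tau\ge|x-y|$) and $\tau<|x-y|$ (where the $P_c$ tail is controlled via sharp Agmon decay of the eigenfunctions). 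None of these three ingredients --- the $\lambda$--Fourier reduction, the $L^1_\tau$ bound from \cite{becgol}, or the finite--speed/Agmon splitting --- appears in your outline, and they are what make the argument work.
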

	
	In addition, we obtain a number of reversed Strichartz estimates for the perturbed Klein--Gordon evolution, the most relevant of which are the following:
	\begin{proposition}[Reversed Strichartz estimates, see Corollary \ref{cor_partial_results}] Consider the Klein--Gordon equation in $\R^{3+1}$ with Hamiltonian $H=-\Delta+V$, $V \in L^{3/2, 1}$, such that $H$ has no threshold bound states (eigenstates or resonance). Then the following operators are bounded:
		$$
		\frac {e^{it\sqrt{H}} P_c}{\sqrt H}: L^2 \to L^\infty_x L^2_t \cap L^{12, 2}_x L^2_t \cap L^{6, 2}_x L^\infty_t \cap L^{24/5, 2}_x L^{8, 2}_t.
		$$
	\end{proposition}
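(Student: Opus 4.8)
The plan is to realize the displayed operator as the perturbed Klein--Gordon half--evolution $\tfrac{e^{it\sqrt{H+1}}P_c}{\sqrt{H+1}}$, to reduce it to its sine and cosine parts, and then, using the kernel splitting of Theorem \ref{SBH}, to treat separately a ``wave'' piece and a ``Bessel'' piece, which satisfy the four estimates for entirely different reasons. Concretely, writing $e^{it\sqrt{H+1}}=\cos(t\sqrt{H+1})+i\sin(t\sqrt{H+1})$ and invoking Theorem \ref{SBH} --- together with a parallel decomposition (obtained by differentiating in $t$) for the cosine propagator --- reduces matters to proving the four mapping properties for (i) the wave half--propagator $\tfrac{e^{it\sqrt H}P_c}{\sqrt H}$ and (ii) the Bessel remainder $S_B^H$ and its cosine analogue.

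For (i), the reversed Strichartz estimates for the wave evolution with a potential were established in \cite{becgol}, and I would quote them for the two \emph{scale--invariant} exponent pairs. The bound $L^2\to L^\infty_xL^2_t$ is a Kato--smoothing estimate: after Plancherel in $t$ it becomes a uniform bound on the spectral density of $H$, which follows from the limiting absorption principle for $H$, valid uniformly down to zero energy precisely because there is no threshold bound state. The bound $L^2\to L^{6,2}_xL^\infty_t$ is a maximal--in--time estimate; at the free level it is the spherical maximal inequality $\big\|\sup_{t}|\mu_{|t|}*f|\big\|_{L^{6,2}_x}\les\|f\|_{L^2}$ for the (normalized) spherical means $\mu_r$ --- the spherical maximal function with the $\dot H^1\hookrightarrow L^{6,2}$ scaling --- proved by local smoothing, and in the perturbed case one uses the kernel domination of \cite{becgol} and the absence of a zero--energy resonance to realize $H^{-1/2}P_c$. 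These pairs satisfy $\tfrac1p+\tfrac3q=\tfrac12$; the wave part does \emph{not} satisfy the other two, which sit on the line $\tfrac1p+\tfrac3q=\tfrac34$ and already fail for $V=0$ by scaling. Hence the sub--critical bounds must come entirely from the Bessel remainder, where the mass term breaks scale invariance and supplies extra decay.

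For (ii) I would use that, unlike the wave kernel (only a measure in $t$), Theorem \ref{SBH} gives a genuine bounded--function kernel, with $\|S_B^H(x,y,\cdot)\|_{L^1_t}\les|x-y|^{-1/2}$ and $\|S_B^H(x,y,\cdot)\|_{L^\infty_t}\les1$, hence $\|S_B^H(x,y,\cdot)\|_{L^p_t}\les|x-y|^{-1/(2p)}$ by log--convexity, together with the convolution--kernel bounds in $L^{6,\infty}_xL^1_t\cap L^{4,\infty}_xL^{4/3,\infty}_t\cap L^\infty_{x,t}$. The two scale--invariant estimates for $S_B^H$ follow by the same smoothing and maximal arguments. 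For the sub--critical estimates the idea is to split in frequency: at low energies scaling is broken, $e^{it\sqrt{H+1}}$ is unitary on $L^2$ and $(H+1)^{-1/2}P_c$ has norm at most $1$, so this ``Schr\"odinger--like'' régime contributes harmlessly to $L^q_xL^p_t$; at high energies $S_B^H$ is a smoothed wave, and one runs Hardy--Littlewood--Sobolev / fractional integration against the kernel--decay estimates above, trading the extra spatial decay for time integrability. This yields the endpoints $L^{12,2}_xL^2_t$ and $L^{24/5,2}_xL^{8,2}_t$; the whole segment joining them (along which $\tfrac1p+\tfrac3q=\tfrac34$) is filled in by interpolation, and adding the wave and Bessel contributions gives boundedness into the intersection of the four spaces.

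The hardest point, I expect, is item (ii) at the spatial endpoint: the Bessel kernel, though an honest bounded function, decays only like $|x-y|^{-1}$ (or more slowly, depending on the time norm) in $x$, which is too weak for a naive Schur or Young estimate to reach $L^q_x$ with $q>2$. One genuinely has to interleave the $L^2$--based smoothing (Plancherel, limiting absorption) with the dispersive kernel decay, and carrying the sharp Lorentz indices through this interpolation --- especially through the spherical maximal estimate behind the $L^{6,2}_xL^\infty_t$ bound --- is where most of the care goes.
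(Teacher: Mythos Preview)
There is a genuine gap in the logic. You correctly observe that the wave half--propagator $\tfrac{e^{it\sqrt H}P_c}{\sqrt H}$ fails the two ``sub--critical'' bounds $L^2\to L^{12,2}_xL^2_t$ and $L^2\to L^{24/5,2}_xL^{8,2}_t$ by scaling. But then your conclusion, ``the sub--critical bounds must come entirely from the Bessel remainder,'' cannot work: if $A=B+C$ with $B$ unbounded into a given target, then $A$ is unbounded there no matter how nicely $C$ behaves. The splitting $E_{1/2}^H=(\text{wave})+(\text{Bessel})$ from Theorem \ref{SBH} therefore cannot yield those two estimates by treating the summands separately; it can only give the two wave--scale--invariant ones.

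The paper proceeds quite differently, and never splits at this stage. It first establishes pointwise kernel bounds on the \emph{full} Klein--Gordon operator
\[
C_1^H(t)=\frac{\cos(t\sqrt{H+1})P_c}{H+1}
\]
(Proposition \ref{C1H}, reducing to the free case in Lemma \ref{c1bounds}): namely $\|C_1^H(x,y,\cdot)\|_{L^1_t}\les\langle x-y\rangle^{-1/2}$, $\|C_1^H(x,y,\cdot)\|_{L^{4,\infty}_t}\les|x-y|^{-5/4}$, and $\|C_1^H(x,y,\cdot)\|_{L^\infty_t}\les|x-y|^{-1}$. The crucial extra half--power in the $L^1_t$ bound (the wave analogue $\tfrac1{4\pi r}\chi_{|t|<r}$ has constant $L^1_t$ norm) comes from a \emph{cancellation} between the wave piece and the Bessel tail of $C_1$ outside the light cone, as computed in Proposition \ref{cosine_kernel}; splitting destroys exactly this. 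These kernel bounds make $C_1^H$ bounded between the dual pairs $L^{12/11,2}_xL^2_t\to L^{12,2}_xL^2_t$, $L^{24/19,2}_xL^{8/7,2}_t\to L^{24/5,2}_xL^{8,2}_t$, $L^{6/5,2}_xL^1_t\to L^{6,2}_xL^\infty_t$, and $L^1_xL^2_t\to L^\infty_xL^2_t$. A $TT^*$ argument using
\[
S_{1/2}^H(t)\big(S_{1/2}^H(s)\big)^*=C_1^H(t-s)-C_1^H(t+s),\qquad
C_{1/2}^H(t)\big(C_{1/2}^H(s)\big)^*=C_1^H(t-s)+C_1^H(t+s),
\]
then gives the four $L^2\to L^q_xL^p_t$ bounds for $S_{1/2}^H$ and $C_{1/2}^H$, hence for $E_{1/2}^H$. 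No smoothing, maximal function, or frequency splitting is involved.
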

	After computing the pointwise decay of fractional integrations of the sine propagator, via  the $TT^*$ argument, we also obtain standard Strichartz estimates.
	\begin{theorem}[Strichartz estimates, see Theorem 	\ref{thm:KGStrichartz}]
		\label{thm:KGStrichartzintro} Consider the Klein--Gordon equation in $\R^{3+1}$ with Hamiltonian $H=-\Delta+V$, $V \in L^{3/2, 1}$, such that $H$ has no threshold bound states (eigenstates or resonance). Then for 
		\begin{equation}
		\frac{2}{p}+\frac{3}{q}=\frac{3}{2},\,p\geq2\label{eq:661int}
		\end{equation}
		one has the following Strichartz estimates
		\begin{equation}
		\left\Vert E_{\frac{1}{2}\left(\frac{1}{p}+\frac{1}{2}-\frac{1}{q}\right)}^{H}\left(t\right)P_{c}f\right\Vert _{L_{t}^{p}L_{x}^{q}}\lesssim\left\Vert f\right\Vert _{L_{x}^{2}}\label{eq:662int}
		\end{equation}
		where  we used the notation $E_{\alpha}^{H}(t)=\frac{e^{it\sqrt{H+1}}}{\left(H+1\right)^{\alpha}}$.
	\end{theorem}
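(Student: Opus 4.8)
The plan is to run a $TT^{*}$ argument on the operator $T_{\alpha}f(t)=E_{\alpha}^{H}(t)P_{c}f$, where $(p,q)$ satisfies \eqref{eq:661int} and $\alpha=\alpha(p,q)=\frac12\big(\frac1p+\frac12-\frac1q\big)$. By duality the bound $\|T_{\alpha}f\|_{L^{p}_{t}L^{q}_{x}}\les\|f\|_{L^{2}_{x}}$ is equivalent to the boundedness of $T_{\alpha}T_{\alpha}^{*}\colon L^{p'}_{t}L^{q'}_{x}\to L^{p}_{t}L^{q}_{x}$, and since $H$ is self-adjoint and commutes with $P_{c}$ one has $T_{\alpha}T_{\alpha}^{*}g(t)=\int_{\R}E_{2\alpha}^{H}(t-s)P_{c}g(s)\dd s$, i.e.\ convolution in time with the operator kernel $E_{2\alpha}^{H}(\tau)P_{c}$. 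It therefore suffices to prove the dispersive-type bound
\be\lb{eq:dispplan}
\big\|E_{2\alpha}^{H}(\tau)P_{c}\big\|_{L^{q'}_{x}\to L^{q}_{x}}\les|\tau|^{-2/p},
\ee
after which the Hardy--Littlewood--Sobolev (weak Young) inequality in $t$ closes the estimate for $p>2$ (note $|\tau|^{-2/p}\in L^{p/2,\infty}_{t}$), while the endpoint $(p,q)=(2,6)$ is recovered by running a Keel--Tao type bilinear argument on the same kernel, frequency block by frequency block, as described below.

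To obtain \eqref{eq:dispplan} I would decompose the evolution exactly as in \eqref{eq:kgsplitfree}, but relative to $H$. Writing $e^{it\sqrt{H+1}}=\cos(t\sqrt{H+1})+i\sin(t\sqrt{H+1})$ and realizing the negative power $(H+1)^{-2\alpha}$ via fractional integration in $t$ against the sine and cosine propagators, $E_{2\alpha}^{H}(\tau)P_{c}$ reduces to a ``wave part'' built from $\frac{\sin(t\sqrt{H})P_{c}}{\sqrt{H}}$ (with fractional weights) and the $t$-fractional integration of the ``Bessel'' kernel $S_{B}^{H}(t)$. For the wave part the required decay and Strichartz bounds are precisely those established in \cite{becgol}. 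For the Bessel part I invoke Theorem \ref{SBH}: it presents $S_{B}^{H}$ as dominated by a convolution operator with kernel in $L^{6,\infty}_{x}L^{1}_{t}\cap L^{4,\infty}_{x}L^{4/3,\infty}_{t}\cap L^{\infty}_{x,t}$, and interpolating among these three mixed norms and carrying out the $t$-fractional integration upgrades them to bounds of the shape \eqref{eq:dispplan}. Heuristically the Bessel piece is the more favourable of the two: it is already pointwise bounded, and for $|t|\gg|x-y|$ it decays at the Schr\"odinger rate $|t|^{-3/2}$, with leading behaviour $\sin\sqrt{t^{2}-|x-y|^{2}}/(t^{2}-|x-y|^{2})^{3/4}$ as in \eqref{eq:kgsplitfree}.

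The reason the sharp exponent $\alpha(p,q)$ appears in \eqref{eq:662int} -- rather than the larger, frequency-independent power one would read off a crude $L^{1}_{x}\to L^{\infty}_{x}$ estimate -- is that the dispersion of $e^{it\sqrt{H+1}}$ depends on the spectral parameter: wave-like ($|t|^{-1}$, at the cost of one derivative) at high energy and Schr\"odinger-like ($|t|^{-3/2}$, with no derivative loss) near the bottom of the spectrum. Accordingly I would decompose $P_{c}f=\sum_{N\geq1}P_{N}f$ into spectral Littlewood--Paley blocks $P_{N}=\chi\big((H+1)/N^{2}\big)P_{c}$, prove the block version of \eqref{eq:dispplan}, which together with $L^{2}$ conservation yields (via the per-block Keel--Tao argument, endpoint included) $\|E_{\alpha}^{H}(t)P_{N}f\|_{L^{p}_{t}L^{q}_{x}}\les\langle N\rangle^{\frac52(\frac12-\frac1q)-2\alpha}\|P_{N}f\|_{L^{2}_{x}}$, and then sum in $\ell^{2}$ -- legitimate because $p,q\geq2$ allow a square-function estimate followed by Minkowski's inequality. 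The point is that $\alpha=\frac12(\frac1p+\frac12-\frac1q)$ is exactly the value for which the exponent $\frac52(\frac12-\frac1q)-2\alpha$ vanishes, so that the dyadic sum collapses to $\big(\sum_{N}\|P_{N}f\|_{L^{2}_{x}}^{2}\big)^{1/2}=\|f\|_{L^{2}_{x}}$ with no loss.

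I expect the genuine difficulty to lie not in the $TT^{*}$/interpolation bookkeeping but in transferring this sharp, frequency-localized dispersive behaviour from the free equation to the perturbed one while keeping exact track of the power of $(H+1)$. The low-energy regime is the delicate one: there the potential's influence is strongest, a threshold eigenvalue or resonance would already destroy the $|t|^{-3/2}$ decay -- which is precisely why the no-threshold-bound-state hypothesis enters, through Theorem \ref{SBH} and \cite{becgol} -- and one must still verify that the perturbed projections $\chi((H+1)/N^{2})$ interact acceptably with the wave/Bessel splitting and with $P_{c}$. Two secondary technical nuisances are the endpoint $(2,6)$, where the $|t|^{-1}$ tail of the block-level dispersive estimate forces the Keel--Tao bilinear machinery rather than plain Young, and the genuine coupling between $t$ and $|x-y|$ in the Bessel kernel, which rules out a clean product-form pointwise bound and requires the three mixed-norm estimates of Theorem \ref{SBH} to be interpolated with care.
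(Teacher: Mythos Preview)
Your high-level architecture for the endpoint is exactly what the paper does: localize to spectral Littlewood--Paley blocks $P_{N}=\chi(2^{-k}\sqrt{H})$ (using the perturbed square-function theory of Theorem~\ref{thm:BG}), run Keel--Tao on each block, and sum in $\ell^{2}$ via Minkowski and the square function. Your exponent computation $\tfrac52(\tfrac12-\tfrac1q)-2\alpha=0$ is correct and is precisely the mechanism by which the paper arrives at $E^{H}_{5/12}$ for $(p,q)=(2,6)$; the full range then follows by interpolation with the trivial $(p,q)=(\infty,2)$.

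Where your proposal deviates from the paper, and where there is a real gap, is in how you obtain the dispersive input. You propose to reach \eqref{eq:dispplan} by splitting into wave and Bessel parts and then invoking the mixed $L^{p}_{x}L^{q}_{t}$ bounds on $S_{B}^{H}$ from Theorem~\ref{SBH}, followed by ``$t$-fractional integration.'' But the statements of Theorem~\ref{SBH} are $L^{p}_{t}$ norm bounds on the kernel; they do \emph{not} yield a pointwise-in-$t$ operator estimate of the form $\|E^{H}_{2\alpha}(\tau)P_{c}\|_{L^{q'}\to L^{q}}\lesssim|\tau|^{-2/p}$, and fractional integration in $t$ maps $L^{p}_{t}$ to another $L^{p}_{t}$, not to $L^{\infty}_{t}$ with a prescribed decay rate. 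What the Keel--Tao argument actually consumes is the fixed-time $L^{1}\to L^{\infty}$ bound
\[
\big\|E^{H}_{5/4}(t)P_{c}\big\|_{L^{1}_{x}\to L^{\infty}_{x}}\lesssim|t|^{-3/2},
\]
which the paper establishes separately as Theorem~\ref{PKGpointwise}. That estimate is \emph{not} deduced from Theorem~\ref{SBH}; it comes from the spectral representation formula \eqref{eq:EaPerturbed}, expressing the perturbed kernel as an $L^{1}_{\tau}$-average (via the Beceanu--Goldberg bound \eqref{eq:boundkernel}) of the free kernel $E_{5/4}(r,t)$, together with the free pointwise estimate $|E_{5/4}(r,t)|\lesssim|t|^{-3/2}$ uniformly in $r$ (Theorem~\ref{KGpointDecay}, obtained from Proposition~\ref{fractional_powers} at $a=3/2$). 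In short, the right input is a pointwise kernel bound at the level of $E_{5/4}$, not the reversed-norm bounds on $S_{B}^{H}$.

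A secondary remark: for the non-endpoint range you propose to prove \eqref{eq:dispplan} for every admissible pair and then apply Hardy--Littlewood--Sobolev. That would work, but it requires the full family of pointwise dispersive estimates for $E^{H}_{2\alpha}$, which again comes from Proposition~\ref{C1H}/Theorem~\ref{PKGpointwise} rather than Theorem~\ref{SBH}. The paper sidesteps this by proving only the endpoint and interpolating with $(\infty,2)$.
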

	\subsection{The semilinear Klein--Gordon equation} Another model of physical interest is obtained by adding a nonlinear monomial potential
	\be\lb{kgnl}
	u_{tt} - \Delta u + m^2 u + V u \pm u^{N+1} = F,\ u(0)=u_0,\ u_t(0)=u_1.
	\ee
	Usually $N$ is taken to be an integer, $2$ or $4$ in particular, but other values are also allowed, as well as more general nonlinearities.
	
	Using linear estimates above, we then illustrate several nonlinear applications, proving that the equation (\ref{kgnl}) is globally well-posed for small data in these norms. For large data, using the trick that choping the data into pieces, then using finite speed of propagation to patch each piece together, we can obtain the local well-posedness for large data from the small data results.
	
	All throughout we consider mild solutions to (\ref{kgnl}), i.e.\;solutions in the sense of semigroups:
	$$\begin{aligned}
	u(t) &= \cos(t\sqrt{H+1}) u_0 + \frac {\sin(t\sqrt{H+1})} {\sqrt{H+1}} u_1 \\
	&\mp \int_0^t \frac {\sin((t-s)\sqrt{H+1})} {\sqrt{H+1}} (u^{N+1}(s) u(s)+F(s)) \dd s.
	\end{aligned}$$
	Here $H=-\Delta+V$ is the Hamiltonian of the corresponding wave equation.
	
	Our main nonlinear result is the following application of reversed Strichartz estimates:
	
	\begin{proposition}[Proposition \ref{prop_nonlinear}] Consider a potential $V \in L^{3/2, 1}(\R^3)$ such that $-\Delta+V$ has no negative eigenvalues and no threshold eigenfunctions or resonance. The quintic Klein--Gordon equation in $\R^{3+1}$
		$$
		u_{tt}-\Delta u + Vu + u \pm u^5 = F,\ u(0)=v,\ u_t(0)=w
		$$
		is globally well-posed if the norm of the initial data $\|(v, w)\|_{H^1 \times L^2}$ is sufficiently small, if $F$ is also small in the norm of
		$
		L^{6/5, 2}_x L^\infty_t \cap L^{16/15, 2}_x L^{16/5, 2}_t \cap L^{12/11, 2}_x L^{4, 1}_t \cap L^{48/41, 2}_x L^{16/3, 2}_t.
		$
		
		The solution $u$ is in $L^{6, 2}_x L^\infty_t \cap L^{16/3, 2}_x L^{16, 2}_t$ and
		$$
		\|u\|_{L^{6, 2}_x L^\infty_t \cap L^{16/3, 2}_x L^{16, 2}_t} \les \|(v, w)\|_{H^1 \times L^2} + \|F\|_{L^{6/5, 2}_x L^\infty_t \cap L^{16/15, 2}_x L^{16/5, 2}_t \cap L^{12/11, 2}_x L^{4, 1}_t \cap L^{48/41, 2}_x L^{16/3, 2}_t}.
		$$
		
		If in addition $F=0$, then $u \in L^\infty_x L^2_t \cap L^{12, 2}_x L^2_t \cap L^{24/5, 2}_x L^{8, 2}_t$ as well.
	\end{proposition}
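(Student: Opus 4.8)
The plan is to set up the integral (Duhamel) formulation and run a contraction mapping argument in the space $X := L^{6,2}_x L^\infty_t \cap L^{16/3,2}_x L^{16,2}_t$, with the data and the forcing entering through the reversed Strichartz estimates proved above. First I would reduce the linear evolution to the operator $\frac{e^{it\sqrt{H+1}}}{\sqrt{H+1}}$ acting on $L^2$: under the present hypotheses $H=-\Delta+V$ has purely absolutely continuous spectrum $[0,\infty)$ (no positive eigenvalues for $V\in L^{3/2,1}$, no negative eigenvalues and no threshold resonance by assumption), so $P_c=I$ and $\sqrt{H+1}\ge 1$, and moreover $\|\sqrt{H+1}\,v\|_{L^2}\sim\|v\|_{H^1}$ since $V$ is form-bounded with relative bound $0$. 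Then $\cos(t\sqrt{H+1})v$ and $\frac{\sin(t\sqrt{H+1})}{\sqrt{H+1}}w$ are both of the form $\frac{e^{\pm it\sqrt{H+1}}}{\sqrt{H+1}}g$ with $\|g\|_{L^2}\lesssim\|(v,w)\|_{H^1\times L^2}$. To pass from the wave propagator $\frac{e^{it\sqrt H}}{\sqrt H}$ of Corollary \ref{cor_partial_results} to the Klein--Gordon one I would use the splitting into the wave part, controlled in \cite{becgol}, plus the Bessel part $S_B^H$, whose convolution domination in $L^{6,\infty}_xL^1_t\cap L^{4,\infty}_xL^{4/3,\infty}_t\cap L^\infty_{x,t}$ from Theorem \ref{SBH} is more than enough to map $L^2$ into $X$. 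This gives the homogeneous bound $\|(\text{linear evolution})\|_X\lesssim\|(v,w)\|_{H^1\times L^2}$.

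The second ingredient is the retarded estimate
$$\Bigl\|\int_0^t\frac{\sin((t-s)\sqrt{H+1})}{\sqrt{H+1}}G(s)\dd s\Bigr\|_X\lesssim\|G\|_Y,\quad Y:=L^{6/5,2}_xL^\infty_t\cap L^{16/15,2}_xL^{16/5,2}_t\cap L^{12/11,2}_xL^{4,1}_t\cap L^{48/41,2}_xL^{16/3,2}_t.$$
Here I would dominate the kernel of $\frac{\sin((t-s)\sqrt{H+1})}{\sqrt{H+1}}$, viewed as a function of $(x,y,t-s)$, by a spatial convolution kernel with $L^{q,\infty}_xL^p_t$ bounds coming from Theorem \ref{SBH} (Bessel part) together with the wave-part bounds of \cite{becgol}, and then estimate the resulting operator by Young's inequality in $t$ followed by a Hardy--Littlewood--Sobolev inequality in $x$; the truncation $0<s<t$ only improves matters, and where a genuinely non-$L^\infty_t$ output forces it, the Christ--Kiselev lemma removes the truncation. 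The four source spaces $Y_j$ appear because the factor $(H+1)^{-1/2}$ can be split in several proportions between the ``output'' and the ``input'' sides, each proportion coupling one component of $X$ with one $Y_j$ via the corresponding pointwise fractional-integration bound for the propagator (the same family of bounds that yields Theorem \ref{thm:KGStrichartz}).

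Granting the two linear estimates, the nonlinear step reduces to the multilinear bounds $\|u^5\|_Y\lesssim\|u\|_X^5$ and $\|u^5-\tilde u^5\|_Y\lesssim(\|u\|_X^4+\|\tilde u\|_X^4)\|u-\tilde u\|_X$. These follow from H\"{o}lder's inequality in Lorentz spaces: expanding $u^5$ (resp.\ $u^5-\tilde u^5$) into products of five factors and assigning each factor to a real-interpolation intermediate space between $L^{6,2}_xL^\infty_t$ and $L^{16/3,2}_xL^{16,2}_t$, the endpoint choices of how many factors sit at each end produce exactly the four target spaces $L^{6/5,2}_xL^\infty_t$, $L^{16/15,2}_xL^{16/5,2}_t$, $L^{12/11,2}_xL^{4,1}_t$, $L^{48/41,2}_xL^{16/3,2}_t$ (and any intermediate choice interpolates between these); the second Lorentz index only improves under H\"{o}lder, from $2$ down toward $2/5$, which embeds into the stated index. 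Combining everything, $\Phi(u):=(\text{linear evolution})\mp\int_0^t\frac{\sin((t-s)\sqrt{H+1})}{\sqrt{H+1}}(u^5+F)\dd s$ satisfies $\|\Phi(u)\|_X\lesssim\|(v,w)\|_{H^1\times L^2}+\|F\|_Y+\|u\|_X^5$ and a matching difference bound, so for $\|(v,w)\|_{H^1\times L^2}$ and $\|F\|_Y$ small enough $\Phi$ is a contraction on a small ball of $X$; its fixed point is the global solution, with the asserted estimate, and uniqueness in the ball and continuous dependence are immediate. When $F=0$, once $u\in X$ is known we have $u^5\in Y$, and applying the reversed Strichartz estimates of Corollary \ref{cor_partial_results} into the extra spaces $L^\infty_xL^2_t\cap L^{12,2}_xL^2_t\cap L^{24/5,2}_xL^{8,2}_t$ --- directly to the linear term and, via the same convolution-domination and $TT^*$/Christ--Kiselev scheme, to the Duhamel term --- places $u$ in that intersection as well.

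The step I expect to be the main obstacle is making the retarded estimate and the quintic H\"{o}lder estimate interlock cleanly: one must choose the family of retarded bounds $Y_j\to X$ and the family of Lorentz--H\"{o}lder splittings of $u^5$ so that, for every admissible distribution of the five factors, the output of the H\"{o}lder step lands in a source space for which the retarded estimate is available. Carrying out this bookkeeping --- tracking the spatial exponents, the temporal exponents (including the non-product-type $L^{16/3}_t$ and the $L^{4,1}_t$ endpoint) and the Lorentz second indices through real interpolation, while keeping the amount of $(H+1)^{-1/2}$-smoothing consistent across the two sides --- is where the real work lies. The fixed-point mechanics, the reduction of the data to $L^2$, and the final bootstrap for $F=0$ are routine once the two linear estimates are in hand in the stated form.
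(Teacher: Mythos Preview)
Your approach is essentially the same as the paper's: set up a contraction (the paper uses an equivalent Picard iteration) in $X=L^{6,2}_xL^\infty_t\cap L^{16/3,2}_xL^{16,2}_t$, split $S_{1/2}^H$ into its wave and Bessel parts, control each via the reversed-norm kernel bounds of Theorem~\ref{SBH} and \cite{becgol}, and close with H\"older.

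Two small corrections. First, Corollary~\ref{cor_partial_results} is already stated for the Klein--Gordon propagators $S_{1/2}^H,C_{1/2}^H,E_{1/2}^H$ (built from $\sqrt{H+1}$), not for the wave propagator $\frac{e^{it\sqrt H}}{\sqrt H}$; you do not need to pass from one to the other for the homogeneous piece. Second, your explanation for why four source spaces $Y_j$ appear is not quite the mechanism used in the paper: they do not come from distributing $(H+1)^{-1/2}$ between input and output, but simply from the $2\times 2$ structure (two target components of $X$) $\times$ (wave part / Bessel part). Concretely, into $L^{6,2}_xL^\infty_t$ the wave part needs $u^5\in L^{6/5,2}_xL^\infty_t$ and the Bessel part needs $u^5\in L^{12/11,2}_xL^{4}_t$; into $L^{16/3,2}_xL^{16,2}_t$ the wave part needs $u^5\in L^{48/41,2}_xL^{16,2}_t$ and the Bessel part needs $u^5\in L^{16/15,2}_xL^{16/5,2}_t$. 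Each of these four is obtained by a single H\"older split of $u^5=u\cdot u^4$ with $u$ in one component of $X$ and $u^4$ in the appropriate product of the two (so $u^4\in L^{3/2}_xL^\infty_t\cap L^{4/3}_xL^4_t$), and no Christ--Kiselev argument is needed because the kernel bounds dominate the full convolution. The bookkeeping you flagged as the main obstacle is therefore lighter than you feared.
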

	
	%
	%
	\subsection{Discussion of the results}

	The study of standard Strichartz estimates for the Klein--Gordon equation has a long history, see for example D'Ancona--Fanelli \cite{DaF}, Beals--Strauss \cite{BS}, Keel--Tao \cite{KeTa} and Nakanishi--Schlag \cite{NaSch}.
	
	\textbf{Reversed Strichartz estimates.}
	In $\mathbb{R}^{3}$, it is known that the endpoint Strichartz estimate
	$L_{t}^{2}L_{x}^{\infty}$ fails both for the wave and Klein-Gordon
	equations, see for example  Machihara-Nakamura-Nakanishi-Ozawa \cite{MNNO}. But in many nonlinear stability
	analysis problems, for example in Chen \cite{C2} and Jia-Liu-Schlag-Xu \cite{JLSchX}, one has to deal with the
	quadratic terms. Then the reversed type estimates $L_{x}^{\infty}L_{t}^{2}$
	play a crucial role in the analysis.
	
	Except for the homogeneous
	Strichartz estimate, the standard Strichartz estimates are not Lorentz
	invariant. But using the reversed type estimates, one can obtain some
	Lorentz invariance using a family of reversed type estimates which
	will be important in the analysis of multisoliton, see Chen \cite{C1,C2}.
	We should point out that these reversed type estimates for perturbed
	equations do not follow from the $L^{p}$ boundedness of wave operators.
	
	Under some stronger decay assumptions on the potential $V$, one can infer some of our estimates by using the wave operator structure formula of  Beceanu--Schlag \cite{BeSch}. However, here we prove them directly, under less stringent assumptions on the potential and for a wider range of exponents, in the spirit of the work of  Beceanu--Goldberg \cite{becgol}.

	\textbf{Standard Strichartz estimates.} The study of standard Strichartz estimates for the Klein--Gordon equation has a long history, see for example D'Ancona--Fanelli \cite{DaF}, Beals--Strauss \cite{BS}, Ibrahim--Masmoudi--Nakanishi \cite{IMN}, Keel--Tao \cite{KeTa}, Machihara--Nakamura--Nakanishi--Ozawa \cite{MNNO2} and Nakanishi--Schlag \cite{NaSch}.
	
	The following is a complete list of the standard Strichartz estimates for the Klein--Gordon equation in three dimensions, as per \cite{DaF}:
	\begin{proposition} The free Klein--Gordon flow in three dimensions $e^{it\sqrt{-\Delta+1}}$ satisfies the Strichartz estimates
		$$
		\|e^{it\sqrt{-\Delta+1}} f\|_{L^p_t W^{1/q-1/p-1/2, q}_x} \les \|f\|_{L^2_x}
		$$
		whenever $2 \leq p, q \leq \infty$, $\frac 2 p + \frac 3 q = \frac 3 2$ (Schr\"{o}dinger-admissible exponents).
	\end{proposition}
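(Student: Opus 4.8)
The plan is to obtain this proposition as an immediate specialization of the main Strichartz estimate, Theorem~\ref{thm:KGStrichartzintro}, to the free Hamiltonian $H=-\Delta$ (so that $V=0$ and $P_c=I$), after unwinding the Sobolev notation. Write $s=\tfrac1q-\tfrac1p-\tfrac12$ for the Sobolev exponent in the statement and note the elementary identity $-\tfrac s2=\tfrac12\bigl(\tfrac1p+\tfrac12-\tfrac1q\bigr)$. Since the $W^{s,q}_x$ norm is $\|(-\Delta+1)^{s/2}\,\cdot\,\|_{L^q_x}$ and $(-\Delta+1)^{s/2}$ commutes with $e^{it\sqrt{-\Delta+1}}$, we have
\[
\bigl\|e^{it\sqrt{-\Delta+1}}f\bigr\|_{L^p_tW^{s,q}_x}
=\bigl\|(-\Delta+1)^{s/2}e^{it\sqrt{-\Delta+1}}f\bigr\|_{L^p_tL^q_x}
=\bigl\|E^{-\Delta}_{-s/2}(t)f\bigr\|_{L^p_tL^q_x},
\]
which, because $-\tfrac s2=\tfrac12(\tfrac1p+\tfrac12-\tfrac1q)$, is exactly the left-hand side of \eqref{eq:662int} with $H=-\Delta$.

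Next I would check that the admissibility ranges coincide. Both impose the scaling relation $\tfrac2p+\tfrac3q=\tfrac32$, and under this relation $p\in[2,\infty]$ is equivalent to $q\in[2,6]$, hence to $2\le p,q\le\infty$. Therefore Theorem~\ref{thm:KGStrichartzintro}, applied with the (trivially admissible) potential $V=0$, covers precisely the pairs in the proposition and yields the claimed bound. This includes the Keel--Tao endpoint $(p,q)=(2,6)$, the admissible endpoint in dimension three, while the other extreme $(p,q)=(\infty,2)$, where $s=0$, is just the unitarity of $e^{it\sqrt{-\Delta+1}}$ on $L^2_x$.

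For completeness, the classical self-contained argument (D'Ancona--Fanelli) runs instead through the frequency-localized dispersive bound for $e^{it\sqrt{-\Delta+1}}$ --- decay $|t|^{-3/2}$ at frequencies $\lesssim 1$, of Schr\"odinger type, and $|t|^{-1}$ with the appropriate frequency weight at frequencies $\gtrsim 1$, coming from the radially degenerate but angularly non-degenerate Hessian of $\sqrt{|\xi|^2+1}$ --- summed over a Littlewood--Paley decomposition and fed into the Keel--Tao $TT^*$ machinery; the derivative loss $W^{1/q-1/p-1/2,q}$ is precisely what interpolating between the two frequency regimes forces, reflecting the several scalings of the sine propagator stressed in the introduction. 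In that route the delicate points are the uniform high/low frequency analysis and the $(p,q)=(2,6)$ endpoint, but in the reduction above both are already contained in Theorem~\ref{thm:KGStrichartzintro}, so the only thing to do is the bookkeeping identification above --- there is no real obstacle.
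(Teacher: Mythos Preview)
Your reduction is correct: specializing Theorem~\ref{thm:KGStrichartzintro} to $V=0$ (so $H=-\Delta$, $P_c=I$) and identifying $-s/2=\tfrac12(\tfrac1p+\tfrac12-\tfrac1q)$ yields exactly the stated estimate, and the admissibility ranges match as you checked. There is no circularity, since the paper's proof of Theorem~\ref{thm:KGStrichartz} rests on the pointwise decay of Theorem~\ref{PKGpointwise}, the $L^2$ bound, Keel--Tao, and the perturbed Littlewood--Paley theory of \cite{becgol1}, none of which invoke the free Strichartz estimates as input.

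That said, the paper does not itself prove this proposition: it is quoted in the introduction as a known result from D'Ancona--Fanelli \cite{DaF}, as background and motivation for the paper's own Strichartz theorem. So there is no ``paper's own proof'' to compare against; the paper simply cites the literature. Your approach --- recovering the free case as the $V=0$ specialization of the paper's main result --- is a perfectly valid a posteriori proof, and has the virtue of showing that Theorem~\ref{thm:KGStrichartzintro} genuinely subsumes the classical statement. The alternative route you sketch (frequency-localized dispersive bounds plus Keel--Tao, as in \cite{DaF}) is the historically prior, logically independent argument that the paper is referencing.
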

	
	All these norms can be obtained by complex interpolation between the trivial endpoint $L^\infty_t L^2_x$ and the nontrivial endpoint $L^2_t W^{-5/6, 6}_x$. By the Sobolev embedding, this nontrivial endpoint implies an $L^2_t L^9_x$ bound for the sine propagator $S_{1/2}$, see \cite{NaSch}, and is in turn equivalent, by the $T T^*$ method, to the claim that
	$$
	C_{5/6}(t)=\frac {\cos(t\sqrt{-\Delta+1})}{(-\Delta+1)^{5/6}} \in \B(L^2_t L^{6/5}_x, L^2_t L^6_x).
	$$
	
	Many proofs of Strichartz estimates for wave and Klein--Gordon equations involve a Littlewood-Paley dyadic decomposition in frequency. However, the Littlewood--Paley theory for the perturbed Hamiltonian is more involved, see Beceanu-Goldberg \cite{becgol1} and references therein. Since the perturbed equation is not scaling-invariant, unlike in the free case. Thus, we try to avoid it in the current paper. For non-endpoint estimates, our approach is inspired by Beals \cite{BE}. We do not rely on a dyadic decomposition in the frequency space. Instead, we use complex interpolation between pointwise decay estimates.
	
	The endpoint Strichartz estimates, see Keel--Tao \cite{KeTa}, are of crucial importance in the analysis
	of dispersive PDE. In their classic paper, Keel and Tao used real interpolation between a two-parameter family of bilinear estimates to prove the endpoint estimate in a general framework. For the endpoint Strichartz estimates, we have to invoke Keel--Tao \cite{KeTa}  approach with the Littlewood-Paley dyadic decomposition in frequency developed by Beceanu-Goldberg \cite{becgol1}.
	
	%
	%
	
	%
	%
	%
	
	\subsection{Notations}\lb{notation}
	We denote the Klein--Gordon propagators that we study in this paper as follows:
	$$
	S_\alpha(t) = \frac {\sin(t\sqrt{-\Delta+1})}{(-\Delta+1)^\alpha},\ C_\alpha(t) = \frac {\cos(t\sqrt{-\Delta+1})}{(-\Delta+1)^\alpha},\ E_\alpha(t) = \frac {e^{it\sqrt{-\Delta+1}}}{(-\Delta+1)^\alpha}.
	$$
	For the perturbed Hamiltonian, $H=-\Delta+V$, we also define:
	\[
	S_{\alpha}^{H}(t)=\frac{\sin\left(t\sqrt{H+1}\right)}{\left(H+1\right)^{\alpha}},\,C_{\alpha}^{H}(t)=\frac{\cos\left(t\sqrt{H+1}\right)}{\left(H+1\right)^{\alpha}},\,E_{\alpha}^{H}(t)=\frac{e^{it\sqrt{H+1}}}{\left(H+1\right)^{\alpha}}.
	\]
	\subsection{Organization}
	
	This paper is organized as follows: In Section \ref{FKG}, we
	analyze the free Klein-Gordon equation including the pointwise decay,
	reversed Strichartz estimates, fractional integration
	of the Klein-Gordon propagator and local energy decay. In Section
	\ref{KGP}, we extend the estimates for the free equation to the Klein-Gordon
	equation with a scalar potential,  including standard Strichartz estimates.
	In Section \ref{sec:nonlinear}, we illustrate some
	applications of the estimates we obtained from previous sections by
	showing the existence of the solutions to some semilinear Klein-Gordon
	equation with monomial nonlinearity. In Appendix \ref{sec:bessel}, we recall
	some facts about Bessel and Hankel functions. In Appendix \ref{sec:agmon}, we present the sharp Agmon estimates for the sake of completeness. Then finally, in Appendix \ref{sec:sob}, some comparisons between Sobolev spaces defined by the free Laplacian and the perturbed Hamiltonian are established.
	\smallskip
	
	\section{The free Klein--Gordon kernels}\lb{FKG}
	In this section, we analyze the kernels associated to the free Klein--Gordon equation. Reversed Strichartz estimates, pointwise decay and some local energy decay will be established. Notice that in this section, the physical representation of the fundamental solution to the linear Klein--Gordon equation plays a pivotal  role.
	\subsection{The cosine kernel} We first estimate the kernel of
	\be\lb{C_1}
	C_1=\frac {\cos(t\sqrt{-\Delta+1})}{-\Delta+1}.
	\ee
	For $t \geq 0$, one can write
	$$
	\frac {\cos(t\sqrt{-\Delta+1})}{-\Delta+1} = \int_t^\infty \frac {\sin(\tau\sqrt{-\Delta+1})}{\sqrt{-\Delta+1}} \dd \tau.
	$$
	Recall that for $r=|x|$
	$$
	S_{1/2}(r, \tau) = \frac {\sin(\tau\sqrt{-\Delta+1})}{\sqrt{-\Delta+1}} = \frac 1 {4\pi r} \delta_r(\tau) - \frac 1 {4\pi} \chi_{\tau \geq r} \frac {J_1(\sqrt{\tau^2-r^2})} {\sqrt{\tau^2-r^2}}.
	$$
	
	For each individual $x$, the domain of integration in (\ref{C_1}) is $\tau \in [\max(r, t), \infty)$. We distinguish two cases: $t \leq r$ and $t>r$. In the first case, $C_1$ does not depend on $t$ and we get a contribution from the wave equation term,
	$$
	\int_t^\infty \frac 1 {4\pi r} \delta_r(\tau) \dd \tau = \frac 1 {4\pi r},
	$$
	while in the second case this term does not contribute. Hence, the kernel has a jump discontinuity of size $1/r$ along the light cone $|t|=r$, but is otherwise continuous.
	
	These are our cosine kernel bounds:
	\begin{proposition}\lb{cosine_kernel} For $t>r$
		\be\lb{C1}
		|C_1(r, t)| = \bigg|\frac {\cos(t\sqrt{-\Delta+1})}{-\Delta+1}\bigg|(r, t) \les \frac 1 {t \langle t^2-r^2 \rangle^{1/4}}
		\ee
		and in addition it is uniformly bounded for $r<1$: $|C_1(r, t)| \les 1$.
		
		For $0 \leq t < r$ the kernel has exponential decay: for $r>1$
		\be\lb{C2}
		|C_1(r, t)| = \bigg|\frac {\cos(t\sqrt{-\Delta+1})}{-\Delta+1}\bigg|(r, t) \les \frac {e^{-r}}{r^{1/2}}
		\ee
		and an $r^{-1}$ bound holds for $0 \leq t < r < 1$: $|C_1(r, t)| \les r^{-1}$.
	\end{proposition}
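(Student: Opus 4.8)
The plan is to treat the two regions $t>r$ and $0\le t<r$ separately, reducing everything to the explicit kernel formula \eqref{eq:kgsplitfree} for $S_{1/2}$ and the representation $C_1(t)=\int_t^\infty S_{1/2}(\tau)\dd\tau$. For $t>r$ the light-cone term $\frac1{4\pi r}\delta_r(\tau)$ drops out, its support $\tau=r$ lying below the range of integration, so $C_1(r,t)=-\frac1{4\pi}\int_t^\infty\frac{J_1(\sqrt{\tau^2-r^2})}{\sqrt{\tau^2-r^2}}\dd\tau$. The substitution $u=\sqrt{\tau^2-r^2}$ (so $\tau=\sqrt{u^2+r^2}$, $\dd\tau=u(u^2+r^2)^{-1/2}\dd u$) turns this into
\[
C_1(r,t)=-\frac1{4\pi}\int_{\sqrt{t^2-r^2}}^\infty\frac{J_1(u)}{\sqrt{u^2+r^2}}\dd u,
\]
where crucially the weight $\sqrt{u^2+r^2}$ equals $t$ at the lower endpoint. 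Since $J_1(u)/u$ is bounded near $0$ and $O(u^{-3/2})$ at infinity (Appendix \ref{sec:bessel}) and $\sqrt{u^2+r^2}\ge u$, this already gives $|C_1(r,t)|\le\frac1{4\pi}\int_0^\infty\frac{|J_1(u)|}{u}\dd u\les1$, which is the asserted uniform bound.

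For the decay estimate \eqref{C1} I would integrate by parts using $J_1=-J_0'$, obtaining
\[
C_1(r,t)=-\frac1{4\pi}\left(\frac{J_0(\sqrt{t^2-r^2})}{t}-\int_{\sqrt{t^2-r^2}}^\infty\frac{uJ_0(u)}{(u^2+r^2)^{3/2}}\dd u\right),
\]
the boundary term at infinity vanishing and the one at the lower endpoint again producing the factor $t$. The standard bound $|J_0(s)|\les\langle s\rangle^{-1/2}$ makes the first term $\les\frac1{t\langle t^2-r^2\rangle^{1/4}}$, exactly the target. The crux is bounding the remaining integral by the same quantity. Setting $a=\sqrt{t^2-r^2}$, the key is that $u^2+r^2\ge a^2+r^2=t^2$ on the whole range, so $(u^2+r^2)^{3/2}\ge t\,u^2$: when $a\ge1$ this together with $|J_0(u)|\les u^{-1/2}$ gives $\les\frac1t\int_a^\infty u^{-3/2}\dd u\les\frac1{ta^{1/2}}\sim\frac1{t\langle t^2-r^2\rangle^{1/4}}$; when $a<1$ I would split at $u=1$, using the same estimate on $[1,\infty)$ and, on $[a,1]$, the bound $|J_0|\le1$ with the exact primitive $-(u^2+r^2)^{-1/2}$ to get $\frac1t-(1+r^2)^{-1/2}\le\frac1t$, which is again of the right size since $\langle t^2-r^2\rangle\sim1$ there. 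I expect precisely this error term — extracting the sharp power $t^{-1}$ and handling the mild growth of the integrand near $u=0$ when $a$ is small — to be the only delicate point.

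In the subluminal region $0\le t<r$ I would argue by locality. Since $\partial_t C_1(t)=-S_{1/2}(t)$ and, by \eqref{eq:kgsplitfree}, the kernel of $S_{1/2}(t)$ is supported in $\{r\le|t|\}$, the kernel $C_1(r,t)$ is independent of $t$ on $\{r>|t|\}$ and hence equals $C_1(r,0)$, the kernel of the Bessel potential $(-\Delta+1)^{-1}$ on $\R^3$, namely $\frac{e^{-r}}{4\pi r}$. (Equivalently, for $t<r$ one has directly $C_1(r,t)=\frac1{4\pi r}-\frac1{4\pi}\int_0^\infty\frac{J_1(u)}{\sqrt{u^2+r^2}}\dd u$, and the classical identity $\int_0^\infty\frac{J_1(u)}{\sqrt{u^2+r^2}}\dd u=\frac{1-e^{-r}}{r}$ yields the same value.) Consequently $|C_1(r,t)|=\frac{e^{-r}}{4\pi r}$, which is $\les\frac{e^{-r}}{r^{1/2}}$ as soon as $r>1$ and $\les r^{-1}$ for $0\le t<r<1$, giving \eqref{C2} and the final bound.
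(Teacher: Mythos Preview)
Your proof is correct. For $t>r$ it follows the same path as the paper: the substitution $u=\sqrt{\tau^2-r^2}$ followed by integration by parts with $J_1=-J_0'$, yielding the main term $J_0(\sqrt{t^2-r^2})/t$ and a remainder $\int_a^\infty\frac{uJ_0(u)}{(u^2+r^2)^{3/2}}\dd u$ that both you and the paper bound by $t^{-1}\langle t^2-r^2\rangle^{-1/4}$. Your uniform bound via $\int_0^\infty|J_1(u)|/u\,\dd u<\infty$ is slightly slicker than the paper's argument, which instead differentiates the integral in $r$ to control the small-$r$ behaviour.

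For $0\le t<r$ you take a genuinely different and much shorter route. The paper spends considerable effort here: it first obtains arbitrary polynomial decay by iterated integration by parts (using $(z^nJ_n)'=z^nJ_{n-1}$), and then upgrades to the exponential bound by a contour-deformation argument with the Hankel function $H_1^+$, leading to an integral of size $e^{-r}r^{-1/2}$. You bypass all of this by observing that, since $S_{1/2}(r,\tau)$ is supported in $\{|\tau|\ge r\}$, the kernel $C_1(r,\cdot)$ is constant on $(-r,r)$ and therefore equals $C_1(r,0)$, which is simply the Yukawa kernel $\frac{e^{-r}}{4\pi r}$ of the resolvent $(-\Delta+1)^{-1}$. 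This gives the exact value, from which both \eqref{C2} and the $r^{-1}$ bound for $r<1$ are immediate. The trade-off is robustness: the paper's contour-integral technique is reused later (in Proposition~\ref{fractional_powers}, via the reference to \eqref{imaginary}) to treat the fractional kernels $E_{(1+\alpha)/2}$, for which no closed-form analogue of the Yukawa potential is available.
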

	
	\begin{proof}
		
		Let $\tau=\sigma+r$. We make the change of variable
		$$
		\tilde \sigma(\sigma) = \sqrt{\sigma^2+2r\sigma} = \sqrt{\tau^2-r^2},
		$$
		which is an increasing, continuous, invertible map $\tilde \sigma(\sigma):[0, \infty) \to [0, \infty)$, so its inverse is also increasing and continuous.
		
		Explicitly, the inverse is given by
		\be\lb{inverse}
		\sigma(\tilde \sigma) = \sqrt{\tilde \sigma^2+r^2}-r,\ \sigma'(\tilde \sigma) = \frac {\tilde \sigma}{\sqrt{\tilde \sigma^2+r^2}}.
		\ee
		In particular, note that $\sigma'(\tilde \sigma) \geq 0$ for $\tilde \sigma \geq 0$. The following identity is useful:
		\be\lb{inverse'}
		\frac {d \sigma}{\tilde \sigma} = \frac {d \tilde \sigma}{r+\sigma(\tilde \sigma)} = \frac {d \tilde \sigma}{\sqrt{\tilde \sigma^2+r^2}}.
		\ee
		
		\noindent\textbf{I. Case $0 \leq t \leq r$.} For $0 \leq t \leq r$, this leads to
		\be\lb{integral}\begin{aligned}
			\int_r^\infty \frac {J_1(\sqrt{\tau^2-r^2})} {\sqrt{\tau^2-r^2}} \dd \tau &= \int_0^\infty \frac {J_1(\sqrt{\sigma^2+2r\sigma})} {\sqrt{\sigma^2+2r\sigma}} \dd \sigma = \int_0^\infty \frac {J_1(\tilde \sigma)} {\tilde \sigma} \dd \sigma\\
			&= \int_0^\infty \frac {J_1(\tilde \sigma)} {r+\sigma(\tilde \sigma)} \dd \tilde \sigma = \int_0^\infty \frac {J_1(\tilde \sigma)} {\sqrt{r^2+\tilde \sigma^2}} \dd \tilde \sigma\\
			&= \int_0^\infty \bigg(\int_{\tilde \sigma}^\infty \frac {\sigma'(\sigma^*) \dd \sigma^*} {(r+\sigma(\sigma^*))^2}\bigg) J_1(\tilde \sigma) \dd \tilde \sigma \\
			&= \int_0^\infty \frac {\sigma'(\tilde \sigma)}{(r+\sigma(\tilde \sigma))^2} \bigg(\int_0^{\tilde \sigma} J_1(\sigma^*) \dd \sigma^*\bigg) \dd \tilde \sigma \\
			&\leq \frac 1 r \sup_{\tilde \sigma \geq 0} \bigg|\int_0^{\tilde \sigma} J_1(\sigma^*) \dd \sigma^*\bigg| \leq \frac 2 r.
		\end{aligned}\ee
		
		Since this improper integral is uniformly bounded, $(\ref{integral}) \les C/r$.
		
		Also, we have an $r^{-1/2}$ bound: since $J_1(\tilde \sigma) \les \tilde \sigma^{-1/2}$,
		$$
		(\ref{integral}) = \int_0^\infty \frac {J_1(\tilde \sigma)} {\sqrt{r^2+\tilde \sigma^2}} \dd \tilde \sigma \les r^{-1/2}.
		$$
		One can also prove a uniform bound for small $r$, see at the end of this proof. However, such a bound is not important here, because we still have to subtract the free wave term.
		
		Taking into account the free wave term, there is more cancellation and a different bound is possible: due to formula (\ref{intj}),
		$$
		\frac 1 r = \int_0^\infty \frac {\sigma'(\tilde \sigma)}{(r+\sigma(\tilde \sigma))^2} \dd \tilde \sigma = \int_0^\infty \frac {\sigma'(\tilde \sigma)}{(r+\sigma(\tilde \sigma))^2} \bigg(\int_0^\infty J_1(\sigma^*) \dd \sigma^*\bigg) \dd \tilde \sigma,
		$$
		so
		\be\begin{aligned}\lb{temp1}
			\frac 1 r -\int_r^\infty \frac {J_1(\sqrt{\tau^2-r^2})} {\sqrt{\tau^2-r^2}} \dd \tau &= \int_0^\infty \frac {\sigma'(\tilde \sigma)}{(r+\sigma(\tilde \sigma))^2} \bigg(\int_{\tilde \sigma}^\infty J_1(\sigma^*) \dd \sigma^*\bigg) \dd \tilde \sigma \\
			&= \int_0^\infty \frac {\tilde \sigma J_0(\tilde \sigma)}{(r^2+\tilde \sigma^2)^{3/2}} \dd \tilde \sigma.
		\end{aligned}\ee
		Since $|J_0(\tilde \sigma)| \leq 1$, we get a bound of $r^{-1}$, which has the right scaling for the wave equation:
		$$
		|(\ref{temp1})| \leq \int_0^\infty \frac {\tilde \sigma}{(r^2+\tilde \sigma^2)^{3/2}} \dd \tilde \sigma = r^{-1}.
		$$
		
		The integral is also bounded by $r^{-3/2}$. Indeed, since $J_0(z) \les |z|^{-1/2}$,
		$$\begin{aligned}
		\int_0^\infty \frac {\tilde \sigma J_0(\tilde \sigma)}{(r^2+\tilde \sigma^2)^{3/2}} \dd \tilde \sigma &= \int_0^r \frac {\tilde \sigma J_0(\tilde \sigma)}{(r^2+\tilde \sigma^2)^{3/2}} \dd \tilde \sigma + \int_r^\infty \frac {\tilde \sigma J_0(\tilde \sigma)}{(r^2+\tilde \sigma^2)^{3/2}} \dd \tilde \sigma \\
		&\leq \int_0^r \frac {\tilde \sigma J_0(\tilde \sigma)}{r^3} \dd \tilde \sigma + \int_r^\infty \frac {\tilde \sigma J_0(\tilde \sigma)}{\tilde \sigma^3} \dd \tilde \sigma \les r^{-3/2}.
		\end{aligned}$$
		
		Following one more integration by parts, since $(z J_1(z))' = z J_0(z)$,
		$$\begin{aligned}
		\int_0^\infty \frac {\tilde \sigma J_0(\tilde \sigma)}{(r^2+\tilde \sigma^2)^{3/2}} \dd \tilde \sigma = 3 \int_0^\infty \frac {\tilde \sigma^2 J_1(\tilde \sigma)}{(r^2+\tilde \sigma^2)^{5/2}} \dd \tilde \sigma.
		\end{aligned}$$
		In the same way as above, we get a bound of $r^{-5/2}$.
		
		Iterating and using the fact that $(z^n J_n(z))' = z^n J_{n-1}(z)$, one gets that for every $n \geq 1$
		$$
		(\ref{temp1}) = (2n-1)!! \int_0^\infty \frac {\tilde \sigma^n J_{n-1}(\tilde \sigma)}{(r^2+\tilde \sigma^2)^{n+\frac 1 2}} \dd \tilde \sigma.
		$$
		This leads to arbitrary polynomial decay: due to (\ref{universal_bessel})
		$$\begin{aligned}
		(2n-1)!! \int_0^\infty \frac {\tilde \sigma^n J_{n-1}(\tilde \sigma)}{(r^2+\tilde \sigma^2)^{n+\frac 1 2}} \dd \tilde \sigma &\les \int_0^r \frac {\tilde \sigma^n J_{n-1}(\tilde \sigma)}{r^{2n+1}} \dd \tilde \sigma + \int_r^\infty \frac {\tilde \sigma J_{n-1}(\tilde \sigma)}{\tilde \sigma^{n+1}} \dd \tilde \sigma \\
		&\les (2n-1)!! \frac {r^{-n-\frac 1 2}}{n^{1/2}}.
		\end{aligned}$$
		
		In general, Stirling's formula asserts that $\Gamma(z) \sim \sqrt{2\pi} z^{z-\frac 1 2} e^{-z}$ as $\re z \to +\infty$. Then $(2n-1)!! = \frac {2^n}{\sqrt \pi} \Gamma(n+\frac 1 2) \sim 2^n n^{n+1} e^{-n}$. This leads to a bound of
		$$
		\frac {2^n n^{n+\frac 1 2} e^{-n}}{r^{n+\frac 1 2}}
		$$
		for the whole expression. Setting $n=\lfloor r \rfloor$, we end up with $(e/2)^{-r}$ for $r >> 1$, i.e.\;exponential decay.
		
		For an easier proof and a more accurate bound, consider the integral
		$$
		\int_{-\infty}^\infty \frac {H_1^+(\sigma)}{\sqrt{r^2+\sigma^2}} \dd \sigma = \int_{-\infty}^0 \frac {H_1^+(\sigma)}{\sqrt{r^2+\sigma^2}} \dd \sigma + \int_0^\infty \frac {H_1^+(\sigma)}{\sqrt{r^2+\sigma^2}} \dd \sigma.
		$$
		
		The integrand has a branching point at $\sigma=ir$ and a simple pole at $0$, as per (\ref{asymp}), which contributes half the value of its residue to the integral, since it is on the integration contour.
		
		We work with the branch of $\frac {H_1^+(\sigma)}{\sqrt{r^2+\sigma^2}}$ determined by a cut along $[ir, i\infty)$. By changing the contour of integration, the integral becomes
		$$\begin{aligned}
		&=\bigg(\int_{i\infty-0}^{ir-0} + \int_{ir+0}^{i\infty+0}\bigg) \frac {H_1^+(\sigma)}{\sqrt{r^2+\sigma^2}} \dd \sigma + \pi i \Res\bigg(\frac {H_1^+(\sigma)}{\sqrt{r^2+\sigma^2}}; \sigma=0\bigg) \\
		&= 2 \int_{ir+0}^{i\infty+0} \frac {H_1^+(\sigma)}{\sqrt{r^2+\sigma^2}} \dd \sigma + \frac 2 r.
		\end{aligned}$$
		However, a simple computation shows that
		$$
		H_1^+(-\sigma) = \ov{H_1^+}(\sigma) = H_1^-(\sigma)
		$$
		for $\sigma \in \R$, so the two integrals can be combined. Since $H_\alpha^\pm = J_\alpha \pm i Y_\alpha$ we get that
		$$
		\int_0^\infty \frac {J_1(\sigma)}{\sqrt{r^2+\sigma^2}} \dd \sigma - \frac 1 r = \int_{ir+0}^{i\infty+0} \frac {H_1^+(\sigma)}{\sqrt{r^2+\sigma^2}} \dd \sigma.
		$$
		For $r>>1$, this leads to a bound of
		\be\lb{imaginary}\begin{aligned}
			&\les \int_r^\infty \frac {e^{-\sigma}}{\sqrt \sigma \sqrt{\sigma^2-r^2}} \dd \sigma \les e^{-r} \int_r^\infty \frac {d\sigma}{\sqrt \sigma \sqrt{\sigma^2-r^2}} \les e^{-r} r^{-1/2},
		\end{aligned}\ee
		with the last bound due to scaling. Here we used the fact that $H_1^+(z) \sim \frac 1 {\sqrt z} e^{iz}$, for $|z|>>1$ in the upper half-plane $\im z > 0$.
		
		This Hankel function bound only works for large $r$. For $r<<1$, the initial $r^{-1}$ bound is the best possible.\\
		
		\noindent\textbf{II. Case $t>r$.} 
		Here we get the following $t^{-3/2}$ bound:
		$$\begin{aligned}
		\int_t^\infty \frac {J_1(\sqrt{\tau^2-r^2})} {\sqrt{\tau^2-r^2}} \dd \tau &= \int_{t-r}^\infty \frac {J_1(\sqrt{\sigma^2+2r\sigma})} {\sqrt{\sigma^2+2r\sigma}} \dd \sigma \\
		&= \int_{\sqrt{t^2-r^2}}^\infty \frac {J_1(\tilde \sigma)} {r+\sigma(\tilde \sigma)} \dd \tilde \sigma \\
		&= \int_{\sqrt{t^2-r^2}}^\infty \bigg(\int_{\tilde \sigma}^\infty \frac {\sigma'(\sigma^*) \dd \sigma^*} {(r+\sigma(\sigma^*))^2}\bigg) J_1(\tilde \sigma) \dd \tilde \sigma \\
		&= \int_{\sqrt{t^2-r^2}}^\infty \frac {\sigma'(\tilde \sigma)}{(r+\sigma(\tilde \sigma))^2} \bigg(\int_{\sqrt{t^2-r^2}}^{\tilde \sigma} J_1(\sigma^*) \dd \sigma^*\bigg) \dd \tilde \sigma \\
		&\leq \frac 1 t \sup_{\tilde \sigma \geq \sqrt{t^2-r^2}} |J_0(\sqrt{t^2-r^2}) - J_0(\tilde \sigma)| \\
		&\les \frac 1 {t \langle t^2-r^2\rangle^{1/4}}.
		\end{aligned}$$
		Replacing the integral up to $\tilde \sigma$ with the one up to $+\infty$ in the above, we get the main term in the asymptotic expansion:
		$$
		\chi_{t>r} \frac {J_0(\sqrt{t^2-r^2})} {t}.
		$$
		The remainder in this expansion is
		\be\lb{temp2}\begin{aligned}
			\int_{\sqrt{t^2-r^2}}^\infty \frac {\sigma'(\tilde \sigma)}{(r+\sigma(\tilde \sigma))^2} \bigg(\int_{\tilde \sigma}^\infty J_1(\sigma^*) \dd \sigma^*\bigg) \dd \tilde \sigma &= \int_{\sqrt{t^2-r^2}}^\infty \frac {\sigma'(\tilde \sigma) J_0(\tilde \sigma)}{(r+\sigma(\tilde \sigma))^2} \dd \tilde \sigma \\
			&= \int_{\sqrt{t^2-r^2}}^\infty \frac {J_0(\tilde \sigma) \tilde \sigma \dd \tilde \sigma}{(r^2+\tilde \sigma^2)^{3/2}}.
		\end{aligned}
		\ee
		Since $J_0$ is bounded, a rough estimation gives a $t^{-1}$ bound. On the other hand, integration by parts gives
		$$\begin{aligned}
		&= \frac {\sqrt{t^2-r^2}J_1(\sqrt{t^2-r^2})}{t^3} + 3 \int_{\sqrt{t^2-r^2}}^\infty \frac {\tilde \sigma^2 J_1(\tilde \sigma)}{(r^2+\tilde \sigma^2)^{5/2}} \\
		&\les \frac {(t^2-r^2)^{1/4}} {t^3} + \min(r^{-5/2}, (t^2-r^2)^{-5/4}).
		\end{aligned}$$
		Note that $|J_0(z)| \les \langle z \rangle^{-1/2}$. Also, for both $t \sim r$ and $t>>r$,
		$$
		t \les \max(\sqrt{t^2-r^2}, r) \les t.
		$$
		Thus, when $t>r$, we get a bound of
		$$
		(\ref{temp2}) \les \frac 1 {t \langle t^2-r^2 \rangle^{1/4}}+t^{-5/2}.
		$$
		
		This is the desired bound when $r>1$ or $r \sim 1$.
		
		For $r<1$ and $t>r$, this bound can be improved. The integral
		$$
		\int_{\tilde t}^\infty \frac {J_1(\sqrt{\sigma^2+2r\sigma})} {\sqrt{\sigma^2+2r\sigma}} \dd \sigma
		$$
		is uniformly bounded for $\tilde t := t-r \geq 0$ and $r=0$ and so is its $r$ derivative:
		$$
		\int_{\tilde t}^\infty \frac {J_1'(\sqrt{\sigma^2+2r\sigma}) \sigma} {\sigma^2+2r\sigma} - \frac {J_1(\sqrt{\sigma^2+2r\sigma}) \sigma} {(\sigma^2+2r\sigma)^{3/2}} \dd \sigma = - \int_{\tilde t}^\infty \frac {J_2(\sqrt{\sigma^2+2r\sigma})}{\sigma^2+2r\sigma} \dd \sigma.
		$$
		This quantity is uniformly bounded for $r<1$ and $t>r$, which establishes the desired estimate for small $t$.
	\end{proof}
	
	One goal of the estimates in Proposition \ref{cosine_kernel} is to bound the cosine kernel $C_1$ in $L^p_t$, $1 \leq p \leq \infty$, in particular in $L^1_t$ and $L^\infty_t$.
	
	\begin{lemma}\lb{c1bounds} The cosine kernel $C_1$ satisfies the following bounds:
		$$
		\|C_1(r, t)\|_{L^1_t} \les \langle r \rangle^{-1/2},\ \|C_1(r, t)\|_{L^\infty_t} \les r^{-1}.
		$$
		In addition, for $p<4$
		$$
		\|C_1(r, t)\|_{L^p_t} \les r^{1/p-3/2}
		$$
		and for $p>4$
		$$
		\|C_1(r, t)\|_{L^p_t} \les r^{-1-1/p}.
		$$
		For $p=4$, the bound holds in the weak-type Lebesgue space $L^{4, \infty}$:
		$$
		\|C_1\|_{L^{4, \infty}_t} \les r^{-5/4}.
		$$
	\end{lemma}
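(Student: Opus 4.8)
The three bounds of Proposition \ref{cosine_kernel} are pointwise in $t$, so the plan is simply to integrate them in $t$ and, at the crossover exponent $p=4$, to estimate the distribution function of the kernel directly. Throughout, split $t\in[0,\infty)$ into the interior region $0\le t<r$, the near-cone region $r<t<2r$, and the far region $t>2r$. On $r<t<2r$ one has $t\sim r$ and $t^2-r^2=(t-r)(t+r)\sim r(t-r)$, while on $t>2r$ one has $t^2-r^2\sim t^2$, so that $|C_1(r,t)|\lesssim t^{-3/2}$ for $t\gtrsim1$ by \eqref{C1}. The claimed bounds are sharp (up to the logarithm at $p=4$) for $r\gtrsim1$; for $r<1$ they hold but are not optimal and follow from the uniform bounds $|C_1|\lesssim r^{-1}$ on $\{t<r\}$ and $|C_1|\lesssim1$ on $\{r<t\lesssim1\}$ of Proposition \ref{cosine_kernel} in a completely elementary way, so I concentrate on $r\gtrsim1$.

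On the interior region with $r\gtrsim1$, estimate \eqref{C2} gives $|C_1|\lesssim e^{-r}r^{-1/2}$, so $\|C_1\|_{L^p_t(0,r)}^p\lesssim r\,(e^{-r}r^{-1/2})^p$, which is negligible against every claimed bound. On the far region, $\int_{2r}^\infty|C_1|^p\,dt\lesssim\int_{2r}^\infty t^{-3p/2}\,dt\sim r^{1-3p/2}$, contributing $r^{1/p-3/2}$; this matches the claim for $p<4$, and for $p>4$ it is dominated by the near-cone term computed below, because $r^{1/p-3/2}\le r^{-1-1/p}$ whenever $r\ge1$ and $p\ge4$. The $L^\infty_t$ bound is immediate, since $|C_1(r,t)|\le 1/t\le1/r$ on $t>r$ by \eqref{C1} and the interior bounds are smaller.

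The main point is the near-cone region. Setting $u=t-r$ and using $|C_1|\lesssim r^{-1}\langle ru\rangle^{-1/4}$ there,
\[
\int_0^r\frac{du}{r^p\,\langle ru\rangle^{p/4}}=\frac1{r^{p+1}}\int_0^{r^2}\frac{dv}{\langle v\rangle^{p/4}},
\]
where the inner integral is comparable to $r^{2-p/2}$ for $p<4$, to a constant for $p>4$, and to $\log r$ for $p=4$. Taking $p$-th roots gives $r^{1/p-3/2}$ for $p<4$ and $r^{-1-1/p}$ for $p>4$; together with the interior and far contributions this yields the stated $L^p_t$ bounds, and the special case $p=1$ gives $\|C_1\|_{L^1_t}\lesssim r^{-1/2}$ for $r\gtrsim1$, which combines with the elementary $O(1)$ bound for $r<1$ into $\|C_1\|_{L^1_t}\lesssim\langle r\rangle^{-1/2}$.

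At $p=4$ the inner integral $\int_0^{r^2}\langle v\rangle^{-1}\,dv\sim\log r$ produces a logarithmic loss, so the strong $L^4_t$ norm is not $\lesssim r^{-5/4}$ and one must work in $L^{4,\infty}_t$, bounding the distribution function $d(\lambda)=|\{\,t:|C_1(r,t)|>\lambda\,\}|$ directly. Since $|C_1(r,t)|\lesssim r^{-1}$ for all $t$, one has $d(\lambda)=0$ once $\lambda\gtrsim r^{-1}$; for $\lambda\lesssim r^{-1}$ the near-cone condition $r\langle r(t-r)\rangle^{1/4}\lesssim\lambda^{-1}$ contributes a set of measure $\lesssim\min(r,\,r^{-5}\lambda^{-4})$, the bound $|C_1|\lesssim t^{-3/2}$ contributes $\lesssim\lambda^{-2/3}$ (nonzero only for $\lambda\lesssim r^{-3/2}$), and the interior region contributes $\lesssim r$ only for exponentially small $\lambda$. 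Hence
\[
\lambda\,d(\lambda)^{1/4}\lesssim\lambda\min\!\big(r^{1/4},\,r^{-5/4}\lambda^{-1}\big)+\lambda^{5/6}\,\chi_{\{\lambda\lesssim r^{-3/2}\}}+e^{-r}r^{-1/4}\lesssim r^{-5/4}
\]
uniformly in $\lambda$, which is $\|C_1\|_{L^{4,\infty}_t}\lesssim r^{-5/4}$ (for $r<1$ even the strong $L^4_t$ norm is $\lesssim r^{-3/4}\le r^{-5/4}$). The one genuine subtlety is the bookkeeping: identifying which of the three regions dominates for each $p$, and recognizing that at $p=4$ the two competing scalings $r^{1/p-3/2}$ and $r^{-1-1/p}$ coincide while the $L^4_t$ norm fails by a logarithm, which is exactly why the lemma is stated in $L^{4,\infty}_t$ there.
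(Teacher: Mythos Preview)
Your argument is correct and follows essentially the same strategy as the paper: integrate the pointwise bounds of Proposition~\ref{cosine_kernel} over the regions $[0,r)$, $[r,2r)$, $[2r,\infty)$, noting that the interior piece is exponentially small for $r\gtrsim1$ and the far piece contributes $r^{1/p-3/2}$ by the $t^{-3/2}$ decay. Two minor differences in execution: for $p>4$ the paper splits the near-cone interval further into $[r,r+\rho]$ and $[r+\rho,2r]$ and optimizes over $\rho=r^{-1}$, whereas your substitution $v=ru$ handles both subranges at once and is a bit cleaner; for the $L^{4,\infty}_t$ endpoint the paper observes directly that $\chi_{t>r}\,t^{-1}(t^2-r^2)^{-1/4}$ rescales under $t\mapsto rt$ to $r^{-5/4}$ times a fixed $L^{4,\infty}$ function, which is shorter than your distribution-function computation, though both give the same bound.
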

	\begin{observation} In order to compare these estimates, let us count the powers of decay.
		
		For $p \in [1, 4]$, our estimates give $3/2$ powers of decay, while for $p \in [4, \infty]$ they give $1+2/p$ powers. In addition, for the whole range $p \in [1, \infty]$ we get estimates with exactly one power of decay, like for the wave equation.
		
		By comparison, for the wave equation propagators, all estimates have equally many powers of decay: $\frac {\sin(t\sqrt{-\Delta})}{\sqrt{-\Delta}}$ has two powers of decay, $\frac {\cos(t\sqrt{-\Delta})}{-\Delta}$ has one power, and in general $\frac {e^{it\sqrt{-\Delta}}}{(-\Delta)^s}$ has $3-s$ powers of decay. 
	\end{observation}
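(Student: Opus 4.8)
Since the statement is a bookkeeping remark rather than a theorem, the ``proof'' I envisage consists of fixing a precise meaning for the phrase \emph{powers of decay} and then doing the arithmetic. The plan is to adopt the following convention: an $L^p_t$ estimate $\|K(r,\cdot)\|_{L^p_t}\les r^{1/p-N}$, valid for $r\gtrsim1$, is said to carry $N$ powers of decay; equivalently, a bound $\|K(r,\cdot)\|_{L^p_t}\les r^{-\beta}$ carries $N=\beta+\tfrac{1}{p}$ powers. I would justify this normalization in one sentence: if $K$ were homogeneous of degree $-N$ jointly in $(r,t)$ then $\|K(r,\cdot)\|_{L^p_t}=c_p\,r^{1/p-N}$ for \emph{every} $p$, so $N$ is the (unique, $p$-independent) scaling exponent; for $C_1$, which fails to be homogeneous because $-\Delta+1$ superimposes two scales, distinct $p$'s detect distinct effective exponents, and this is exactly the splitting that the remark records.

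With the convention in place, the first two assertions are read off Lemma \ref{c1bounds}: for $p=1$ the bound $\langle r\rangle^{-1/2}$ gives $\beta=\tfrac{1}{2}$, $N=\tfrac{3}{2}$; for $1\le p<4$ the bound $r^{1/p-3/2}$ gives $N=\tfrac{3}{2}$; the weak-type endpoint $\|C_1\|_{L^{4,\infty}_t}\les r^{-5/4}$ gives $\beta=\tfrac{5}{4}$, $N=\tfrac{5}{4}+\tfrac{1}{4}=\tfrac{3}{2}$; for $4<p\le\infty$ the bound $r^{-1-1/p}$ gives $\beta=1+\tfrac{1}{p}$, $N=1+\tfrac{2}{p}$; and at $p=\infty$ this reads $N=1$. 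Thus the count is $\tfrac{3}{2}$ on $[1,4]$ and $1+\tfrac{2}{p}$ on $[4,\infty]$, agreeing at the overlap $p=4$. For the third assertion I would note that, for $r\ge1$, $r^{1/p-3/2}\le r^{1/p-1}$ and $r^{-1-1/p}=r^{1/p-1-2/p}\le r^{1/p-1}$, so every one of the sharp bounds implies the weaker uniform family $\|C_1(r,\cdot)\|_{L^p_t}\les r^{1/p-1}$, valid for every $p\in[1,\infty]$ and carrying exactly one power of decay --- the ``wave-like'' family alluded to.

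The comparison with the wave propagators then reduces to invoking explicit kernels. The split \eqref{eq:kgsplitfree} exhibits $\frac{\sin(t\sqrt{-\Delta})}{\sqrt{-\Delta}}$ with kernel $\frac{\sgn t}{4\pi r}\,\delta_r(t)$, whose $\mc M_t$-norm (the $p=1$ case) is $\frac{1}{4\pi r}$, i.e.\ of the form $c\,r^{1/p-N}$ with $N=2$; the identity $\frac{\cos(t\sqrt{-\Delta})}{-\Delta}=\int_t^\infty\frac{\sin(\tau\sqrt{-\Delta})}{\sqrt{-\Delta}}\dd\tau$ (the wave analogue of the manipulation used to study \eqref{C_1}) gives $\frac{\cos(t\sqrt{-\Delta})}{-\Delta}$ the kernel $\frac{1}{4\pi r}\chi_{|t|\le r}$, whose $L^p_t$-norm is $\frac{(2r)^{1/p}}{4\pi r}=c\,r^{1/p-1}$, i.e.\ $N=1$ for all $p$. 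More generally, the same scaling argument used above for $C_1$ shows that $\frac{e^{it\sqrt{-\Delta}}}{(\sqrt{-\Delta})^{s}}$ has a space-time kernel homogeneous of degree $-(3-s)$ jointly in $(r,t)$, so $\|\cdot\|_{L^p_t}\asymp c\,r^{1/p-(3-s)}$ and $N=3-s$ for every admissible $p$; the values $s=1$ and $s=2$ recover the two cases just treated.

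I do not expect a genuine obstacle: once the normalization is fixed the verification is arithmetic. The two points needing a word of care are (i) scoring the weak-type $L^{4,\infty}$ endpoint by the same rule as a strong $L^p$ bound --- it is, since there too $\beta+\tfrac{1}{p}=\tfrac{3}{2}$ --- so that the count stays continuous across $p=4$; and (ii) the regime $r\les1$, where the bounds of Lemma \ref{c1bounds} inherit the non-scaling contribution of the ``$1$'' in $-\Delta+1$ (the $\langle r\rangle$ brackets, and the exponential decay of $C_1$ for $|t|<r$ from Proposition \ref{cosine_kernel}): since ``powers of decay'' is by nature a large-$r$ notion, one restricts to $r\gtrsim1$, where the listed exponents are sharp and the comparison is meaningful.
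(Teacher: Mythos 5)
The observation is a bookkeeping remark that the paper itself does not prove, so there is no ``paper proof'' to compare against; you have supplied the natural formalization, and your arithmetic is correct. Your convention — that $\|K(r,\cdot)\|_{L^p_t}\les r^{-\beta}$ carries $N=\beta+1/p$ powers, calibrated so that a space-time kernel jointly homogeneous of degree $-N$ has $p$-independent count $N$ — is exactly the reading that makes the paper's phrasing consistent, and it reproduces $N=3/2$ on $[1,4]$, $N=1+2/p$ on $[4,\infty]$, and the uniform one-power family $r^{1/p-1}$, all from Lemma \ref{c1bounds} (including the weak-type $p=4$ endpoint, where $5/4+1/4=3/2$ keeps the count continuous). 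Two small remarks. First, ``exactly one power'' in the observation is really the best $p$-uniform count (forced by the $L^\infty_t$ bound $r^{-1}$); your derivation shows ``at least one power for every $p$,'' which together with the $p=\infty$ case gives the intended meaning. Second, note that the paper's notation $\frac{e^{it\sqrt{-\Delta}}}{(-\Delta)^s}$ with ``$3-s$ powers'' is internally inconsistent with the two special cases it lists (sine at $s=1/2$ would give $5/2$, not $2$); your silent switch to $\frac{e^{it\sqrt{-\Delta}}}{(\sqrt{-\Delta})^s}$, whose kernel is homogeneous of degree $s-3$, is the reading that makes $3-s$ correct and agrees with Remark \ref{obs1} (order $(1+\alpha)/2$ in $-\Delta$, i.e.\ order $1+\alpha$ in $\sqrt{-\Delta}$, gives $2-\Re\alpha=3-(1+\Re\alpha)$ powers). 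It would be worth flagging that you are amending the paper's notation rather than following it literally.
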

	
	\begin{proof} The $L^\infty_t$ norm of $C_1(r, t)$ is at least $1/r$:
		$$
		\|C_1(r, t)\|_{L^\infty_t} \ges 1/r,
		$$
		due to the jump discontinuity of this size near the light cone. This is also an upper bound, as shown by the estimates of Proposition \ref{cosine_kernel}.
		
		However, for $r>1$, one can do better than $1/r$ in almost every situation.
		
		When $t>(1+\epsilon)r$ (so away from the light cone) the kernel is bounded by $t^{-3/2} \leq r^{-3/2}$. For $0 \leq t < r$ there is an exponential bound $e^{-r} r^{-1/2} \les r^{-3/2}$. Even for $t>r+\epsilon$ there is a bound of $r^{-5/4}$.
		
		However, when $t \sim r$ and $t>r$, $r^{-1}$ is still the best possible bound.
		
		Thus, further improvements are possible, but not in the $L^\infty$ norm. For $p<4$ and $r>1$,
		
		For $p>4$, $t^{-p}(t^2-r^2)^{-p/4}$ is no longer integrable, so we need to use a different bound: on the interval $[r, r+\rho]$
		$$
		\|1/r\|_{L^p_t([r, r+\rho])} \les r^{-1} \rho^{1/p},
		$$
		while on the interval $[r+\rho, 2r]$
		$$
		\bigg\|\frac 1 {r^{5/4}(t-r)^{1/4}}\bigg\|_{L^p_t} \les r^{-5/4} \rho^{1/p-1/4}
		$$
		and on the interval $[2r, \infty)$ there is a $r^{1/p-3/2}$ bound. Setting $\rho=r^{-1}$, we get an overall $r^{-1-1/p}$ bound.
		
		We next estimate the $L^1_t$ norm. For $r<1/2$, the $[0, r]$ interval contributes a term of size $1$, while the $[r, \infty)$ interval also contributes
		$$
		\int_r^1 dt + \int_1^\infty t^{-3/2} dt \les 1.
		$$
		
		For large $r$, the $[0, r]$ interval has an exponentially small contribution, while on its complement we get, by scaling,
		$$
		\int_r^\infty \frac {dt}{t(t^2-r^2)^{1/4}} \les r^{-1/2}.
		$$
		
		For $p=4$, in order to get a bound without logarithms, we need to use a weak-type norm instead:
		$$
		\|C_1\|_{L^{4, \infty}_t} \les r^{-5/4}.
		$$
		Indeed, by rescaling
		$$
		\bigg\|\chi_{t>r} \frac 1 {t(t^2-r^2)^{1/4}}\bigg\|_{L^{4, \infty}_t} = r^{-5/4} \bigg\|\chi_{t>1} \frac 1 {t(t^2-1)^{1/4}}\bigg\|_{L^{4, \infty}_t} \les r^{-5/4}.
		$$
	\end{proof}
	
	\begin{observation} Using properties of convolution on Lorentz spaces,
		$$
		L^{p, 2} \ast L^{q, \infty} \mapsto L^{r, 2},\ \frac 1 p + \frac 1 q - 1 = \frac 1 r.
		$$
	\end{observation}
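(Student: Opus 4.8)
The plan is to derive this from O'Neil's convolution inequality, which I would recover here by real interpolation so as to keep the argument self-contained. Fix $g \in L^{q,\infty}(\R^3)$ with $1 < q < \infty$ and consider the linear convolution operator $T_g f = f \ast g$. The idea is to establish two endpoint bounds for $T_g$ and then interpolate with second Lorentz index $2$.

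For the first endpoint I would show $T_g : L^1 \to L^{q,\infty}$ with operator norm $\les \|g\|_{L^{q,\infty}}$: writing $f \ast g(x) = \int f(y)\,(\tau_y g)(x) \dd y$ and using that for $q > 1$ the quasi-norm of $L^{q,\infty}$ is equivalent to a genuine norm, Minkowski's integral inequality together with translation invariance of Lorentz norms gives $\|f \ast g\|_{L^{q,\infty}} \les \int |f(y)|\,\|g\|_{L^{q,\infty}} \dd y = \|f\|_{L^1}\|g\|_{L^{q,\infty}}$. For the second endpoint, $T_g : L^{q',1} \to L^\infty$ with $1/q + 1/q' = 1$ and operator norm $\les \|g\|_{L^{q,\infty}}$: this is H\"{o}lder's inequality for Lorentz spaces applied at each point, $|f \ast g(x)| \les \|f\|_{L^{q',1}}\,\|g\|_{L^{q,\infty}}$, the pairs $(q',1)$ and $(q,\infty)$ being H\"{o}lder-dual.

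Next I would apply the real interpolation functor $(\cdot,\cdot)_{\theta,2}$, $\theta \in (0,1)$. By the standard computation of real-interpolation spaces between Lorentz spaces (see Bergh--L\"{o}fstr\"{o}m \cite{bergh}), $(L^1, L^{q',1})_{\theta,2} = L^{p,2}$ with $1/p = 1 - \theta/q$, and $(L^{q,\infty}, L^\infty)_{\theta,2} = L^{r,2}$ with $1/r = (1-\theta)/q$. Hence $T_g : L^{p,2} \to L^{r,2}$ boundedly, with norm $\les \|g\|_{L^{q,\infty}}$, and the scaling relation falls out: $1/p + 1/q - 1 = (1 - \theta/q) + 1/q - 1 = (1-\theta)/q = 1/r$. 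Letting $\theta$ range over $(0,1)$ recovers the full admissible range with $1 < p, q, r < \infty$.

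The delicate point — more a bookkeeping matter than a genuine obstacle — is the identification of the target interpolation space $(L^{q,\infty}, L^\infty)_{\theta,2}$: one needs the theorem that $(L^{p_0,s_0}, L^{p_1,s_1})_{\theta,s} = L^{p,s}$ whenever $p_0 \neq p_1$ and $1/p = (1-\theta)/p_0 + \theta/p_1$, irrespective of the fixed second exponents $s_0, s_1$ (here with $p_1 = \infty$ and $s_0 = \infty$), so that the weak-type factor on one side does not pollute the conclusion. Granting this, the statement follows; alternatively one may simply quote O'Neil's inequality, which in the form $\|f \ast g\|_{L^{r,s}} \les \|f\|_{L^{p,2}}\|g\|_{L^{q,\infty}}$ valid for every $s \geq 2$ (in particular $s=2$) already contains the claim.
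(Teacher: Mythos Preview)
Your argument is correct: the two endpoint bounds $T_g:L^1\to L^{q,\infty}$ and $T_g:L^{q',1}\to L^\infty$ are valid for the reasons you give, and real interpolation with parameter $(\theta,2)$ yields the stated mapping with the correct scaling relation. The identification $(L^{q,\infty},L^\infty)_{\theta,2}=L^{r,2}$ is indeed the one point requiring care, and your reference to the general theorem in \cite{bergh} (that the second Lorentz index on the endpoints is irrelevant once the first indices differ) handles it.

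As for comparison: the paper does not prove this observation at all --- it is stated as a known property of Lorentz-space convolution (essentially a case of O'Neil's inequality) and used as a black box. Your derivation is therefore strictly more detailed than what the paper provides; it supplies a self-contained justification where the paper simply invokes the result.
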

	
	This implies the following reversed Strichartz estimates for the Klein--Gordon equation.
	\begin{theorem}\lb{thm_partial_results} The operators $E_{1/2}$, $S_{1/2}$, and $C_{1/2}$ are bounded between the following spaces:
		$$
		S_{1/2}, C_{1/2}, E_{1/2}: L^2 \to L^\infty_x L^2_t \cap L^{12, 2}_x L^2_t \cap L^{6, 2}_x L^\infty_t \cap L^{24/5, 2}_x L^{8, 2}_t.
		$$
	\end{theorem}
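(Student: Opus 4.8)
The plan is a $TT^*$ argument combined with the Lorentz convolution estimates recorded just above the statement, fed by the $C_1$ bounds of Lemma \ref{c1bounds}. Since $E_{1/2}=C_{1/2}+iS_{1/2}$, the triangle inequality reduces everything to $S_{1/2}$ and $C_{1/2}$. For each target space $L^q_xL^p_t$, boundedness of $S_{1/2}\colon L^2_x\to L^q_xL^p_t$ is equivalent to boundedness of $S_{1/2}S_{1/2}^*$ from the predual (e.g.\ $L^1_xL^2_t$ for $L^\infty_xL^2_t$, and $L^{q',2}_xL^{p',1}_t$-type spaces for the Lorentz targets) into $L^q_xL^p_t$, and similarly for $C_{1/2}$. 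Writing $\omega=\sqrt{-\Delta+1}$ and using $\sin(t\omega)\sin(s\omega)=\tfrac12[\cos((t-s)\omega)-\cos((t+s)\omega)]$ (resp.\ the $\cos\cos$ identity), together with the fact that $S_{1/2}(t),C_{1/2}(t)$ have real radial spatial kernels, one finds that $S_{1/2}S_{1/2}^*$ (resp.\ $C_{1/2}C_{1/2}^*$) is the spacetime operator with kernel
$$
\tfrac12\big[\,C_1(x-y,t-s)\mp C_1(x-y,t+s)\,\big],
$$
where $C_1$ is the honest locally integrable kernel of $\cos(t\omega)/\omega^2$ analyzed in Proposition \ref{cosine_kernel} (no singular measure along the light cone, unlike the sine kernel). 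The $t+s$ term is reduced to the $t-s$ term by $s\mapsto -s$, which preserves the $L^{p'}_t$ norm, so it suffices to bound the spacetime convolution against the positive kernel $|C_1(x-y,t-s)|$ from $L^{q'}_xL^{p'}_t$ to $L^q_xL^p_t$ in the four cases.

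To do this I first apply Young's inequality in $t$ alone, pointwise in $(x,y)$: $\|\,|C_1(x-y,\cdot)|*_t h\,\|_{L^p_t}\le\|C_1(x-y,\cdot)\|_{L^{p/2,\bullet}_t}\|h\|_{L^{p'}_t}$, using O'Neil's inequality when $p/2$ must be read in a weak space. This reduces matters to the purely spatial convolution of the weight $w(x):=\|C_1(x,\cdot)\|_{L^{p/2,\bullet}_t}$ against $y\mapsto\|h(y,\cdot)\|_{L^{p'}_t}$, to which the recorded property $L^{a,\infty}_x*L^{q',2}_x\to L^{q,2}_x$ (with $\tfrac1a+\tfrac1{q'}-1=\tfrac1q$, i.e.\ $a=q/2$) applies. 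One then reads off from Lemma \ref{c1bounds} that $w$ lies in precisely the space $L^{q/2,\infty}_x$ required: $\|C_1(x,\cdot)\|_{L^1_t}\les\langle x\rangle^{-1/2}\in L^{6,\infty}_x\cap L^\infty_x$ handles $L^{12,2}_xL^2_t$ and $L^\infty_xL^2_t$ (here $p=2$, $p/2=1$, $q/2\in\{6,\infty\}$); $\|C_1(x,\cdot)\|_{L^\infty_t}\les|x|^{-1}\in L^{3,\infty}_x$ handles $L^{6,2}_xL^\infty_t$ ($p=\infty$, $q/2=3$); and $\|C_1(x,\cdot)\|_{L^{4,\infty}_t}\les|x|^{-5/4}\in L^{12/5,\infty}_x$ handles $L^{24/5,2}_xL^{8,2}_t$ ($p=8$, $p/2=4$, $q/2=12/5$), the weak-$L^4$ bound forcing the use of O'Neil's inequality in $t$ there. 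In all four cases the numerology $a=q/2$, $b=p/2$ closes, which is exactly where the exponents $\infty,12,6,24/5$ (and $2,2,\infty,8$) come from.

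The genuinely delicate point is this exponent matching: the three kernel estimates of Lemma \ref{c1bounds} (in $L^1_t$, $L^{4,\infty}_t$, $L^\infty_t$, with the corresponding spatial decay $\langle x\rangle^{-1/2},\,|x|^{-5/4},\,|x|^{-1}$) pair up one-to-one with the nontrivial targets $L^{12,2}_xL^2_t$, $L^{24/5,2}_xL^{8,2}_t$, $L^{6,2}_xL^\infty_t$ (and the $L^1_t$ one also with $L^\infty_xL^2_t$) under $a=q/2$, $b=p/2$; verifying this, and running the mixed-norm Lorentz--Young step so that the secondary Lorentz indices land on $2$ on the target side as in the recorded property, is the substance of the proof. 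A minor technical wrinkle is that the $TT^*$ identity must be tested against the predual of a mixed-norm space, which is legitimate because $L^2_t$ is reflexive and $L^{p,q}_x$ with $1<p<\infty$, $1\le q<\infty$ is a dual space; everything else is bookkeeping with the Lorentz convolution and scaling.
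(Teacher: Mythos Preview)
Your proposal is correct and follows essentially the same approach as the paper: the $TT^*$ reduction via the product-to-sum identities $\sin(t\omega)\sin(s\omega)=\tfrac12[\cos((t-s)\omega)-\cos((t+s)\omega)]$ and its cosine analogue, followed by the $C_1$ kernel bounds of Lemma \ref{c1bounds} combined with Young/O'Neil in $t$ and the Lorentz convolution inequality in $x$ between dual exponents. The paper's proof is slightly terser but the exponent matching you spell out ($\|C_1\|_{L^1_t}\les\langle r\rangle^{-1/2}$ for the $L^\infty_xL^2_t$ and $L^{12,2}_xL^2_t$ targets, $\|C_1\|_{L^\infty_t}\les r^{-1}$ for $L^{6,2}_xL^\infty_t$, and $\|C_1\|_{L^{4,\infty}_t}\les r^{-5/4}$ for $L^{24/5,2}_xL^{8,2}_t$) is exactly what the paper records.
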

	The first and third estimates are also true for the free wave equation, while the second and fourth are better by a quarter power of decay --- meaning that they manifest $3/4$ powers of decay in total.
	\begin{proof} Due to Lemma \ref{c1bounds}, $C_1$ is bounded between the following pairs of dual spaces:
		$$
		\|C_1(r, t)\|_{L^1_t} \les 1 \implies C_1: L^1_x L^2_t \to L^\infty_x L^2_t,
		$$
		$$
		\|C_1(r, t)\|_{L^1_t} \les r^{-1/2} \implies C_1: L^{12/11, 2}_x L^2_t \to L^{12, 2}_x L^2_t,
		$$
		and
		$$
		\|C_1(r, t)\|_{L^\infty_t} \les r^{-1} \implies C_1: L^{6/5, 2}_x L^1_t \to L^{6, 2}_x L^\infty_t.
		$$
		
		In the first and third inequalities, the $C_1$ integral kernel has exactly one power of decay, like the free wave equation, while in the second it has $3/2$ powers of decay, something specific to Klein--Gordon.
		
		More generally, when estimated in $L^p_t$, the $C_1$ integral kernel has $3/2$ powers of decay for $p<4$ and $1+2/p$ for $p>4$, with $p=4$ being a special endpoint case:
		$$
		\|C_1(r, t)\|_{L^{4, \infty}_t} \les r^{-5/4} \implies C_1:L^{24/19, 2}_x L^{8/7, 2}_t \to L^{24/5, 2}_x L^{8, 2}_t.
		$$
		Indeed, $L^{8/7, 2} \ast L^{4, \infty} \mapsto L^{8, 2}$.
		
		In order to obtain estimates for $S_{1/2}$ and $C_{1/2}$, we use the $T T^*$ method:
		$$
		\frac{\sin(t\sqrt{-\Delta+1})}{\sqrt{-\Delta+1}} \frac{\sin(s\sqrt{-\Delta+1})}{\sqrt{-\Delta+1}} = \frac {\cos((t-s)\sqrt{-\Delta+1})}{-\Delta+1} - \frac {\cos((t+s)\sqrt{-\Delta+1})}{-\Delta+1}
		$$
		and
		$$
		\frac{\cos(t\sqrt{-\Delta+1})}{\sqrt{-\Delta+1}} \frac{\cos(s\sqrt{-\Delta+1})}{\sqrt{-\Delta+1}} = \frac {\cos((t-s)\sqrt{-\Delta+1})}{-\Delta+1} + \frac {\cos((t+s)\sqrt{-\Delta+1})}{-\Delta+1}.
		$$
		Thus, in order to prove that $S_{1/2}, C_{1/2} \in B(L^2, L^p_x L^q_t)$, it suffices to prove that $C_1$ acting by convolution in $t$ is bounded from $L^{p'}_x L^{q'}_t$ to $L^p_x L^q_t$, where $p'$ and $q'$ are the dual exponents of $p$ and $q$.
	\end{proof}
	
	\begin{observation} Interpolating between the second and the fourth estimate we also obtain a homogeneous spacetime norm, namely $L^{16/3}_{x, t}$.
		
		Numerology: using weights of $1/6$ for the second estimate ($L^{12, 2}_x L^2_t$) and $5/6$ for the fourth estimate ($L^{24/5, 2}_x L^{8, 2}_t$), we obtain $L^{16/3, 2}$ in both space and time:
		$$
		\frac 3 {16} = \frac 1 6 \cdot \frac 1 {12} + \frac 5 {6} \cdot \frac 5 {24}
		$$
		and
		$$
		\frac 3 {16} = \frac 1 6 \cdot \frac 1 2 + \frac 5 6 \cdot \frac 1 8.
		$$
		Starting from the other two estimates, we obtain that the solution is in $L^8_{x, t}$, which is also valid for the wave equation.
	\end{observation}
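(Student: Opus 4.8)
To justify the Remark, the plan is to obtain both homogeneous spacetime bounds by complex interpolation applied directly to the single linear operator $E_{1/2}$ (the argument is identical for $S_{1/2}$ and $C_{1/2}$, since Theorem~\ref{thm_partial_results} equips all three with the same mapping properties). Write $X_1=L^\infty_x L^2_t$, $X_2=L^{12,2}_x L^2_t$, $X_3=L^{6,2}_x L^\infty_t$, $X_4=L^{24/5,2}_x L^{8,2}_t$ for the four target spaces, so that $E_{1/2}\colon L^2\to X_j$ is bounded for $j=1,\dots,4$ by Theorem~\ref{thm_partial_results}. Every first Lorentz index lies in $[1,\infty]$ and every second index is $2$ or $\infty$, so each $X_j$ is equivalent to a Banach function space and Calder\'on's complex interpolation method applies to the couple $(X_i,X_j)$ on the target side; since $[L^2,L^2]_\theta=L^2$ for every $\theta$, the interpolation theorem for a single operator yields $E_{1/2}\colon L^2\to[X_i,X_j]_\theta$ bounded for every $\theta\in(0,1)$. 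All that then remains is to identify $[X_i,X_j]_\theta$ for the two relevant choices of pair and exponent, and to verify the exponent arithmetic, which is precisely the numerology recorded in the statement.

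For the first assertion I would take $i=2$, $j=4$, $\theta=5/6$. Complex interpolation of mixed-norm Lorentz spaces is carried out by iterating Calder\'on's functor, first in the inner time slot and then in the outer space slot (see Bergh--L\"ofstr\"om \cite{bergh}); as the second indices agree, interpolating $L^{12,2}_xL^2_t$ with $L^{24/5,2}_xL^{8,2}_t$ produces $L^{q,2}_xL^{p,2}_t$ with
$$\frac1q=\frac16\cdot\frac1{12}+\frac56\cdot\frac5{24}=\frac3{16},\qquad\frac1p=\frac16\cdot\frac12+\frac56\cdot\frac18=\frac3{16},$$
that is $[X_2,X_4]_{5/6}=L^{16/3,2}_xL^{16/3,2}_t$. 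Applying the Lorentz embedding $L^{a,2}\hookrightarrow L^{a,a}=L^a$, valid because $2\le 16/3$, in each of the two variables then upgrades this to the homogeneous Lebesgue norm $L^{16/3}_{x,t}$, which is the first claim.

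For the second assertion I would instead take $i=1$, $j=3$, $\theta=3/4$, interpolating $L^\infty_xL^2_t$ with $L^{6,2}_xL^\infty_t$; the $L^\infty$ slot appearing in one variable of each factor is harmless for Calder\'on interpolation (modulo the usual intersection/closure caveats at an $L^\infty$ endpoint, which do not affect the conclusion here). The arithmetic $1/q=\theta/6=1/8$ and $1/p=(1-\theta)/2=1/8$ forces $p=q=8$, and the resulting second Lorentz indices — at most $8$ in the space variable and exactly $8$ in the time variable — give $L^{8,\,\cdot}_xL^8_t\hookrightarrow L^8_{x,t}$, the estimate also available for the wave equation. The one point requiring genuine care in a complete write-up is the legitimacy of the inner/outer iteration of complex interpolation for vector-valued Lorentz spaces at these particular indices, together with the endpoint behaviour at the $L^\infty$ slots; once that (essentially routine) bookkeeping is dispatched, everything reduces to the fractions already displayed in the Remark, and one could equally well fall back on real interpolation if one prefers.
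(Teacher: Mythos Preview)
Your proposal is correct and follows the same approach as the paper: the Remark is merely a numerological observation, and the paper offers no proof beyond displaying the two interpolation identities already quoted in the statement. You have supplied the complex-interpolation scaffolding that the paper leaves implicit, including the correct choice of weights $\theta=5/6$ and $\theta=3/4$, the identification of the interpolated mixed Lorentz spaces, and the embedding $L^{a,2}\hookrightarrow L^a$ for $a\ge 2$; your caveats about the $L^\infty$ endpoints are appropriate but, as you note, do not obstruct the conclusion here.
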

	
	\subsection{Fractional integration}
	Next, we perform fractional integration of order $\alpha$ in $t$, in order to estimate
	$$
	D^{-\alpha}_t \frac {e^{it\sqrt{-\Delta+1}}}{\sqrt{-\Delta+1}} = \frac {e^{it\sqrt{-\Delta+1}}}{(-\Delta+1)^{(1+\alpha)/2}}=E_{\frac{\alpha+1}{2}}(t),
	$$
	starting from the known expression of $S_{1/2}$.
	
	Fractional integration $D^{-\alpha}_t$ is given by convolution with $|t|^{\alpha-1}$ with or without some complex phase, see below. 
	
	This spends $\alpha$ powers of decay, hence this is a bounded operator from $L^p_t$ to $L^q_t$, $1<p, q<\infty$, $1/p-1/q=\alpha$, by Young's inequality, as well as from $L^1$ to $L^{1/(1-\alpha), \infty}$ and from $L^{1/\alpha, 1}$ to $L^\infty$ at the endpoints.
	
	For $\Re \alpha \in (0, 1)$, there are two linearly independent choices of fractional integration operators, which differ roughly by a Hilbert transform.
	
	We first perform some auxiliary computations to determine which one to use.
	
	\begin{lemma} For $0 < \Re \alpha < 1$ and $f \in \S$,
		\be\lb{abst}
		\mc F \bigg[f \ast \frac {|t|^{\alpha-1}}{2 \Gamma(\alpha)}\bigg] = \frac {\cos(\frac \pi 2 \alpha)}{|\xi|^\alpha} \widehat f(\xi)
		\ee
		and
		\be\lb{ant}
		\mc F \bigg[f \ast \frac {t^{\alpha-1}}{2 i^{\alpha-1} \Gamma(\alpha)} \bigg] = \chi_{\xi>0} \frac {\sin(\pi \alpha)}{|\xi|^\alpha} \widehat f(\xi),
		\ee
		where $t^{\alpha-1}$ is defined so as to be analytic in the upper half-plane, see (\ref{ant'}).
		
		Same holds for $0 < \Re \alpha < 2$, $\alpha \ne 1$, when $f \in \S$ has $\int_{-\infty}^\infty f(t) \dd t = 0$ and more generally for $0 < \Re \alpha < n$, $\alpha \not \in \Z$, when the function $f$'s first $n-1$ moments vanish.
	\end{lemma}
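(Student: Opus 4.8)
The plan is to separate the two convolution kernels by means of the convolution theorem $\mc F[f\ast g]=\widehat f\,\widehat g$, valid for $f\in\S$ and $g\in\S'$, and to base everything on a single computation: the Fourier transform of the one-sided kernel $\chi_{t>0}t^{\alpha-1}$. For $0<\Re\alpha<1$ both $|t|^{\alpha-1}$ and the analytic branch $t^{\alpha-1}$ are locally integrable and bounded at infinity, hence tempered distributions, so it is enough to identify $\mc F[|t|^{\alpha-1}]$ and $\mc F[t^{\alpha-1}]$ in $\S'$ and then take Fourier transforms of the convolutions.

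The first step is to compute $\mc F[\chi_{t>0}t^{\alpha-1}]$. Regularizing by $e^{-\epsilon t}$ and using $\int_0^\infty t^{\alpha-1}e^{-pt}\dd t=\Gamma(\alpha)p^{-\alpha}$ for $\Re p>0$, then letting $\epsilon\downarrow0$ in $\S'$ (the pairing against a fixed Schwartz function converges by dominated convergence, since $|(\epsilon+i\xi)^{-\alpha}|\les|\xi|^{-\Re\alpha}$ near the origin and $\Re\alpha<1$), one obtains
\[
\mc F[\chi_{t>0}t^{\alpha-1}](\xi)=\Gamma(\alpha)\,(i\xi+0)^{-\alpha}=\Gamma(\alpha)\,|\xi|^{-\alpha}e^{-\frac{i\pi}{2}\alpha\sgn\xi},
\]
and, by the reflection $t\mapsto -t$, $\mc F[\chi_{t<0}|t|^{\alpha-1}](\xi)=\Gamma(\alpha)|\xi|^{-\alpha}e^{\frac{i\pi}{2}\alpha\sgn\xi}$.

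Next I would assemble the two claims. Since $|t|^{\alpha-1}=\chi_{t>0}t^{\alpha-1}+\chi_{t<0}|t|^{\alpha-1}$, adding the two transforms gives $\mc F[|t|^{\alpha-1}]=2\Gamma(\alpha)\cos(\tfrac\pi2\alpha)|\xi|^{-\alpha}$, which is \eqref{abst} after dividing by $2\Gamma(\alpha)$. For the analytic branch, the normalization in \eqref{ant'} amounts to $t^{\alpha-1}=\chi_{t>0}t^{\alpha-1}+e^{i\pi(\alpha-1)}\chi_{t<0}|t|^{\alpha-1}$; substituting the two transforms, the coefficient of $\chi_{\xi<0}|\xi|^{-\alpha}$ cancels identically — this is where analyticity in the half-plane enters — while the coefficient of $\chi_{\xi>0}|\xi|^{-\alpha}$ collapses by Euler's formula to $-2i\,\Gamma(\alpha)\sin(\pi\alpha)e^{i\pi\alpha/2}=2i^{\alpha-1}\Gamma(\alpha)\sin(\pi\alpha)$, so dividing by $2i^{\alpha-1}\Gamma(\alpha)$ yields \eqref{ant}. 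Finally, the extension to $0<\Re\alpha<n$ (away from the integers) when the first $n-1$ moments of $f$ vanish follows by analytic continuation in $\alpha$: the moment hypothesis forces $\widehat f(\xi)=O(|\xi|^{n-1})$, so for each fixed $\phi\in\S$ both sides of \eqref{abst} and of \eqref{ant}, paired with $\phi$, are holomorphic functions of $\alpha$ on the strip $0<\Re\alpha<n$ — on the left because $\alpha\mapsto|t|^{\alpha-1}$ and $\alpha\mapsto t^{\alpha-1}$ are holomorphic $\S'$-valued on $\Re\alpha>0$, on the right because $\widehat f(\xi)|\xi|^{-\alpha}$ is then locally integrable — and they already agree on $0<\Re\alpha<1$.

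The main obstacle I anticipate is the branch-and-constant bookkeeping: pinning down the boundary value $(i\xi+0)^{-\alpha}$ with the right branch, checking the precise cancellation that confines $\mc F[t^{\alpha-1}]$ to $\{\xi>0\}$, and matching the $i^{\alpha-1}$ normalization; the remaining points — the $\epsilon\downarrow0$ limit in $\S'$ and the holomorphy of both sides in the continuation step — are routine.
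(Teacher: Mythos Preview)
Your proof is correct and follows the same core strategy as the paper: both compute the Fourier multipliers of the one-sided kernels $\chi_{t>0}t^{\alpha-1}$ and $\chi_{t<0}|t|^{\alpha-1}$ --- the paper by contour rotation on $\int_0^\infty e^{\pm i\tau\xi}\tau^{\alpha-1}\dd\tau$, you by the equivalent $e^{-\epsilon t}$ regularization --- and then combine them to read off the multipliers for $|t|^{\alpha-1}$ and for the analytic branch $t^{\alpha-1}$, with the same cancellation on $\{\xi<0\}$ in the latter case.

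The one genuine difference is the extension to $\Re\alpha\geq 1$. The paper argues by iterated integration: on functions with zero mean, convolution with $\chi_{t\geq 0}$ acts as the multiplier $(i\xi)^{-1}$, so one may shift $\alpha\to\alpha+1$ and reuse the $0<\Re\alpha<1$ computation. You instead invoke analytic continuation in $\alpha$, using the moment hypothesis to force $\widehat f(\xi)=O(|\xi|^{n-1})$ so that $\widehat f(\xi)|\xi|^{-\alpha}$ stays locally integrable on the whole strip $0<\Re\alpha<n$. Both are valid; your route handles the general $n$-moment case in one stroke, while the paper's is more concrete but only sketched beyond $n=2$.
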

	
	In general, both the expression in (\ref{abst}) and the expression in (\ref{ant}) have singularities at $\alpha=1$ and more generally at $\alpha \in \Z$, but we can ignore them in the case of functions with sufficiently many vanishing moments.
	
	For example, at $\alpha=1$, the right-hand side in (\ref{abst}) and in (\ref{ant}) vanishes when $f$'s first moment vanishes, as appropriate.
	
	\begin{proof}
		For any Schwartz-class function $f \in \S$ and $0 < \Re \alpha < 1$,
		$$\begin{aligned}
		f \ast |t|^{\alpha - 1} &= \int_{\R} f(t-\tau) |\tau|^{\alpha-1} \dd \tau = \int_\R \int_\R \widehat f(\xi) e^{i(t-\tau) \xi} |\tau|^{\alpha-1} \dd \xi \dd \tau \\
		&= \int_\R \widehat f(\xi) \bigg(\lim_{R \to \infty} \int_{-R}^R e^{i(t-\tau)\xi} |\tau|^{\alpha-1} \dd \tau \bigg) \dd \xi
		\end{aligned}$$
		and same for the other fractional integration operators we consider.
		
		Let $0 < \Re \alpha < 1$. For $\xi > 0$, convolution with $\chi_{t \leq 0} |t|^{\alpha-1}$ has the following effect on $e^{it\xi}$:
		\be\lb{rel1}
		e^{it\xi} \ast \chi_{t \leq 0} |t|^{\alpha-1} = \int_{-\infty}^0 e^{i(t-\tau)\xi} |\tau|^{\alpha-1} \dd \tau = e^{it\xi} \int_0^\infty e^{i\tau\xi} \tau^{\alpha-1} \dd \tau = \frac{e^{it\xi} i^\alpha \Gamma(\alpha)}{\xi^\alpha},
		\ee
		by a change of contour.
		
		Likewise, again for $\xi>0$
		\be\lb{rel2}
		e^{it\xi} \ast \chi_{t \geq 0} t^{\alpha-1} = \int_0^\infty e^{i(t-\tau)\xi} \tau^{\alpha-1} \dd \tau = e^{it\xi} \int_0^\infty e^{-i\tau\xi} \tau^{\alpha-1} \dd \tau = \frac{e^{it\xi} (-i)^\alpha \Gamma(\alpha)}{\xi^\alpha}.
		\ee
		Powers of $\pm i$ are defined here using the main branch of the logarithm, $(\pm i)^\alpha = e^{\pm i \frac \pi 2 \alpha}$.
		
		Hence these fractional integration operators behave in the same way for all Schwartz-class functions and, by extension, for all spaces of functions we are interested in (in which Schwartz-class functions are dense).
		
		For $\xi<0$, one similarly gets:
		\be\lb{rel3}\begin{aligned}
			e^{it\xi} \ast \chi_{t \geq 0} t^{\alpha-1} = \int_0^\infty e^{i(t-\tau)\xi} \tau^{\alpha-1} \dd \tau = e^{it\xi} \int_0^\infty e^{-i\tau\xi} \tau^{\alpha-1} \dd \tau = \frac{e^{it\xi} i^\alpha \Gamma(\alpha)}{|\xi|^\alpha}
		\end{aligned}\ee
		and
		\be\lb{rel4}
		e^{it\xi} \ast \chi_{t \leq 0} |t|^{\alpha-1} = \int_{-\infty}^0 e^{i(t-\tau)\xi} |\tau|^{\alpha-1} \dd \tau = e^{it\xi} \int_0^\infty e^{i\tau\xi} \tau^{\alpha-1} \dd \tau = \frac{e^{it\xi} (-i)^\alpha \Gamma(\alpha)}{|\xi|^\alpha}.
		\ee
		This is the opposite of the previous case $\xi>0$.
		
		Hence, for $0 < \Re \alpha < 1$ convolution with $|t|^{\alpha-1}$ acts as follows, for all $\xi \in \R$:
		$$
		e^{it\xi} \ast |t|^{\alpha-1} = e^{it\xi} \ast \chi_{t \leq 0} |t|^{\alpha-1} + e^{it\xi} \ast \chi_{t \geq 0} t^{\alpha-1} = \frac {2 e^{it\xi} \cos(\frac \pi 2 \alpha) \Gamma(\alpha)}{|\xi|^\alpha}.
		$$
		
		Note that convolution with $\frac 1 {\Gamma(\alpha)} |t|^{\alpha-1}$ has an analytic extension to $\alpha=0$, whereas convolution with the family $|t|^{\alpha-1}$ has a pole at $\alpha=0$.
		
		On the other hand, convolution with
		\be\lb{ant'}
		t^{\alpha-1} := \chi_{t \leq 0} e^{i\pi(\alpha-1)} |t|^{\alpha-1} + \chi_{t \geq 0} t^{\alpha-1},
		\ee
		which has an analytic extension to the upper half-plane as defined, acts as follows:
		$$
		e^{it\xi} \ast t^{\alpha-1} = \left\{\begin{aligned}
		&\frac {2 e^{it\xi} i^{\alpha-1} \sin(\pi \alpha) \Gamma(\alpha)}{|\xi|^\alpha},&&\xi > 0,\\
		&0,&&\xi < 0.
		\end{aligned}\right.
		$$
		Indeed, for $\xi>0$
		$$\begin{aligned}
		e^{i\pi(\alpha-1)} (e^{it\xi} \ast \chi_{t \leq 0} |t|^{\alpha-1}) + e^{it\xi} \ast \chi_{t \geq 0} t^{\alpha-1} &= \frac {e^{it\xi} \Gamma(\alpha)}{\xi^\alpha} [e^{i\pi(\alpha-1)} i^\alpha + (-i)^\alpha]
		\\
		&= \frac {e^{it\xi} \Gamma(\alpha)}{\xi^\alpha} [-e^{i\frac {3\pi} 2 \alpha} + e^{-i\frac {\pi} 2 \alpha}] \\
		&= \frac {e^{it\xi} \Gamma(\alpha)}{\xi^\alpha} e^{i\frac {\pi} 2 \alpha} \cdot 2i \sin(-\pi \alpha).
		\end{aligned}
		$$
		For $\xi<0$
		$$\begin{aligned}
		e^{i\pi(\alpha-1)} (e^{it\xi} \ast \chi_{t \leq 0} |t|^{\alpha-1}) + e^{it\xi} \ast \chi_{t \geq 0} t^{\alpha-1} &= \frac {e^{it\xi} \Gamma(\alpha)}{\xi^\alpha} [e^{i\pi(\alpha-1)} (-i)^\alpha + i^\alpha]
		\\
		&= \frac {e^{it\xi} \Gamma(\alpha)}{\xi^\alpha} [-e^{i\frac {\pi} 2 \alpha} + e^{i\frac {\pi} 2 \alpha}]  = 0.
		\end{aligned}
		$$
		
		The conjugate of this function has an analytic extension to the lower half-plane in $\alpha$. Overall, this is not an analytic function of $\alpha$, due to the $i^{\alpha-1}$ factor.
		
		One can also summarize relations (\ref{rel1}-\ref{rel4}) as
		$$
		e^{it\xi} \ast \chi_{t \geq 0} t^{\alpha-1} = \frac {e^{it\xi} \Gamma(\alpha)}{(i\xi)^\alpha}
		$$
		and
		$$
		e^{it\xi} \ast \chi_{t \leq 0} t^{\alpha-1} = \frac {e^{it\xi} i^\alpha \Gamma(\alpha)}{\xi^\alpha},
		$$
		for $0 < \Re \alpha < 1$, where the powers are defined using the main branch of the complex logarithm.
		
		The Fourier transform of $\chi_{t \geq 0}$ is  
		$$
		\frac 1 {t+i0} = \delta_0 - i\;p.v. \frac 1 t.
		$$
		
		So convolution with $\chi_{t \geq 0}$ is the same as the Fourier multiplier $\frac 1 {i\xi}$ for Schwartz functions whose integral is zero.
		
		Then for such functions
		$$
		\mc F(f \ast \chi_{t \geq 0}) = \frac 1 {i\xi} \widehat f(\xi),\ \mc F(f \ast \chi_{t \leq 0}) = \frac i {\xi} \widehat f(\xi).
		$$
		
		So at least for functions whose integral (first moment) is zero we can extend relations (\ref{rel1}-\ref{rel4}), as well as our conclusions, to $0 < \Re \alpha < 2$.
	\end{proof}
	
	
	Thus, starting from $S_{1/2}$, we can compute by fractional integration all the other kernels $S_{(1+\alpha)/2}$, $C_{(1+\alpha)/2}$, and $E_{(1+\alpha)/2}$ for $0 < \Re \alpha < 1$.
	
	We are also interested in the $\Re\alpha=0$ and $\Re\alpha=1$ cases, as well as in more than one order of fractional integration, $\Re \alpha > 1$. Due to analytic continuation, the formulas above remain valid in these cases, when properly interpreted.
	
	
	\begin{lemma}\lb{combinations} For $\Re\alpha>0$, $\alpha \not \in \Z$,
		$$
		S_{1/2} \ast |t|^{\alpha-1} = 2 \cos(\frac \pi 2 \alpha) \Gamma(\alpha) S_{(1+\alpha)/2},
		$$
		$$
		S_{1/2} \ast t^{\alpha-1} = -i^\alpha \sin(\pi\alpha) \Gamma(\alpha) E_{(1+\alpha)/2},
		$$
		$$
		S_{1/2} \ast \Re \bigg[\frac{t^{\alpha-1}}{i^\alpha \sin(\pi\alpha)  \Gamma(\alpha)}\bigg] = - C_{(1+\alpha)/2}.
		$$
		Here $t^{\alpha-1}$ is defined using the main branch of the complex logarithm.
	\end{lemma}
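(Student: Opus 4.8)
The plan is to obtain all three identities directly from the temporal Fourier-multiplier picture of fractional integration recorded in \eqref{abst}--\eqref{ant}, applied to the kernel $S_{1/2}(t)$ regarded as an operator-valued function of $t$. Set $\omega:=\sqrt{-\Delta+1}$, an operator whose spectrum is contained in $[1,\infty)$, and decompose $S_{1/2}(t)=\frac{1}{2i\omega}\big(e^{it\omega}-e^{-it\omega}\big)$. The one structural observation needed is that, because $\omega>0$, the positive temporal frequencies $\xi>0$ are carried entirely by the branch $e^{it\omega}$ and the negative ones by $e^{-it\omega}$; consequently a multiplier $m(\xi)$ in the $t$-variable acts on $e^{\pm it\omega}$ simply as multiplication by $m(\pm\omega)$, via the spectral calculus in $\omega$.

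Granting this, the first identity is immediate: by \eqref{abst} the multiplier attached to $\frac{|t|^{\alpha-1}}{2\Gamma(\alpha)}$ is $\cos(\tfrac\pi2\alpha)|\xi|^{-\alpha}$, which is even, hence multiplies both $e^{\pm it\omega}$ by $\cos(\tfrac\pi2\alpha)\omega^{-\alpha}$, giving $S_{1/2}\ast\frac{|t|^{\alpha-1}}{2\Gamma(\alpha)}=\cos(\tfrac\pi2\alpha)\,\omega^{-\alpha}S_{1/2}=\cos(\tfrac\pi2\alpha)\,S_{(1+\alpha)/2}$; multiply by $2\Gamma(\alpha)$. For the second, by \eqref{ant} the multiplier attached to $\frac{t^{\alpha-1}}{2i^{\alpha-1}\Gamma(\alpha)}$ is $\chi_{\xi>0}\sin(\pi\alpha)|\xi|^{-\alpha}$, which annihilates the $e^{-it\omega}$ branch and multiplies $e^{it\omega}$ by $\sin(\pi\alpha)\omega^{-\alpha}$, so $S_{1/2}\ast\frac{t^{\alpha-1}}{2i^{\alpha-1}\Gamma(\alpha)}=\frac{\sin(\pi\alpha)}{2i}\,\omega^{-\alpha}\frac{e^{it\omega}}{\omega}=\frac{\sin(\pi\alpha)}{2i}E_{(1+\alpha)/2}$; clearing the scalar and simplifying $\frac{2i^{\alpha-1}}{2i}=i^{\alpha-2}=-i^{\alpha}$ gives $S_{1/2}\ast t^{\alpha-1}=-i^{\alpha}\sin(\pi\alpha)\Gamma(\alpha)E_{(1+\alpha)/2}$. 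The third identity I would deduce from the second by dividing by $i^{\alpha}\sin(\pi\alpha)\Gamma(\alpha)$ to get $S_{1/2}\ast\frac{t^{\alpha-1}}{i^{\alpha}\sin(\pi\alpha)\Gamma(\alpha)}=-E_{(1+\alpha)/2}$ and then taking real parts: $S_{1/2}$ has a real convolution kernel in $t$, so $\Re$ commutes past the convolution onto the scalar coefficient $\frac{t^{\alpha-1}}{i^{\alpha}\sin(\pi\alpha)\Gamma(\alpha)}$, while $\Re E_{(1+\alpha)/2}=C_{(1+\alpha)/2}$.

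The remaining point is to extend these from the strip $0<\Re\alpha<1$, where \eqref{abst}--\eqref{ant} were established, to the whole range $\Re\alpha>0$, $\alpha\notin\Z$. For $\Re\alpha\ge1$ the distributions $|t|^{\alpha-1}$ and $t^{\alpha-1}$ are no longer locally integrable or decaying, so the convolutions must be understood distributionally; the way around this, already anticipated in the discussion after \eqref{ant}, is that the operator-valued temporal distribution $t\mapsto S_{1/2}(t)$ has Fourier transform supported in $\{|\xi|\ge1\}$, hence may be treated as having all vanishing moments, and the version of \eqref{abst}--\eqref{ant} valid on $0<\Re\alpha<n$ for functions with vanishing moments then yields the identities on every strip $n-1<\Re\alpha<n$. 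I expect the genuinely technical step to be exactly this last piece: pinning down the normalization of $|t|^{\alpha-1}$ and $t^{\alpha-1}$ as tempered distributions, testing $S_{1/2}$ against Schwartz functions in $t$, and combining the spectral calculus in $\omega$ with a density argument so that the ``vanishing moments'' reduction is rigorous. The integers $\alpha\in\Z$ are genuinely excluded, since there both the Gamma/trigonometric prefactors and the distributions $|t|^{\alpha-1}$, $t^{\alpha-1}$ have poles; the multiplier algebra itself, once the Fourier side is set up, is completely routine.
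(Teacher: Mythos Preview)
Your proposal is correct and takes essentially the same approach as the paper: both apply the Fourier-multiplier identities \eqref{abst}--\eqref{ant} to $S_{1/2}$ via spectral calculus (the paper writes it pointwise as $\frac{\sin(tA)}{A}\ast|t|^{\alpha-1}$ for each spectral value $A>0$, while you phrase it through the decomposition $S_{1/2}=\frac{1}{2i\omega}(e^{it\omega}-e^{-it\omega})$, which is the same computation), both invoke the Fourier support of $S_{1/2}$ in $\{|\xi|\ge1\}$ to justify the extension to all $\Re\alpha>0$ via the vanishing-moments remark after \eqref{ant}, and both obtain the third identity by taking real parts of the second.
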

	\begin{observation}
		As $\alpha$ approaches $0$, the last two expressions have meaningful limits: as
		$$
		\lim_{\alpha \to 0} \sin(\pi \alpha) \Gamma(\alpha) = \pi,
		$$
		we get
		$$
		S_{1/2} \ast \frac 1 {\pi (t+i0)} = -E_{1/2},\ S_{1/2} \ast p.v. \frac 1 {\pi t} = -C_{1/2}.
		$$
		The former, $\frac 1 {\pi (t+i0)}$, corresponds to the Cauchy kernel for the upper half-plane; the latter, $p.v. \frac 1 {\pi t}$, is the Hilbert kernel.
	\end{observation}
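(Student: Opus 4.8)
The plan is to obtain both identities by passing to the limit $\alpha \to 0$ in the last two formulas of Lemma \ref{combinations}, after dividing the second one through by $i^\alpha\sin(\pi\alpha)\Gamma(\alpha)$, so that the two relations read
$$
S_{1/2} \ast \frac{t^{\alpha-1}}{i^\alpha \sin(\pi\alpha)\Gamma(\alpha)} = -E_{(1+\alpha)/2}, \qquad S_{1/2} \ast \Re\!\left[\frac{t^{\alpha-1}}{i^\alpha \sin(\pi\alpha)\Gamma(\alpha)}\right] = -C_{(1+\alpha)/2}.
$$
Three things must then be checked: the convergence of the numerical prefactors, the convergence of the right-hand side operators, and the convergence of the time-kernels on the left together with its compatibility with the convolution. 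For the prefactors, the reflection formula $\Gamma(\alpha)\Gamma(1-\alpha) = \pi/\sin(\pi\alpha)$ gives $\sin(\pi\alpha)\Gamma(\alpha) = \pi/\Gamma(1-\alpha) \to \pi$ and $i^\alpha = e^{i\pi\alpha/2} \to 1$ as $\alpha \to 0$; in particular $i^\alpha\sin(\pi\alpha)\Gamma(\alpha)$ stays bounded away from $0$ near $\alpha=0$, so the division is legitimate. For the right-hand sides, the spectral theorem applied to $-\Delta+1 \ge 1$ yields $(-\Delta+1)^{-(1+\alpha)/2} \to (-\Delta+1)^{-1/2}$ strongly on $L^2$ (dominated convergence in the spectral representation, dominating function $\lambda \mapsto \lambda^{-1/2}$ on $[1,\infty)$), hence $E_{(1+\alpha)/2}(t) \to E_{1/2}(t)$ and $C_{(1+\alpha)/2}(t) \to C_{1/2}(t)$ strongly, locally uniformly in $t$.

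For the time-kernels I would pass to the Fourier transform in $t$. By relations (\ref{rel1})--(\ref{rel4}), equivalently by (\ref{ant}) together with (\ref{ant'}), the principal-branch distribution $t^{\alpha-1}$ has $\mc F_t[t^{\alpha-1}](\xi) = 2\,i^{\alpha-1}\sin(\pi\alpha)\Gamma(\alpha)\,|\xi|^{-\alpha}\chi_{\xi>0}$, so
$$
\mc F_t\!\left[\frac{t^{\alpha-1}}{i^\alpha\sin(\pi\alpha)\Gamma(\alpha)}\right](\xi) = -2i\,|\xi|^{-\alpha}\chi_{\xi>0}, \qquad \mc F_t\!\left[\Re\frac{t^{\alpha-1}}{i^\alpha\sin(\pi\alpha)\Gamma(\alpha)}\right](\xi) = -i\,|\xi|^{-\alpha}\sgn\xi .
$$
Since $|\xi|^{-\alpha} \to 1$ in $\S'(\R)$ as $\alpha \to 0^+$ (pointwise convergence, with $L^1_{loc}$ dominating function $|\xi|^{-1/2}\chi_{|\xi|\le1}+\chi_{|\xi|\ge1}$), these symbols converge in $\S'$ to $-2i\chi_{\xi>0}$ and $-i\sgn\xi$, which by the Sokhotski--Plemelj formula (closing the contour) are precisely $\mc F_t\bigl[\tfrac1{t+i0}\bigr]$ and $\mc F_t\bigl[\tfrac1\pi\,p.v.\tfrac1t\bigr]$. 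Hence $\frac{t^{\alpha-1}}{i^\alpha\sin(\pi\alpha)\Gamma(\alpha)} \to \frac1{\pi(t+i0)}$ and $\Re\frac{t^{\alpha-1}}{i^\alpha\sin(\pi\alpha)\Gamma(\alpha)} \to \frac1\pi\,p.v.\tfrac1t$ in $\S'(\R)$; the latter is consistent with $\Re\frac1{t+i0} = p.v.\tfrac1t$, from $\frac1{t+i0} = p.v.\tfrac1t - i\pi\delta_0$.

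It remains to pass to the limit on the left. The one delicate point is that $S_{1/2}$ is merely a tempered distribution in $t$, so convolution against it is not continuous for arbitrary $\S'$-convergent families; it is here because $\mc F_t[S_{1/2}]$ is supported in $\{|\xi|\ge1\}$. Concretely, exactly as in the proof of Lemma \ref{combinations}, convolving $S_{1/2}$ with the two families above produces $-(-\Delta+1)^{-\alpha/2}E_{1/2}(t)$, respectively $-(-\Delta+1)^{-\alpha/2}C_{1/2}(t)$ — the symbol $|\xi|^{-\alpha}$ being evaluated at $\xi=\sqrt{-\Delta+1}\ge1$, where it is bounded and tends to $1$ — and $(-\Delta+1)^{-\alpha/2}\to I$ strongly by the spectral theorem. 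Letting $\alpha\to0$ gives $S_{1/2} \ast \frac1{\pi(t+i0)} = -E_{1/2}$ and $S_{1/2} \ast p.v.\frac1{\pi t} = -C_{1/2}$, as claimed. I expect this last compatibility step — reconciling the weak convergence of the time-kernels with the fixed, non-decaying convolver $S_{1/2}$, where the localization of $\mc F_t[S_{1/2}]$ away from $\xi=0$ is the decisive structural fact — to be the only genuine subtlety; the rest is bookkeeping of the scalar limits $\sin(\pi\alpha)\Gamma(\alpha)\to\pi$, $i^\alpha\to1$ and the Sokhotski--Plemelj identities.
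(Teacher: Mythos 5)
Your proof is correct and takes the natural route the paper's observation is gesturing at — take $\alpha \to 0$ in the formulas of Lemma \ref{combinations}, use the reflection formula and $i^\alpha\to 1$ for the scalar prefactor, and identify the limiting kernels via Sokhotski--Plemelj. The paper treats this as a remark and gives no proof beyond the scalar limit $\sin(\pi\alpha)\Gamma(\alpha)\to\pi$; you supply the missing rigor, and correctly isolate the one genuine subtlety — that $\hat{S}_{1/2}$ vanishes on $(-1,1)$, so the multiplier $|\xi|^{-\alpha}$ only needs to converge on $|\xi|\ge1$ (where it is bounded by $1$ and converges uniformly on compacts), which is what legitimizes passing the $\mathcal{S}'$-limit of the time-kernels through the convolution against the non-decaying distribution $S_{1/2}$.
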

	
	
	\begin{proof}
		For each fixed $r$, the kernel of $S_{1/2}$ is the sum of a bounded integrable function, which decays like $t^{-3/2}$ at infinity (the first term in the asymptotic expansion behaves roughly like $t^{-3/2} \sin t$), and a signed measure.
		
		Also, its $t$ integral is zero, because it is an odd measure. Hence, we can apply the previous result for $0 < \Re \alpha < 2$.
		
		But in fact all its moments are zero, because its Fourier support is outside the interval $(-1, 1)$. Indeed, its Fourier transform as a function of $t$ is
		$$
		[\mc F_t {S_{1/2}}](\tau) = \sgn(\tau) \chi_{|\tau| \geq 1} \frac 1 {4\pi} \frac {\sin(\sqrt{\tau^2-1}|x|)}{|x|}.
		$$
		
		So $\Re \alpha > 0$ can be arbitrarily large.
		
		The key fact is that $t^{1-\alpha}$ and $|t|^{1-\alpha}$ are linearly independent for $\alpha \not \in 2\Z+1$. Thus, choosing suitable combinations of $t^{1-\alpha}$ and $|t|^{1-\alpha}$, we can retrieve all the integral kernels in which we are interested.
		
		Due to (\ref{abst}), for any $A > 0$
		$$
		\frac {\sin(tA)}{A} \ast |t|^{\alpha-1} = 2 \cos(\frac \pi 2 \alpha) \Gamma(\alpha) \frac {\sin(tA)}{A^{\alpha+1}}.
		$$
		Therefore
		$$
		S_{1/2}(t) \ast |t|^{\alpha-1} = 2 \cos\Big(\frac \pi 2 \alpha\Big) \Gamma(\alpha) S_{(1+\alpha)/2}(t).
		$$
		
		By (\ref{ant}), for any $A > 0$
		$$
		\frac {\sin(tA)}{A} \ast t^{\alpha-1} = 2i^{\alpha-1} \sin(\pi \alpha) \Gamma(\alpha) \frac {e^{itA}}{2i A^{1+\alpha}}.
		$$
		Hence
		$$
		S_{1/2}(t) \ast t^{\alpha-1} = -i^{\alpha} \sin(\pi \alpha) \Gamma(\alpha) E_{(1+\alpha)/2}(t)
		$$
		or
		$$
		S_{1/2}(t) \ast t^{\alpha-1} = -i^{\alpha} \sin(\pi \alpha) \Gamma(\alpha) E_{(1+\alpha)/2}(t)
		$$
		This has a removable singularity at $\alpha=1$.
		
		Thus
		$$
		S_{1/2}(t) \ast \Re \frac{t^{\alpha-1}}{i^{\alpha} \sin(\pi\alpha) \Gamma(\alpha)} = -C_{(1+\alpha)/2}(t).
		$$
	\end{proof}
	
	By this sort of fractional integration, we obtain the following estimates:
	
	\begin{proposition}\lb{fractional_powers} Let $\alpha = a+ib$, $a, b \in \R$, and $0 < a=\re \alpha < 5/2$. Then the integral kernel of $E_{(1+\alpha)/2}$, hence also those of $S_{(1+\alpha)/2}$ and $C_{(1+\alpha)/2}$, satisfies the following bounds.
		
		For $0 \leq t < r$,
		$$
		\big|E_{\frac{\alpha+1}{2}}\big|(r, t) = \bigg|\frac {e^{it\sqrt{-\Delta+1}}}{(-\Delta+1)^{(1+\alpha)/2}}\bigg|(r, t) \les \frac {e^{-r}}{t^{1-a} \sqrt r}
		$$
		when $r\geq 1$ is large and
		$$
		\big|E_{\frac{\alpha+1}{2}}\big|(r, t) =\bigg|\frac {e^{it\sqrt{-\Delta+1}}}{(-\Delta+1)^{(1+\alpha)/2}}\bigg|(r, t) \les \frac 1 {r(r-t)^{1-a}}
		$$
		when $r\leq 1$ is small.
		
		For $t>r$,
		\be\lb{complicated}
		\big|E_{\frac{\alpha+1}{2}}\big|(r, t) =\bigg|\frac {e^{it\sqrt{-\Delta+1}}}{(-\Delta+1)^{(1+\alpha)/2}}\bigg|(r, t) \les 
		\frac 1 {t^a \langle t^2-r^2 \rangle^{3/4-a/2}}
		\ee
		when $r$ is large. This bound is true and, in addition, the integral kernel is also uniformly bounded when $r$ is small.
	\end{proposition}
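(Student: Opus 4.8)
The plan is to derive every bound in the statement from the fractional-integration identities of Lemma~\ref{combinations}, together with the explicit kernel of $S_{1/2}$ and the bounds already obtained for $C_1$ in Proposition~\ref{cosine_kernel}. By Lemma~\ref{combinations}, up to the scalar $-\big(i^{\alpha}\sin(\pi\alpha)\Gamma(\alpha)\big)^{-1}$ for $E_{(1+\alpha)/2}$ (and the analogous factors for $S$ and $C$), whose modulus is bounded on compact subsets of $\{0<\re\alpha<5/2\}\setminus\Z$, the kernel of $E_{(1+\alpha)/2}(r,\cdot)$ is the convolution in $t$ of $S_{1/2}(r,\cdot)$ with the upper half-plane branch of $t^{\alpha-1}$; near $\alpha=1,2$ one replaces this by the removable-singularity combinations of Lemma~\ref{combinations}. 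Substituting $S_{1/2}(r,\tau)=\frac{\sgn\tau}{4\pi r}\,\delta_{|\tau|=r}-\frac{\sgn\tau}{4\pi}\,\chi_{|\tau|\ge r}\,\frac{J_1(\sqrt{\tau^2-r^2})}{\sqrt{\tau^2-r^2}}$ then splits $E_{(1+\alpha)/2}(r,t)$ into a \emph{wave part}, proportional to $\frac1r\big[(t-r)^{\alpha-1}-(t+r)^{\alpha-1}\big]$, and a \emph{Bessel part}, proportional to $\int_{|\tau|\ge r}\sgn\tau\,\frac{J_1(\sqrt{\tau^2-r^2})}{\sqrt{\tau^2-r^2}}\,(t-\tau)^{\alpha-1}\dd\tau$.

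The wave part is a scalar multiple of the fractional integral in $t$ of the free three-dimensional wave kernel, whose pointwise size $\frac1r|t-r|^{a-1}$ near the light cone (with the upper half-plane power interpreted appropriately) is exactly what is used in Beceanu--Goldberg \cite{becgol}; for $0\le t<r$ with $r$ small this already gives the claimed $\frac1{r(r-t)^{1-a}}$. When $\re\alpha>1$, however, the individual terms $(t\pm r)^{\alpha-1}$ grow in $t$, and the decay is recovered only after cancellation against the Bessel part. It is therefore cleaner, exactly as in the proof of Proposition~\ref{cosine_kernel}, to subtract from the $J_1$-integral its leading value $\int_0^\infty J_1=1$ — producing the more convergent expressions of the type \eqref{temp1}, \eqref{temp2} — and only then to recombine with the remainder of the wave contribution.

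The heart of the matter is the Bessel part, which I would treat with the same tools as Proposition~\ref{cosine_kernel}. In the region $t>r$, the substitution $\tilde\sigma=\sqrt{\tau^2-r^2}$, $\sigma(\tilde\sigma)=\sqrt{\tilde\sigma^2+r^2}-r$ turns it into $\int_0^\infty\frac{J_1(\tilde\sigma)}{\sqrt{r^2+\tilde\sigma^2}}\,\big(t-r-\sigma(\tilde\sigma)\big)^{\alpha-1}\dd\tilde\sigma$, in which $\big(t-r-\sigma(\tilde\sigma)\big)^{\alpha-1}$ has an integrable singularity at $\tilde\sigma=\sqrt{t^2-r^2}$ (the light cone). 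Splitting off that singularity — which supplies the $t^{-a}$ factor of \eqref{complicated} plus lower-order pieces — and integrating by parts repeatedly on the remainder using $\int_0^zJ_1=1-J_0(z)$, $(z^nJ_n(z))'=z^nJ_{n-1}(z)$ and $|J_n(z)|\les\langle z\rangle^{-1/2}$, one gains decay: since $\sigma'(\tilde\sigma)/(r+\sigma(\tilde\sigma))^2=\tilde\sigma/(r^2+\tilde\sigma^2)^{3/2}$ and $r+\sigma(\sqrt{t^2-r^2})=t$, each step trades powers of $\langle t^2-r^2\rangle^{1/2}$ for powers of $t$, and carrying the procedure out to an order comparable to $\lceil a\rceil$ produces the exponent $3/4-a/2$ of \eqref{complicated}, precisely as in the polynomial-decay computation in Proposition~\ref{cosine_kernel}. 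In the complementary region $0\le t<r$ the factor $\frac{J_1}{\cdots}$ is supported away from $t$, so there is no light-cone singularity; combining the Bessel and wave parts and deforming the $\tilde\sigma$-contour to $[ir,i\infty)$, using $H_1^+(z)\sim z^{-1/2}e^{iz}$ for $\im z>0$ as in the proof of \eqref{imaginary}, yields the exponential bound $\frac{e^{-r}}{t^{1-a}\sqrt r}$ for $r$ large. Finally, in both small-$r$ regimes ($t<r$ and $t>r$) the relevant $\tilde\sigma$-integrals and their $r$-derivatives are uniformly bounded, exactly as at the end of the proof of Proposition~\ref{cosine_kernel}, which gives boundedness of the kernel there.

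I expect the main obstacle to be the uniform bookkeeping of the Bessel part near the light cone for complex $\alpha$ throughout the strip $0<\re\alpha<5/2$: tracking the $\im\alpha$-dependence, which should be absorbed into the implicit constant once the $e^{\pm\pi\im\alpha}$-type factors from the power $t^{\alpha-1}$ are matched with those in the prefactor $\big(i^{\alpha}\sin(\pi\alpha)\Gamma(\alpha)\big)^{-1}$, and, for $\re\alpha>3/2$, choosing the number of integrations by parts so that the residual integral converges and exhibits precisely the exponent of \eqref{complicated} rather than a weaker one. As a consistency check — and an alternative route when $0<\re\alpha<1$ — one can interpolate analytically in $\alpha$ between the endpoint $\re\alpha=0$, where the kernel is that of $S_{1/2}$ with its $\langle t^2-r^2\rangle^{-3/4}$ decay, and $\re\alpha=1$, where the bounds are those of Proposition~\ref{cosine_kernel} for $C_1$.
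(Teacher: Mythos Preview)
Your framework is the paper's: reduce via Lemma~\ref{combinations} to one-sided convolutions of $S_{1/2}$ with $t^{\alpha-1}$, use the change of variable $\tilde\sigma=\sqrt{\tau^2-r^2}$, and for $0\le t<r$ combine the wave and Bessel contributions and deform the contour to $[ir,i\infty)$ exactly as in \eqref{imaginary} to get the exponential bound. The small-$r$ arguments you sketch also match the paper.

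The gap is in the region $t>r$, i.e.\ the bound \eqref{complicated}. Your plan there---``split off the light-cone singularity, then integrate by parts repeatedly to an order comparable to $\lceil a\rceil$''---is not how the exponent $3/4-a/2$ arises, and I do not see how repeated integration by parts alone would produce a \emph{continuous} exponent in $a$ rather than integer jumps. The paper's actual mechanism is a balancing argument: write the relevant integral as $\int_t^\infty \tfrac{J_1(\sqrt{\tau^2-r^2})}{\sqrt{\tau^2-r^2}}(\tau-t)^{\alpha-1}\dd\tau$ and split it at an intermediate point $T>t$. On $[t,T]$ one uses only the pointwise bound $|J_1(z)/z|\les\langle z\rangle^{-3/2}$, which gives $\les (T-t)^a\langle t^2-r^2\rangle^{-3/4}$. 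On $[T,\infty)$ a \emph{single} integration by parts (using $J_1=-J_0'$ and $|J_0(z)|\les\langle z\rangle^{-1/4}$; this is where the constraint $a<5/2$ enters, so that the boundary term at infinity vanishes) gives $\les t^{-1}(T-t)^{-(1-a)}\langle t^2-r^2\rangle^{-1/4}$. Choosing $T=t+\langle t^2-r^2\rangle^{1/2}/t$ equalizes the two bounds and yields exactly $t^{-a}\langle t^2-r^2\rangle^{-(3/4-a/2)}$. No iteration is needed, and the same single step works uniformly for all $0<a<5/2$; your proposed order-$\lceil a\rceil$ scheme and the analytic interpolation in $\alpha$ are unnecessary detours.
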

	These bounds are true for all combinations --- sine, cosine, and complex exponentials --- but for some particular combinations one can do strictly better.
	
	\begin{proof} We perform the computation for $t>0$ only. In light of Lemma \ref{combinations}, it suffices to estimate $S_{1/2} \ast \chi_{t\geq 0} t^{\alpha-1}$ and $S_{1/2} \ast \chi_{t\leq 0} |t|^{\alpha-1}$, meaning the integrals
		\be\lb{first}
		S_{1/2} \ast \chi_{t\geq 0} = \int_t^\infty \frac {J_1(\sqrt{\tau^2-r^2})}{\sqrt{\tau^2-r^2}} (\tau-t)^{\alpha-1} \dd \tau
		\ee
		when $t > r$ and
		\be\lb{first'}
		S_{1/2} \ast \chi_{t\geq 0} = \int_r^\infty \frac {J_1(\sqrt{\tau^2-r^2})}{\sqrt{\tau^2-r^2}} (\tau-t)^{\alpha-1} \dd \tau - \int_\R \frac 1 r \delta_r(\tau)(\tau-t)^{\alpha-1} \dd \tau
		\ee
		when $0 \leq t < r$, as well as
		\be\lb{second}
		S_{1/2} \ast \chi_{t\leq 0} |t|^{\alpha-1} = \int_{-\infty}^{-r} \frac {J_1(\sqrt{\tau^2-r^2})}{\sqrt{\tau^2-r^2}} |\tau-t|^{\alpha-1} \dd \tau = \int_r^{\infty} \frac {J_1(\sqrt{\tau^2-r^2})}{\sqrt{\tau^2-r^2}} (\tau+t)^{\alpha-1} \dd \tau
		\ee
		for the other kernel. Since we assumed $t>0$, it follows that $t>-r$, so here we only have one case.
		
		We first evaluate (\ref{first}) and (\ref{first'}). The computation is similar to the one in Proposition \ref{cosine_kernel}. Let $\tau=\sigma+r$. We again make the change of variable
		$$
		\tilde \sigma(\sigma) = \sqrt{\sigma^2+2r\sigma} = \sqrt{\tau^2-r^2},
		$$
		with the inverse (\ref{inverse}). One also has to consider the Jacobian factor, as in (\ref{inverse'}).
		
		\noindent\textbf{I. Case $0 \leq t < r$.} On this interval, which corresponds to the exterior of the light cone, we need to compute expression (\ref{first'}). Start with
		\be\lb{first_term}\begin{aligned}
			&\int_r^\infty \frac {J_1(\sqrt{\tau^2-r^2})} {\sqrt{\tau^2-r^2}} (\tau-t)^{\alpha-1} \dd \tau \\
			&= \int_0^\infty \frac {J_1(\sqrt{\sigma^2+2r\sigma})} {\sqrt{\sigma^2+2r\sigma}} (\sigma+r-t)^{\alpha-1} \dd \sigma \\
			&= \int_0^\infty \frac {J_1(\tilde \sigma)} {(r+\sigma(\tilde \sigma))(r+\sigma(\tilde \sigma)-t)^{1-\alpha}} \dd \tilde \sigma \\
			&= \int_0^\infty \frac \partial {\partial \tilde \sigma} \bigg(\frac {-1} {(r+\sigma(\tilde \sigma))(r+\sigma(\tilde \sigma)-t)^{1-\alpha}}\bigg) (J_0(0)-J_0(\tilde \sigma)) \dd \tilde \sigma \\
			&\leq \frac 1 {r(r-t)^{1-a}} \sup_{\tilde \sigma \geq 0} |J_0(0)-J_0(\tilde \sigma)| \leq \frac 2 {r(r-t)^{1-a}}.
		\end{aligned}\ee
		
		For the boundary term in this integration by parts to vanish as $\tilde \sigma \to \infty$, one needs that $-1/2-(2-a) < 0$ (where half a power of decay comes from the Bessel function), so $a=\Re\alpha<5/2$. One needs instead to use repeated integration when $a \geq 5/2$.
		
		The contribution of the free wave equation term is
		\be\lb{wave_eqn_term}
		\int \frac 1 r \delta_r(\tau) (\tau-t)^{\alpha-1} \dd \tau = \frac 1 {r(r-t)^{1-\alpha}}.
		\ee
		
		This suggests a cancellation. Same as in (\ref{imaginary}), we get polynomial and even exponential decay, as for large $r$ the difference has size
		$$
		(\ref{first'}) \les \int_r^\infty \frac {e^{-\sigma} \dd \sigma} {\sqrt \sigma \sqrt{r^2-\sigma^2} (\sqrt {r^2-\sigma^2} - t)^{1-\alpha}} \dd \sigma.
		$$
		Note that $\sqrt {r^2-\sigma^2}$ is purely imaginary on this domain, because $\sigma>r$, so
		$$
		|\sqrt {r^2-\sigma^2} - t| = \sqrt{\sigma^2-r^2 + t^2}.
		$$
		We divide the domain $[r, \infty)$ into two parts, $\sigma \in [r, \sqrt{r^2+t^2}]$ and $\sigma \geq \sqrt{r^2+t^2}$, leading to a bound of
		$$
		(\ref{first'}) \les \frac {e^{-r}} {t^{1-a} \sqrt r} + \frac {e^{-r}} {r^{3/2-a}} \les \frac {e^{-r}} {t^{1-a} \sqrt r}.
		$$
		
		For $r<1$, (\ref{first_term}) is uniformly bounded, so the size of the expression (\ref{first'}) is determined by (\ref{wave_eqn_term}).\\
		
		\noindent\textbf{II. Case $t>r$.} Here we compute (\ref{first}), which is independent of $r$.
		
		Again, for $r<1$ the expression is uniformly bounded. For $r>1$, we split the domain of integration. Fix $T>t$, to be specified below. Integrating by parts on $[T, \infty)$ we get
		$$\begin{aligned}
		\int_T^\infty \frac {J_1(\sqrt{\tau^2-r^2})} {\sqrt{\tau^2-r^2}} (\tau-t)^{\alpha-1} \dd \tau &= \int_{T-r}^\infty \frac {J_1(\sqrt{\sigma^2+2r\sigma})} {\sqrt{\sigma^2+2r\sigma}} (\sigma+r-t)^{\alpha-1} \dd \sigma \\
		&= \int_{\sqrt{T^2-r^2}}^\infty \frac {J_1(\tilde \sigma)} {(r+\sigma(\tilde \sigma))(r+\sigma(\tilde \sigma)-t)^{1-\alpha}} \dd \tilde \sigma \\
		&\les \frac 1 {T (T - t)^{1-a}\langle T^2-r^2\rangle^{1/4}} \\
		&\les \frac 1 {t (T - t)^{1-a}\langle t^2-r^2\rangle^{1/4}}.
		\end{aligned}$$
		The boundary term as $\tilde \sigma \to \infty$ in the integration by parts only vanishes if $-1/2-(2-a)<0$, so if $a=\re \alpha<5/2$.
		
		On the remaining interval $[t, T]$, since
		$$
		\bigg|\frac {J_1(z)}{z}\bigg| \les \langle z \rangle^{-3/2},
		$$
		we obtain an estimate of
		$$
		\frac {(T-t)^a}{\langle t^2-r^2 \rangle^{3/4}}.
		$$
		Setting $T = t + \langle t^2-r^2 \rangle^{1/2}/t$ leads to an overall bound of
		$$
		\frac 1 {t^a \langle t^2-r^2 \rangle^{3/4-a/2}}.
		$$

		Regarding (\ref{second}), the computations are almost identical with those of Case I, including the cancellation with
		$$
		\int_{-\infty}^{-r} \frac 1 r \delta_r(\tau) |\tau-t|^{\alpha-1} \dd \tau = \frac 1 {r(r+t)^{1-\alpha}}.
		$$
		Only Case I applies, not Case II, leading for large $r$ to a bound of
		$$
		(\ref{second}) \les \frac {e^{-r}} {t^{1-a} \sqrt r} + \frac {e^{-r}} {r^{3/2-a}} \les \frac {e^{-r}} {t^{1-a} \sqrt r}.
		$$
		We are done.
	\end{proof}
	
	The explicit formulas of Proposition \ref{fractional_powers} for the size of the integral kernels of $E_\alpha$ allow us to compute their norms of interest.
	
	\begin{lemma}\lb{fractional_bounds} Suppose $\alpha=a+ib$, $a, b \in \R$, $0 \leq a=\re \alpha <1$. Then the integral kernel of $E_{(1+\alpha)/2}$ has size
		\be\lb{L1}
		\|E_{(1+\alpha)/2}(r, t)\|_{L^1_t} \les
		\left\{\begin{aligned}
			&r^{-1/2} \text{ when } r>1 \\
			&r^{a-1} \text{ when } r<1.
		\end{aligned}\right.
		\ee
		
		For $r<1$, the $t \in [0, r)$ portion of the integral kernel is bounded in $L^p_t$ for $p<\frac 1 {1-a}$ and in $L^{\frac 1 {1-a}, \infty}_t$ and has size
		\be\lb{outside_cone}
		\|\chi_{t \in [0, r)} E_{(1+\alpha)/2}(x, t)\|_{L^p_t} \les r^{a-2+1/p}
		\ee
		and is unbounded in $L^p_t$ for $p>\frac 1 {1-a}$.
		
		The $t \in [0, r)$ portion is exponentially small for $r>1$, being of size $e^{-r} r^{a-3/2+1/p}$ in $L^p_t$ for $p<\frac 1 {1-a}$ and in $L^{\frac 1 {1-a}, \infty}_t$, and unbounded in $L^p_t$ for $p>\frac 1 {1-a}$.
		
		
		Concerning the region inside the light cone, if $0\leq a < 1/2$ and $p<\frac 1 {1-a}$,
		$$
		\|\chi_{r<t} E_{(1+\alpha)/2}(r, t)\|_{L^p_t} \les r^{1/p-3/2}.
		$$
		In the limiting case $p=\frac 1 {1-a}$, $\|\chi_{r<t} E_{(1+\alpha)/2}(r, t)\|_{L^{\frac 1 {1-a}, \infty}_t} \les r^{-a-1/2}$.
		
		If $1/2<a<1$, the $t\in(r, \infty)$ portion has size
		$$
		\|\chi_{r<t} E_{(1+\alpha)/2}(r, t)\|_{L^p_t} \les \left\{\begin{aligned}
		&r^{1/p-3/2},&&p<\frac 4 {3-2a},\\
		&r^{-a-1/p},&&\frac 4 {3-2a}<p<\frac 1 {1-a}
		\end{aligned}\right.
		$$
		for $r>1$ and is uniformly bounded for $r<1$.
		
		In the limiting case $p=\frac 4 {3-2a}$, $\|E_{(1+\alpha)/2}(x, t)\|_{L^{p, \infty}_t} \les r^{-3/4-a/2}$.
		
		Suppose $1<a<3/2$. Then the $t \in [0, r)$ piece still has the same size
		(\ref{outside_cone}) and the $r<t$ piece has size
		$$
		\|\chi_{r<t} E_{(1+\alpha)/2}(r, t)\|_{L^p_t} \les \left\{\begin{aligned}
		&r^{1/p-3/2},&&p<\frac 4 {3-2a},\\
		&r^{-a-1/p},&&\frac 4 {3-2a}<p \leq \infty.
		\end{aligned}\right.
		$$
		
		For $3/2 \leq a<5/2$ and $1 \leq p \leq \infty$, again the portion outside the light cone has size (\ref{outside_cone}), while
		$$
		\|\chi_{r<t} E_{(1+\alpha)/2}(r, t)\|_{L^p_t} \les r^{1/p-3/2}
		$$
		for all $1 \leq p \leq \infty$.
	\end{lemma}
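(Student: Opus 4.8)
\section*{Proof proposal}

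The plan is to obtain each of these $L^p_t$ bounds by integrating in $t$ the pointwise kernel estimates of Proposition \ref{fractional_powers}. Since that proposition records $|E_{(1+\alpha)/2}|(r,t)$ only for $t>0$, and since $|E_{(1+\alpha)/2}|(r,t)=|E_{(1+\bar\alpha)/2}|(r,|t|)$ while all the bounds depend only on $a=\re\alpha$, it suffices to estimate norms over $t\in(0,\infty)$. I would then decompose $(0,\infty)=(0,r)\cup(r,2r)\cup(2r,\infty)$ --- outside the light cone, just inside, and far inside --- and in each piece treat $r\le1$ and $r>1$ separately, since Proposition \ref{fractional_powers} supplies visibly different bounds in those two ranges.

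\emph{Outside the cone.} For $r<1$ use $|E_{(1+\alpha)/2}|(r,t)\les\frac1{r(r-t)^{1-a}}$, so that $\|\chi_{(0,r)}E_{(1+\alpha)/2}\|_{L^p_t}=r^{-1}\|(r-t)^{-(1-a)}\|_{L^p_t(0,r)}$; the substitution $s=r-t$ gives $r^{-1}\cdot r^{1/p-(1-a)}=r^{a-2+1/p}$ whenever $p(1-a)<1$, i.e.\ $p<\frac1{1-a}$ (no constraint when $a\ge1$, since then there is no singularity at $t=r$), which is (\ref{outside_cone}); at $p=\frac1{1-a}$ one has $(r-t)^{-(1-a)}\in L^{1/(1-a),\infty}_t$, yielding the weak-type endpoint, and for larger $p$ the norm diverges. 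For $r>1$ use instead $|E_{(1+\alpha)/2}|(r,t)\les\frac{e^{-r}}{t^{1-a}\sqrt r}$; now the singularity sits at $t=0$ and the identical computation yields $e^{-r}r^{a-3/2+1/p}$ with the same restriction and weak-type endpoint.

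\emph{Inside the cone.} Here the bound is $|E_{(1+\alpha)/2}|(r,t)\les\min\bigl(1,\ t^{-a}\langle t^2-r^2\rangle^{-(3/4-a/2)}\bigr)$, the flat factor being needed only for small $r$. On $(2r,\infty)$ one has $\langle t^2-r^2\rangle\sim t^2$, so the kernel is $\les t^{-3/2}$ and $\|\chi_{(2r,\infty)}E_{(1+\alpha)/2}\|_{L^p_t}\sim r^{1/p-3/2}$ for every $p\ge1$. On $(r,2r)$ with $r>1$ one has $t^2-r^2\sim r(t-r)$, so $|E_{(1+\alpha)/2}|(r,t)\les\min\bigl(r^{-a},\ r^{-3/4-a/2}(t-r)^{-(3/4-a/2)}\bigr)$, the two expressions crossing at $t-r\sim r^{-1}$. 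If $p(3/4-a/2)<1$, i.e.\ $p<\frac4{3-2a}$, the singular power is $L^p$-integrable and this region also contributes $r^{1/p-3/2}$; if $p>\frac4{3-2a}$, splitting at $t-r=r^{-1}$ produces $r^{-a}\cdot(r^{-1})^{1/p}=r^{-a-1/p}$ from the flat part and, since the tail is dominated by its lower endpoint, the same $r^{-a-1/p}$ from the singular part. At $p=\frac4{3-2a}$ the bound $(t-r)^{-(3/4-a/2)}\in L^{4/(3-2a),\infty}_t$ gives $r^{-3/4-a/2}$, and one checks the other two sub-regions yield the same weak-type size. For $r<1$, the flat bound $\les1$ on $(r,O(1))$ together with $\les t^{-3/2}$ on $(O(1),\infty)$ makes $\|\chi_{(r,\infty)}E_{(1+\alpha)/2}\|_{L^p_t}$ uniformly bounded.

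\emph{Assembling, and the main difficulty.} It then remains to compare powers of $r$ across regimes, and the thresholds in the statement are exactly the crossover points. One has $\frac4{3-2a}\gtrless\frac1{1-a}$ according as $a\lessgtr\frac12$, so for $0\le a<\frac12$ every admissible $p<\frac1{1-a}$ already satisfies $p<\frac4{3-2a}$ and the inside-cone norm is $r^{1/p-3/2}$ (with $r^{-a-1/2}$ at the weak endpoint $p=\frac1{1-a}$); for $\frac12<a<1$ the window $\bigl(\frac4{3-2a},\frac1{1-a}\bigr)$ is nonempty and the branch $r^{-a-1/p}$ appears; for $1<a<\frac32$ the outside-cone piece is finite for all $p$, so that window runs up to $p=\infty$; and for $\frac32\le a<\frac52$ the exponent $3/4-a/2$ is $\le0$, the near-cone singularity disappears, and $r^{1/p-3/2}$ holds for all $p$. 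Taking $p=1$ and combining with the outside-cone bounds ($r^{a-1}$ for $r<1$, exponentially small for $r>1$) gives (\ref{L1}). The computations themselves are elementary; the only genuine care lies in the bookkeeping --- tracking which power of $r$ dominates in each $(a,p)$ regime so as to recover precisely the advertised exponents and thresholds --- and in verifying that all three sub-regions produce matching sizes at the two weak-type endpoints.
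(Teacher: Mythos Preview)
Your proposal is correct and follows essentially the same approach as the paper: both proofs simply integrate the pointwise kernel bounds of Proposition \ref{fractional_powers}, treat the regions $t\in[0,r)$ and $t>r$ separately, and in the latter split according to whether $p(3/4-a/2)<1$ (where the paper uses a direct scaling argument on $\chi_{t\ge r}\,t^{-a}(t^2-r^2)^{-(3/4-a/2)}$ and you instead decompose $(r,2r)\cup(2r,\infty)$, which is equivalent). Your discussion of the thresholds $\frac4{3-2a}$ versus $\frac1{1-a}$ and of the case $a\ge\frac32$ matches the paper's, and your bookkeeping of the crossover exponents is in fact slightly more explicit than the paper's own proof.
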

	
	\begin{observation}\lb{obs1}
		For the free wave equation, the corresponding kernels of order $(1+\alpha)/2$ have $2-a$ powers of decay.
		
		
		For the Klein--Gordon equation, the situation is more complicated. In general, estimates are inhomogeneous.
		
		Estimates with $2-a$ powers of decay hold only for the region $r>t$ (outside the light cone): for example, $\|\chi_{t \in [0, r)} E_{(1+\alpha)/2}\|_{L^{\frac 1 {1-a}, \infty}_t} \les r^{-1}$.
		
		
		Inside the light cone, when $0 \leq a \leq 1$, $E_{(1+\alpha)/2}$ has $3/2$ powers of decay in $L^p$ for $p \in [1, \min(\frac 4 {3-2a}, \frac 1 {1-a})]$ and, for $a \geq \frac 1 2$, it has $a+2/p$ powers of decay for $p \in (\frac 4 {3-2a}, \frac 1 {1-a})$, while having infinite norm in $L^p$ for $p>\frac 1 {1-a}$.
		
		Same is true for $\alpha \in [1, 3/2)$, with $\frac 1 {1-a}$ replaced by  infinity (so the kernel is bounded in all $L^p$).
		
		When $3/2 \leq a < 5/2$, $E_{(1+\alpha)/2}$ has exactly $\frac{3}{2}$ powers of decay for all $p \in [1, \infty]$.
		
		Thus, for the purpose of complex interpolation, there is no further decay gain in the above estimates from taking $\alpha>3/2$.
		
		A significant endpoint estimate is that, for $\alpha=3/2$, $\|E_{5/4}\|_{L^\infty_t} \les r^{-3/2}$.
		
		Another significant estimate, which also holds for the wave equation, for which the powers of decay are the same both inside and outside the light cone, is
		$$
		\|E_{3/4}\|_{L^p_t} \les r^{1/p-3/2}
		$$
		for $p<2$ and
		$$
		\|E_{3/4}\|_{L^{2, \infty}_t} \les r^{-1}.
		$$
	\end{observation}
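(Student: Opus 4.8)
The proof is a largely mechanical passage from the pointwise bounds of Proposition \ref{fractional_powers} to mixed space-time norms, so the plan is: fix $r$, cut the $t$-axis into a handful of pieces on which the kernel of $E_{(1+\alpha)/2}(r,\cdot)$ has a pure power-law profile, compute the $L^p_t$ norm (or, at critical exponents, the $L^{p,\infty}_t$ norm) on each piece by an elementary integration or a dilation, and take the maximum. One always separates $r<1$ from $r>1$, since Proposition \ref{fractional_powers} gives differently shaped bounds there, and for $r>1$ the part of the kernel outside the light cone carries a factor $e^{-r}$ and is negligible next to the part supported on $\{t>r\}$.

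Concretely: outside the cone, for $r<1$ one has $|E_{(1+\alpha)/2}|(r,t)\les \frac1{r(r-t)^{1-a}}$, and substituting $s=r-t$ the integral $\int_0^r s^{-(1-a)p}\,ds\sim r^{1-(1-a)p}$ converges exactly when $(1-a)p<1$, giving (\ref{outside_cone}); at $p=\frac1{1-a}$ the kernel is $r^{-1}s^{-1/p}$, whose scale-invariant weak norm yields the clean $r^{-1}$, and for $a\geq1$ the exponent $1-a$ is non-positive, so the same formula holds for all $p$. For $r>1$ the outside part is $\les\frac{e^{-r}}{t^{1-a}\sqrt r}$, giving $e^{-r}r^{a-3/2+1/p}$. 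Inside the cone one splits at $t=2r$: on $\{t>2r\}$ the bound (\ref{complicated}) collapses to $t^{-3/2}$ (since $t^2-r^2\sim t^2$), with $L^p_t([2r,\infty))$ norm $r^{1/p-3/2}$; on the annulus $\{r<t<2r\}$ one writes $u=t-r$, $t^2-r^2\sim ru$, and splits again at $u=1/r$. For $u<1/r$ the kernel is $\sim r^{-a}$ on an interval of length $1/r$, contributing $r^{-a-1/p}$; for $1/r<u<r$ the kernel is $\sim r^{-(a/2+3/4)}u^{-(3/4-a/2)}$, whose $L^p$-integrability is governed by the threshold $p=\frac4{3-2a}$ (relevant only for $a<3/2$, since for $a\geq3/2$ the exponent $3/4-a/2$ is $\leq0$ and everything is integrable): below the threshold the integral is controlled by its endpoint $u\sim r$ and yields $r^{1/p-3/2}$, above it by $u\sim1/r$ and yields $r^{-a-1/p}$, and at $p=\frac4{3-2a}$ one records the scale-invariant weak bound $r^{-3/4-a/2}$. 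Since the $u<1/r$ and $t>2r$ contributions are both dominated by this annular sub-piece, one concludes that inside the cone, for $r>1$, $\|E_{(1+\alpha)/2}(r,\cdot)\|_{L^p_t}\sim r^{1/p-3/2}$ for $p\leq\frac4{3-2a}$ and $\sim r^{-a-1/p}$ for $p\geq\frac4{3-2a}$ (weak-type at equality), while for $r<1$ the kernel is uniformly bounded with an integrable $t^{-3/2}$ tail, so all $L^p_t$ norms are $\les1\les r^{1/p-3/2}$. Assembling these, and comparing each range of $p$ against the two critical exponents $\frac1{1-a}$ and $\frac4{3-2a}$, produces every case in the statement; the $L^1_t$ bound (\ref{L1}) is just $p=1$, together with the exponentially small (for $r>1$), respectively $r^{a-1}$ (for $r<1$), outside contribution.

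The routine parts are the power integrations and dilations; the real work, and the main obstacle, is the bookkeeping --- identifying, for each subinterval of $a=\re\alpha\in[0,5/2)$ and each $p$, which of the two competing powers $r^{1/p-3/2}$ and $r^{-a-1/p}$ is dominant, pinpointing the two families of critical exponents $p=\frac1{1-a}$ and $p=\frac4{3-2a}$ at which a logarithm appears in $L^p_t$ and one must retreat to $L^{p,\infty}_t$ (and checking that the logarithm is genuinely absent away from these), and verifying that at those exponents the several pieces balance. A minor separate point is the endpoint $a=0$: there $S_{1/2}$ still contains the singular light-cone measure $\frac1{4\pi r}\delta_r(t)$ and $C_{1/2}$, outside the cone, equals the fixed Bessel-potential kernel of $(-\Delta+1)^{-1/2}$, so Proposition \ref{fractional_powers} does not apply verbatim and one argues directly from this structure as in Lemma \ref{c1bounds} and Theorem \ref{thm_partial_results}, which still reproduces (\ref{L1}) and the remaining assertions at $a=0$.
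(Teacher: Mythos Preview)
Your proposal is correct and follows essentially the same approach as the paper: the observation is a summary of Lemma \ref{fractional_bounds}, whose proof in the paper proceeds exactly as you describe --- splitting into the regions $t<r$ and $t>r$, handling the outside-cone piece by direct integration (with the weak-type endpoint at $p=\frac1{1-a}$), and on the inside-cone piece partitioning $[r,\infty)$ into $[r,r+\rho]$, $[r+\rho,2r]$, $[2r,\infty)$ with $\rho=r^{-1}$ (your $u=1/r$ cut), then comparing against the threshold $p=\frac4{3-2a}$. Your explicit remark on the $a=0$ endpoint, where one must fall back on the direct analysis of $S_{1/2}$ and $C_1$ rather than Proposition \ref{fractional_powers}, is a useful clarification that the paper leaves implicit.
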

	
	\begin{proof}
		Suppose $0<a<1$ and let us first compute the $L^1_t$ norm.
		
		There are several cases to consider. For $r>1$, the contribution of the interval $t \in [0, r)$ is exponentially small.
		
		On the interval $[r, \infty)$, the following integral converges and by scaling can be bound by
		$$
		\int_r^\infty \frac {dt} {t^a (t^2-r^2)^{3/4-a/2}} \les r^{-1/2}.
		$$
		
		For $r<1$, the contribution of the interval $t \in [0, r]$ is of order $r^{a-1}$ and the contribution of the interval $t \in [r, \infty)$ has size
		\be\lb{smallr}
		\int_r^1 dt + \int_1^\infty t^{-3/2} \dd t \les 1.
		\ee
		
		Next, we compute the $L^p_t$ norm of the kernel, $1 \leq p \leq \infty$. For small $r$ and, necessarily, $p<\frac 1 {1-a}$, on $t \in [0, r)$ the norm is bounded by
		$$
		\bigg( \int_0^r \frac {dt} {r^p(r-t)^{p(1-a)}} \bigg)^{1/p} \les (r^{pa-2p+1})^{1/p} = r^{a-2+1/p}.
		$$
		At the endpoint $p=\frac 1 {1-a}$, we obtain by rescaling that the $L^{\frac 1 {1-a}, \infty}_t$ norm is bounded by $r^{-1}$ on this interval.
		
		For large $r>1$ and $p<\frac 1 {1-a}$, for $t \in [0, r)$ any $L^p_t$ norm is exponentially small: for $p<\frac 1 {1-a}$ and for $p=\frac 1 {1-a}$ in $L^{\frac 1 {1-a}, \infty}_t$
		$$
		\bigg\| \frac {e^{-r}}{t^{1-a} \sqrt r} \bigg\|_{L^p_t([0, r])} \les e^{-r} r^{a-3/2+1/p}.
		$$
		
		On the interval $t \in (r, \infty)$, we have to distinguish two cases, according to whether $3/4-a/2<1/p$ or not. If $3/4-a/2<1/p$, then due to scaling
		$$
		\bigg\|\chi_{t \geq r} \frac 1 {t^a(t^2-r^2)^{3/4-a/2}}\bigg\|_{L^p_t} \les r^{1/p-3/2}.
		$$
		At the endpoint $3/4-a/2=1/p$, so $p=4/(3-2a)$, in order to avoid logarithms, we use the weak-type norm: by scaling
		$$
		\bigg\|\chi_{t \geq r} \frac 1 {t^a(t^2-r^2)^{3/4-a/2}}\bigg\|_{L^{p,\infty}_t} \les r^{1/p-3/2} = r^{-3/4-a/2}.
		$$
		In the other case $3/4-a/2>1/p$, we evaluate the norm as follows:
		$$\begin{aligned}
		\bigg\|\frac 1 {t^a}\bigg\|_{L^p_t([r, r+\rho])} &\les \rho^{1/p} r^{-a},\\
		\bigg\|\frac 1 {r^{3/4+a/2}(t-r)^{3/4-a/2}}\bigg\|_{L^p_t([r+\rho, 2r])} &\les r^{-3/4-a/2} \rho^{a/2-3/4+1/p},\\
		\|t^{-3/2}\|_{L^p_t([2r, \infty))} &\les r^{1/p-3/2}.
		\end{aligned}$$
		
		Choosing $\rho=r^{-1}$, we get an overall bound of $r^{-a-1/p}$, as $-a-1/p>1/p-3/2$ in this case. At the endpoint $3/4-a/2=1/p$, this is the same value as above.
		
		Same as in (\ref{smallr}), for small $r$ the contribution of the interval $[r, \infty)$ to the $L^p_t$ norm is uniformly bounded.

		Finally, we consider the case that $\frac{3}{2}\leq a<\frac{5}{2}$.
		When $t>r$, one has
		\begin{align*}
		\left|\chi_{r<t}E_{\left(1+\alpha\right)/2}\right| & \lesssim\frac{1}{t^{a}\left\langle t^{2}-r^{2}\right\rangle ^{\frac{3}{4}-\frac{a}{2}}}\\
		& =\frac{\left\langle t^{2}-r^{2}\right\rangle ^{\frac{a}{2}-\frac{3}{4}}}{t^{\alpha}}\\
		& \lesssim\frac{t^{a-\frac{3}{2}}}{t^{a}}.
		\end{align*}
		Therefore
		\[
		\left\Vert \chi_{r<t}E_{\left(1+\alpha\right)/2}\right\Vert _{L_{t}^{p}\left([r,\infty)\right)}\lesssim r^{\frac{1}{p}-\frac{3}{2}}.
		\]
		We are done.
	\end{proof}
	
	\begin{observation}\lb{cor:reversedE} Integrating in time, the $L^1_t$ norm (\ref{L1}) bounds the $L^\infty_t$ norm for half more powers of the Laplacian.
		
		For $\alpha=a+ib$, $a, b \in \R$, $0<a=\re\alpha<1$,
		\be\begin{aligned}\lb{epsilon}
			\|E_{1+\alpha/2}(r, t)\|_{L^\infty_t} \les \left\{\begin{aligned}
				&r^{-1/2}, && r>1 \\
				&r^{a-1}, && r<1.
			\end{aligned}\right.
		\end{aligned}\ee
		
		This is just checking for consistency and not as strong a result as the one we obtained by computing the $L^\infty$ norm directly, above.
		
		At the endpoint $\Re \alpha=0$ we have already obtained an $r^{-1}$ bound in $L^\infty_t$ for $C_1$, in Lemma \ref{c1bounds}.
		%
	\end{observation}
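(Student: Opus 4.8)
The plan is to obtain the $L^\infty_t$ estimate from the $L^1_t$ estimate (\ref{L1}) by a single integration in time, using that passing from $E_{(1+\alpha)/2}$ to $E_{1+\alpha/2}$ divides by one more power of $\sqrt{-\Delta+1}$, i.e.\ antidifferentiates in $t$. Indeed, differentiating $e^{it\sqrt{-\Delta+1}}$ gives the kernel identity $\partial_t E_{1+\alpha/2}(r,t) = i\,E_{(1+\alpha)/2}(r,t)$, so that $E_{1+\alpha/2}(r,\cdot)$ is a $C^1$ antiderivative of $i E_{(1+\alpha)/2}(r,\cdot)$, which by (\ref{L1}) lies in $L^1_t$.

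First I would pin down the boundary value of this antiderivative at $t=\pm\infty$. Writing $1+\alpha/2=(1+\gamma)/2$ with $\gamma=1+\alpha$, we have $\re\gamma=1+a\in(1,2)\subset(0,5/2)$, so Proposition \ref{fractional_powers} applies with parameter $\gamma$; for $t>r$ its bound on $|E_{1+\alpha/2}(r,t)|$ decays like $t^{-3/2}$ for each fixed $r$ as $t\to\infty$, and in the region $0\le t<r$ it is exponentially small, so $E_{1+\alpha/2}(r,t)\to0$ as $t\to+\infty$. The same holds as $t\to-\infty$ by the symmetry $\overline{E_\beta(r,t)}=E_{\bar\beta}(r,-t)$ of the kernels, which preserves absolute values. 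Integrating $\partial_t E_{1+\alpha/2}(r,t) = i\,E_{(1+\alpha)/2}(r,t)$ from $t$ to $+\infty$ for $t\ge0$ (and from $-\infty$ to $t$ for $t\le0$) then gives $E_{1+\alpha/2}(r,t) = \mp i\int_t^{\pm\infty} E_{(1+\alpha)/2}(r,s)\dd s$, hence in every case $\|E_{1+\alpha/2}(r,\cdot)\|_{L^\infty_t}\le\|E_{(1+\alpha)/2}(r,\cdot)\|_{L^1_t}$. Inserting the bound (\ref{L1}) --- namely $r^{-1/2}$ for $r>1$ and $r^{a-1}$ for $r<1$ --- yields exactly (\ref{epsilon}).

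The only delicate point is the identification of the limits at $t=\pm\infty$: a $C^1$ antiderivative of an $L^1_t$ function automatically has limits at $\pm\infty$, but one must check these vanish rather than equal some nonzero constant, and this is precisely what Proposition \ref{fractional_powers}, applied with the shifted parameter $1+\alpha$, supplies. Everything else is the fundamental theorem of calculus together with Lemma \ref{fractional_bounds}. As the statement itself notes, the resulting estimate is not sharp for $r\gg1$ --- the direct pointwise computations of the earlier results do strictly better there (for instance $r^{-1}$ rather than $r^{-1/2}$, as already seen for $C_1$ at $\alpha=0$ in Lemma \ref{c1bounds}) --- but it is the natural internal consistency check.
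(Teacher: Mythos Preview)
Your proposal is correct and follows exactly the approach the paper intends: the Observation is stated as a one-line remark (``Integrating in time, the $L^1_t$ norm bounds the $L^\infty_t$ norm for half more powers of the Laplacian''), and you have simply supplied the details behind that sentence, namely the identity $\partial_t E_{1+\alpha/2}=i\,E_{(1+\alpha)/2}$ together with the vanishing of the boundary term at $t=\pm\infty$ via Proposition \ref{fractional_powers}. Your care in justifying that the antiderivative actually tends to zero at infinity (rather than to a nonzero constant) is a genuine point the paper leaves implicit.
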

	
	%
	%
	
	\subsection{Klein--Gordon sine propagator bounds}
	
	We prove reversed Strichartz bounds for $S_{1/2}$, the Klein--Gordon sine propagator, by a $T T^*$ argument, starting from the corresponding bounds on $C_1$.
	
	$S_1$, being the Hilbert transform of $C_1$, is slightly worse behaved, but fortunately unimportant here. Due to fractional integration, bounds on $E_\alpha$ for $\alpha<1$ are also relevant.
	
	For $1/2 \leq a = \Re \alpha < 3/4$, the $L^1_t$ norm of $E_\alpha$ cannot be bounded by only one power of $r$. For example, when $\alpha=1/2$ we need to bound $\|E_{1/2}\|_{L^1_t}$ using $r^{-1/2}$ for $r>1$ and $r^{-1}$ for $r<1$.
	
	On the other hand, when $3/4 \leq a=\re \alpha < 1$, $E_\alpha$ has the same mapping properties as the corresponding operator for the free wave equation and some additional ones. In this region, $\|E_\alpha\|_{L^1_t} \sim \min(r^{2a-2}, r^{-1/2})$.
	
	By comparison, due to scaling, the same norm for the free wave equation is always bounded by $r^{2a-2}$.
	
	Note the particular role played by $E_{3/4}$, whose $L^p_t$ norm is uniformly bounded exactly by $r^{1/p-3/2}$, same as for the free wave equation.
	
	%
	%
	
	The corresponding inhomogeneous estimates can be stated more generally, but we need dual exponents for the $T T^*$ argument.
	
	One can also strengthen these estimates by using the Kato class and its dual, just as in Beceanu-Goldberg \cite{becgol}.
	
	\begin{theorem}\lb{thm:inhomoRev} For $\Re \alpha \geq 3/4$, the Klein--Gordon evolution operator $E_\alpha$ defined in Section \ref{notation} is bounded between the following spaces:
		$$
		E_\alpha: L^{p_1}_x L^q_t \to L^{p_2}_x L^q_t,\ \frac 1 {p_2} - \frac 1 {p_1} + 1 \in \bigg[\frac {2-2a}{3}, \frac 1 6\bigg],\ 1 \leq q \leq \infty,
		$$
		and
		$$
		E_{\alpha+1/2}: L^{p_1}_x L^1_t \to L^{p_2}_x L^\infty_t,\ \frac 1 {p_2} - \frac 1 {p_1} + 1 \in \bigg[\frac {2-2a}{3}, \frac 1 6\bigg].
		$$
	\end{theorem}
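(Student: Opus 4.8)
The plan is to realize $E_\alpha$ as an operator dominated by convolution in \emph{both} the space and the time variables, with convolution kernel the pointwise bound on $|E_\alpha(|x-y|,t-s)|$ from Proposition \ref{fractional_powers}. The key point is that once the purely temporal size of the kernel is controlled in $L^1_t$, convolution in time preserves every mixed norm $L^q_t$, $1\le q\le\infty$, simultaneously and for free, with operator norm bounded by the spatial profile $g(r):=\|E_\alpha(r,\cdot)\|_{L^1_t}$; the whole estimate then collapses to a single application of Young's inequality for convolution on $\R^3$, between $L^{p_1}_x$ and the weak Lebesgue class of $g$.

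Concretely I would proceed in three steps. \emph{Step 1 (kernel bound).} From Lemma \ref{fractional_bounds} one has, for $a=\re\alpha$ with $3/4\le a<1$,
$$
g(r)=\|E_\alpha(r,\cdot)\|_{L^1_t}\les\min\big(r^{2a-2},\,r^{-1/2}\big),
$$
while for $a\ge1$ the kernel is in addition uniformly bounded near the origin (Proposition \ref{fractional_powers}, together with Lemma \ref{c1bounds} at $a=1$), so $g(r)\les\langle r\rangle^{-1/2}$; and, by Observation \ref{cor:reversedE} (integrating in $t$ transfers the $L^1_t$ bound at index $\beta$ to an $L^\infty_t$ bound at index $\beta+\tfrac12$), $\|E_{\alpha+1/2}(r,\cdot)\|_{L^\infty_t}\les g(r)$ with the same right-hand side. \emph{Step 2 (weak Lebesgue class of $g$).} As a radial function on $\R^3$, the local part $r^{2a-2}\chi_{\{r<1\}}$ is locally integrable (since $a\ge3/4$) and lies in $L^{\rho,\infty}_x$ exactly for $\rho\le 3/(2-2a)$, whereas the tail $r^{-1/2}\chi_{\{r>1\}}$ lies in $L^{\rho,\infty}_x$ exactly for $\rho\ge6$; hence $g\in L^{\rho,\infty}_x(\R^3)$ precisely for
$$
\frac1\rho\in\Big[\frac{2-2a}{3},\ \frac16\Big]
$$
(for $a\ge1$ the constraint from the origin is vacuous, so this reads $\tfrac1\rho\in[0,\tfrac16]$, in agreement with $g(r)\les\langle r\rangle^{-1/2}$). \emph{Step 3 (the two convolutions).} Given $F$, set $G(y):=\|F(y,\cdot)\|_{L^q_t}$; Minkowski's inequality in $t$ followed by Young's inequality $\|k\ast f\|_{L^q_t}\le\|k\|_{L^1_t}\|f\|_{L^q_t}$ in the time variable (with $k=E_\alpha(|x-y|,\cdot)$, valid for every $q$) give $\|(E_\alpha F)(x,\cdot)\|_{L^q_t}\le(g\ast G)(x)$, and Young's inequality in Lorentz spaces $L^{p_1}_x\ast L^{\rho,\infty}_x\to L^{p_2}_x$ with $\tfrac1{p_2}=\tfrac1{p_1}+\tfrac1\rho-1$ (cf. the remark after Lemma \ref{c1bounds}) then yields
$$
\|E_\alpha F\|_{L^{p_2}_x L^q_t}\les\|g\|_{L^{\rho,\infty}_x}\,\|G\|_{L^{p_1}_x}=\|g\|_{L^{\rho,\infty}_x}\,\|F\|_{L^{p_1}_x L^q_t}
$$
for $\tfrac1{p_2}-\tfrac1{p_1}+1=\tfrac1\rho$ in the stated interval. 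The inhomogeneous estimate for $E_{\alpha+1/2}$ is identical, except that the temporal step uses $\|k\ast f\|_{L^\infty_t}\le\|k\|_{L^\infty_t}\|f\|_{L^1_t}$ together with $\|E_{\alpha+1/2}(r,\cdot)\|_{L^\infty_t}\les g(r)$.

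There is no new analytic difficulty: everything is already in Lemma \ref{fractional_bounds}, and the only thing requiring genuine care is the bookkeeping of Step 2 that pins down the exponent range. It is this computation that explains the shape of the statement: at the borderline $a=3/4$ one has $g(r)\simeq r^{-1/2}$ for all $r$, forcing $\rho=6$; as $a$ increases past $3/4$ the origin singularity weakens and the admissible band $[6,\,3/(2-2a)]$ opens up, saturating once $a\ge1$. A secondary point is the endpoint behaviour of Young's inequality ($p_1=1$, $p_2=\infty$, or $q\in\{1,\infty\}$): at the extreme corners one uses elementary estimates such as $\|k\ast f\|_{L^\infty}\le\|k\|_{L^{p'}}\|f\|_{L^p}$, avoiding logarithmic losses; and since the kernel bounds of Lemma \ref{fractional_bounds} hold throughout $\re\alpha<5/2$, with the kernel only more regular beyond, the statement is valid for all $\re\alpha\ge3/4$.
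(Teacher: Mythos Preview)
Your proof is correct and follows the approach implicit in the paper, which states the theorem without an explicit proof as a direct consequence of the $L^1_t$ kernel bound $\|E_\alpha(r,\cdot)\|_{L^1_t}\les\min(r^{2a-2},r^{-1/2})$ recorded in the discussion preceding the theorem (derived from Lemma~\ref{fractional_bounds}) together with Young's inequality in Lorentz spaces. Your Steps~1--3 make this explicit, including the translation of the kernel's two-regime profile into the weak-$L^\rho_x$ range $1/\rho\in[(2-2a)/3,1/6]$ and the observation (via Remark~\ref{cor:reversedE}) that the $L^\infty_t$ bound on $E_{\alpha+1/2}$ inherits the same spatial profile $g(r)$, which is exactly what the paper intends.
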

	The first endpoint is the same as for the free wave equation; the other is specific to the Klein--Gordon equation.

	\subsection{Fractional powers}

	For the free wave equation, purely imaginary powers of the Hamiltonian $(-\Delta)^{i\sigma}$ are bounded operators on $L^p$, $1 < p < \infty$, because they are Mihlin multipliers.
	
	Same is true in the Klein--Gordon case: $(-\Delta+1)^{i\sigma}$ has the same properties as $(-\Delta)^{i\sigma}$, because it is also a Mihlin multiplier.
	
	However, $(-\Delta+1)^{-\alpha}$ has strictly better properties than $(-\Delta)^{-\alpha}$ when $\Re \alpha > 0$, because $(-\Delta+1)^{-\alpha}$ has an integrable kernel, hence is also bounded on $L^1$ and $L^\infty$.
	
	Indeed, Klein--Gordon negative powers have all the fractional integration properties of $(-\Delta)^{-\alpha}$, but are additionally bounded for non-sharp pairs of powers:
	\begin{proposition} For $a = \Re \alpha \geq 0$, $(-\Delta+1)^{-\alpha}$ is bounded on $L^p$, $p \in (1, \infty)$. For $a = \re \alpha>0$, whenever $\frac 1 p \geq \frac 1 q \geq \frac 1 p - \frac {2a} 3$, one has that
		$$
		(-\Delta+1)^{-\alpha} \in \B(L^p, L^q)
		$$
		with the usual modifications at the endpoints.
	\end{proposition}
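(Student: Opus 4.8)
The plan is to reduce all the claimed mapping properties to the well-understood behaviour of the Bessel potential kernel via subordination, and then conclude by Young's inequality, Hardy--Littlewood--Sobolev, and interpolation.

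First I would record the subordination identity: for $a=\re\alpha>0$,
\[
(-\Delta+1)^{-\alpha}=\frac 1{\Gamma(\alpha)}\int_0^\infty t^{\alpha-1}e^{-t}e^{t\Delta}\dd t,
\]
where $e^{t\Delta}$ is convolution with the Gaussian $(4\pi t)^{-3/2}e^{-|x|^2/4t}$ on $\R^3$. Taking absolute values under the integral sign, the convolution kernel $K_\alpha$ of $(-\Delta+1)^{-\alpha}$ obeys the pointwise domination $|K_\alpha(x)|\le \frac{\Gamma(a)}{|\Gamma(\alpha)|}K_a(x)$, where $K_a\ge 0$ is the kernel of the real power $(-\Delta+1)^{-a}$; this reduces everything to the case $\alpha=a>0$. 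From the same integral representation (splitting the $t$-integral at $t\sim|x|^2$ and at $t\sim 1$) one reads off the standard asymptotics: $K_a(x)\les\min(|x|^{2a-3},e^{-|x|/2})$ when $0<a<3/2$, with an extra logarithmic factor when $a=3/2$, while $K_a$ is bounded (and still exponentially decaying at infinity) when $a>3/2$; moreover $\int_{\R^3}K_a=1$ since the Fourier symbol equals $1$ at $\xi=0$ and $K_a\ge 0$.

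With these facts in hand the three families of estimates follow quickly. For $L^p$ boundedness, $p\in(1,\infty)$: when $a=0$ the symbol $(1+|\xi|^2)^{-i\im\alpha}$ satisfies the H\"ormander--Mihlin condition (as already noted above), and when $a>0$ it is immediate from $K_a\in L^1$ that $(-\Delta+1)^{-\alpha}\in\B(L^p,L^p)$ for every $1\le p\le\infty$. For the sharp off-diagonal endpoint $\frac 1q=\frac 1p-\frac{2a}{3}$: if $0<2a<3$ then $K_a\in L^{3/(3-2a),\infty}$ by the bound above, so Hardy--Littlewood--Sobolev (Young's inequality in the Lorentz scale) yields $(-\Delta+1)^{-\alpha}\colon L^p\to L^q$ on this line for $1<p<q<\infty$; if $2a\ge 3$ the kernel lies in every $L^r$, $r<\infty$ (and in $L^\infty$ for $2a>3$), so plain Young already covers the whole admissible range. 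Finally, for intermediate exponents $\frac 1p-\frac{2a}{3}<\frac 1q<\frac 1p$ one interpolates (Riesz--Thorin, or real interpolation directly on the kernel bounds) between these two, and at $q=\infty$ or on the critical line one passes to a Lorentz-space refinement, which is the meaning of ``the usual modifications at the endpoints.''

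The only step requiring genuine care is establishing the pointwise asymptotics of $K_a$ uniformly across the regimes $0<2a<3$, $2a=3$, $2a>3$, together with keeping track of the correct weak-type versus $L^{p,1}$ formulation at the endpoints; once that is in place no serious obstacle remains, the rest being a routine application of Young, Hardy--Littlewood--Sobolev, and interpolation.
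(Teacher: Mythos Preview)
Your proof is correct and follows essentially the same route as the paper: both identify the Bessel potential kernel, record that it is integrable with a $|x|^{2a-3}$-type singularity at the origin and rapid decay at infinity (so lies in $L^1\cap L^{3/(3-2a),\infty}$ when $0<2a<3$), and then conclude by Young/HLS. The only difference is one of detail: you derive the kernel asymptotics via the heat-semigroup subordination formula and explicitly reduce complex $\alpha$ to real $a$ by pointwise domination, whereas the paper simply asserts the Fourier-analytic properties of the kernel and invokes Young's inequality directly.
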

	\begin{proof}
		For $\alpha > 0$, $(-\Delta+1)^{-\alpha}$ is given by convolution with the inverse Fourier transform of $(|\xi|^2+1)^{-\alpha}$.
		
		Since $(|\xi|^2+1)^{-\alpha}$ is an analytic function with polynomial decay at infinity, where it decays like $|\xi|^{-2\alpha}$, its inverse Fourier transform is a rapidly decaying function, which is smooth except for a singularity of $|x|^{2\alpha-3}$ at zero.
		
		Thus
		$$
		\mc F^{-1} (|\xi|^2+1)^{-\alpha} \in L^1 \cap L^{\frac {2\alpha}3-1, \infty}
		$$
		and the boundedness properties follow from Young's inequality.
	\end{proof}	
	
	The fractional integration kernels $(-\Delta+1)^{-\alpha}$, $\Re \alpha > 0$, are bounded in absolute value, so they also act as stated on the reversed Strichartz norms. Therefore, if a reversed Strichartz estimate holds for some exponent $\alpha$, we can increase the real part of $\alpha$ without prejudice:
	\begin{corollary}\lb{increase}
		If some reversed Strichartz estimate holds for $E_{\alpha}$ with a certain $\alpha$, then it also holds for all $\beta$ with $\Re \beta > \Re \alpha$.
	\end{corollary}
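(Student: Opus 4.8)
The plan is to reduce everything to the mapping properties of the pure fractional‑integration operator $(-\Delta+1)^{-\gamma}$, where $\gamma:=\beta-\alpha$ is a complex number with $\Re\gamma>0$. First I would record the Fourier‑multiplier identity in the spatial variable,
$$
E_\beta(t)=\frac{e^{it\sqrt{-\Delta+1}}}{(-\Delta+1)^\beta}=(-\Delta+1)^{-\gamma}\,E_\alpha(t),
$$
which holds because both sides are functions of $-\Delta$ alone, and $(-\Delta+1)^{-\gamma}$ acts only in $x$, commuting with everything in $t$. Consequently, if a reversed Strichartz estimate $E_\alpha:X\to Y$ is known, with $Y$ one of the mixed spacetime (Lorentz) spaces $L^{p,s}_x L^{q,r}_t$ on the target side, or a finite intersection of such, then it suffices to prove that $(-\Delta+1)^{-\gamma}$ is bounded on $Y$; post‑composition then yields $E_\beta=(-\Delta+1)^{-\gamma}E_\alpha:X\to Y$. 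Note that keeping the operator on the target side makes the argument indifferent to whether $X$ is $L^2_x$ or a mixed space $L^{p_1}_x L^{q_1}_t$, and the conclusion automatically transfers to $S_\beta$ and $C_\beta$, which are the imaginary and real parts of $E_\beta$.

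Next I would invoke the computation already carried out above: for $\Re\gamma>0$ the kernel $G_\gamma:=\mc F^{-1}(|\xi|^2+1)^{-\gamma}$ is a function in $L^1(\R^3)$, in fact in $L^1\cap L^{2\Re\gamma/3-1,\infty}$. Hence $(-\Delta+1)^{-\gamma}$ is dominated in absolute value by convolution in $x$ with the nonnegative $L^1_x$ kernel $|G_\gamma|$,
$$
\big|\big((-\Delta+1)^{-\gamma}F\big)(x,t)\big|\le\big(|G_\gamma|\ast_x|F|\big)(x,t).
$$
Then Young's inequality for functions valued in a Banach space $B$ (equivalently, Minkowski's integral inequality) shows that convolution in $x$ with $|G_\gamma|$ is bounded on $L^p_x(B)$ for every $1\le p\le\infty$; taking $B=L^{q,r}_t$, together with the standard inclusion $L^1_x\ast L^{p,s}_x\hookrightarrow L^{p,s}_x$ on the Lorentz scale and iterating over the finitely many factors of an intersection, gives that $(-\Delta+1)^{-\gamma}$ is bounded on $Y$. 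That closes the argument.

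I do not expect a genuine obstacle here — the argument is soft. The one point that truly matters, and the reason the statement requires $\Re\beta>\Re\alpha$ strictly rather than $\ge$, is that it fails for purely imaginary $\gamma=i\sigma$: there $(-\Delta+1)^{i\sigma}$ is only a Mihlin multiplier, bounded on $L^p_x$ for $1<p<\infty$ but not on $L^1_x$ or $L^\infty_x$, so the reversed Strichartz target spaces whose spatial exponent sits at the edge of the admissible range would not be preserved. Finally I would remark that the identical reasoning, with $(-\Delta+1)^{-\gamma}$ replaced by $(H+1)^{-\gamma}$ and using the boundedness of the latter on the relevant spaces, delivers the same monotonicity in $\Re\alpha$ for the perturbed evolution $E_\alpha^H$.
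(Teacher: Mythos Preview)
Your argument is correct and is essentially the paper's own approach: factor $E_\beta=(-\Delta+1)^{-\gamma}E_\alpha$ with $\gamma=\beta-\alpha$, use that the convolution kernel $G_\gamma\in L^1(\R^3)$ when $\Re\gamma>0$, and observe that domination by $|G_\gamma|\ast_x|\cdot|$ preserves every reversed mixed norm $L^{p,s}_x L^{q,r}_t$. You have simply made explicit what the paper compresses into the single remark that ``the fractional integration kernels $(-\Delta+1)^{-\alpha}$, $\Re\alpha>0$, are bounded in absolute value, so they also act as stated on the reversed Strichartz norms.''
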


	\subsection{The free sine propagator}\lb{subsec:sinpro}
	In order to introduce nonlinear potentials into the equation (i.e.\;to use our results in the study of semilinear Klein--Gordon equations), we need to bound the free sine propagator from Duhamel's formula (\ref{duhamel}).
	
	We split the inhomogenous term into two parts, the wave part
	$$\begin{aligned}
	(S_\Delta F)(t) &:= \int_{-\infty}^t \frac {\sin((t-s)\sqrt{-\Delta})} {\sqrt{-\Delta}} F(s) \dd s \\
	&= \int_{-\infty}^t \frac 1 {4 \pi (t-s)} \delta_{t-s}(|x|) \ast F(s) \dd s
	\end{aligned}$$
	and the Bessel part
	$$
	(S_B F)(t) := \int_{-\infty}^t \frac 1 {4\pi} \frac {J_1(\sqrt{(t-s)^2+|x|^2})}{\sqrt{(t-s)^2+|x|^2}} \ast F(s) \dd s.
	$$
	The wave part is well understood, see \cite{becgol}. Thus, it is left for us to examine the Bessel part.
	
	\begin{lemma}\lb{waveandBessel} The Bessel part of the propagator satisfies the following bounds:
		$$
		\|S_B\|_{L^p_t} \les \left\{\begin{aligned}
		&r^{1/p-3/2},&&1 \leq p<4/3,\\
		&r^{-1/p},&&1 \leq 4/3<p \leq\infty,
		\end{aligned}\right.
		$$
		while for $p=4/3$
		$$
		\|S_B\|_{L^{4/3, \infty}_t} \les r^{-3/4}.
		$$
	\end{lemma}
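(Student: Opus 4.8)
The plan is to read off the kernel of the Bessel part $S_B$ from the splitting~\eqref{eq:kgsplitfree} and then bound its $L^p_t$ norm by a pointwise estimate followed by an elementary integration; no oscillation is exploited, since we only take the $L^p_t$ norm of the kernel itself. At a fixed $r=|x|$ and for $t>0$, $S_B$ is the difference of the Klein--Gordon and the free wave sine kernels, hence equals $-\frac{1}{4\pi}\chi_{t>r}\,\frac{J_1(\sqrt{t^2-r^2})}{\sqrt{t^2-r^2}}$, and it is odd in $t$, so it suffices to estimate it for $t>0$. The Bessel asymptotics recalled in Appendix~\ref{sec:bessel} give $|J_1(z)/z|\les\langle z\rangle^{-3/2}$ (the quotient tends to $\tfrac{1}{2}$ at the origin and is $\les z^{-3/2}$ at infinity), so
\[
|S_B(r,t)|\les\chi_{t>r}\,\langle t^2-r^2\rangle^{-3/4}.
\]
This bound is sharp up to constants: just inside the light cone $S_B$ develops a plateau of height $\sim1$ and width $\sim r^{-1}$ (the set where $t^2-r^2\les1$), and this is exactly what forces the $r^{-1/p}$ rate for $p>4/3$.

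Next I would split $\{t>r\}$ into the diagonal piece $[r,2r]$ and the far piece $[2r,\infty)$. On $[2r,\infty)$ one has $t^2-r^2\ge t^2/2$, hence $\langle t^2-r^2\rangle\gtrsim\langle t\rangle^2$ and $|S_B|\les\langle t\rangle^{-3/2}$; this lies in every $L^p_t$, $p\ge1$, with $\|\chi_{[2r,\infty)}S_B\|_{L^p_t}\les r^{1/p-3/2}$ by a direct computation. On $[r,2r]$ one has $t^2-r^2=(t-r)(t+r)\sim r(t-r)$, so the substitution $v=r(t-r)\in[0,r^2]$ (with $\dd v=r\dd t$) reduces the estimate to $\frac1r\int_0^{r^2}\langle v\rangle^{-3p/4}\dd v$: for $p>4/3$ this integral is bounded uniformly in $r$, giving $\|\chi_{[r,2r]}S_B\|_{L^p_t}\les r^{-1/p}$, while for $p<4/3$ it is $\sim r^{2-3p/2}$, giving $\les r^{1/p-3/2}$. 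Adding the two pieces and comparing exponents shows the sum is $\les r^{-1/p}$ for $p>4/3$ and $\les r^{1/p-3/2}$ for $p<4/3$ when $r>1$, which is the claim.

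Two points remain, both routine. At $p=4/3$ the diagonal integral $\frac1r\int_0^{r^2}\langle v\rangle^{-1}\dd v$ diverges logarithmically, so I pass to the weak space $L^{4/3,\infty}_t$: the scaling identity $\|h(r\,\cdot)\|_{L^{q,\infty}}=r^{-1/q}\|h\|_{L^{q,\infty}}$ applied to $h(v)=\langle v\rangle^{-3/4}\in L^{4/3,\infty}(\R)$ handles $[r,2r]$ with the bound $r^{-3/4}$, while $\|\langle t\rangle^{-3/2}\chi_{t>2r}\|_{L^{4/3,\infty}_t}\le\|\langle t\rangle^{-3/2}\chi_{t>2r}\|_{L^{4/3}_t}\les r^{-3/4}$ handles the far piece. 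For $r<1$ the diagonal piece contributes $O(r^{1/p})$ and the far piece still obeys $|S_B|\les\langle t\rangle^{-3/2}$, so $\|S_B\|_{L^p_t}\les1$ for every $p\ge1$; since $\tfrac1p-\tfrac32\le0$ and $-\tfrac1p\le0$, this is automatically $\les r^{1/p-3/2}$ and $\les r^{-1/p}$, so the estimates also hold for small $r$. The only real obstacle is the bookkeeping --- keeping straight which of the two competing powers of $r$ dominates in each range of $p$ and $r$, and replacing the strong norm by a weak one at the single logarithmic endpoint $p=4/3$.
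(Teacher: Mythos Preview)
Your proof is correct and follows essentially the same approach as the paper: both use the pointwise bound $|S_B(r,t)|\les\chi_{t>r}\langle t^2-r^2\rangle^{-3/4}$ and then estimate the resulting integral by isolating the plateau near the light cone. The paper splits at $t=r+\delta$ with $\delta=r^{-1}$ and optimizes, whereas you split at $t=2r$ and use the change of variable $v=r(t-r)$; your organization is slightly cleaner and you handle the case $r<1$ explicitly, which the paper leaves implicit.
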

	The important norms here are
	\be\lb{important_norms}
	|x|^{-1/2} L^\infty_x L^1_t \subset L^{6, \infty}_x L^1_t,\ |x|^{-3/4} L^\infty_x L^{4/3, \infty}_t \subset L^{4, \infty}_x L^{4/3, \infty}_t,\ L^\infty_{x, t}.
	\ee
	All the other norms can be obtained by interpolation between these three.
	
	The first two norms have the specific Klein--Gordon scaling, $3/2$ powers of decay, while the last has no decay.
	
	This family of estimates contains no bound with the wave equation scaling, i.e.\;two powers of decay. 
	
	\begin{proof}
		For the Bessel part $S_B$, the $L^2_t$ norm of the Bessel propagator is $\les r^{-1/2}$:
		$$
		\frac {J_1(\sqrt{t^2-r^2})}{\sqrt{t^2-r^2}} \les \frac 1 {\langle t^2-r^2\rangle^{3/4}},
		$$
		so
		$$
		\| S_B \|_{L^2_t}^2 \les \int_r^{r+\delta} 1 \dd t + \int_{r+\delta}^\infty \frac {dt}{(t^2-r^2)^{3/2}} \les \delta+\frac 1 {\delta^{1/2}r^{3/2}} \les \frac 1 r.
		$$
		Here we set $\delta=r^{-1}$. More generally, for $p>4/3$ the $L^p_t$ norm is bounded by $r^{-1/p}$: again setting $\delta = r^{-1}$
		$$
		\| S_B \|_{L^p_t}^p \les \int_r^{r+\delta} 1 \dd t + \int_{r+\delta}^\infty \frac {dt}{(t^2-r^2)^{3p/4}} \les \delta+\frac 1 {\delta^{3p/4-1}r^{3p/4}} \les \frac 1 r.
		$$
		This has $2/p$ powers of decay ($1/p$ in time and $1/p$ in space), which would be the correct scaling when $p=1$ for the free wave propagator.
		
		However, for $p<4/3$ we get a different scaling for the Bessel part:
		$$
		\|S_B\|_{L^p_t}^p \les \int_r^\infty \frac {dt}{(t^2-r^2)^{3p/4}} \les r^{1-3p/2}.
		$$
		Hence the $\|S_B\|_{L^p_t}$ norm is bounded by $r^{1/p-3/2}$ in this case, so there are no more than $3/2$ powers of decay in total. In particular, the $L^1_t$ norm is bounded by $r^{-1/2}$. 
		
		At the endpoint $p=4/3$, we have to use a weak-type estimate in order to avoid logarithms:
		$$
		\|S_B\|_{L^{4/3, \infty}_t} \les r^{-3/4},
		$$
		so the kernel is in $L^{4, \infty}_x L^{4/3, \infty}_t$.
	\end{proof}
	
	\subsection{Pointwise decay estimates}
	Another important class of dispersive estimates is that of pointwise decay estimates.
	
	For the wave equation, the two best-known pointwise decay estimates are (see for example \cite{becgol})
	$$
	\bigg\|\frac {\sin(t\sqrt{-\Delta})}{\sqrt{-\Delta}} f\bigg\|_{L^\infty_x} \les |t|^{-1} \|\nabla f\|_{L^1},\ \|\cos(t\sqrt{-\Delta}) f\|_{L^\infty_x} \les |t|^{-1} \|\Delta f\|_{L^1}.
	$$
	
	In this section we replicate these estimates and their proofs for the Klein--Gordon equation.
	
	
	\begin{theorem}\lb{thm:fkgpointwise} Consider the free Klein--Gordon equation in $\R^{3+1}$. Same as for the wave equation,
		$$
		\|S_{1/2}(t) f\|_{L^\infty_x} \les |t|^{-1} \|\nabla f\|_{L^1},\ \|C_1(t) f\|_{L^\infty_x} \les |t|^{-1} \|f\|_{L^1}.
		$$
	\end{theorem}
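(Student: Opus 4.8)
The plan is to treat the two propagators separately. The cosine propagator $C_1(t)$ has, for $t\neq0$, a genuine bounded function for its kernel (the wave measure has been integrated away), so the estimate follows at once from the pointwise bounds already proved; the sine propagator $S_{1/2}(t)$ still carries the singular wave-measure term and must be handled by the classical trick of integrating $f$ against $\nabla f$.

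For $C_1$: for $t>0$ the kernel, as a function of $r=|x-y|$, is $C_1(r,t)=\frac{\chi_{r\ge t}}{4\pi r}-\frac1{4\pi}\int_{\max(t,r)}^\infty\frac{J_1(\sqrt{\tau^2-r^2})}{\sqrt{\tau^2-r^2}}\,d\tau$, and combining the pointwise bounds of Proposition~\ref{cosine_kernel} — on $r<t$ one has $|C_1(r,t)|\les\frac{1}{t\langle t^2-r^2\rangle^{1/4}}\le\frac1t$, while on $r>t$ one has $|C_1(r,t)|\les\min(e^{-r}r^{-1/2}\chi_{r>1},\,r^{-1})\le\frac1r<\frac1t$ — gives $\sup_{r\ge0}|C_1(r,t)|\les|t|^{-1}$ for every $t\neq0$. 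Hence $\|C_1(t)f\|_{L^\infty_x}\le\|C_1(\cdot,t)\|_{L^\infty_x}\|f\|_{L^1}\les|t|^{-1}\|f\|_{L^1}$.

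For $S_{1/2}$, assume $t>0$ (the kernel is odd in $t$) and take $f\in\S$. For a unit vector $\omega$ write $f(x+\rho\omega)=-\int_\rho^\infty\nabla f(x+s\omega)\cdot\omega\,ds$ and substitute this both into the spherical-means term $\frac{t}{4\pi}\int_{S^2}f(x+t\omega)\,d\omega$ coming from the wave part and into $-\frac1{4\pi}\int_{|z|\le t}\frac{J_1(\sqrt{t^2-|z|^2})}{\sqrt{t^2-|z|^2}}f(x+z)\,dz$ coming from the Bessel part; after passing to polar coordinates and using Fubini to interchange the old radial variable with $\rho$ one obtains
\[
S_{1/2}(t)f(x)=\frac1{4\pi}\int_{S^2}\int_0^\infty\bigl(G_t(\rho)-t\,\chi_{\rho>t}\bigr)\,\nabla f(x+\rho\omega)\cdot\omega\,d\rho\,d\omega,\qquad G_t(\rho):=\int_0^{\min(\rho,t)}\frac{J_1(\sqrt{t^2-r^2})}{\sqrt{t^2-r^2}}\,r^2\,dr .
\]
Changing variables back to $y=x+\rho\omega$, so that $d\rho\,d\omega=|x-y|^{-2}\,dy$, gives $\|S_{1/2}(t)f\|_{L^\infty_x}\le\frac1{4\pi}\bigl(\sup_{\rho>0}\rho^{-2}|G_t(\rho)-t\chi_{\rho>t}|\bigr)\|\nabla f\|_{L^1}$, and a density argument completes the proof. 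Everything is thus reduced to the pointwise kernel bound $\sup_{\rho>0}\rho^{-2}\,|G_t(\rho)-t\chi_{\rho>t}|\les t^{-1}$.

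To prove this I would use the substitution $u=\sqrt{t^2-r^2}$ and one integration by parts (via $J_0'=-J_1$, $J_0(0)=1$) to obtain the closed forms $G_t(\rho)=\rho J_0(\sqrt{t^2-\rho^2})-\int_{\sqrt{t^2-\rho^2}}^{t}\frac{uJ_0(u)}{\sqrt{t^2-u^2}}\,du$ for $0\le\rho\le t$ and $G_t(\rho)-t=-\int_0^t\frac{uJ_0(u)}{\sqrt{t^2-u^2}}\,du$ for $\rho\ge t$, together with the elementary identity $\int_a^t\frac{du}{\sqrt{t^2-u^2}}=\arccos(a/t)$. Using $|J_0(u)|\les\langle u\rangle^{-1/2}$ and the monotonicity bound $u\langle u\rangle^{-1/2}\le t\langle t\rangle^{-1/2}$ for $u\le t$, the range $\rho\ge t$ is immediate ($\rho^{-2}|G_t(\rho)-t|\les t\langle t\rangle^{-1/2}\rho^{-2}\les t^{-1}$), and so is the range $\sqrt t\le\rho\le t$ once $t\ge1$, since there $\arccos(\sqrt{t^2-\rho^2}/t)=\arcsin(\rho/t)\les\rho/t$ produces the needed gain in both terms. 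On the remaining range $0<\rho\le\sqrt t$ (with $t\ge1$) those two explicit expressions are each too lossy, so I would instead estimate $G_t(\rho)=\int_0^\rho\frac{J_1(\sqrt{t^2-r^2})}{\sqrt{t^2-r^2}}r^2\,dr$ directly: here $\sqrt{t^2-r^2}\sim t$ on all of $[0,\rho]$, so $|\frac{J_1(w)}{w}|\les\langle w\rangle^{-3/2}\les t^{-3/2}$ and $|G_t(\rho)|\les t^{-3/2}\rho^3$, whence $\rho^{-2}|G_t(\rho)|\les t^{-3/2}\rho\le t^{-1}$; the range $t<1$ is routine from $|J_1(w)/w|\les1$ and $|J_0|\les1$. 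By oddness the full estimate $\les|t|^{-1}$ holds for all $t\ne0$. The main obstacle is precisely this kernel bound: the crude absolute-value estimate of $G_t$ loses half a power of $t$ in the band $\sqrt t\lesssim\rho\lesssim t$, so one genuinely needs the oscillation of $J_1$ (exploited through the integration by parts), and one has to be careful about which of the two closed-form expressions for $G_t$ is the efficient one in each subregion.
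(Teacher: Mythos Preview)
Your proof is correct and follows essentially the same route as the paper. Both arguments reduce the $S_{1/2}$ estimate, via radial integration by parts, to the kernel bound $\sup_{\rho>0}\rho^{-2}|K_t(\rho)|\les t^{-1}$ for the very same function $K_t$ (your $G_t(\rho)-t\chi_{\rho>t}$), then use the substitution $u=\sqrt{t^2-r^2}$ and the identity $J_0'=-J_1$; the only difference is bookkeeping in the case analysis --- the paper splits at $\rho=t/2$ and takes a geometric mean of two bounds, while you split at $\rho=\sqrt t$ and use the crude $|J_1(w)/w|\les t^{-3/2}$ bound directly on the small piece. Your treatment of the $\rho\ge t$ range via the monotonicity of $u\langle u\rangle^{-1/2}$ is a bit cleaner than the paper's second integration by parts there. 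The $C_1$ estimate is handled identically in both.
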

	%
	%
	\begin{proof} In order to evaluate the decay of the translation-invariant sine propagator, we integrate by parts:
		$$\begin{aligned}
		\bigg(\frac {\sin(t\sqrt{-\Delta+1})}{\sqrt{-\Delta+1}} f\bigg)(0, t) &= \frac 1 {4\pi t} \int_{|y|=t} f(y) \dd y - \frac 1 {4\pi} \int_{|y| \leq t} \frac {J_1(\sqrt{t^2-r^2})}{\sqrt{t^2-r^2}} f(y) \dd y \\
		&= \frac 1 {4\pi} \int_{S^2} f(t, \omega) t - \int_0^t \frac {J_1(\sqrt{t^2-r^2})}{\sqrt{t^2-r^2}} f(r, \omega) r^2 \dd r \dd \omega \\
		&= \frac 1 {4\pi} \int_{S^2} \int_0^\infty K_t(r) f_r(r, \omega) \dd r \dd \omega \\
		&\les \sup_r \bigg|\frac {K_t(r)}{r^2}\bigg| \|f_r\|_{L^1},
		\end{aligned}$$
		where $K_t$ is the following integral kernel that we have to compute:
		\be\lb{first_expression}
		K_t(r) = \int_0^r \frac {J_1(\sqrt{t^2-\rho^2})}{\sqrt{t^2-\rho^2}} \rho^2 \dd \rho
		\ee
		for $r<t$ and
		\be\lb{second_expression}
		K_t(r) = \int_0^t \frac {J_1(\sqrt{t^2-\rho^2})}{\sqrt{t^2-\rho^2}} \rho^2 \dd \rho - t
		\ee
		for $r\geq t$; the latter expression is actually independent of $r$.
		
		For (\ref{first_expression}), since $|J_1(z)/z| \les 1$, there is always a bound of $r^3$ (or $r$ after dividing by $r^2$), which can be further refined to
		\be\lb{refinement}
		r \sup_{\rho \in [0, r]} \frac {J_1(\sqrt{t^2-\rho^2})}{\sqrt{t^2-\rho^2}} \les \frac r {\sqrt{t^2-r^2} \langle t^2-r^2\rangle^{1/4}}.
		\ee
		If $r<t/2$, then this is bounded by $r/t^{3/2}$, so by $t^{-1/2}$.
		
		After the substitution $\sigma=\sqrt{t^2-\rho^2}$ we also get
		$$\begin{aligned}
		(\ref{first_expression}) &=\int_{\sqrt{t^2-r^2}}^t J_1(\sigma) \sqrt{t^2-\sigma^2} \dd \sigma \\
		&= J_0(\sqrt{t^2-r^2}) r + \int_{\sqrt{t^2-r^2}}^t J_0(\sigma) \partial_\sigma \sqrt{t^2-\sigma^2} \dd \sigma \\
		&\les \sup_{\sigma \in [\sqrt{t^2-r^2}, t]} |J_0(\sigma)| r \les \frac{r}{\langle t^2-r^2 \rangle^{1/4}}.
		\end{aligned}$$
		Hence $(\ref{first_expression})/r^2$ is also bounded by
		$$
		\frac {1}{r\langle t^2-r^2\rangle^{1/4}}.
		$$
		and taking the geometric mean of this bound and (\ref{refinement}) we get
		$$
		\frac 1 {(t^2-\rho^2)^{1/4} \langle t^2-r^2\rangle^{1/4}}.
		$$
		This is less than $C/t$ whenever $r<t/2$, while the prior bound is less than $C/t$ when $t/2 \leq r < t$.
		
		For the second expression (\ref{second_expression}), the main term $J_0(0)t=t$ cancels and we are left with
		$$
		\int_0^t J_0(\sigma) \partial_\sigma \sqrt{t^2-\sigma^2} \dd \sigma.
		$$
		Since $J_0$ is bounded, this expression is bounded by $t$, then $t/r^2 \leq 1/t$ in fact, after dividing by $r^2$.
		
		For $t>1$, one can do better: dividing the integration region into two portions and integrating by parts on the lower first portion, we get
		$$
		\begin{aligned}
		&\int_0^{\sqrt{t^2-\rho^2}} \frac {\sigma J_0(\sigma) \dd \sigma}{\sqrt{t^2-\sigma^2}} + \int_{\sqrt{t^2-\rho^2}}^t \frac {\sigma J_0(\sigma) \dd \sigma}{\sqrt{t^2-\sigma^2}} \les \\
		&\les \frac {\sqrt{t^2-\rho^2} J_1(\sqrt{t^2-\rho^2})}{\rho} + \int_0^{\sqrt{t^2-\rho^2}} \frac {\sigma^2J_1(\sigma) \dd \sigma} {(t^2-\sigma^2)^{3/2}} + \frac \rho {\langle t^2-\rho^2 \rangle^{1/4}} \\
		& \les \frac {\sqrt{t^2-\rho^2}}{\langle t^2-\rho^2 \rangle^{1/4} \rho} + \frac \rho {\langle t^2-\rho^2 \rangle^{1/4}} \les 1,
		\end{aligned}
		$$
		by setting $\rho=\sqrt t$. This leads to a bound of $1/r^2 \leq 1/t^2$ for (\ref{second_expression}).
		
		%
		%
		
		The cosine kernel $C_1$ has size $\|C_1(t)\|_{L^\infty_r} \les 1/t$, same as for the wave equation, as proved by our previous estimates.
		%
		%
		%
	\end{proof}
	
		Specifically to Klein--Gordon equation, using Proposition \ref{fractional_powers}, we also obtain an important pointwise decay estimate for the endpoint with $\alpha=3/2$ of the fractional power of Klein--Gordon evolution. 	This is vital in the proof of the endpoint standard Strichartz estimate later on.
		We also record the poinwise decay for $\alpha=1/2$.
		\begin{theorem}\lb{KGpointDecay} Consider the free Klein--Gordon equation in $\R^{3+1}$. One has for $b\in\mathbb{R}$
			\[
			\left|E_{1+ib}\right|\left(r,t\right)\lesssim\frac{1}{|t|},\quad	\left|E_{\frac{5}{4}+ib}\right|\left(r,t\right)\lesssim\frac{1}{|t|^{\frac{3}{2}}}.
			\]
		\end{theorem}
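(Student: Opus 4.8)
The plan is to obtain both bounds as immediate specializations of Proposition~\ref{fractional_powers}: one chooses the parameter $\alpha$ there so that the displayed index $(1+\alpha)/2$ equals the desired power of $-\Delta+1$, and then checks, in each of the three regimes of that proposition ($t>r$ with $r$ arbitrary, $0\le t<r$ with $r\ge 1$, and $0\le t<r$ with $r\le 1$), that the stated estimate dominates the claimed power of $|t|$. As a preliminary reduction, since $\overline{E_\beta(t)}$ has the same kernel as $E_{\bar\beta}(-t)$ one has $|E_\beta(t)|(r)=|E_{\bar\beta}(|t|)|(r)$, so it suffices to treat $t>0$ (this is why the statement is phrased with $|t|$, and it is consistent with Proposition~\ref{fractional_powers}, whose proof also treats only $t>0$).

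For the estimate on $E_{1+ib}$ I would take $\alpha=1+2ib$, so that $a=\re\alpha=1\in(0,5/2)$ and $(1+\alpha)/2=1+ib$. Inside the light cone, Proposition~\ref{fractional_powers} gives $|E_{1+ib}|(r,t)\les t^{-1}\langle t^2-r^2\rangle^{-1/4}\le t^{-1}$. Outside the light cone, for $r\ge 1$ it gives $e^{-r}t^{1-a}r^{-1/2}=e^{-r}r^{-1/2}\les r^{-1}<t^{-1}$ (using $t<r$ and boundedness of $r^{1/2}e^{-r}$ for $r\ge1$), and for $r\le 1$ it gives $\bigl(r(r-t)^{1-a}\bigr)^{-1}=r^{-1}<t^{-1}$; combining the three regimes yields $|E_{1+ib}|(r,t)\les|t|^{-1}$.

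For the estimate on $E_{5/4+ib}$ I would take $\alpha=\tfrac32+2ib$, so that $a=\tfrac32\in(0,5/2)$ and $(1+\alpha)/2=\tfrac54+ib$. The key observation is that $\tfrac34-\tfrac a2=0$ at $a=\tfrac32$, so the inhomogeneous weight $\langle t^2-r^2\rangle^{3/4-a/2}$ in Proposition~\ref{fractional_powers} trivializes and inside the light cone one simply has $|E_{5/4+ib}|(r,t)\les t^{-3/2}$. Outside the light cone, for $r\ge1$ the bound is $e^{-r}t^{1-a}r^{-1/2}=e^{-r}\sqrt{t/r}\le e^{-r}\les r^{-3/2}<t^{-3/2}$ (using $t<r$ and boundedness of $r^{3/2}e^{-r}$ for $r\ge1$), and for $r\le 1$ the bound is $\bigl(r(r-t)^{1-a}\bigr)^{-1}=(r-t)^{1/2}r^{-1}\le r^{-1/2}<r^{-3/2}<t^{-3/2}$ (using $r<1$ and $t<r$); hence $|E_{5/4+ib}|(r,t)\les|t|^{-3/2}$.

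I do not expect a serious obstacle: the whole content sits inside Proposition~\ref{fractional_powers}, and the two claims are just its values at $\alpha=1+2ib$ and at the threshold $\alpha=\tfrac32+2ib$ --- the latter being exactly where the $\langle t^2-r^2\rangle$-weight drops out, leaving clean $|t|$-power decay (which is why $E_{5/4}$ is the natural endpoint used later). The only place needing a word of care is the borderline $b=0$ in the first estimate, where $\alpha\in\Z$ must be read through the removable singularities noted in Lemma~\ref{combinations}; equivalently, for $\re\alpha=1$ the cosine part is covered directly by Proposition~\ref{cosine_kernel} and the sine part is estimated in the same manner.
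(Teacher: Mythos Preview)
Your proposal is correct and follows essentially the same approach as the paper: both proofs specialize Proposition~\ref{fractional_powers} at $a=1$ and $a=\tfrac32$ and check the three regimes $t>r$, $0\le t<r$ with $r\ge1$, and $0\le t<r$ with $r\le1$. The paper's proof is terser (it only spells out the $a=\tfrac32$ case), and your final caveat about $\alpha\in\Z$ is unnecessary since Proposition~\ref{fractional_powers} is stated for the full range $0<\re\alpha<5/2$ without exclusions.
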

		\begin{proof}
			We consider $t\geq 0$ and focus on the second estimate above. Given $\alpha= a+ib$, recall that $0\leq t<r$ 
			\[
			\left|E_{\frac{1+\alpha}{2}}\right|\left(r,t\right)\lesssim\frac{e^{-r}}{t^{1-a}\sqrt{r}}
			\]
			when $r$ is large and 
			\[
			\left|E_{\frac{1+\alpha}{2}}\right|\left(r,t\right)\lesssim\frac{1}{r\left(r-t\right)^{1-a}}
			\]
			when $r\leq 1$ is small.  Setting $a=\frac{3}{2}$, one has the desired estimate in the first region. By construction, if $t> 1$, the second region does not exist. For $0\leq t\leq 1$, after plugging $a=\frac{3}{2}$, it is clear that $\frac{1}{r}<\frac{1}{t^{\frac{3}{2}}}$. 
			
			Next, for $t>r$
			\[
			\left|E_{\frac{1+\alpha}{2}}\right|\left(r,t\right)\lesssim\frac{1}{t^{a}\left\langle t^{2}-r^{2}\right\rangle ^{\frac{3}{4}-\frac{a}{2}}}.
			\]
			Taking $a=\frac{3}{2}$, we obtain the desired estimate in this region.
		\end{proof}

	\section{The Klein--Gordon equation with a scalar potential}\lb{KGP}
	In this section, consider the linear Klein--Gordon equation with a time-independent potential (\ref{pkg}):
	$$
	u_{tt} - \Delta u + u + V(x) u = F,\ u(0)=u_0,\ u_t(0)=u_1.
	$$
	
	We extend all estimates obtained for the free Klein--Gordon equation in Section \ref{FKG} to the perturbed equation (\ref{pkg}) using the spectral representation of the fundamental solution and results from \cite{becgol}.
	
	The following computation will help us better understand the difference between the Bessel and wave portion of the Klein--Gordon sine propagator.
	\begin{lemma}\lb{spectral} For $r, t \in \R$, $t \ne 0$,
		$$
		\frac 1 {2|t|} \int_{-\infty}^\infty \big(\cos(t\sqrt{\lambda^2+1}) - \cos(t\lambda)\big) e^{i\lambda r} \dd \lambda = - \chi_{r \leq |t|} \frac {J_1(\sqrt{t^2-r^2})}{\sqrt{t^2-r^2}}.
		$$
	\end{lemma}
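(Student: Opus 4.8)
The plan is to recognize the left-hand side as (a multiple of) the one-dimensional convolution kernel of $\cos(t\sqrt{-\partial_{x}^{2}+1})-\cos(t\sqrt{-\partial_{x}^{2}})$ and to compute it by differentiating a sine kernel in $t$. Since $\cos(t\sqrt{\lambda^{2}+1})$ and $\cos(t\lambda)$ are even in $t$, it is enough to treat $t>0$. Up to the constant fixed by the Fourier convention, the left side is $\frac{1}{2t}$ times the convolution kernel, evaluated at $r\in\R$, of $\cos(t\sqrt{-\partial_{x}^{2}+1})-\cos(t\sqrt{-\partial_{x}^{2}})$; the point is that, on subtracting, the parts of the two kernels supported on the light cone $|x|=t$ cancel, leaving exactly the Bessel term.

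We need nothing new for the one-dimensional sine kernels: they follow from the three-dimensional formula \eqref{eq:kgsplitfree} already in hand. For a radial Fourier multiplier $g(|\xi|)$ the three- and one-dimensional convolution kernels obey $K_{3}(r)=-\frac{1}{2\pi r}\,\partial_{r}K_{1}(r)$; integrating from $r$ to $+\infty$ (where $K_{1}$ vanishes) and substituting \eqref{eq:kgsplitfree} for $g(\rho)=\sin(t\sqrt{\rho^{2}+1})/\sqrt{\rho^{2}+1}$ yields, after the change of variable $s=\sqrt{t^{2}-\rho^{2}}$ and the identity $\int_{0}^{a}J_{1}=1-J_{0}(a)$,
\[
\frac{\sin(t\sqrt{-\partial_{x}^{2}+1})}{\sqrt{-\partial_{x}^{2}+1}}=\tfrac12\,\chi_{|x|<t}\,J_{0}\!\big(\sqrt{t^{2}-x^{2}}\big),\qquad t>0,
\]
the wave case ($g(\rho)=\sin(t\rho)/\rho$, i.e.\ $J_{0}\equiv1$) giving $\tfrac12\chi_{|x|<t}$. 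Differentiating in $t$: the jump of $\chi_{|x|<t}$ across $|x|=t$ produces $\tfrac12\delta(|x|-t)$ since $J_{0}(0)=1$, whereas inside the cone $\partial_{t}J_{0}(\sqrt{t^{2}-x^{2}})=-\frac{t}{\sqrt{t^{2}-x^{2}}}J_{1}(\sqrt{t^{2}-x^{2}})$ (using $J_{0}'=-J_{1}$) produces $-\frac{t}{2}\chi_{|x|<t}\frac{J_{1}(\sqrt{t^{2}-x^{2}})}{\sqrt{t^{2}-x^{2}}}$. For the wave equation the same computation with $J_{0}$ replaced by $1$ produces only $\tfrac12\delta(|x|-t)$. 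Subtracting kills the light-cone $\delta$'s and leaves $-\frac{t}{2}\chi_{|x|<t}\frac{J_{1}(\sqrt{t^{2}-x^{2}})}{\sqrt{t^{2}-x^{2}}}$; dividing by $t$ and reinstating the Fourier constant is the asserted identity.

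A second route, which sidesteps the one-dimensional fundamental solution, is to compute the integral directly. The substitution $\lambda=\sinh\theta$, $\sqrt{\lambda^{2}+1}=\cosh\theta$, turns $\int_{\R}\cos(t\sqrt{\lambda^{2}+1})e^{i\lambda r}\,d\lambda$ into $\int_{\R}\cos(t\cosh\theta)\,e^{ir\sinh\theta}\cosh\theta\,d\theta$. When $0\le r<t$, writing $(t,r)=\sqrt{t^{2}-r^{2}}(\cosh\phi,\sinh\phi)$ and applying the hyperbolic addition formula sends the two exponentials $e^{\pm it\cosh\theta}e^{ir\sinh\theta}$ to $e^{\pm i\sqrt{t^{2}-r^{2}}\cosh(\theta\pm\phi)}$; a shift of $\theta$, the evenness of $\cosh$, and the oddness of $\sinh$ then collapse the integral to $\cosh\phi\cdot\int_{\R}\cos\!\big(\sqrt{t^{2}-r^{2}}\cosh\theta\big)\cosh\theta\,d\theta$, and this last integral equals $-\pi J_{1}(\sqrt{t^{2}-r^{2}})$ upon differentiating in $z$ the classical representation $\int_{0}^{\infty}\sin(z\cosh\theta)\,d\theta=\frac{\pi}{2}J_{0}(z)$. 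Since $\cosh\phi=t/\sqrt{t^{2}-r^{2}}$, this is the Bessel term. When $r>t$, the spacelike parametrization $(t,r)=\sqrt{r^{2}-t^{2}}(\sinh\psi,\cosh\psi)$ collapses the integral to $\cosh\psi\int_{\R}e^{i\sqrt{r^{2}-t^{2}}\sinh\theta}\cosh\theta\,d\theta=\cosh\psi\int_{\R}e^{i\sqrt{r^{2}-t^{2}}u}\,du=0$, because $\sqrt{r^{2}-t^{2}}>0$; and $\cos(t\lambda)$ contributes only a combination of $\delta(r\pm t)$, hence nothing for $|x|\ne t$. This matches the right side in both regimes.

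The one genuinely delicate issue, shared by both routes, is that these Fourier integrals are only conditionally convergent and must be read in $\mathcal S'$: one has to justify differentiating under the integral sign in $t$ and, above all, account correctly for the distributional mass concentrated on the light cone $|x|=t$ (the $\delta$-terms above), while keeping careful track of the $2\pi$-type normalization constants. Once that bookkeeping is in place, either computation delivers the stated formula.
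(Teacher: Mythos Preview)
Your proof is correct, and both routes work. Route 1 is a close cousin of the paper's argument: both take the known three-dimensional sine kernel \eqref{eq:kgsplitfree} as input and produce the cosine identity by a single differentiation/integration. The paper stays on the spectral side, writing the $3$D sine propagator as $\int_0^\infty \frac{\sin(t\sqrt{\lambda^2+1})}{\sqrt{\lambda^2+1}}\frac{\sin(\lambda r)}{r}\lambda\,d\lambda$, extending to $\lambda\in\R$, and then integrating by parts in $\lambda$ via $\frac{\sin(t\sqrt{\lambda^2+1})}{\sqrt{\lambda^2+1}}\lambda=-\tfrac{1}{t}\partial_\lambda\cos(t\sqrt{\lambda^2+1})$; you instead pass to the physical side, first integrating the $3$D kernel in $r$ to obtain the $1$D sine kernel $\tfrac12\chi_{|x|<t}J_0(\sqrt{t^2-x^2})$, and then differentiating in $t$. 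These are dual reorganizations of the same computation; your packaging has the mild advantage of making the $1$D fundamental solution explicit, which is a pleasant byproduct.

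Route 2, by contrast, is genuinely different from anything in the paper: the hyperbolic substitution $\lambda=\sinh\theta$ together with the Lorentz-boost parametrization of $(t,r)$ reduces the integral directly to the Mehler--Sonine representation $\int_0^\infty\sin(z\cosh\theta)\,d\theta=\tfrac{\pi}{2}J_0(z)$ (differentiated once). This is self-contained and does not appeal to the $3$D formula at all. The one point to watch here is that for $|r|>t$ you reduce to $\int_\R e^{ia u}\,du$ with $a>0$, which vanishes only distributionally; and your casewise treatment does not, on its own, verify that the Klein--Gordon piece carries a matching $\delta$ on the light cone $|r|=t$ that cancels the wave contribution. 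You flag exactly this in your closing paragraph, so you are aware; Route 1 handles it transparently since the $\tfrac12\delta(|x|-t)$ from $\partial_t\chi_{|x|<t}$ appears identically in both the Klein--Gordon and wave kernels and cancels in the difference.
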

	\begin{proof} With no loss of generality, assume $t>0$. By spectral calculus, the free Klein--Gordon sine propagator is given by the formula
		\begin{multline}\lb{spectral_calc}
		\frac 1 {2\pi i} \int_0^\infty \frac {\sin(t\sqrt{\lambda^2+1})}{\sqrt{\lambda^2+1}} (R_{0+}(\lambda^2)-R_{0-}(\lambda^2)) \lambda \dd \lambda = \\
		=\frac {\sgn t} {4\pi} \bigg(\frac {\delta_r(t)} r - \chi_{r \leq |t|} \frac {J_1(\sqrt{t^2-r^2})}{\sqrt{t^2-r^2}}\bigg).
		\end{multline}
		The density of the spectral measure of the free Laplacian appearing in this formula is
		$$
		\frac 1 {2\pi i} (R_{0+}(\lambda^2)-R_{0-}(\lambda^2)) = \frac {\sin(\lambda r)}{4\pi r}.
		$$
		Consequently, identifying the two expressions in (\ref{spectral_calc}) pointwise, we obtain that for all $r \in \R$ and $t \ne 0$
		\be\begin{aligned}\lb{identity}
			(\sgn t)\bigg(\frac {\delta_{|t|}(r)} {|t|} - \frac {J_1(\sqrt{t^2-r^2})}{\sqrt{t^2-r^2}}\bigg) &= \int_0^\infty \frac {\sin(t\sqrt{\lambda^2+1})}{\sqrt{\lambda^2+1}} \frac {\sin(\lambda r)} r \lambda \dd \lambda \\
			&=
			\frac 1 {2i} \int_{-\infty}^\infty \frac {\sin(t\sqrt{\lambda^2+1})}{\sqrt{\lambda^2+1}} \frac {e^{i\lambda r}}{r} \lambda \dd \lambda \\
			&= \frac 1 {2t} \int_{-\infty}^\infty \cos(t\sqrt{\lambda^2+1}) e^{i\lambda r} \dd \lambda,
		\end{aligned}\ee
		in the sense of the Fourier transform of tempered distributions.
		
		
		Doing the same computation for the free wave equation sine propagator leads instead to
		$$
		\frac 1 {2t} \int_{-\infty}^\infty \cos(t|\lambda|) e^{i\lambda r} \dd \lambda = (\sgn t) \frac {\delta_{|t|}(r)} {|t|}.
		$$
		Taking the difference, we have obtained that
		$$
		\frac 1 {2t} \int_{-\infty}^\infty \big(\cos(t\sqrt{\lambda^2+1}) - \cos(t\lambda)\big) e^{i\lambda r} \dd \lambda = - (\sgn t) \chi_{r \leq |t|} \frac {J_1(\sqrt{t^2-r^2})}{\sqrt{t^2-r^2}}.
		$$
		This is the desired conclusion.
	\end{proof}
	
	Now we prove reverse Strichartz estimates for perturbed Hamiltonians $H=-\Delta+V$. We lose the translation invariance, but otherwise the estimates are the same as in the free case.
	
	Just as in the free case, the wave part of the Klein--Gordon propagator is already understood, in this case due to \cite{becgol}. Thus, we are left with analyzing the difference between the Klein--Gordon and wave sine propagators, which we call the Bessel part of the propagator by analogy with the free equation.
	
	This part scales differently from the wave part, hence satisfies different bounds.
	
	\begin{theorem}\lb{SBH} Consider the Klein--Gordon equation in $\R^{3+1}$ with Hamiltonian $H=-\Delta+V$, $V \in L^{3/2, 1}$, such that $H$ has no threshold bound states (eigenstates or resonance). Then the difference of kernels (the Bessel part)
		$$
		S_B^H(t) = \frac {\sin(t\sqrt{H+1})P_c}{\sqrt{H+1}} - \frac {\sin(t\sqrt{H})P_c}{\sqrt{H}}
		$$
		is bounded in $L^\infty_{x, y, t}$ and
		$$
		\|S_B^H(t)\|_{L^1_t} \les |x-y|^{-1/2},\ \|S_B^H(t)\|_{L^{4/3, \infty}_t} \les |x-y|^{-3/4},\ \|S_B^H(t)\|_{L^\infty_t} \les 1.
		$$
		Consequently $S_B^H$ is dominated by a convolution operator, with convolution kernel in
		$$
		|x|^{-1/2} L^\infty_x L^1_t \subset L^{6, \infty}_x L^1_t,\ |x|^{-3/4} L^\infty_x L^{4/3, \infty}_t \subset L^{4, \infty}_x L^{4/3, \infty}_t,\ L^\infty_{x, t}.
		$$
	\end{theorem}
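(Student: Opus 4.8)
The plan is to reduce $S_B^H$ to two ingredients already available: the free Bessel--kernel bounds of Lemma~\ref{waveandBessel}, and the kernel bounds for the perturbed wave sine propagator $\frac{\sin(s\sqrt H)P_c}{\sqrt H}$ proved in \cite{becgol}. The bridge is a transmutation identity coming from Lemma~\ref{spectral}: reading that lemma as a Fourier inversion in the spectral variable (equivalently, matching it against the explicit split \eqref{eq:kgsplitfree}) gives the scalar relation
$$\cos(t\sqrt{\mu+1})-\cos(t\sqrt\mu)=-|t|\int_0^{|t|}\frac{J_1(\sqrt{t^2-s^2})}{\sqrt{t^2-s^2}}\,\cos(s\sqrt\mu)\,ds,\qquad \mu\geq 0,$$
and, since $P_c$ projects onto the spectral subspace on which $H\geq 0$, the spectral theorem turns this into an operator identity. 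Writing $S_B^H(t)=\int_0^t[\cos(s\sqrt{H+1})-\cos(s\sqrt H)]P_c\,ds$, substituting the identity, applying Fubini together with $\int_\sigma^t s\,\frac{J_1(\sqrt{s^2-\sigma^2})}{\sqrt{s^2-\sigma^2}}\,ds=1-J_0(\sqrt{t^2-\sigma^2})$, and integrating by parts once via $\cos(\sigma\sqrt H)=\partial_\sigma\frac{\sin(\sigma\sqrt H)}{\sqrt H}$ (boundary terms vanish since $1-J_0(0)=0$), I arrive at
$$S_B^H(t)=-\int_0^{|t|}\frac{s\,J_1(\sqrt{t^2-s^2})}{\sqrt{t^2-s^2}}\,\frac{\sin(s\sqrt H)P_c}{\sqrt H}\,ds\qquad(t>0),$$
and $S_B^H(-t)=-S_B^H(t)$ for $t<0$; taking $V\equiv 0$ and $\frac{\sin(s\sqrt{-\Delta})}{\sqrt{-\Delta}}(x,y)=\frac{\delta_{|x-y|}(s)}{4\pi|x-y|}$ recovers \eqref{eq:kgsplitfree}, a consistency check.

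Next I feed in the \cite{becgol} description of the perturbed wave kernel: $[\frac{\sin(s\sqrt H)P_c}{\sqrt H}](x,y)$ is dominated, as a measure in $s$, by a translation-invariant kernel $W(s,x-y)$ supported in $\{|s|\geq|x-y|\}$, with $\|W(\cdot,z)\|_{\mathcal M_s}\les|z|^{-1}$ and mass concentrated near $|s|\sim|z|$ (the leading part being the free light-cone measure). Then $|S_B^H(t)(x,y)|$ is dominated by the convolution in $s$ of the nonnegative Bessel weight $\chi_{|t|\geq s}\,\frac{s\,|J_1(\sqrt{t^2-s^2})|}{\sqrt{t^2-s^2}}$ against $W(\cdot,x-y)$, so it remains to compute the $t$-norms of the weight; using only $|J_1(z)/z|\leq\tfrac12$ near $z=0$ and $|J_1(z)/z|\les\langle z\rangle^{-3/2}$ --- the same facts used for the free kernel in Lemma~\ref{waveandBessel} --- one finds
$$\Bigl\|\,\chi_{|t|\geq s}\,\tfrac{s\,|J_1(\sqrt{t^2-s^2})|}{\sqrt{t^2-s^2}}\,\Bigr\|_{L^1_t}\les\min(s,\sqrt s),\qquad \Bigl\|\,\cdot\,\Bigr\|_{L^{4/3,\infty}_t}\les\min(s,s^{1/4}),\qquad \Bigl\|\,\cdot\,\Bigr\|_{L^\infty_t}\les s.$$
Convolving against $W(\cdot,x-y)$, whose mass $\les|x-y|^{-1}$ sits near $s\sim|x-y|$, gives by Minkowski the three asserted bounds $|x-y|^{-1/2}$, $|x-y|^{-3/4}$, $1$; the pointwise $L^\infty_{x,y,t}$ bound follows from the same domination, using in addition $\frac{s|J_1(\sqrt{t^2-s^2})|}{\sqrt{t^2-s^2}}\les s\langle t^2-s^2\rangle^{-3/4}$ when $s\ll|t|$. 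Finally $|x-y|^{-1/2}\in L^{6,\infty}_x$, $|x-y|^{-3/4}\in L^{4,\infty}_x$, $1\in L^\infty_x$, so $S_B^H$ is dominated by a convolution operator with kernel in $L^{6,\infty}_xL^1_t\cap L^{4,\infty}_xL^{4/3,\infty}_t\cap L^\infty_{x,t}$, and intermediate norms follow by interpolation. The hypotheses $V\in L^{3/2,1}$ and absence of threshold bound states enter only through the \cite{becgol} input, being precisely what makes the limiting absorption principle --- hence those wave-kernel bounds --- valid down to zero energy.

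I expect the main obstacle to be the large-$s$ tail of $W(\cdot,x-y)$: because the Bessel weight grows in $s$ (like $\sqrt s$ in the $L^1_t$ norm, like $s$ pointwise), the bare bound $\|W(\cdot,z)\|_{\mathcal M_s}\les|z|^{-1}$ does not suffice --- one genuinely needs the sharper statement that the mass of $W$ beyond $\{s\gtrsim 2|z|\}$ decays, i.e.\ a quantitative local-energy-decay property of the perturbed wave flow. The same difficulty resurfaces if one bypasses the transmutation identity and plugs the Born expansion $R_V=\sum_n(-R_0V)^nR_0+(\text{remainder})$ into Stone's formula for $S_B^H$: each term is a free Bessel kernel evaluated at a ``radius'' $\rho$ that is an algebraic combination of the inter-scatterer distances $|x-z_1|,|z_1-z_2|,\dots,|z_n-y|$ and can be small (or negative of small modulus), where the Bessel $L^1_t$ norm is only $O(1)$ rather than decaying in $\rho$, so that the spatial decay and the convergence of the series must be extracted from the $V\in L^{3/2,1}$ factors and the uniform-in-$\lambda$ resolvent bounds, with the no-threshold hypothesis controlling $(I+R_0(\lambda^2\pm i0)V)^{-1}$. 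In either route the real work is the bookkeeping that keeps the Klein--Gordon scaling --- $3/2$ powers of $|x-y|$ in $L^1_t$ rather than the $1$ or $2$ powers of the wave equation --- intact through the perturbation.
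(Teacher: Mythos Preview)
Your transmutation identity is precisely the representation the paper derives (via Lemma~\ref{spectral} and the identification $(\partial_\lambda R_{V+}(\lambda^2)P_c)^\vee(\tau)=\chi_{\tau\geq0}\,\tau\,\frac{\sin(\tau\sqrt H)P_c}{\sqrt H}$): up to a constant, for $t>0$,
$$S_B^H(t)(x,y)=-\int_0^t\frac{J_1(\sqrt{t^2-\tau^2})}{\sqrt{t^2-\tau^2}}\cdot\tau\,\Big[\frac{\sin(\tau\sqrt H)P_c}{\sqrt H}\Big](x,y)\,d\tau.$$
The obstacle you flag is real but comes from attaching the factor $\tau$ to the wrong piece. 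You keep $\tau$ with the Bessel weight, whose $L^1_t$ norm then grows like $\sqrt\tau$, and pair it with the \cite{becgol} bound $\big\|[\frac{\sin(\cdot\sqrt H)P_c}{\sqrt H}](x,y)\big\|_{\mathcal M_\tau}\les|x-y|^{-1}$; this indeed forces an unproved concentration-of-mass hypothesis on $W$. The paper instead keeps $\tau$ with the wave kernel and invokes the \emph{other} \cite{becgol} input, the uniform bound
$$\sup_{x,y}\Big\|\tau\,\big[\tfrac{\sin(\tau\sqrt H)P_c}{\sqrt H}\big](x,y)\Big\|_{L^1_\tau}<\infty,$$
which is (\ref{crucial}). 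Now the bare Bessel weight satisfies $\big\|\chi_{|t|\geq\tau}\,J_1(\sqrt{t^2-\tau^2})/\sqrt{t^2-\tau^2}\big\|_{L^1_t}\les\tau^{-1/2}$, \emph{decreasing} in $\tau$, so on the region $\tau\geq|x-y|$ Minkowski gives $\sup_{\tau\geq|x-y|}\tau^{-1/2}=|x-y|^{-1/2}$ with no tail hypothesis whatsoever. The $L^{4/3,\infty}_t$ and $L^\infty_t$ estimates go identically with $\tau^{-3/4}$ and $1$ in place of $\tau^{-1/2}$.

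There is a second gap you pass over: a translation-invariant majorant supported in $\{s\geq|x-y|\}$ does not exist when $H$ has bound states. Finite propagation speed holds for $\frac{\sin(\tau\sqrt H)}{\sqrt H}$, not for its $P_c$ part, so the latter has a nonzero tail on $0\leq\tau<|x-y|$. The paper treats this tail by writing it as minus the point-spectrum contribution $-\sum_k\frac{\sinh(\tau\sqrt{-\lambda_k})}{\sqrt{-\lambda_k}}f_k(x)\overline{f_k}(y)$ and applying the sharp Agmon bounds of Appendix~\ref{sec:agmon}: the exponential growth in $\tau$ is beaten by the eigenfunctions' exponential spatial decay, and integrating over $[0,|x-y|]$ yields $|x-y|^{1/2}\langle x\rangle^{-1}\langle y\rangle^{-1}\les|x-y|^{-1/2}$. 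So the two missing ingredients are the regrouping of $\tau$ together with bound~(\ref{crucial}), and the Agmon-based treatment of the outside-light-cone region.
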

	These are the three most important sine propagator bounds we obtained in the free Klein--Gordon case, see Lemma \ref{waveandBessel}, from which one can recover all the others by interpolation.
	\begin{proof}
		Taking the difference between the Klein--Gordon and wave sine propagators, we are left with
		$$\begin{aligned}
		&S_B^H(t) = \frac {\sin(t\sqrt{H+1})P_c}{\sqrt{H+1}} - \frac {\sin(t\sqrt{H})P_c}{\sqrt{H}} = \\
		&=\frac 1 {2\pi} \int_0^\infty \bigg(\frac {\sin(t\sqrt{\lambda^2+1})}{\sqrt{\lambda^2+1}} - \frac {\sin(t \lambda)}{\lambda} \bigg) (R_{V+}(\lambda^2)-R_{V-}(\lambda^2))P_c \lambda \dd \lambda \\
		&= \frac 1 {2\pi} \int_{-\infty}^\infty \bigg(\frac {\sin(t\sqrt{\lambda^2+1})}{\sqrt{\lambda^2+1}} - \frac {\sin(t \lambda)}{\lambda}\bigg) R_{V+}(\lambda^2)P_c \lambda \dd \lambda \\
		&= \frac 1 {2\pi t} \int_{-\infty}^\infty (\cos(t\sqrt{\lambda^2+1}) - \cos(t\lambda)) \partial_\lambda R_{V+}(\lambda^2)P_c \dd \lambda \\
		&= -\frac {\sgn t} \pi \int_{-|t|}^{|t|} \frac {J_1(\sqrt{t^2-\tau^2})}{\sqrt{t^2-\tau^2}} \big(\partial_\lambda R_{V+}(\lambda^2)P_c\big)^\vee(\tau) \dd \tau
		\end{aligned}$$
		by the previous Lemma \ref{spectral}.
		
		As shown in \cite{becgol}, the last factor is in fact
		\be\lb{last_factor}
		\big(\partial_\lambda R_{V+}(\lambda^2)P_c\big)^\vee(x, y, \tau) = \chi_{\tau \geq 0} \frac {\tau \sin(\tau\sqrt H)P_c}{\sqrt H}(x, y).
		\ee
		
		
		From \cite{becgol} we also know that this expression is bounded by
		\be\lb{crucial}
		\sup_{x, y} \|(\partial_\lambda R_{V+}(\lambda^2)P_c)^\vee(x, y, \tau)\|_{L^1_\tau} = \sup_{x, y} \|\tau (R_{V+}(\lambda^2)P_c)^\vee(x, y, \tau)\|_{L^1_\tau} < \infty.
		\ee
		
		In other words, for each $x, y \in \R^3$, the kernel $S_H^B(x, y, t)$ is an integrable combination of $\frac {J_1(\sqrt{t^2-\tau^2})}{\sqrt{t^2-\tau^2}}$ for various values of $\tau$ for which $|\tau| \leq |t|$.
		
		We thus obtain a bound of
		$$
		\sup_{x, y, t} \bigg|\frac{\sin(t\sqrt{H+1})P_c}{\sqrt{H+1}} - \frac{\sin(t\sqrt H)P_c}{\sqrt H}\bigg| \les \sup_{0 \leq \tau \leq t} \frac {J_1(\sqrt{t^2-\tau^2})}{\sqrt{t^2-\tau^2}} < \infty.
		$$
		This is one of the three main bounds we proved for the free Klein--Gordon propagator, see (\ref{important_norms}).

		A further fact we can use is that, due to the finite speed of propagation of the wave equation,
		$$
		\frac {\tau \sin(\tau\sqrt H)}{\sqrt H}(x, y)
		$$
		is supported on the region $|\tau| \geq |x-y|$. However, if $H$ has negative energy eigenstates, then the expression we are actually interested in,
		$$
		\frac {\tau \sin(\tau\sqrt H)P_c}{\sqrt H}(x, y),
		$$
		has nonzero tails in the region $|\tau| < |x-y|$, whose contribution we have to evaluate.
		
		We use this splitting to prove an $L^1_t$ bound for the kernel $S^B_H$. In the region $\tau \geq |x-y|$, since
		\be\lb{bessel_bound}
		\int_{|t| \geq \tau} \bigg|\frac {J_1(\sqrt{t^2-\tau^2})}{\sqrt{t^2-\tau^2}}\bigg| \dd t \les \int_{|t| \geq \tau} \frac {dt}{(t^2-\tau^2)^{3/4}} \les \tau^{-1/2}
		\ee
		and the expression (\ref{last_factor}) is integrable in $\tau$ as per (\ref{crucial}), we get a contribution of
		\begin{multline*}
		\bigg\|\int_{-t}^t \frac {J_1(\sqrt{t^2-\tau^2})}{\sqrt{t^2-\tau^2}} \chi_{\tau \geq |x-y|} \big(\partial_\lambda R_{V+}(\lambda^2)P_c\big)^\vee(\tau) \dd \tau\bigg\|_{L^1_t}
		\les \sup_{\tau \geq |x-y|} \tau^{-1/2} \leq \frac 1 {|x-y|^{1/2}}.
		\end{multline*}
		
		Next, we consider the contribution of the region outside the light cone. In this region, due to the finite speed of propagation of the overall solution, the continuous and point spectrum parts must exactly cancel out. Supposing there are $N$ eigenvalues $\lambda_1,\ldots,\lambda_N$, with normalized eigenstates $f_1,\ldots,f_N$,
		$$\begin{aligned}
		\chi_{|\tau|<|x-y|} \frac {\sin(\tau\sqrt H)P_c}{\sqrt H} &= -\chi_{|\tau|<|x-y|} \frac{\sin(\tau\sqrt H) P_p}{\sqrt H} \\
		&= -\chi_{|\tau|<|x-y|} \sum_{k=1}^N \frac {\sinh(\tau\sqrt{-\lambda_k})}{\sqrt{-\lambda_k}} f_k(x) \otimes \ov f_k(y).
		\end{aligned}
		$$
		Due to sharp Agmon bounds, see Appendix \ref{sec:agmon}, we get that for $0 \leq \tau < |x-y|$
		\be\lb{fine_bound}
		(R_{V+}(\lambda^2)P_c)^\vee(x, y, \tau) = \sum_{k=1}^N R_k,\ |R_k| \les \langle x \rangle^{-1} \langle y \rangle^{-1} e^{(\tau - |x| - |y|)\sqrt{-\lambda_k}},
		\ee
		hence
		$$
		|(\partial_\lambda R_{V+}(\lambda^2)P_c)^\vee(x, y, \tau)| \les \tau \langle x \rangle^{-1} \langle y \rangle^{-1} e^{(\tau - |x| - |y|)\sqrt{-\lambda_1}}.
		$$
		
		This leads to a uniform bound of $\langle x \rangle^{-1} \langle y \rangle^{-1}$, but we actually need to use formula (\ref{fine_bound}) under the integral. Doing this saves a power of $\tau$, as follows: for $\alpha, c>0$
		$$
		\int_0^T \tau^\alpha e^{c\tau} \dd \tau \les T^\alpha e^{cT}. 
		$$
		
		The $L^1_t$ norm contributed by the region outside the light cone is no more than
		$$
		\sum_{k=1}^N \int_0^{|x-y|} \frac {\tau^{1/2} e^{(\tau - |x| - |y|)\sqrt{-\lambda_k}} \dd \tau}{\langle x \rangle \langle y \rangle} \les \frac {|x-y|^{1/2}}{\langle x \rangle \langle y \rangle} \les \frac 1 {|x-y|^{1/2}}.
		$$
		Here the $\tau^{1/2}$ factor comes from (\ref{bessel_bound}) together with one power of $\tau$ corresponding to $\partial_\lambda R_V$.
		
		Consequently, the same bound holds for the whole expression:
		$$
		\|S_H^B\|_{L^1_t} = \bigg\|\frac{\sin(t\sqrt{H+1})P_c}{\sqrt{H+1}} - \frac{\sin(t\sqrt H)P_c}{\sqrt H}\bigg\|_{L^1_t} \les \frac 1 {|x-y|^{1/2}}.
		$$
		This is another of the three main estimates we got for the free Klein--Gordon sine propagator, see Lemma \ref{waveandBessel} and (\ref{important_norms}).
		
		To prove the remaining bound, firstly by rescaling we have that
		\be\lb{rescale}
		\bigg\|\chi_{t \geq \tau} \frac 1 {(t^2-\tau^2)^{3/4}}\bigg\|_{L^{4/3, \infty}_t} \les \tau^{-3/4}.
		\ee
		Since $L^{4/3, \infty}$ is a Banach space, in the region $\tau \geq |x-y|$ this together with (\ref{crucial}) leads to
		$$
		\bigg\|\frac{\sin(t\sqrt{H+1})}{\sqrt{H+1}} - \frac{\sin(t\sqrt H)}{\sqrt H}\bigg\|_{L^1_t} \les \sup_{\tau \geq |x-y|} \tau^{-3/4} \leq \frac 1 {|x-y|^{3/4}}.
		$$
		Outside the light cone we can prove the same bound using (\ref{fine_bound}):
		$$
		\sum_{k=1}^N \int_0^{|x-y|} \frac {\tau^{1/4} e^{(\tau - |x| - |y|)\sqrt{-\lambda_k}} \dd \tau}{\langle x \rangle \langle y \rangle} \les \frac {|x-y|^{1/4}}{\langle x \rangle \langle y \rangle} \les \frac 1 {|x-y|^{3/4}}.
		$$
		The $\tau^{1/4}$ factor comes from (\ref{rescale}) together with one power of $\tau$ corresponding to $\partial_\lambda R_V$.
		
		Putting the two contributions together, the bound holds for the whole expression. This is the last estimate we wanted to prove.
	\end{proof}
	
	The same method works for all reversed Strichartz norms and can be used to deduce all the reversed type estimates for the perturbed case from reversed type estimates for the free case.
	\begin{proposition}\lb{C1H} Consider the Klein--Gordon equation in $\R^{3+1}$ with Hamiltonian $H=-\Delta+V$, $V \in L^{3/2, 1}$, such that $H$ has no threshold bound states (eigenstates or resonance). Then the kernel of
		$$
		C_1^H(t) P_c = \frac{\cos(t\sqrt{H+1}) P_c}{H+1}
		$$
		satisfies the bounds of Lemma \ref{c1bounds}, with $r=|x-y|$.
		
		In addition, all the estimates in Lemma \ref{fractional_bounds}, Corollary \ref{cor:reversedE}, and Theorem \ref{thm:inhomoRev} hold for
		\begin{equation} E^{H}_{\alpha}(t)P_{c}=\frac{e^{it\sqrt{H+1}}P_{c}}{\left(H+1\right)^{\alpha}}.
		\end{equation}
	\end{proposition}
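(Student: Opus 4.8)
The plan is to transport the free estimates of Section \ref{FKG} to the perturbed setting by the device already used in Theorem \ref{SBH}: decompose $C_1^H P_c$ (resp. $E_\alpha^H P_c$) into a ``wave'' term built from $H$ rather than $H+1$, which is already controlled in \cite{becgol}, plus a ``Bessel'' difference term, and represent the latter spectrally so that, pointwise in $(x,y)$, it becomes an $L^1$--superposition, against a finite measure in an auxiliary variable $s$, of the corresponding \emph{free} Klein--Gordon kernels with $s$ playing the role of the spatial variable $r$.

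First I would treat $C_1^H$. Since, on the continuous spectral subspace, $\frac{\cos(t\sqrt{H+1})P_c}{H+1}$ and $\frac{\cos(t\sqrt H)P_c}{H}$ are both the $\tau$--antiderivatives of the respective sine propagators, one has the subordination identity
\[
C_1^H(t)P_c - \frac{\cos(t\sqrt H)P_c}{H} \;=\; C_B^H(t) \;:=\; \int_{|t|}^\infty S_B^H(\tau)\,d\tau ,
\]
the integral being absolutely convergent by Theorem \ref{SBH}. Into this I insert the representation of $S_B^H$ obtained in the proof of Theorem \ref{SBH}: $S_B^H(x,y,\tau)$ is the integral, against the measure $(\partial_\lambda R_{V+}(\lambda^2)P_c)^\vee(x,y,s)\,ds$ whose total mass is bounded uniformly in $x,y$ by \eqref{crucial}, of the free kernels $\chi_{|s|\le|\tau|}\frac{J_1(\sqrt{\tau^2-s^2})}{\sqrt{\tau^2-s^2}}$. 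By Fubini, $C_B^H(x,y,t)$ is then the same superposition in $s$ of the free \emph{cosine} Bessel kernels $\int_{\max(|t|,|s|)}^\infty \frac{J_1(\sqrt{\tau^2-s^2})}{\sqrt{\tau^2-s^2}}\,d\tau$, i.e. of the quantity estimated in Proposition \ref{cosine_kernel} and Lemma \ref{c1bounds} with $s$ in place of $r$. On the region $|s|\ge|x-y|$, Minkowski's integral inequality together with the fact that those $L^p_t$--bounds are decreasing powers of $s$ and that the total mass is finite gives the bound evaluated at $s=|x-y|$; on $|s|<|x-y|$, which is nonzero only because $P_c$ has exponential tails outside the light cone when $H$ has negative eigenvalues, I repeat the argument of Theorem \ref{SBH}, using the sharp Agmon bounds \eqref{fine_bound} and the elementary inequality $\int_0^T s^\beta e^{cs}\,ds \les T^\beta e^{cT}$, with the exponential growth absorbed by the factor $e^{-|x|-|y|}$ and the surviving $\langle x\rangle^{-1}\langle y\rangle^{-1}$ finishing the estimate. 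Adding back the wave cosine kernel $\frac{\cos(t\sqrt H)P_c}{H}$, whose bounds from \cite{becgol} are exactly the wave--scaled members of Lemma \ref{c1bounds}, recovers the full list of Lemma \ref{c1bounds} with $r=|x-y|$.

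The estimates for $E_\alpha^H P_c$ follow with one extra layer. By Lemma \ref{combinations} the free $E_\alpha$ is obtained from $S_{1/2}$ by convolution in $t$ with an explicit fractional-integration kernel, and applying the same convolution to $S_{1/2}^H P_c = (\text{wave sine})+S_B^H$ produces $E_\alpha^H P_c$. The wave piece is covered by the corresponding estimates in \cite{becgol} — these supply the members of Lemma \ref{fractional_bounds}, Observation \ref{cor:reversedE} and Theorem \ref{thm:inhomoRev} that coincide with the free wave equation, in particular the $2-a$--powers-of-decay bounds outside the light cone. For the Bessel piece, Fubini once more writes it as the finite-mass $s$--superposition, against $(\partial_\lambda R_{V+}(\lambda^2)P_c)^\vee(x,y,s)\,ds$, of the free fractional kernels of Proposition \ref{fractional_powers} evaluated with $s$ instead of $r$; averaging the $L^p_t$ bounds of Lemma \ref{fractional_bounds} in $s$ (dominated by $s=|x-y|$ because the exponents are negative) and handling $s<|x-y|$ by the Agmon argument above — now with the $s^\beta$ weight accounting both for the power of $s$ coming from the $\tau$--norm of the free kernel and for the single power of $s$ attached to $\partial_\lambda R_{V+}$, cf. \eqref{bessel_bound} — yields the kernel bounds with $r=|x-y|$. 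The reversed Strichartz estimates of Observation \ref{cor:reversedE} and Theorem \ref{thm:inhomoRev} then follow from these kernel bounds exactly as in the free case, since $E_\alpha^H P_c$ is dominated in absolute value by a convolution operator and one applies the same convolution-on-Lorentz-spaces arguments; the weak-type endpoints ($L^{4/3,\infty}_t$, $L^{p,\infty}_t$) cause no trouble because these are Banach spaces, so Minkowski's inequality still applies under the $s$--integral.

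I expect the main obstacle to be, as in Theorem \ref{SBH}, the rigorous control of the region outside the light cone: for each of the several regimes of $\alpha$ and $p$ appearing in Lemma \ref{fractional_bounds} one must verify that the power of $s$ produced by integrating the free kernel against the Bessel factor, combined with the extra power of $s$ from $\partial_\lambda R_{V+}$, is absorbed by the exponential $e^{(s-|x|-|y|)\sqrt{-\lambda_k}}$ without loss of the target power of $|x-y|$. A secondary nuisance is the bookkeeping of the two distinct scalings of the free kernels — the $r>1$ versus $r<1$ behaviour — all the way through the $s$--averaging, since for $E_\alpha^H$ with $\Re\alpha$ near $1/2$ the free estimate in Lemma \ref{fractional_bounds} genuinely changes form at $r=1$ and both regimes must reappear correctly in the final bound.
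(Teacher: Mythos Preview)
Your decomposition $C_1^H P_c = \dfrac{\cos(t\sqrt H)P_c}{H} + C_B^H$ loses exactly the cancellation that produces the Klein--Gordon-specific decay rates in Lemma \ref{c1bounds}. The kernel you call the ``free cosine Bessel kernel'', namely $\int_{\max(|t|,|s|)}^\infty \frac{J_1(\sqrt{\tau^2-s^2})}{\sqrt{\tau^2-s^2}}\,d\tau$, is \emph{not} the quantity estimated in Proposition \ref{cosine_kernel} and Lemma \ref{c1bounds}: those results are for the full $C_1$, which for $0\le t<s$ equals $\frac{1}{4\pi s}$ \emph{minus} that integral, and it is precisely this subtraction (equation \eqref{temp1}) that yields the exponential decay outside the light cone and hence the $\langle s\rangle^{-1/2}$ bound in $L^1_t$. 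Your Bessel integral alone is $\sim 1/s$ on $|t|<s$, so its $L^1_t$ norm is $\sim 1$, not $\les\langle s\rangle^{-1/2}$; likewise the perturbed wave cosine $\frac{\cos(t\sqrt H)P_c}{H}$ has $L^1_t$ norm $\sim 1$ (the free kernel is $\frac{1}{4\pi r}\chi_{|t|<r}$). Estimating the two pieces separately therefore gives only $\|C_1^H\|_{L^1_t}\les 1$, and you cannot recover $\les\langle|x-y|\rangle^{-1/2}$ or any of the $r^{1/p-3/2}$ bounds for $p<4$. The same defect propagates to your treatment of $E_\alpha^H$: the $r^{-1/2}$ behaviour of $\|E_{(1+\alpha)/2}\|_{L^1_t}$ for large $r$ in Lemma \ref{fractional_bounds} again comes from the cancellation in Proposition \ref{fractional_powers} between the wave term \eqref{wave_eqn_term} and the Bessel term \eqref{first_term}.

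The paper avoids this by never splitting. Using identity \eqref{identity} (rather than only Lemma \ref{spectral}) one represents the \emph{full} perturbed sine propagator as a superposition of the \emph{full} free one,
\[
S_{1/2}^H(t)P_c(x,y)=\int_{-\infty}^\infty S_{1/2}(r=\tau,t)\,(\partial_\lambda R_{V+}(\lambda^2)P_c)^\vee(x,y,\tau)\,d\tau,
\]
and then integrates (resp.\ fractionally integrates) in $t$ to obtain $C_1^H P_c$ (resp.\ $E_\alpha^H P_c$) directly as the same superposition of the full free $C_1$ (resp.\ $E_\alpha$), with the cancellation already built in. After that, your argument for the two regions $\tau\ge|x-y|$ and $\tau<|x-y|$ goes through exactly as you describe.
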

	\begin{proof}
		Starting from (\ref{identity}), we obtain
		\begin{align*}
		\frac{\sin(t\sqrt{H+1})P_{c}}{\sqrt{H+1}}(x, y) & =\frac{1}{2\pi t}\int_{-\infty}^{\infty}\cos(t\sqrt{\lambda^{2}+1})\partial_{\lambda}R_{V+}\left(\lambda^{2}\right)P_{c}\dd\lambda\\
		& =\int_{-\infty}^{\infty}S_{1/2}\left(r=\tau,t\right)(\partial_{\lambda}R_{V+}(\lambda^{2})P_c)^\vee(x, y, \tau)\dd\tau.
		\end{align*}
		Integrating from some fixed $t$ to infinity, this leads to 
		\[
		\frac{\cos(t\sqrt{H+1})P_{c}}{H+1}(x, y)=\int_{-\infty}^{\infty}C_{1}\left(r,t\right)(\partial_{\lambda}R_{V+}(\lambda^{2})P_c)^\vee(x, y, r)\dd r.
		\]
		
		Similarly, by means of fractional integration we obtain that
		\begin{equation}
		[E^{H}_{\alpha}\left(t\right)P_{c}](x, y) = \frac{e^{it\sqrt{H+1}}P_{c}}{\left(H+1\right)^{\alpha}}=\int_{-\infty}^{\infty}E_{\alpha}(r,t)\left(\partial_{\lambda}R_{V+}\left(\lambda^{2}\right)P_c\right)^\vee(x, y, r)\dd r.\label{eq:EaPerturbed}
		\end{equation}
		
		As in the proof of the previous theorem, we again split the integral into two regions, according to whether $r \geq |x-y|$ or $r<|x-y|$.
		
		In the first region (inside the light cone), the bound
		\begin{equation}\label{eq:boundkernel}
		\sup_{x,y}\big\| \left(\partial_{\lambda}R_{V+}(\lambda^{2}) P_c\right)^\vee(x, y, \tau)\big\|_{L_{\tau}^{1}}<\infty
		\end{equation}
		from \cite{becgol} implies that this portion of $C_1^H(t)$ is an integrable combination of $C_1(\tau, t)$, for various values of $\tau \geq |x-y|$.
		
		Since our estimates for the free Klein--Gordon equation are monotonically decreasing as a function of the radius, one can replace each value of $\tau$ by $|x-y|$ with no prejudice and obtain the same estimates as in the free case, with a different constant, in terms of $|x-y|$:
		$$\begin{aligned}
		&\bigg\|\int_{r \geq |x-y|}E_{\alpha}(r,t)\left(\partial_{\lambda}R_{V+}\left(\lambda^{2}\right)P_c\right)^\vee(x, y, r)\dd r\bigg\|_{L^p_t} \leq \\
		&\leq \sup_{r \geq |x-y|}\|E_{\alpha}(r,t)\|_{L^p_t} \ \sup_{x,y}\big\| \left(\partial_{\lambda}R_{V+}(\lambda^{2}) P_c\right)^\vee(x, y, \tau)\big\|_{L_{\tau}^{1}}.
		\end{aligned}$$
		
		In the region outside the light cone, one has the improved bound (\ref{fine_bound}) and the reasoning proceeds as in the proof of the previous theorem: in general
		$$
		\sum_{k=1}^N \int_0^{|x-y|} \frac {\tau^{1-\alpha} e^{(\tau - |x| - |y|)\sqrt{-\lambda_k}} \dd \tau}{\langle x \rangle \langle y \rangle} \les \frac {|x-y|^{1-\alpha}}{\langle x \rangle \langle y \rangle} \les \frac 1 {|x-y|^{\alpha}}.
		$$
	\end{proof}
	
	\begin{corollary}[Reversed Strichartz estimates]\lb{cor_partial_results} Consider the Klein--Gordon equation in $\R^{3+1}$ with Hamiltonian $H=-\Delta+V$, $V \in L^{3/2, 1}$, such that $H$ has no threshold bound states (eigenstates or resonance). Then the operators $E_{1/2}^H P_c$, $S_{1/2}^H P_c$, and $C_{1/2}^H P_c$ are bounded between the following spaces:
		$$
		S_{1/2}^H P_c, C_{1/2}^H P_c, E_{1/2}^H P_c: L^2 \to L^\infty_x L^2_t \cap L^{12, 2}_x L^2_t \cap L^{6, 2}_x L^\infty_t \cap L^{24/5, 2}_x L^{8, 2}_t.
		$$
	\end{corollary}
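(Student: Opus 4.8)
The plan is to transfer the free-case argument of Theorem~\ref{thm_partial_results} to the perturbed setting, the essential input being Proposition~\ref{C1H}, which already asserts that the kernel of $C_1^H(t)P_c$ obeys the same bounds as the free kernel $C_1$ in Lemma~\ref{c1bounds}, now with $r=|x-y|$. First I would record that for all $x,y\in\R^3$,
$$\|C_1^H(x,y,\cdot)\|_{L^1_t}\les\langle|x-y|\rangle^{-1/2},\qquad\|C_1^H(x,y,\cdot)\|_{L^\infty_t}\les|x-y|^{-1},\qquad\|C_1^H(x,y,\cdot)\|_{L^{4,\infty}_t}\les|x-y|^{-5/4},$$
and in particular $\|C_1^H(x,y,\cdot)\|_{L^1_t}\les1$. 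Since the radial majorants $\langle r\rangle^{-1/2}$, $r^{-1}$, $r^{-5/4}$ lie in $L^{6,\infty}_x$, $L^{3,\infty}_x$, $L^{12/5,\infty}_x$ respectively, the operator with kernel $C_1^H(x,y,\cdot)$ is, in the spatial variable, pointwise dominated by convolution with one of these radial functions, so O'Neil's inequality $L^{p,2}_x\ast L^{s,\infty}_x\hookrightarrow L^{r,2}_x$ with $\frac1p+\frac1s-1=\frac1r$ applies; the time variable is treated by Young's inequality for convolution in $t$ against $C_1^H(x,y,\cdot)$.

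Combining these ingredients exactly as in the free proof yields, for $C_1^H P_c$ acting by convolution in $t$ and integration in $x$,
$$C_1^HP_c:\ L^1_xL^2_t\to L^\infty_xL^2_t,\quad L^{12/11,2}_xL^2_t\to L^{12,2}_xL^2_t,\quad L^{6/5,2}_xL^1_t\to L^{6,2}_xL^\infty_t,\quad L^{24/19,2}_xL^{8/7,2}_t\to L^{24/5,2}_xL^{8,2}_t.$$
Then comes the $TT^*$ step. Using that $H$ is self-adjoint, that $P_c=P_c^*=P_c^2$ commutes with $H$, and the product-to-sum identities of the functional calculus, one has
$$S_{1/2}^H(t)P_c\,\big(S_{1/2}^H(s)P_c\big)^*=\frac12\big(C_1^H(t-s)-C_1^H(t+s)\big)P_c,\qquad C_{1/2}^H(t)P_c\,\big(C_{1/2}^H(s)P_c\big)^*=\frac12\big(C_1^H(t-s)+C_1^H(t+s)\big)P_c,$$
so, by the standard $TT^*$ lemma $\|T\|=\|TT^*\|^{1/2}$, the boundedness of $S_{1/2}^HP_c$ and $C_{1/2}^HP_c$ from $L^2$ into each of $L^\infty_xL^2_t$, $L^{12,2}_xL^2_t$, $L^{6,2}_xL^\infty_t$, $L^{24/5,2}_xL^{8,2}_t$ reduces to the corresponding mapping property of $C_1^HP_c$ between the dual exponents established above (the $t+s$ terms being handled identically to the $t-s$ terms, since the relevant spaces are invariant under translation and reflection in $t$). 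Finally, $E_{1/2}^H(t)P_c=C_{1/2}^H(t)P_c+iS_{1/2}^H(t)P_c$ propagates all of these to $E_{1/2}^HP_c$, which gives the claim.

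The one point that needs care — the main obstacle — is that, unlike in the free case, $C_1^HP_c$ is not translation-invariant in $x$, so the convolution estimates on Lorentz spaces cannot be invoked verbatim. This is resolved by the observation that all bounds supplied by Proposition~\ref{C1H} are radial and monotone nonincreasing in $|x-y|$; hence the operator is pointwise dominated by genuine convolution with a radial $L^{s,\infty}_x$ function, after which O'Neil's inequality closes the argument. Everything else is a line-by-line repetition of the proof of Theorem~\ref{thm_partial_results}.
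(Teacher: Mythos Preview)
Your proposal is correct and follows essentially the same approach as the paper's proof: invoke Proposition~\ref{C1H} for the kernel bounds on $C_1^H P_c$, run the $TT^*$ argument from Theorem~\ref{thm_partial_results} verbatim, and then combine $S_{1/2}^H$ and $C_{1/2}^H$ to get $E_{1/2}^H$. Your explicit remark about domination by a radial decreasing convolution kernel (to handle the loss of translation invariance) makes precise a point the paper leaves implicit.
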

	More generally, admissible reversed Strichartz estimates lie in the quadrilateral with these four vertices.
	\begin{proof} The estimates for $S_{1/2}^H$ and $C_{1/2}^H$ can be proved by the same $T T^*$ argument used in the proof of Theorem \ref{thm_partial_results}, starting from our estimates for $C_1^H P_c$ in Proposition \ref{C1H}. In turn, they imply the estimates for $E_{1/2}^H$.
	\end{proof}
	
	For pointwise decay estimates, we have the following analogue of Theorem \ref{thm:fkgpointwise}.
	\begin{theorem}\lb{thm:PKGcosine} Consider the Klein--Gordon equation in $\R^{3+1}$ with Hamiltonian $H=-\Delta+V$, $V \in L^{3/2, 1}$, such that $H$ has no threshold eigenstates or resonance. Then
		$$
		\bigg\|\frac{\cos(t\sqrt{H+1})P_c}{H+1}f\bigg\|_{L^\infty_x} \les |t|^{-1} \|f\|_{L^1}.
		$$
	\end{theorem}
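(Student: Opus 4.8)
My plan is to deduce the estimate from a uniform pointwise bound on the integral kernel of $C_1^H(t)P_c$, using the representation formula established in Proposition \ref{C1H},
\[
[C_1^H(t)P_c](x,y)=\int_{-\infty}^\infty C_1(r,t)\,\bigl(\partial_\lambda R_{V+}(\lambda^2)P_c\bigr)^\vee(x,y,r)\dd r,
\]
and reducing the claim to $\sup_{x,y}\bigl|[C_1^H(t)P_c](x,y)\bigr|\les |t|^{-1}$; this suffices, since $|(C_1^H(t)P_c f)(x)|\le\sup_y|[C_1^H(t)P_c](x,y)|\,\|f\|_{L^1}$ and hence $\|C_1^H(t)P_c f\|_{L^\infty_x}\les|t|^{-1}\|f\|_{L^1}$.

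Two ingredients feed into the kernel bound. First, the free cosine kernel satisfies $\|C_1(\cdot,t)\|_{L^\infty_r}\les|t|^{-1}$ for every $t\ne0$; this is precisely the bound already exploited in the proof of Theorem \ref{thm:fkgpointwise}, and it follows from Proposition \ref{cosine_kernel} and Lemma \ref{c1bounds} by running through the various regimes (inside the light cone $|C_1(r,t)|\les(|t|\langle t^2-r^2\rangle^{1/4})^{-1}\le|t|^{-1}$, together with the uniform bound $|C_1|\les1$ for $r<1$; outside the light cone $|C_1(r,t)|\les\min(r^{-1},e^{-r}r^{-1/2})\le|t|^{-1}$, since there $r>|t|$). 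Second, I would invoke the bound (\ref{crucial}), equivalently (\ref{eq:boundkernel}), from \cite{becgol}, namely
\[
\sup_{x,y}\bigl\|\bigl(\partial_\lambda R_{V+}(\lambda^2)P_c\bigr)^\vee(x,y,\tau)\bigr\|_{L^1_\tau}<\infty,
\]
which is available precisely under the present hypothesis that $H$ has no threshold bound states.

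Combining the two, for each fixed $x,y$ the kernel of $C_1^H(t)P_c$ is a superposition of the translation-invariant kernels $r\mapsto C_1(r,t)$ against an $L^1_r$ density whose total mass is bounded uniformly in $x,y$, so pairing the $L^\infty_r$ estimate against the $L^1_r$ estimate gives $|[C_1^H(t)P_c](x,y)|\les|t|^{-1}$ uniformly, which completes the proof. I do not anticipate a real obstacle here: unlike in the proofs of Theorem \ref{SBH} and Proposition \ref{C1H}, one need not split the $r$-integral at $r=|x-y|$, since that device served only to extract spatial decay in $|x-y|$, which this estimate does not require. The only genuinely nontrivial input is the uniform-in-$(x,y)$ control of $\|\partial_\lambda R_{V+}\|_{L^1_\tau}$, and that is imported directly from \cite{becgol}; the remainder is bookkeeping.
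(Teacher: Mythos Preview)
Your proposal is correct and follows essentially the same approach as the paper: the paper's proof also invokes the representation formula for $C_1^H(t)P_c$ from Proposition \ref{C1H}, pairs the uniform bound $\|C_1(\cdot,t)\|_{L^\infty_r}\les|t|^{-1}$ against the $L^1_\tau$ bound (\ref{crucial}) on $(\partial_\lambda R_{V+}(\lambda^2)P_c)^\vee$, and reads off the kernel estimate directly. Your remark that no splitting at $r=|x-y|$ is needed here is apt and matches the paper's brevity.
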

	
	\begin{proof} The $L^1 \to L^\infty$ decay bound is easy to prove using
		$$
		\frac{\cos(t\sqrt{H+1}) P_c}{H+1}(x, y) = \int_{-\infty}^\infty C_1(r=\tau, t) \big(\partial_\lambda R_{V+}(\lambda^2)P_c\big)^\vee(x, y, \tau) \dd \tau.
		$$
		Indeed, $C_1(\cdot, t)$ is uniformly bounded by $t^{-1}$ and the other factor is integrable in $\tau$.
		%
		%
		%
	\end{proof}
	
		By the same method, we can infer the other main pointwise decay estimate for the perturbed evolution:
		\begin{theorem}\lb{PKGpointwise} Consider the Klein--Gordon equation in $\R^{3+1}$ with Hamiltonian $H=-\Delta+V$, $V \in L^{3/2, 1}$, such that $H$ has no threshold bound states (eigenstates or resonance). Then
			\[
			\|E^{H}_{1+ib}(t)P_{c}f\|_{L^\infty_x}\lesssim |t|^{-1}\|f\|_{L^1}.
			\]
			and 
			\[
			\|E^{H}_{\frac{5}{4}+ib}(t)P_{c}f\|_{L^\infty_x}\lesssim |t|^{-\frac{3}{2}}\|f\|_{L^1}.
			\]
		\end{theorem}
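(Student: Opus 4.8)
The plan is to repeat, essentially verbatim, the argument of Theorem \ref{thm:PKGcosine}, replacing the input bound $\|C_1(t)\|_{L^\infty_r}\les|t|^{-1}$ by the two free pointwise decay bounds of Theorem \ref{KGpointDecay}. First I would invoke the spectral representation (\ref{eq:EaPerturbed}) from Proposition \ref{C1H}, valid under exactly the present hypotheses ($V\in L^{3/2,1}$, no threshold bound states): for $\alpha=1+ib$ and for $\alpha=\frac{5}{4}+ib$,
\[
[E^H_\alpha(t)P_c](x,y)=\int_{-\infty}^\infty E_\alpha(r=\tau,t)\,\big(\partial_\lambda R_{V+}(\lambda^2)P_c\big)^\vee(x,y,\tau)\dd\tau.
\]
Since $\|E^H_\alpha(t)P_cf\|_{L^\infty_x}\leq\big(\sup_{x,y}|[E^H_\alpha(t)P_c](x,y)|\big)\|f\|_{L^1}$, it is enough to bound the kernel pointwise, and by the triangle inequality
\[
\big|[E^H_\alpha(t)P_c](x,y)\big|\leq\Big(\sup_{\tau\in\R}|E_\alpha(\tau,t)|\Big)\,\big\|\big(\partial_\lambda R_{V+}(\lambda^2)P_c\big)^\vee(x,y,\cdot)\big\|_{L^1_\tau}.
\]

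Next I would estimate the two factors separately. The second factor is bounded uniformly in $x,y$ by (\ref{crucial}) (equivalently (\ref{eq:boundkernel})), the $L^1_\tau$ bound taken from \cite{becgol}; this bound already includes the tails in the region $|\tau|<|x-y|$ coming from the negative eigenvalues of $H$, so --- unlike in Theorem \ref{SBH}, where one wants to extract a power of $|x-y|$ --- no separate Agmon-type estimate is required here. For the first factor, Theorem \ref{KGpointDecay} gives exactly $\sup_{\tau}|E_{1+ib}(\tau,t)|\les|t|^{-1}$ and $\sup_{\tau}|E_{\frac{5}{4}+ib}(\tau,t)|\les|t|^{-\frac{3}{2}}$; the essential point is that these free bounds are uniform in the radial variable $r$, so they may be pulled out of the $\tau$-integral. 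Multiplying the two factors gives the two asserted decay estimates.

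The hard part is essentially nonexistent: the only things that need checking are that the free bounds of Theorem \ref{KGpointDecay} do hold for the full range of $r$ (small $r$, $r$ near $t$, and large $r$), which is already done in the proof of that theorem by separately treating $0\leq t<r$ and $t>r$, and that the spectral-density factor $\big(\partial_\lambda R_{V+}(\lambda^2)P_c\big)^\vee(x,y,\cdot)$ lies in $L^1_\tau$ uniformly in $x,y$, which is (\ref{crucial}). Both are in hand from the earlier sections and from \cite{becgol}, so the proof is a direct assembly of these ingredients, with no new estimate to prove.
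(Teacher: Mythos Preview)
Your proposal is correct and follows essentially the same approach as the paper, which explicitly says the result follows ``by the same method'' as Theorem \ref{thm:PKGcosine}: represent the perturbed kernel via (\ref{eq:EaPerturbed}), pull out $\sup_\tau|E_\alpha(\tau,t)|$ using Theorem \ref{KGpointDecay}, and bound the remaining factor by the uniform $L^1_\tau$ estimate (\ref{crucial}). Your observation that no separate Agmon-type splitting is needed here (since you only want a uniform bound, not a power of $|x-y|$) is also on point.
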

		\begin{observation}
			Unlike the sine propagator bound in Theorem 	  \ref{thm:fkgpointwise}, we could not obtain a bound in the same form since the kernel   \eqref{eq:boundkernel} has no convolution structure.
		\end{observation}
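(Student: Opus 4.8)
The final statement is an explanatory remark, not a theorem, so what I have in mind is a justification of the claim that the mechanism behind Theorem~\ref{thm:fkgpointwise} does not yield a perturbed analogue of the free \emph{sine} bound $\|S_{1/2}(t)f\|_{L^\infty_x}\les|t|^{-1}\|\nabla f\|_{L^1}$, even though it does yield the cosine-type bounds of Theorems~\ref{thm:PKGcosine} and \ref{PKGpointwise}. The plan is to trace precisely where translation invariance enters the free argument and then to point out what is missing in the perturbed representation through $(\partial_\lambda R_{V+}(\lambda^2)P_c)^\vee$.

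First I would recall the structure of the proof of Theorem~\ref{thm:fkgpointwise}: one uses translation invariance to place the source at the origin, passes to polar coordinates $y=r\omega$ \emph{centered at the source}, writes $S_{1/2}(r,t)\,r^2=\partial_r K_t(r)$ with $K_t$ as in \eqref{first_expression}--\eqref{second_expression} (the light-cone $\delta$-measure being absorbed by the $-t$ term in \eqref{second_expression}), and integrates by parts in $r$ to move the radial derivative onto $f$, obtaining $|(S_{1/2}(t)f)(0)|\les\sup_r|K_t(r)/r^2|\,\|\nabla f\|_{L^1}\les|t|^{-1}\|\nabla f\|_{L^1}$. Every ingredient here --- a single radial variable $r=|x-y|$, the $r^2$ Jacobian, and the $1/r$-weighted light-cone measure --- is a feature of the \emph{convolution} kernel $S_{1/2}(|x-y|,t)$ and is lost once $V\ne 0$.

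Next I would write down the perturbed representation. By Lemma~\ref{spectral} and the identity \eqref{last_factor}, $S_{1/2}^H(t)P_c(x,y)=\int_{-\infty}^\infty S_{1/2}(\tau,t)\,(\partial_\lambda R_{V+}(\lambda^2)P_c)^\vee(x,y,\tau)\dd\tau$, and the only quantitative control available for the inner kernel is its uniform $L^1_\tau$ bound \eqref{eq:boundkernel}. This is exactly enough for Theorems~\ref{thm:PKGcosine} and \ref{PKGpointwise}, because $C_1(\cdot,t)$, $E_{1+ib}(\cdot,t)$ and $E_{5/4+ib}(\cdot,t)$ are \emph{uniformly bounded in their radial argument} --- by $|t|^{-1}$, $|t|^{-1}$ and $|t|^{-3/2}$ respectively (Proposition~\ref{cosine_kernel} and Theorem~\ref{KGpointDecay}) --- so one may take $\sup_\tau$ of the free kernel against the $L^1_\tau$ weight. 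The free sine kernel $S_{1/2}(\cdot,t)$ is \emph{not} in $L^\infty$ in its radial argument: for fixed $t$ it contains the singular measure $\tfrac1{4\pi t}\delta_t$, plus a Bessel term whose $L^1$-mass is only $\sim t^{-1/2}$ by Lemma~\ref{waveandBessel}. One therefore cannot estimate $S_{1/2}^H(t)P_c(x,y)$ by $\|S_{1/2}(\cdot,t)\|_{\mathcal M_\tau}\cdot\sup_\tau|(\partial_\lambda R_{V+}(\lambda^2)P_c)^\vee(x,y,\tau)|$ either, since by finite speed of propagation (up to point-spectrum tails, exactly as in the proof of Theorem~\ref{SBH}) the kernel $(\partial_\lambda R_{V+}(\lambda^2)P_c)^\vee(x,y,\cdot)=\chi_{\tau\ge 0}\,\tau\,\tfrac{\sin(\tau\sqrt H)P_c}{\sqrt H}(x,y)$ itself carries a singular light-cone component in $\tau$ and is genuinely only $L^1_\tau$, never $L^\infty_\tau$, and two singular measures cannot be paired. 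The remaining option --- mimicking the free integration by parts --- would require writing $S_{1/2}^H(t)P_c(x,y)$ as $\partial_{y_j}$ of a kernel bounded by $|t|^{-1}$; but since $(\partial_\lambda R_{V+}(\lambda^2)P_c)^\vee(x,y,\tau)$ is \emph{not} a function of $x-y$, its antiderivative in $y$ along any ray is not radial, does not pair with a $|x-y|^2$ Jacobian, and admits no $|t|^{-1}$ control uniform over the $L^1_\tau$ weight. This is precisely the obstruction the remark attributes to the absence of convolution structure.

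I expect the genuine difficulty to be that the remark is methodological rather than an impossibility claim: the reasoning above shows only that \emph{this} route fails, not that no such bound exists. Upgrading it to a true non-existence statement would require exhibiting an admissible potential $V\in L^{3/2,1}$ and a family of data with $\|\nabla f\|_{L^1}$ bounded but $\|S_{1/2}^H(t)P_cf\|_{L^\infty_x}$ failing to decay at rate $|t|^{-1}$ --- presumably by concentrating data near a negative eigenfunction or near the support of $V$ and tracking the point-spectrum tails from \eqref{fine_bound}. That construction is the hard part and lies outside the scope of the remark; I would therefore keep the statement at the methodological level and present the discussion above as its justification.
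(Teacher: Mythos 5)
Your reading of the remark is correct and essentially matches the intent of the paper (which offers no proof for this observation, since it is purely explanatory). The key point you identify is exactly right: the perturbed cosine-type bounds of Theorems~\ref{thm:PKGcosine} and~\ref{PKGpointwise} go through because the free kernels $C_1(\cdot,t)$, $E_{1+ib}(\cdot,t)$, $E_{5/4+ib}(\cdot,t)$ are \emph{uniformly bounded} in the radial variable, so they pair with the $L^1_\tau$ bound \eqref{eq:boundkernel} on $(\partial_\lambda R_{V+}(\lambda^2)P_c)^\vee$; whereas the free sine estimate $\|S_{1/2}(t)f\|_{L^\infty_x}\les|t|^{-1}\|\nabla f\|_{L^1}$ is proved by integrating by parts in $r=|x-y|$ against the $r^2$ Jacobian, a step that exists only because $S_{1/2}(t)$ is a radial convolution kernel. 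Once $V\ne 0$, the spectral-measure kernel $(\partial_\lambda R_{V+}(\lambda^2)P_c)^\vee(x,y,\tau)$ depends on $x$ and $y$ separately, carries its own light-cone singularity in $\tau$, and is controlled only in $L^1_\tau$ --- so neither the $L^\infty_\tau\times L^1_\tau$ pairing nor the radial integration-by-parts scheme applies. You are also right that this is a methodological obstruction, not a claim of impossibility; the remark only records that the method of Theorem~\ref{thm:fkgpointwise} does not transfer, which is precisely what your argument establishes.
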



		\subsection{Standard Strichartz estimates}
		
		In this section, we prove the standard Strichartz estimates for the perturbed Klein--Gordon equation.
		\subsubsection{Wave-admissible Strichartz estimates}

		First of all, we discuss the wave-admissible Strichartz estimates  using a
		method inspired by Beals \cite{BE}.
		
		Relying on the complex interpolation instead of frequency dyadic decomposition and
		square functions, we deal with all frequencies simultaneously. This is particularly useful in the case of the perturbed equation, where we lose the scaling invariance.
		
		Our approach is based on two main ingredients: $L^{1}\mapsto L^{\infty}$ pointwise dispersion and the $L^{p}$ boundedness of Mihlin multipliers.

		By Theorem \ref{thm:PKGcosine} and Theorem \ref{thm:fkgpointwise},
		the free sine and cosine evolution of the Klein--Gordon equation
		have the same decay rate as the wave equation. Then all the wave admissible
		Strichartz estimates should hold for the Klein-Gordon equation with
		the homogeneous Sobolev spaces replaced by the regular Sobolev space.
		
		Recall that for the free wave equation in $\mathbb{R}^{3+1}$, 
		\begin{equation*}
		u_{tt}-\Delta u=F,\,\,u(0)=u_{0},\,\,u_{t}(0)=u_{1},\label{eq:614}
		\end{equation*}
		one has the following Strichartz estimates: 
		\begin{equation*}
		\left\Vert u\right\Vert _{L_{t}^{\infty}\dot{H}_{x}^{s}\bigcap L_{t}^{p}L_{x}^{q}}+\left\Vert u_{t}\right\Vert _{L_{t}^{\infty}\dot{H}_{x}^{s-1}}\lesssim\left\Vert u_{0}\right\Vert _{\dot{H}_{x}^{s}}+\left\Vert u_{1}\right\Vert _{\dot{H}_{x}^{s-1}}+\left\Vert F\right\Vert _{L_{t}^{\tilde{p}'}L_{x}^{\tilde{q}'}},\label{eq:615}
		\end{equation*}
		where by the scaling invariance, 
		\begin{equation*}
		\frac{1}{p}+\frac{3}{q}=\frac{3}{2}-s=\frac{1}{\tilde{p}'}+\frac{1}{\tilde{q}'}-2,\,2\leq p,q,\tilde{p},\tilde{q}\leq\infty,\label{eq:616}
		\end{equation*}
		and exponents must be wave-admissible:
		\begin{equation*}
		\frac{2}{p}+\frac{2}{q}\leq1,\,\frac{2}{\tilde{p}}+\frac{2}{\tilde{q}}\leq1.\label{eq:617}
		\end{equation*}
		The endpoint $\left(p,q\right)=\left(2,\infty\right)$ is forbidden.
		So for wave-admissible exponents $\left(p,q\right)$, the pairs $\left(\frac{1}{p},\frac{1}{q}\right)$
		cover triangle with vertices $\left(0,0\right)$, $\left(0,\frac{1}{2}\right)$
		and $\left(\frac{1}{2},0\right)$. 

%
		
		Following the above preparations, we next prove the Strichartz
		estimates for the Klein-Gordon equation with wave-admissible exponents.
		\begin{theorem}
			\label{thm:waveStrichartz}Consider the Klein-Gordon equation in $\mathbb{R}^{3+1}$
			with Hamiltonian $H=-\Delta+V$, $V\in L^{\frac{3}{2},1}$ such that
			$H$ has no threshold eigenstates or resonance. Suppose $u$ solves
			\begin{equation*}
			u_{tt}-\Delta u+u+Vu=0,\,u(0)=u_{0},\,u_{t}(0)=u_{1},\label{eq:623}
			\end{equation*}
			then for $0\leq s<\frac{3}{2}$,
			\begin{equation*}
			\left\Vert P_{c}u\right\Vert _{L_{t}^{\infty}H_{x}^{s}\bigcap L_{t}^{p}L_{x}^{q}}+\left\Vert P_{c}u_{t}\right\Vert _{L_{t}^{\infty}H_{x}^{s-1}}\lesssim\left\Vert u_{0}\right\Vert _{H_{x}^{s}}+\left\Vert u_{1}\right\Vert _{H_{x}^{s-1}}+\left\Vert F\right\Vert _{L_{t}^{\tilde{p}'}L_{x}^{\tilde{q}'}},\label{eq:624}
			\end{equation*}
			where
			\begin{equation*}
			\frac{1}{p}+\frac{3}{q}=\frac{3}{2}-s=\frac{1}{\tilde{p}'}+\frac{1}{\tilde{q}'}-2,\,2\leq p,q,\tilde{p},\tilde{q}\leq\infty,\label{eq:625}
			\end{equation*}
			and exponents are wave-admissible:
			\begin{equation*}
			\frac{2}{p}+\frac{2}{q}\leq1,\,\frac{2}{\tilde{p}}+\frac{2}{\tilde{q}}\leq1.\label{eq:626}
			\end{equation*}
			The endpoint $\left(p,q\right)=\left(2,\infty\right)$ is forbidden.
		\end{theorem}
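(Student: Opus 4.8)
The plan is to deduce these estimates from the $|t|^{-1}$ pointwise dispersive decay already established for the perturbed evolution (Theorems \ref{thm:PKGcosine} and \ref{PKGpointwise}, and Theorem \ref{thm:fkgpointwise} in the free case), from the functional calculus for $H$ on $L^{2}$, from the $L^{p}$-boundedness ($1<p<\infty$) of the Mihlin multipliers $(H+1)^{ib}$, and from the comparison of the Sobolev spaces built from $-\Delta$ and from $H$ in Appendix \ref{sec:sob}. No Littlewood--Paley decomposition is used: because of the mass term $(H+1)^{-s}$ is a genuine inhomogeneous smoothing operator, so all frequencies can be treated at once, in the spirit of Beals \cite{BE}.

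First I would carry out the standard reductions. By Duhamel's formula and the Christ--Kiselev lemma --- applicable because $p>2$ for every wave-admissible pair, hence $p>\tilde p'$ (recall $\tilde p\geq 2$) --- the inhomogeneous estimate follows from the homogeneous ones and their duals composed through $TT^{*}$. The $L^{\infty}_{t}H^{s}_{x}\cap L^{\infty}_{t}H^{s-1}_{x}$ part follows from the unitarity of $e^{it\sqrt{H+1}}$ and $\cos,\sin$ of $\sqrt{H+1}$ on $L^{2}$ together with the boundedness of $(-\Delta+1)^{\sigma/2}(H+1)^{-\sigma/2}$ on $L^{2}$ from Appendix \ref{sec:sob} (so that $H^{s}$ may everywhere be replaced by the equivalent $\|(H+1)^{s/2}\cdot\|_{L^{2}}$), plus the boundedness of $P_{c}$ on $H^{s}$. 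It thus remains to prove, for every wave-admissible $(p,q)$ with $s=\frac32-\frac1p-\frac3q$,
\begin{equation}\label{eq:planhom}
\big\|e^{it\sqrt{H+1}}P_{c}(H+1)^{-s/2}f\big\|_{L^{p}_{t}L^{q}_{x}}\lesssim\|f\|_{L^{2}_{x}}.
\end{equation}

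I would obtain \eqref{eq:planhom} from two generating families. On the edge $p=\infty$ one has $s=\frac32-\frac3q$ with $q<\infty$, and since $H^{s}\hookrightarrow L^{q}$ is the sharp Sobolev embedding in $\R^{3}$, \eqref{eq:planhom} follows from $\|e^{it\sqrt{H+1}}P_{c}f\|_{L^{2}_{x}}\leq\|f\|_{L^{2}_{x}}$ and the Sobolev-norm comparison, uniformly in $t$. On the boundary $\frac2p+\frac2q=1$ (excluding $(2,\infty)$), where $\frac2p=s=1-\frac2q$ and $q=\frac2{1-s}$, I would pass to $TT^{*}$: \eqref{eq:planhom} is equivalent to the boundedness $L^{p'}_{t}L^{q'}_{x}\to L^{p}_{t}L^{q}_{x}$ of $F\mapsto\int_{\R}e^{i(t-\sigma)\sqrt{H+1}}P_{c}(H+1)^{-s}F(\sigma)\dd\sigma$. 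The fixed-time bound $\big\|e^{i\sigma\sqrt{H+1}}P_{c}(H+1)^{-s}\big\|_{\B(L^{q'}_{x},L^{q}_{x})}\lesssim|\sigma|^{-s}$ I would get by Stein interpolation of the analytic family $z\mapsto e^{i\sigma\sqrt{H+1}}P_{c}(H+1)^{-z}$ on the strip $0\leq\Re z\leq 1$: at $\Re z=0$ it is bounded on $L^{2}_{x}$ by $1$; at $\Re z=1$ it equals $E^{H}_{1+ib}(\sigma)P_{c}$, which by Theorem \ref{PKGpointwise} maps $L^{1}_{x}\to L^{\infty}_{x}$ with norm $\lesssim|\sigma|^{-1}$ and, as one reads off the proof of Proposition \ref{fractional_powers} (where the single integration by parts contributes a factor $|1-\alpha|$), with at most polynomial growth in $b$, which is admissible for Stein interpolation. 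The time variable is then integrated via the Hardy--Littlewood--Sobolev inequality, valid since $0<s<1$; the identity $\frac2p=s$ is exactly what makes it close. Finally, every interior wave-admissible pair, together with its Sobolev index, lies on a segment joining a boundary pair to an edge pair, so complex interpolation of the corresponding instances of \eqref{eq:planhom} through the family $z\mapsto e^{it\sqrt{H+1}}P_{c}(H+1)^{-s(z)/2}$ with $s(z)$ affine --- the imaginary powers being Mihlin multipliers on $L^{q}_{x}$, $1<q<\infty$ --- fills in the interior. Combined with the first paragraph this yields the theorem.

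The main obstacle, and the place where the Klein--Gordon structure is indispensable, is the boundary case: one needs an $L^{1}\to L^{\infty}$ dispersive bound at the single real power $(H+1)^{-1}$ (the analogous statement for $(-\Delta)^{-1}$ is false) and with only polynomial dependence on $b$. The latter cannot be obtained by factoring $(H+1)^{-1-ib}=(H+1)^{-1}(H+1)^{-ib}$, since $(H+1)^{-ib}$ is not bounded on $L^{1}$; one must instead track the $b$-dependence directly inside the kernel estimates of Proposition \ref{fractional_powers} and Theorem \ref{PKGpointwise}. A secondary technical point is that everything has to be done on the range of $P_{c}$, uniformly in the finitely many negative eigenvalues of $H$ --- which is precisely why $P_{c}$ was carried along in Theorems \ref{thm:PKGcosine} and \ref{PKGpointwise}.
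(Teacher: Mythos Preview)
Your proposal is correct and follows essentially the same route as the paper: the $|t|^{-1}$ dispersive bound at one full power of $(H+1)^{-1}$, complex interpolation against the $L^2$ conservation to obtain the fixed-time $L^{q'}\to L^q$ decay on the sharp line, HLS/Young in time for $TT^*$, and then interpolation with the energy/Sobolev endpoint $p=\infty$ to fill the interior; the inhomogeneous part is handled by duality (the paper does not spell out Christ--Kiselev but the reduction is the same). The one technical difference is your ``main obstacle'': rather than tracking the $b$-dependence of $E^H_{1+ib}$ through the kernel estimates, the paper factors $\cos(t\sqrt{H+1})P_c(-\Delta+1)^{-1+i\sigma}$ and uses that $(-\Delta+1)^{i\sigma}$ is a Mihlin multiplier bounded on the Hardy space $\mathcal{H}^1$ (then converts $(-\Delta+1)^{-s}$ to $(H+1)^{-s}$ via Appendix~\ref{sec:sob} only after interpolation), which outsources the growth in the imaginary parameter to standard singular-integral theory and avoids revisiting Proposition~\ref{fractional_powers}.
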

		
		\begin{proof}
			We first prove the homogeneous Strichartz estimates, i.e., $F=0$.
			
			By Theorem \ref{thm:PKGcosine} and Theorem \ref{thm:fkgpointwise}, as well as Appendix \ref{sec:sob},
			\begin{equation}
			\|C_0^H(t) P_c (-\Delta+1)^{-1}\|_{L^\infty_x} \les \left\Vert C_{1}^{H}\left(t\right)P_{c}f\right\Vert _{L_{x}^{\infty}}\lesssim\left|t\right|^{-1}\left\Vert f\right\Vert _{L_{x}^{1}}.\label{eq:C1decay}
			\end{equation}
			By spectral calculus, we also have 
			\begin{equation*}
			\left\Vert C_{0}^{H}\left(t\right)P_{c}f\right\Vert _{L_{x}^{2}}=\left\Vert \cos\left(t\sqrt{H+1}\right)P_{c}f\right\Vert _{L_{x}^{2}}\lesssim\left\Vert f\right\Vert _{L_{x}^{2}}.\label{eq:627}
			\end{equation*}
			In order to use the complex interpolation method, one needs to consider $\left(-\Delta+1\right)^{i\sigma}$, which is bounded on the Hardy space $\mathcal{H}^{1}$ that contains $L^1$.
			Therefore one has 
			\begin{equation*}
			\left\Vert \cos\left(t\sqrt{H+1}\right)P_c(-\Delta+1)^{-1+i\sigma}f\right\Vert _{L_{x}^{\infty}}\lesssim\left|t\right|^{-1}\left\Vert f\right\Vert _{\mathcal{H}_{x}^{1}}\label{eq:628}
			\end{equation*}
			and 
			\begin{equation*}
			\left\Vert \cos\left(t\sqrt{H+1}\right)P_{c} \left(-\Delta+1\right)^{i\sigma} f\right\Vert _{L_{x}^{2}}\lesssim\left\Vert f\right\Vert _{L_{x}^{2}}.\label{eq:629}
			\end{equation*}
			Then by the complex interpolation, for $0\leq s<1$, 
			\begin{equation*}
			\left\Vert \cos\left(t\sqrt{H+1}\right)P_{c}\left(-\Delta+1\right)^{-s}f\right\Vert _{L_{x}^{\frac{2}{1-s}}}\lesssim\left|t\right|^{-s}\left\Vert f\right\Vert _{L_{x}^{\frac{2}{1+s}}}.\label{eq:630}
			\end{equation*}
			Therefore, by Young's inequality, 
			\begin{equation*}
			\cos\left(t\sqrt{H+1}\right)P_{c}\left(-\Delta+1\right)^{-s}\in\B(L_{t}^{q_{1},r}L_{x}^{\frac{2}{1+s}},L_{t}^{q_{2},r}L_{x}^{\frac{2}{1-s}}),\label{eq:631}
			\end{equation*}
			where $\frac{1}{q_{2}}=\frac{1}{q_{1}}+s-1$, with the standard modification
			at the endpoints, i.e., when $q_{1}=1$ or $q_{2}=\infty$, then one
			needs to replace $L_{t}^{q_{1},r}$ by $L_{t}^{q_{1},1}$ and replace
			$L_{t}^{q_{2},r}$ by $L_{t}^{q_{2},\infty}$. Applying Lemmas 	\ref{lem:sob1} and \ref{lem:sob} to replace $\left(-\Delta+1\right)^{-s}$ by $\left(H+1\right)^{-s}$, and then making the spaces dual
			to each other, one has
			\begin{equation*}
			\cos\left(t\sqrt{H+1}\right)P_{c}\left(H+1\right)^{-s}\in\mathcal{B}\left(L_{t}^{\frac{2}{1-s},2}L_{x}^{\frac{2}{1+s}},L_{t}^{\frac{2}{s},2}L_{x}^{\frac{2}{1-s}}\right).\label{eq:632}
			\end{equation*}
			By a $TT^{*}$ argument, we can conclude that for $0\leq s<1$
			\begin{equation}
			\frac{\sin\left(t\sqrt{H+1}\right)}{\left(H+1\right)^{s/2}}P_{c},\,\frac{\cos\left(t\sqrt{H+1}\right)}{\left(H+1\right)^{s/2}}P_{c}\in\mathcal{B}\left(L_{x}^{2},L_{t}^{\frac{2}{s},2}L_{x}^{\frac{2}{1-s}}\right)\lb{eq:633}
			\end{equation}
			which with Lemma \ref{lem:sob} together gives the sharp wave-admissible
			homogeneous Strichartz estimates. 
			
			By Lemma \ref{lem:sob}, for $\left|s\right|<\frac{3}{2}$, the
			standard Sobolev spaces are equivalent to those Sobolev spaces defined
			by fractional integration with respect to $H$. Then we have 
			\begin{equation*}
			\| P_{c}u \| _{L_{t}^{\infty}H_{x}^{s}} + \| P_{c}u_t \| _{L_{t}^{\infty}H_{x}^{s-1}}\lesssim\left\Vert u_{0}\right\Vert _{H_{x}^{s}}+\left\Vert u_{1}\right\Vert _{H_{x}^{s-1}}\label{eq:634}
			\end{equation*}
			for $-\frac{1}{2}<s<\frac{3}{2}$. By Sobolev embedding, the above
			energy estimate implies the homogeneous Strichartz estimates along
			the line segment $(p,q)=\left(\infty,\frac{6}{3-2s}\right)$ for $0\leq s<\frac{3}{2}.$
			Finally, interpolating with the sharp homogeneous estimates \eqref{eq:633},
			we obtain the full range of homogeneous Strichartz estimates.
			
			For the inhomogenous estimates, the sharp estimates \eqref{eq:633}
			imply the sharp dual estimates. Again using the energy estimates,
			one can obtain the bound for $(\tilde{p}',\tilde{q}')=\left(1,\frac{6}{3+2s}\right)$
			when $0\leq s<\frac{3}{2}$. Then interpolating between these two cases as
			the homogeneous estimates, we obtain the full range inhomogeneous
			Strichartz estimates.
		\end{proof}
		\subsubsection{The endpoint Strichartz estimates}
		
		Now we discuss the endpoint Strichartz estimates.
		Keel--Tao \cite{KeTa} provides a general framework, based on real interpolation,
		to prove endpoint Strichartz estimates for operators satisfying
		certain conditions.  
		
		To establish the endpoint estimates here, we have to invoke Keel-Tao's approach.  The Littlewood-Paley theory for perturbed Hamiltonians is needed here. We recall results from Beceanu-Goldberg \cite{becgol1}.
		
		Firstly, we define the Littlewood-Paley square function for the perturbed Hamiltonian
		as
		\[
		\left[S_{H}f\right]\left(x\right)=\left(\sum_{k\in\mathbb{Z}}\left|\left[\chi\left(2^{-k}\sqrt{H}\right)f\right]\left(x\right)\right|^{2}\right)^{\frac{1}{2}}.
		\]
		where $\chi \in C_{c}^{\infty}(\mathbb{R})$  is a fixed smooth cutoff function such that $\sum_{k \in \mathbb{Z}} \chi\left(2^{-k} t\right)=\chi_{(0, \infty)}(t)$ and
		$\operatorname{supp} \chi \subset[2 / 3,3].$
		\begin{theorem}[Theorem 4.5 \cite{becgol1}] \label{thm:BG} Suppose $V\in L^{\frac{3}{2},1}$ and $H=-\Delta+V$ has no eigenvalue or
			resonance at zero, and no positive eigenvalues. Then for each $p\in\left(1,\infty\right)$,
			one has
			\[
			\left\Vert S_{H}f\right\Vert _{L^{p}}\lesssim\left\Vert f\right\Vert _{L^{p}}\lesssim\left\Vert S_{H}f\right\Vert _{L^{p}}.
			\]
		\end{theorem}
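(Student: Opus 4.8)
The plan is to transfer the classical Littlewood--Paley inequalities for $-\Delta$ to $H$, after first isolating the continuous spectral subspace. Since $\supp\chi\subset[2/3,3]$, the operator $\chi(2^{-k}\sqrt H)$ is spectrally supported in $\{H\in[(2/3)^2 2^{2k},\,9\cdot 2^{2k}]\}\subset(0,\infty)$, so it annihilates the finitely many negative bound states and the excluded zero mode; hence $\chi(2^{-k}\sqrt H)=\chi(2^{-k}\sqrt H)P_c$, $S_Hf=S_HP_cf$, and, because the hypotheses rule out singular continuous spectrum and positive eigenvalues, $\sum_k\chi(2^{-k}\sqrt H)=\chi_{(0,\infty)}(H)=P_c$. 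The second inequality is therefore to be read as $\|P_cf\|_{L^p}\lesssim\|S_Hf\|_{L^p}$ (on the range of $P_c$); since $P_p=I-P_c$ is finite rank with exponentially localized Agmon kernel, hence $L^p$-bounded, nothing is lost by this reading.

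For the upper bound $\|S_Hf\|_{L^p}\lesssim\|f\|_{L^p}$ the cleanest route is via the wave operators $W_\pm$, which under hypotheses of this kind (cf. the structure formula of \cite{BeSch}) are bounded, together with their adjoints, on $L^p(\R^3)$ for $1<p<\infty$ and satisfy the intertwining identity $\phi(H)P_c=W_\pm\phi(-\Delta)W_\pm^\ast$. Applying it with $\phi(\cdot)=\chi(2^{-k}\sqrt{\,\cdot\,})$ gives $\chi(2^{-k}\sqrt H)P_c=W_\pm\chi(2^{-k}\sqrt{-\Delta})W_\pm^\ast$, so that $S_Hf=(\sum_k|W_\pm\chi(2^{-k}\sqrt{-\Delta})W_\pm^\ast f|^2)^{1/2}$, and it suffices to combine (i) the classical Littlewood--Paley estimate $\|(\sum_k|\chi(2^{-k}\sqrt{-\Delta})g|^2)^{1/2}\|_{L^p}\lesssim\|g\|_{L^p}$ (the $\ell^2$-valued Mihlin--H\"ormander theorem), (ii) the $\ell^2$-valued bound $\|(\sum_k|W_\pm h_k|^2)^{1/2}\|_{L^p}\lesssim\|(\sum_k|h_k|^2)^{1/2}\|_{L^p}$, and (iii) the $L^p$-boundedness of $W_\pm^\ast$. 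An alternative that stays within the stated regularity of $V$ is to observe that $V\in L^{3/2,1}(\R^3)$ lies in the Kato class, whence $e^{-tH}P_c$ obeys a Gaussian heat-kernel bound, and then to invoke the general spectral-multiplier and square-function theory for operators with Gaussian bounds, which yields both inequalities simultaneously.

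For the lower bound I would run the standard duality/reproducing-formula argument: fix $\tilde\chi\in C_c^\infty((0,\infty))$ with $\tilde\chi\equiv1$ on $\supp\chi$, so that $\sum_k\tilde\chi(2^{-k}\sqrt H)\chi(2^{-k}\sqrt H)=P_c$; then for $g\in L^{p'}$, $\langle P_cf,g\rangle=\sum_k\langle\chi(2^{-k}\sqrt H)f,\tilde\chi(2^{-k}\sqrt H)^\ast g\rangle$, and Cauchy--Schwarz in $k$ followed by H\"older in $x$ bounds the right side by $\|S_Hf\|_{L^p}\|\widetilde S_Hg\|_{L^{p'}}$, where $\widetilde S_H$ is the square function built from $\tilde\chi$; the upper bound for $\widetilde S_H$ at the exponent $p'\in(1,\infty)$ then finishes it. The step I expect to be the main obstacle is (ii) — the vector-valued transference of a single dyadic block from $-\Delta$ to $H$, equivalently the Gaussian bound for $e^{-tH}P_c$ — which is precisely where the scaling-critical, non-perturbative nature of the assumption $V\in L^{3/2,1}$ must be confronted; once that ingredient is in place the remainder is routine Calder\'on--Zygmund theory and interpolation, as carried out in \cite{becgol1}.
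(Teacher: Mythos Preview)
The paper does not prove this statement at all: Theorem~\ref{thm:BG} is quoted verbatim as Theorem~4.5 of \cite{becgol1} and used as a black box in the proof of the endpoint Strichartz estimate. There is therefore nothing in the present paper to compare your argument against.

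That said, your outline is a reasonable high-level strategy for such a result, and you correctly locate the genuine difficulty. A few remarks. The wave-operator route you sketch first is circular in spirit here: the authors explicitly say in the introduction that reversed-type estimates ``do not follow from the $L^p$ boundedness of wave operators'' and that the structure formula of \cite{BeSch} requires stronger decay on $V$ than $L^{3/2,1}$; invoking it would undercut the paper's stated goal of working under minimal hypotheses. Your second route, via Gaussian heat-kernel bounds for $e^{-tH}P_c$, is closer to what one expects the actual argument in \cite{becgol1} to be, and you are right that $L^{3/2,1}(\R^3)$ embeds in the Kato class. The step you flag as the obstacle --- obtaining a usable kernel bound for the dyadic spectral multipliers $\chi(2^{-k}\sqrt H)$ at the scaling-critical regularity $V\in L^{3/2,1}$ --- is indeed the substance of \cite{becgol1}, and your sketch does not supply it; everything after that (randomization/duality for the lower bound, Marcinkiewicz--Zygmund for the $\ell^2$-valued extension) is standard. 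So your proposal is best described as a correct reduction to the hard part, not a proof.
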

		
		Motivated by the formulation in D'Ancona--Fanelli \cite{DaF}, Strichartz estimates
		for Klein-Gordon equations are formulated as follows: 
		\begin{theorem}
			\label{thm:KGStrichartz} Consider the Klein-Gordon  in $\mathbb{R}^{3+1}$
			with Hamiltonian $H=-\Delta+V$, $V\in L^{\frac{3}{2},1}$, such that
			$H$ has no threshold eigenstates or resonance. Then for 
			\begin{equation}
			\frac{2}{p}+\frac{3}{q}=\frac{3}{2},\,p\geq2\label{eq:661}
			\end{equation}
			one has the homogeneous Strichartz estimates
			\begin{equation}
			\left\Vert E_{\frac{1}{2}\left(\frac{1}{p}+\frac{1}{2}-\frac{1}{q}\right)}^{H}\left(t\right)P_{c}f\right\Vert _{L_{t}^{p}L_{x}^{q}}\lesssim\left\Vert f\right\Vert _{L_{x}^{2}}\label{eq:662}
			\end{equation}
			and the inhomogeneous Strichartz estimates
			\begin{equation}
			\left\Vert \int_{-\infty}^{t}E_{\frac{1}{2}\left(\frac{1}{p}+\frac{1}{2}-\frac{1}{q}\right)}^{H}\left(t\right)\left(E_{\frac{1}{2}\left(\frac{1}{p}+\frac{1}{2}-\frac{1}{q}\right)}^{H}\left(s\right)\right)^{*}P_{c}F(s)\,ds\right\Vert _{L_{x}^{q}L_{x}^{p}}\lesssim\left\Vert F\right\Vert _{L_{t}^{\tilde{p}'}L_{x}^{\tilde{q}'}}\label{eq:663}
			\end{equation}
			where $\left(\tilde{p},\tilde{q}\right)$ satisfies \eqref{eq:661}
			and $\frac{1}{\tilde{p}}+\frac{1}{\tilde{p}'}=1$,  $\frac{1}{\tilde{q}}+\frac{1}{\tilde{q}'}=1$.
		\end{theorem}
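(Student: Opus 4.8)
The plan is to split the Schr\"odinger-admissible range into the non-endpoint pairs $2<p<\infty$ and the single endpoint $(p,q)=(2,6)$ (equivalently $\alpha=\tfrac5{12}$), treating the former by complex interpolation of the pointwise decay estimates in the spirit of Beals and the latter by the Keel--Tao bilinear argument combined with the Littlewood--Paley calculus for $H$ from Theorem~\ref{thm:BG}. Throughout I would exploit the identity $E^H_\alpha(t)P_c\,(E^H_{\tilde\alpha}(s)P_c)^*=E^H_{\alpha+\tilde\alpha}(t-s)P_c$, so that both \eqref{eq:662} and \eqref{eq:663} become statements about the single analytic family $\{E^H_\beta\}$; the homogeneous estimate \eqref{eq:662} is, via the abstract $TT^*$ principle, equivalent to the (non-retarded) boundedness of $\int E^H_{2\alpha}(t-s)P_c\,F(s)\,ds$ between the appropriate spacetime norms $L^{p'}_tL^{q'}_x\to L^p_tL^q_x$.

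\textbf{Non-endpoint estimates.} On the line $\Re\beta=\tfrac54$, Theorem~\ref{PKGpointwise} gives $\|E^H_{5/4+ib}(t)P_c\|_{L^1_x\to L^\infty_x}\lesssim|t|^{-3/2}$, and I would first check that the implicit constant grows at most polynomially in $b$ by inspecting the $\Gamma$-factors in Proposition~\ref{fractional_powers}; on $\Re\beta=0$, spectral calculus gives $\|E^H_{ib}(t)P_c\|_{L^2_x\to L^2_x}\lesssim1$ because $H\ge0$ on the range of $P_c$. Stein interpolation of $\beta\mapsto E^H_\beta(t)P_c$ then yields, for $0\le\beta<\tfrac54$, the decay $\|E^H_\beta(t)P_c\|_{L^{q'}_x\to L^q_x}\lesssim|t|^{-\mu}$ with $\tfrac1q=\tfrac12-\tfrac{2\beta}5$ and $\mu=\tfrac{6\beta}5$. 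Applying this with $\beta=2\alpha$, together with $TT^*$ and the Hardy--Littlewood--Sobolev inequality in time (the exponent balance forces $\tfrac2p=\mu$, and HLS is applicable precisely when $p>2$), produces \eqref{eq:662}; a direct computation confirms that the resulting $(p,q)$ lies on the line $\tfrac2p+\tfrac3q=\tfrac32$ with $\alpha=\tfrac12(\tfrac1p+\tfrac12-\tfrac1q)$, and that $p>2$ corresponds to $\beta<\tfrac56<\tfrac54$. Away from the endpoint, the Christ--Kiselev lemma upgrades the non-retarded $TT^*$ bound to the retarded inhomogeneous estimate \eqref{eq:663}.

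\textbf{The endpoint.} For $(p,q)=(2,6)$ I would decompose with the Littlewood--Paley projections $P_k=\chi(2^{-k}\sqrt H)$ and use the square-function equivalence $\|g\|_{L^6_x}\sim\|S_Hg\|_{L^6_x}$ of Theorem~\ref{thm:BG}. The key input is a frequency-localized dispersive bound for $U_k(t)=e^{it\sqrt{H+1}}P_kP_c$. Writing $e^{it\sqrt{H+1}}P_k=(H+1)^{5/4}E^H_{5/4}(t)P_k=(H+1)E^H_1(t)P_k$ and using that $(H+1)^s$ restricted to a fattened dyadic block of size $2^k$ is a spectral multiplier of $L^\infty$-size $\sim(2^k+1)^{2s}$ (by the multiplier bounds of \cite{becgol1}), Theorem~\ref{PKGpointwise} gives, uniformly in $k$, $\|U_k(t)\|_{L^1_x\to L^\infty_x}\lesssim\min\big((2^k+1)^2|t|^{-1},\,(2^k+1)^{5/2}|t|^{-3/2},\,(2^k+1)^3\big)\lesssim(2^k+1)^{5/2}|t|^{-3/2}$. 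Combined with $\|U_k(t)\|_{L^2_x\to L^2_x}\lesssim1$, this places each $U_k$ in the Keel--Tao framework with $\sigma=\tfrac32$ and dispersive constant $A_k=(2^k+1)^{5/2}$, so that $\|U_k(t)f\|_{L^2_tL^6_x}\lesssim A_k^{1/3}\|P_kf\|_{L^2_x}$ together with its retarded inhomogeneous counterpart. Since $(H+1)^{-5/12}$ restricted to the $k$-th block has size $\sim A_k^{-1/3}$, I obtain $\|E^H_{5/12}(t)P_kP_cf\|_{L^2_tL^6_x}\lesssim\|P_kf\|_{L^2_x}$ uniformly in $k$; summing over $k$ via $\|(\sum_k|h_k|^2)^{1/2}\|_{L^2_tL^6_x}\le(\sum_k\|h_k\|_{L^2_tL^6_x}^2)^{1/2}$ (valid as $6\ge2$), the square-function bound, and $L^2$ orthogonality of the $P_k$, then yields \eqref{eq:662} and \eqref{eq:663} in the cases involving the endpoint. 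The remaining homogeneous estimates can alternatively be recovered by Stein-interpolating the analytic family between this endpoint and $\beta=0$.

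\textbf{Main obstacle.} The hard part will be the interplay between the Keel--Tao bilinear argument and the perturbed Littlewood--Paley theory: one must keep all frequency-localized estimates uniform in $k$ even though the projections $P_k$ are bounded on neither $L^1$ nor $L^\infty$ (the standard remedy is to carry along fattened projections $\widetilde P_k$ with $\widetilde P_kP_k=P_k$), and one must justify the frequency-localized dispersive bounds in the \emph{perturbed} setting by rerunning the spectral-representation argument of Proposition~\ref{C1H} and invoking the $L^p$ spectral-multiplier estimates for $H$ from \cite{becgol1}. Verifying the polynomial growth in $\Im\beta$ of the Stein-interpolation constants is routine but necessary; the Hardy--Littlewood--Sobolev, Christ--Kiselev, and square-function steps are all standard.
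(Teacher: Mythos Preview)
Your proposal is correct and essentially the same as the paper's argument: the endpoint $(p,q)=(2,6)$ is handled identically, by combining the frequency-localized dispersive bound coming from Theorem~\ref{PKGpointwise} with Keel--Tao and the perturbed Littlewood--Paley theory of Theorem~\ref{thm:BG}, and the remaining pairs are then recovered by interpolation with the trivial $L^\infty_tL^2_x$ bound. Your direct Stein-interpolation route for the non-endpoint range (using $E^H_{5/4+ib}$ and Hardy--Littlewood--Sobolev) is a harmless variant---the paper carries out this Beals-type argument separately in Theorem~\ref{thm:waveStrichartz} with the $|t|^{-1}$ decay of $C_1^H$ instead---and you rightly note at the end that interpolating with the endpoint suffices anyway; your identification of the $L^1/L^\infty$ issue for the projections $P_k$ and the need for uniform spectral-multiplier bounds from \cite{becgol1} is exactly the technical point the paper glosses over in the step $\chi(2^{-k}\sqrt H)(H+1)^{5/4}\sim\langle 2^{5k/2}\rangle\chi(2^{-k}\sqrt H)$.
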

		
		When $p=\infty$, then $q=2$, the estimate is trivial. We attempt
		to understand the endpoint estimates with $p=2$. Note that in the
		above Strichartz estimates, if we do not apply the Littlewood-Paley
		decomposition, the evolution operator varies with respect to admissible
		pairs.
		
		We will rely on Theorem \ref{thm:PKGcosine} and Theorem
		\ref{KGpointDecay} 
		\begin{equation}
		\left\Vert E_{\frac{5}{4}}^{H}\left(\cdot,t\right)P_{c}f\right\Vert _{L_{x}^{\infty}}\lesssim\left|t\right|^{-\frac{3}{2}}\left\Vert f\right\Vert _{L_{x}^{1}}\label{eq:pointdecay1}
		\end{equation}
		and 
		\begin{equation}
		\left\Vert E_{0}^{H}\left(\cdot,t\right)P_{c}f\right\Vert _{L_{x}^{2}}\lesssim\left\Vert f\right\Vert _{L_{x}^{2}}.\label{eq:L2}
		\end{equation}
		In the following, we will prove the endpoint  $p=2$ and $q=6$.
		The full range in Theorem \ref{thm:KGStrichartz} follows by interpolating
		$p=\infty,\,q=2$ case with the endpoint estimates.
		
		\begin{proof}[Proof of Theorem \ref{thm:KGStrichartz}]
			By our definition, one has
			\[
			\left|\chi\left(2^{-k}\sqrt{H}\right)e^{it\sqrt{H+1}}P_{c}f\right|_{L^{\infty}}\sim\left|\left\langle 2^{\frac{5}{2}k}\right\rangle \chi\left(2^{-k}\sqrt{H}\right)\frac{e^{it\sqrt{H+1}}}{\left(\sqrt{H+1}\right)^{\frac{5}{2}}}P_{c}f\right|_{L^{\infty}}.
			\]
			By the pointwise decay \eqref{eq:pointdecay1}, it follows
			\begin{align*}
			\left|\left\langle 2^{\frac{5}{2}k}\right\rangle \chi\left(2^{-k}\sqrt{H}\right)\frac{e^{it\sqrt{H+1}}}{\left(\sqrt{H+1}\right)^{\frac{5}{2}}}P_{c}f\right|_{L^{\infty}} & \lesssim\left\langle 2^{\frac{5}{2}k}\right\rangle \left|\chi\left(2^{-k}\sqrt{H}\right)\frac{e^{it\sqrt{H+1}}}{\left(\sqrt{H+1}\right)^{\frac{5}{2}}}P_{c}f\right|_{L^{\infty}}\\
			& \lesssim\left\langle 2^{\frac{5}{2}k}\right\rangle \frac{1}{t^{\frac{3}{2}}}\left\Vert \chi\left(2^{-k}\sqrt{H}\right)P_{c}f\right\Vert _{L^{1}}.
			\end{align*}
			We also know from \eqref{eq:L2}
			\[
			\left\Vert \chi\left(2^{-k}\sqrt{H}\right)e^{it\sqrt{H+1}}P_{c}f\right\Vert _{L_{x}^{2}}\lesssim\left\Vert \chi\left(2^{-k}\sqrt{H}\right)P_{c}f\right\Vert _{L_{x}^{2}}.
			\]
			Applying the abstract interpolation theorem due to Keel--Tao \cite{KeTa}, we obtain
			that
			\[
			\int\left\Vert \chi\left(2^{-k}\sqrt{H}\right)e^{it\sqrt{H+1}}P_{c}f\right\Vert _{L_{x}^{6}}^{2}\,dt\lesssim\left\langle 2^{\frac{5}{6}k}\right\rangle \left\Vert \chi\left(2^{-k}\sqrt{H}\right)P_{c}f\right\Vert _{L_{x}^{2}}^{2}.
			\]
			Therefore
			\[
			\int\left\Vert \chi\left(2^{-k}\sqrt{H}\right)\frac{e^{it\sqrt{H+1}}}{\left(\sqrt{H+1}\right)^{\frac{5}{6}}}P_{c}f\right\Vert _{L_{x}^{6}}^{2}\,dt\lesssim\left\Vert \chi\left(2^{-k}\sqrt{H}\right)P_{c}f\right\Vert _{L_{x}^{2}}^{2}.
			\]
			Then applying Theorem \ref{thm:BG} and Minkowski inequality, one has
			\begin{align*}
			\left\Vert \frac{e^{it\sqrt{H+1}}}{\left(\sqrt{H+1}\right)^{\frac{5}{6}}}P_{c}f\right\Vert _{L_{x}^{6}}^{2} & =\left\Vert \sum_{k\in\mathbb{Z}}\chi\left(2^{-k}\sqrt{H}\right)\frac{e^{it\sqrt{H+1}}}{\left(\sqrt{H+1}\right)^{\frac{5}{6}}}P_{c}f\right\Vert _{L_{x}^{6}}^{2}\\
			& \lesssim\left\Vert \left(\sum_{k\in\mathbb{Z}}\left|\chi\left(2^{-k}\sqrt{H}\right)\frac{e^{it\sqrt{H+1}}}{\left(\sqrt{H+1}\right)^{\frac{5}{6}}}P_{c}f\right|^{2}\right)^{\frac{1}{2}}\right\Vert _{L_{x}^{6}}^{2}\\
			& \lesssim\sum_{k\in\mathbb{Z}}\left\Vert \chi\left(2^{-k}\sqrt{H}\right)\frac{e^{it\sqrt{H+1}}}{\left(\sqrt{H+1}\right)^{\frac{5}{6}}}P_{c}f\right\Vert _{L_{x}^{6}}^{2}.
			\end{align*}
			Hence it follows that
			\begin{align*}
			\int\left\Vert \frac{e^{it\sqrt{H+1}}}{\left(\sqrt{H+1}\right)^{\frac{5}{6}}}P_{c}f\right\Vert _{L_{x}^{6}}^{2}dt & \lesssim\sum_{k\in\mathbb{Z}}\int\left\Vert \chi\left(2^{-k}\sqrt{H}\right)\frac{e^{it\sqrt{H+1}}}{\left(\sqrt{H+1}\right)^{\frac{5}{6}}}P_{c}f\right\Vert _{L_{x}^{6}}^{2}dt\\
			& \lesssim\sum_{k\in\mathbb{Z}}\left\Vert \chi\left(2^{-k}\sqrt{H}\right)P_{c}f\right\Vert _{L_{x}^{2}}^{2}\\
			& \lesssim\left\Vert P_{c}f\right\Vert _{L_{x}^{2}}^{2}.
			\end{align*}Inhomogeneous estimates follow in the same manner.
		\end{proof}

	\section{Nonlinear applications}\label{sec:nonlinear}
	In this section, we use some linear estimates from previous sections to establish some well-posedness results for the associated semilinear equations. 	To illustrate the ideas, we focus on the quintic case.
	
	
	When bootstrapping for the wave equation with $\dot H^1$ initial data, the solution had $1/2$ powers of decay, the nonlinearity had $5/2$ powers of decay, the wave equation propagator had $2$ powers of decay, so the contribution of the nonlinearity had $5/2+2-4=1/2$ powers of decay and we could close the loop.
	
	In the Klein--Gordon case, we have to do this for both the wave and the Bessel part. If the solution has $\alpha$ powers of decay, the nonlinearity needs $\alpha+2$ and $\alpha+5/2$ powers of decay, in order to get back $(\alpha+2)+2-4=\alpha$ (for the wave kernel), respectively $(\alpha+5/2)+3/2-4=\alpha$ (for the Bessel part) decay after bootstrapping.
	
	Hence the nonlinear potential $u^4$ needs both $2$ and $5/2$ powers of decay. The first requirement can be satisfied in the same way as for the wave equation. For the second requirement, we need to have two of the $u$ factors within the nonlinear potential in Strichartz norms with $1/4$ power extra decay, which is the most possible.
	
	More precisely, the nonlinear potential $u^4$ ought to be in $L^{3/2, q}_x L^\infty_t$ for the wave part and in e.g.\;$L^{4/3, q}_x L^{4, \infty}_t$ or $L^{6/5, q}_x L^\infty_t$ for the Bessel part.
	
	Hence $u$ itself has to be in $L^6_x L^\infty_t$ and in $L^{16/3}_x L^{16}_t$ or $L^{24/5}_x L^\infty_t$ or some intermediate space.
	
	These latter powers are half-way between the wave-admissible and the endpoint Klein--Gordon-admissible powers.
	
	We thus get the following well-posedness for the quintic equation.
	\begin{proposition}\lb{prop_nonlinear} Consider a potential $V \in L^{3/2, 1}(\R^3)$ such that $H-\Delta+V$ has no negative eigenvalues and no threshold eigenfunctions or resonance. The quintic Klein--Gordon equation in $\R^{3+1}$
		$$
		u_{tt}-\Delta u + Vu + u \pm u^5 = F,\ u(0)=v,\ u_t(0)=w
		$$
		is globally well-posed if the norm of the initial data $\|(v, w)\|_{H^1 \times L^2}$ is sufficiently small, if $F$ is also small in the norm of
		$
		L^{6/5, 2}_x L^\infty_t \cap L^{16/15, 2}_x L^{16/5, 2}_t \cap L^{12/11, 2}_x L^{4, 1}_t \cap L^{48/41, 2}_x L^{16/3, 2}_t.
		$
		The solution $u$ is in $L^{6, 2}_x L^\infty_t \cap L^{16/3, 2}_x L^{16, 2}_t$ and
		$$
		\|u\|_{L^{6, 2}_x L^\infty_t \cap L^{16/3, 2}_x L^{16, 2}_t} \les \|(v, w)\|_{H^1 \times L^2} + \|F\|_{L^{6/5, 2}_x L^\infty_t \cap L^{16/15, 2}_x L^{16/5, 2}_t \cap L^{12/11, 2}_x L^{4, 1}_t \cap L^{48/41, 2}_x L^{16/3, 2}_t}.
		$$
		
		If in addition $F=0$ or $F$ is small in several more norms, then $u \in L^\infty_x L^2_t \cap L^{12, 2}_x L^2_t \cap L^{24/5, 2}_x L^{8, 2}_t$ as well.
	\end{proposition}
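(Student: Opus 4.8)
The plan is to solve the equation by a contraction mapping argument in the space $X := L^{6, 2}_x L^\infty_t \cap L^{16/3, 2}_x L^{16, 2}_t$, with the inhomogeneity living in the dual-type space $Y := L^{6/5, 2}_x L^\infty_t \cap L^{16/15, 2}_x L^{16/5, 2}_t \cap L^{12/11, 2}_x L^{4, 1}_t \cap L^{48/41, 2}_x L^{16/3, 2}_t$. First I would record that, since $H = -\Delta+V$ has no negative eigenvalues and no threshold bound state, its spectrum is purely absolutely continuous and equals $[0,\infty)$; in particular $P_c = I$, the operators $\sqrt H$ and $\sqrt{H+1}$ are well defined, and the splitting $S_{1/2}^H = S_\Delta^H + S_B^H$ of Theorem \ref{SBH} into a perturbed wave part (treated in \cite{becgol}) and a Bessel part applies. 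The fixed-point map is
$$
\mc N(u)(t) = \cos(t\sqrt{H+1})\, v + S_{1/2}^H(t)\, w \mp \int_0^t S_{1/2}^H(t-s)\big(u^5(s) + F(s)\big) \dd s.
$$

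Next I would estimate $\mc N$ on $X$. The free part is handled by Corollary \ref{cor_partial_results}: writing $\cos(t\sqrt{H+1})v = C_{1/2}^H(t)\sqrt{H+1}\,v$ and using $\|\sqrt{H+1}\,v\|_{L^2}\sim\|v\|_{H^1}$ (Appendix \ref{sec:sob}) gives $\|\cos(t\sqrt{H+1})v + S_{1/2}^H(t)w\|_X \les \|(v,w)\|_{H^1\times L^2}$. For the Duhamel term I would split $S_{1/2}^H = S_\Delta^H + S_B^H$: the wave Duhamel operator maps $Y\to X$ by the reversed Strichartz estimates of \cite{becgol}, while the Bessel Duhamel operator maps $Y\to X$ because, by Theorem \ref{SBH}, $S_B^H$ is dominated by a convolution operator with kernel in $L^{6,\infty}_x L^1_t \cap L^{4,\infty}_x L^{4/3,\infty}_t \cap L^\infty_{x,t}$ (hence, by interpolation, in all intermediate mixed Lorentz spaces), so that Young's inequality in $x$ and in $t$ closes the bound. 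The four summands defining $Y$ are precisely dictated by this scheme: two of them feed the $L^{6,2}_x L^\infty_t$ component of $X$ (one through the wave part, one through the Bessel part) and two feed the $L^{16/3,2}_x L^{16,2}_t$ component, matching the ``numerology'' indicated before the statement. Thus $\|\mc N(u)\|_X \les \|(v,w)\|_{H^1\times L^2} + \|F\|_Y + \|u^5\|_Y$.

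The nonlinear estimate $\|u^5\|_Y \les \|u\|_X^5$ I would obtain from Hölder's inequality in mixed Lorentz spaces, distributing the five factors of $u$ among the two components of $X$: five factors in $L^{6,2}_x L^\infty_t$ give $u^5 \in L^{6/5,2/5}_x L^\infty_t \subset L^{6/5,2}_x L^\infty_t$, and the mixed splittings (four factors from one space and one from the other, three and two, and so on) reach the other three components of $Y$, using $L^{p,q_1}\hookrightarrow L^{p,q_2}$ for $q_1\le q_2$. Applying the same computation to $u^5-\tilde u^5 = (u-\tilde u)\sum_{j=0}^4 u^j\tilde u^{4-j}$ yields the Lipschitz bound $\|u^5-\tilde u^5\|_Y \les (\|u\|_X^4 + \|\tilde u\|_X^4)\|u-\tilde u\|_X$. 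Combining, $\mc N$ is a contraction on a small ball of $X$ once $\|(v,w)\|_{H^1\times L^2}$ and $\|F\|_Y$ are small, producing a unique global-in-time mild solution $u\in X$ with the asserted estimate. For the last assertion, once $u\in X$ is in hand I would feed it back into Duhamel's formula once more: under the extra smallness assumed on $F$, Hölder places $u^5 + F$ in the remaining dual reversed-Strichartz spaces, and Corollary \ref{cor_partial_results} together with Theorem \ref{thm:inhomoRev} (extended to $H$ via Proposition \ref{C1H}) gives $u\in L^\infty_x L^2_t \cap L^{12,2}_x L^2_t \cap L^{24/5,2}_x L^{8,2}_t$.

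The main obstacle here is not conceptual but organizational: one must verify that the exponents actually close, i.e.\ that Hölder delivers $u^5\in Y$ from $u\in X$ and that the wave and Bessel Duhamel operators send $Y$ back into $X$, with the Lorentz second indices tracked carefully enough that the endpoint pieces (the $L^{4,1}_t$ and $L^{16/3,2}_t$ factors) survive the interpolation and Young steps. Everything else is a direct consequence of the linear reversed Strichartz estimates already established earlier in the paper.
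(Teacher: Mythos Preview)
Your proposal is correct and follows essentially the same route as the paper: a contraction (the paper phrases it as a Picard iteration $u_n$, which is equivalent) in the space $X = L^{6,2}_x L^\infty_t \cap L^{16/3,2}_x L^{16,2}_t$, splitting the perturbed sine propagator as $S_{1/2}^H = S_W^H + S_B^H$, bounding the free piece by Corollary~\ref{cor_partial_results}, the wave Duhamel piece by \cite{becgol}, and the Bessel Duhamel piece by Theorem~\ref{SBH}, with H\"older in Lorentz spaces closing the quintic nonlinearity. Your identification of the four components of $Y$ as ``two for the wave part, two for the Bessel part, one pair per component of $X$'' is exactly the bookkeeping the paper carries out explicitly, and your remark that the only real work is checking the exponents match is accurate.
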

	The equation is still locally well-posed in these norms if the initial data or $F$ are large.
	
	Concerning the inhomogenous term, note that $L^{4/3} \cdot L^6 = L^{12/11}$ and $L^{3/2} \cdot L^{16/3} = L^{48/41}$.
	\begin{proof} We focus on the homogeneous case first. The solution will be obtained as the limit of a sequence of approximations. Linearize the equation and recursively construct the sequence $(u^n)$ such that $u_0=0$ and
		$$
		(u_n)_{tt}-\Delta u_n + V u_n + u_n \pm (u_{n-1})^5 = 0,\ u_n(0)=v,\ (u_n)_t(0)=w.
		$$
		The nonlinear potential $(u^{n-1})^4$ being small in the $L^{3/2, 1}_x L^\infty_t \cap L^{4/3, 1}_x L^{4, 1}_t$ norm suffices to close the loop.
		
		More precisely, the solution is given by Duhamel's formula: 
		\[
		u_{n}=S_{1/2}^H(t)u_{n}\left(0\right)+C^H_{0}\left(t\right)\left(u_{n}\right)_{t}\left(0\right)+\int_{0}^{t}S^H_{1/2}\left(t-s\right)u_{n-1}^{5}\,ds.
		\]
		
		By our homogeneous Strichartz estimates
		$$
		\|S_{1/2}^H(t)v+C^H_{0}\left(t\right)w\|_{L_{x}^{6,2}L_{t}^{\infty}\cap L_{x}^{\frac{16}{3},2}L_{t}^{16,2}} \les \|(v, w)\|_{H^1 \times L^2}.
		$$
		
		As in Section \ref{subsec:sinpro}, we split the sine propagator into wave and
		Bessel parts:
		\[
		S^H_{1/2}(t-s)=S^H_{W}(t-s)+S^H_{B}(t-s).
		\]
		Suppose $u_{n-1}\in L_{x}^{6,2}L_{t}^{\infty}\cap L_{x}^{\frac{16}{3},2}L_{t}^{16,2}$.
		Then $u_{n-1}^{4} \in L_{x}^{\frac{3}{2},2}L_{t}^{\infty}\cap L_{x}^{\frac{4}{3},2}L_{t}^{4,2}$ and
		\begin{equation}
		\|u_{n-1}^{4}\|_{L_{x}^{\frac{3}{2},2}L_{t}^{\infty}\cap L_{x}^{\frac{4}{3},2}L_{t}^{4,2}} \les \|u_{n-1}\|_{L_{x}^{6,2}L_{t}^{\infty}\cap L_{x}^{\frac{16}{3},2}L_{t}^{16,2}}^4.\label{eq:estf2}
		\end{equation}
		We try to estimate the $L_{x}^{6,2}L_{t}^{\infty}\cap L_{x}^{\frac{16}{3},2}L_{t}^{16,2}$
		norm of $u_{n}$.
		
		For the wave part, 
		by the Strichartz estimates implied by Theorem \ref{SBH} and H\"older's inequality, as in the quintic wave case, one has
		\[
		\bigg\|\int_{0}^{t}S^H_{W}\left(t-s\right)u_{n-1}^{5}(s)\,ds\bigg\| _{L_{x}^{6,2}L_{t}^{\infty}} \les \|u_{n-1}^5\|_{L_{x}^{6/5,2}L_{t}^{\infty}} \les \|u_{n-1}\|_{L^{6, 2}_x L^\infty_t}^5,
		\]
		
		We next estimate the $L_{x}^{6,2}L_{t}^{\infty}$
		norm of the Bessel part: as in Section \ref{subsec:sinpro},
		$$
		\bigg\| \int_{0}^{t}S^H_{B}\left(t-s\right)u_{n-1}^5(s)\,ds\bigg\|_{L_{x}^{6,2}L_{t}^{\infty}} \les \|u_{n-1}^{5}\| _{L_{x}^{12/11,2}L_{t}^{4,2}} \leq\left\Vert u_{n-1}\right\Vert _{L_{x}^{6,2}L_{t}^{\infty}}\left\Vert u_{n-1}^4\right\Vert _{L_{x}^{4/3,2}L_{t}^{4,2}}
		$$
		by H\"older's inequality.
		
		Next, we estimate the $L_{x}^{\frac{16}{3},2}L_{t}^{16,2}$ norm.
		For the wave part, one has
		\[
		\left\Vert \int_{0}^{t}S^H_{W}\left(t-s\right)\left(u_{n-1}\right)^{4}\left(u_{n-1}\right)(s)\,ds\right\Vert _{L_{x}^{\frac{16}{3},2}L_{t}^{16,2}}\lesssim\left\Vert \left(u_{n-1}\right)^{4}\left(u_{n-1}\right)\right\Vert _{L_{x}^{\frac{48}{11},2}L_{t}^{16,2}}.
		\]
		Again by H\"older's inequality, 
		\[
		\left\Vert \left(u_{n-1}\right)^{4}\left(u_{n-1}\right)\right\Vert _{L_{x}^{\frac{48}{11},2}L_{t}^{16,2}}\lesssim\left\Vert u_{n-1}\right\Vert _{L_{x}^{\frac{16}{3},2}L_{t}^{16,2}}\left\Vert u_{n-1}^{4}\right\Vert _{L_{x}^{\frac{3}{2},2}L_{t}^{\infty}}
		\]
		which can be bounded via \eqref{eq:estf2}. 
		
		Finally, we apply the $L_{x}^{\frac{16}{3},2}L_{t}^{16,2}$ norm to
		the Bessel part, we obtain
		\begin{align*}
		\left\Vert \int_{0}^{t}S^H_{B}\left(t-s\right)\left(u_{n-1}\right)^{4}\left(u_{n-1}\right)(s)\,ds\right\Vert _{L_{x}^{\frac{16}{3},2}L_{t}^{16,2}} & \lesssim\left\Vert \left(u_{n-1}\right)^{4}\left(u_{n-1}\right)\right\Vert _{L_{x}^{\frac{16}{15},2}L_{t}^{\frac{16}{3},2}}\\
		& \lesssim\left\Vert u_{n-1}\right\Vert _{L_{x}^{\frac{16}{3},2}L_{t}^{16,2}}\left\Vert u_{n-1}^{4}\right\Vert _{L_{x}^{\frac{4}{3},2}L_{t}^{4,2}},
		\end{align*}
		which can also be bounded by \eqref{eq:estf2}.
		
		From the above discussion, we know that if we set 
		\[
		u_{n}=S^H_{\frac{1}{2}}(t)u_{n}\left(0\right)+C^H_{0}\left(t\right)\left(u_{n}\right)_{t}\left(0\right)+\int_{0}^{t}S^H_{\frac{1}{2}}\left(t-s\right)\left(u_{n-1}\right)^{4}\left(u_{n-1}\right)(s)\,ds
		\]
		then 
		\begin{align*}
		\left\Vert u_{n}\right\Vert _{L_{x}^{6,2}L_{t}^{\infty}\bigcap L_{x}^{\frac{16}{3},2}L_{t}^{16,2}} & \lesssim\left\Vert (v, w) \right\Vert _{H^{1}\times L^{2}} +\left\Vert u_{n-1}\right\Vert _{L_{x}^{6,2}L_{t}^{\infty}\cap L_{x}^{\frac{16}{3},2}L_{t}^{16,2}}^{5}.
		\end{align*}
		Likewise
		\begin{align*}
		\left\Vert u_{n}-u_{n-1}\right\Vert _{L_{x}^{6,2}L_{t}^{\infty}\bigcap L_{x}^{\frac{16}{3},2}L_{t}^{16,2}} & \lesssim \left\Vert u_{n-1}-u_{n-2}\right\Vert_{L_{x}^{6,2}L_{t}^{\infty}\cap L_{x}^{\frac{16}{3},2}L_{t}^{16,2}} \big(\left\Vert u_{n-1}\right\Vert _{L_{x}^{6,2}L_{t}^{\infty}\cap L_{x}^{\frac{16}{3},2}L_{t}^{16,2}}^{4} \\
		&+\left\Vert u_{n-2}\right\Vert_{L_{x}^{6,2}L_{t}^{\infty}\cap L_{x}^{\frac{16}{3},2}L_{t}^{16,2}}^{4}\big).
		\end{align*}
		Therefore, the sequence $(u_n)_n$ is contractive if the initial data have sufficiently small norm in $H^1 \times L^2$ and $L_{x}^{6,2}L_{t}^{\infty}\cap L_{x}^{\frac{16}{3},2}L_{t}^{16,2}$
		is enough to close the iteration loop.
		
		The contribution of the inhomogeneous term $F$ is treated in the same manner. The extra norms are also obtained by iteration, if $F$ is finite in the appropriate norms.
	\end{proof}

		\begin{observation}
			For higher order powers, we refer to Beceanu-Goldberg \cite{becgol} for the wave equation. After some modifications adapted to the Bessel parts as in the analysis above, we can also obtain some well-posedness results for higher order Klein-Gordon equations.
		\end{observation}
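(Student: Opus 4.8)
The plan is to produce the solution by a Picard iteration on Duhamel's formula, closing the fixed point in the space $X := L^{6,2}_x L^\infty_t \cap L^{16/3,2}_x L^{16,2}_t$ (together with the corresponding predual-type norms for the $F$-dependent part). Since $H=-\Delta+V\ge 0$ has no negative eigenvalues and no threshold bound state, $P_c$ acts as the identity, $\sqrt{H+1}\ge 1$, and every dispersive and reversed Strichartz estimate of Sections \ref{FKG}--\ref{KGP} applies. First I would control the free evolution: writing $\cos(t\sqrt{H+1})v = C^H_{1/2}(t)\big((H+1)^{1/2}v\big)$ and $\tfrac{\sin(t\sqrt{H+1})}{\sqrt{H+1}}w = S^H_{1/2}(t)w$, Corollary \ref{cor_partial_results} gives boundedness from $L^2$ into $L^{6,2}_x L^\infty_t \cap L^{24/5,2}_x L^{8,2}_t$; because $\big(\tfrac{3}{16},\tfrac{1}{16}\big)$ is the midpoint of $\big(\tfrac16,0\big)$ and $\big(\tfrac{5}{24},\tfrac18\big)$ in the $(1/p_x,1/p_t)$-plane, complex interpolation also produces the missing $L^{16/3,2}_x L^{16,2}_t$ component, and the equivalence $\|(H+1)^{1/2}v\|_{L^2}\sim\|v\|_{H^1}$ from Appendix \ref{sec:sob} converts the data norm. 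The inhomogeneous term is handled identically through the dual reversed Strichartz estimates, which is precisely why $F$ is required small in the four norms listed in the statement: they are the preduals matched to the four Strichartz estimates used below.

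The heart of the matter is the bound for the Duhamel term $\int_0^t S^H_{1/2}(t-s)\,u^5(s)\,ds$ in $X$. Following Section \ref{subsec:sinpro} I would split $S^H_{1/2}=S^H_W+S^H_B$. The wave part has two powers of decay and is dealt with exactly as for the quintic wave equation in \cite{becgol}: $\int_0^t S^H_W(t-s)\,u^5(s)\,ds$ is controlled in $L^{6,2}_x L^\infty_t$ by an $L^{6/5,2}_x L^\infty_t$ norm of $u^5$ and in $L^{16/3,2}_x L^{16,2}_t$ by a suitable $L^{p,2}_x L^{16,2}_t$ norm of $u^5$. The Bessel part is where the Klein--Gordon scaling enters: by Theorem \ref{SBH}, $S^H_B$ is dominated by a convolution kernel in $L^{6,\infty}_x L^1_t \cap L^{4,\infty}_x L^{4/3,\infty}_t \cap L^\infty_{x,t}$, i.e.\ with only $3/2$ powers of decay, a quarter power short of the wave scaling. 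Using convolution on Lorentz spaces, the Bessel Duhamel operator then maps $L^{12/11,2}_x L^{4,2}_t \to L^{6,2}_x L^\infty_t$ and $L^{16/15,2}_x L^{16/5,2}_t \to L^{16/3,2}_x L^{16,2}_t$, the admissible exponent pairs being forced by the $3/2$-power scaling together with the target norm.

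Each of the resulting four requirements I would close by H\"older, using $u^5 = u\cdot u^4$: if $u\in X$ then $u^4 \in L^{3/2,2}_x L^\infty_t \cap L^{4/3,2}_x L^{4,2}_t$ with $\|u^4\|\les\|u\|_X^4$ --- this ``double-decay'' space carries the quarter-power improvement specific to Klein--Gordon --- and multiplying by one more factor of $u$ drawn from the appropriate component of $X$ places $u^5$ into each of the four predual spaces above, with $\|u^5\|\les\|u\|_X^5$ throughout (for instance $u^5 \in L^{12/11,2}_x L^{4,2}_t$ from $u\in L^{6,2}_x L^\infty_t$ times $u^4\in L^{4/3,2}_x L^{4,2}_t$). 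Combining, the iteration map $\Phi(u) = (\text{free part}) + \big(\text{Duhamel of } \mp u^5 + F\big)$ satisfies a bound of the form: $\|\Phi(u)\|_X$ is controlled by $\|(v,w)\|_{H^1\times L^2}$, the $F$-norm of the statement, and $\|u\|_X^5$, with a matching Lipschitz estimate on differences, so $\Phi$ contracts a small ball of $X$ once the data and $F$ are small; this yields existence, uniqueness and continuous dependence, and the result is global because all the norms are global in time. The additional a posteriori membership $u\in L^\infty_x L^2_t \cap L^{12,2}_x L^2_t \cap L^{24/5,2}_x L^{8,2}_t$ when $F=0$ follows by feeding the constructed $u$, hence $u^5$, back into the remaining reversed Strichartz estimates of Corollary \ref{cor_partial_results}; large-data local well-posedness then comes from the small-data statement by chopping the data into small pieces and patching via finite speed of propagation.

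I expect the main obstacle to be the bookkeeping dictated by the Bessel part's anomalous $3/2$-power scaling. Because it loses a quarter power relative to the wave part, one cannot close the iteration in the wave-admissible space $L^{6,2}_x L^\infty_t$ alone: the solution must simultaneously live in $L^{16/3,2}_x L^{16,2}_t$, the complex-interpolation midpoint of the two sharp ``$3/4$-decay'' reversed Strichartz endpoints of Corollary \ref{cor_partial_results}, sitting halfway between the wave-admissible and the endpoint Klein--Gordon-admissible exponents. Verifying that all four nonlinear estimates --- wave and Bessel, in each of the two components of $X$ --- close with the same $X$, and that the H\"older splittings are compatible with the scaling of each piece, is the delicate point; the remaining contraction-mapping and finite-speed-of-propagation arguments are routine.
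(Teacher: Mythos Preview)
Your proposal is a correct and essentially verbatim reconstruction of the paper's proof of Proposition~\ref{prop_nonlinear} (the quintic case): the same iteration space $X = L^{6,2}_x L^\infty_t \cap L^{16/3,2}_x L^{16,2}_t$, the same wave/Bessel splitting of $S^H_{1/2}$, the same four H\"older estimates on $u^5 = u\cdot u^4$, and the same contraction argument. In that sense there is nothing to criticize in the argument itself.

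However, the statement you were asked to address is not Proposition~\ref{prop_nonlinear} but the remark that follows it, which concerns \emph{higher-order} nonlinearities $u^{N+1}$ with $N\neq 4$. The paper gives no proof of this remark; it merely points to the wave-equation treatment in \cite{becgol} and asserts that the same modifications to the Bessel part as in the quintic analysis carry over. Your write-up never leaves the quintic setting: every H\"older exponent, every predual space, and the entire closing of the iteration are tied to the specific power $5$. To actually substantiate the observation you would need to identify, for general $N$, a pair of reversed Strichartz spaces $X_N$ (one wave-scaled, one carrying the extra quarter-power Klein--Gordon decay) such that $u\in X_N$ forces $u^N$ into both $L^{3/2}_x L^\infty_t$-type and $L^{4/3}_x L^{4}_t$-type norms, and check that the resulting exponents stay in the admissible range of Theorem~\ref{SBH} and the \cite{becgol} wave estimates. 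That bookkeeping---which is exactly what the remark alludes to---is absent from your proposal.
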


	\appendix
	
	\section{Bessel and Hankel functions}\label{sec:bessel}
	In this section we state some known facts about the Bessel functions used in this paper. A good reference concerning Bessel or Hankel functions, or special functions in general, is for example Andrews--Askey--Roy \cite{aar}.\\
	
	\noindent\emph{Bessel functions of the first kind} $J_n$, of integer order $n \in \Z$, can be represented by the integral
	\be\lb{bessel_int}
	J_n(x) = \frac 1 {2\pi} \int_{-\pi}^\pi e^{i(nt + x\sin t)} \dd t.
	\ee
	This integral representation implies that $|J_n(x)| \leq 1$. Also, one has the following uniform decay bound:
	
	\begin{lemma}\lb{universal_bessel} $J_n(x) \les \frac {\langle n \rangle^{1/2}} {x^{1/2}}$.
	\end{lemma}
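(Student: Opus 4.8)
The plan is to prove the estimate by stationary phase analysis of the integral representation \eqref{bessel_int} together with van der Corput's lemma. First I would reduce to a convenient range. Since $J_{-n}(x)=(-1)^n J_n(x)$ for integer $n$, one may assume $n\geq 0$. Next, if $x\leq 4\langle n\rangle$ then $\langle n\rangle^{1/2}x^{-1/2}\gtrsim 1$, so the claim follows at once from $|J_n(x)|\leq 1$. It therefore remains to handle $x\geq 4\langle n\rangle$, in particular $x\geq 4$ and $n\leq x/4$; in this range I would in fact prove the stronger bound $|J_n(x)|\lesssim x^{-1/2}$, which implies the stated one because $\langle n\rangle\geq 1$.

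For $x\geq 4n$, write the phase as $\phi(t)=nt+x\sin t$, so that $\phi'(t)=n+x\cos t$ and $\phi''(t)=-x\sin t$. I would split $[-\pi,\pi]$ according to the size of $|\sin t|$. On the region $I_0=\{t\in[-\pi,\pi]:|\sin t|\geq 1/4\}$, a union of two intervals bounded away from $0$ and $\pm\pi$, one has $|\phi''(t)|=x|\sin t|\geq x/4$, so the second-derivative van der Corput lemma gives $\big|\int_{I_0}e^{i\phi}\big|\lesssim x^{-1/2}$. On the complementary region, which consists of $O(1)$ small neighborhoods of $0$ and of $\pm\pi$, one has $|\cos t|>\sqrt{15}/4$; combined with $n\leq x/4$ this forces $|\phi'(t)|\gtrsim x$ there (near $0$, $\phi'\geq x\sqrt{15}/4$; near $\pm\pi$, $|\phi'|\geq x(\sqrt{15}-1)/4$). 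Moreover $\phi''=-x\sin t$ does not change sign on each such neighborhood once we split it at the point $0$ (resp. $\pm\pi$) itself, so $\phi'$ is monotone on each resulting subinterval, and the first-derivative van der Corput lemma (equivalently, a single integration by parts) bounds each contribution by $\lesssim x^{-1}$. Summing the finitely many pieces yields $|J_n(x)|\lesssim x^{-1/2}+x^{-1}\lesssim x^{-1/2}$, which finishes this range and hence the lemma.

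I do not expect a serious obstacle: this is a textbook stationary phase estimate. The only points needing a little care are (i) the bookkeeping near $t=0$ and $t=\pm\pi$, where $\phi'$ does \emph{not} vanish because we are away from the turning regime $x\sim n$, but $\phi''$ changes sign, so one must cut at those points to invoke the monotone version of van der Corput; and (ii) the observation that all $n$-dependence of the final constant is already captured by the trivial-range reduction, so that in the remaining range it suffices that the van der Corput constants are absolute, independent of the interval and of $\phi$.
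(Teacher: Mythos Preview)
Your proof is correct. Both arguments are stationary phase on the integral representation \eqref{bessel_int}, but they are organized differently. The paper excises intervals of half-width $\epsilon$ around $t=\pm\pi/2$, bounds the excised part trivially by $\epsilon$, integrates by parts on the remainder (picking up a factor $\sim \langle n\rangle/(x\epsilon)$ from the derivative of $e^{int}/(x\cos t)$), and then optimizes in $\epsilon$ to produce the $\langle n\rangle^{1/2}/x^{1/2}$ directly. You instead dispose of the $n$-dependence at the outset via the trivial bound on the range $x\lesssim\langle n\rangle$, and then in the remaining range $x\gtrsim\langle n\rangle$ apply van der Corput in black-box form to get the $n$-independent bound $x^{-1/2}$. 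Your route is a bit cleaner in that no optimization is needed and the van der Corput constants are absolute; the paper's route makes more visible exactly where the $\langle n\rangle^{1/2}$ enters. Either way the content is the same stationary phase estimate.
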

	\begin{proof}
		This integral has two points of stationary phase, namely $x = \pm \pi/2$. We integrate by parts outside two intervals of size $2\epsilon$, one around each point. By the method of stationary phase, we get
		$$\begin{aligned}
		|J_n(x)| &\les \epsilon + \frac{e^{int}}{x \cos t} e^{i(x\sin t)} \bigg|_{t=-\pi}^{-\pi/2-\epsilon} + \bigg|_{-\pi/2+\epsilon}^{\pi/2-\epsilon} + \bigg|_{\pi/2+\epsilon}^{\pi} - \\
		&\bigg(\int_{-\pi}^{-\pi/2-\epsilon} + \int_{-\pi/2+\epsilon}^{\pi/2-\epsilon} + \int_{\pi/2+\epsilon}^{\pi}\bigg) \bigg(\frac {e^{int}} {x \cos t}\bigg)' \cos(nt + x\sin t) \dd t \\
		&\les \frac {\langle n \rangle} {x\epsilon} + \epsilon \les \frac {\langle n \rangle^{1/2}} {x^{1/2}},
		\end{aligned}$$
		for an appropriate choice of $\epsilon=\frac {\langle n \rangle^{1/2}} {|x|^{1/2}}$.
	\end{proof}
	
	\noindent \emph{Recurrence relations}, see \cite{aar}:
	$$
	\alpha J_\alpha(x) +xJ'_\alpha(x) = x J_{\alpha-1}(x),\ 
	-\alpha J_\alpha(x)+xJ_\alpha'(x) = -xJ_{\alpha+1}(x).
	$$
	Hence $J_0'=-J_1$ and
	$$
	\int_\sigma^\infty J_1(z) \dd z = J_0(\sigma).
	$$
	Thus, the integral is also never more than $1$ in absolute value:
	$$
	\sup_{r \geq 0} \bigg|\int_r^\infty J_1(z) \dd z\bigg| < \infty.
	$$
	In particular
	\be\lb{intj}
	\int_0^\infty J_1(s) \dd s = J_0(0) = 1.
	\ee
	Other useful formulas are
	$$
	J_1'(z)=J_0(z)-J_1(z)/z,\ (J_1(z)/z)'=-J_2(z)/z.
	$$
	
	\noindent \emph{Hankel functions} or Bessel functions of the third kind are defined as
	$$
	H_\alpha^\pm = J_\alpha \pm i Y_\alpha.
	$$
	
	\noindent\emph{Asymptotic behavior:} For $|z|<<1$,
	\be\lb{asymp}
	J_1(z) \sim \frac z 2,\ Y_1(z) \sim - \frac 2 {\pi z},\ H_1^+(z) \sim - \frac {2i} {\pi z}.
	\ee
	For $|z|>>1$,
	$$
	H_1^+(z) \sim \sqrt{\frac 2 {\pi z}} e^{i(z - 3\pi/4)}.
	$$
	
	\section{Sharp Agmon estimates}\label{sec:agmon}
	
	In this appendix, we provide a proof of the sharp Agmon estimates
	for the sake of completeness. 
	\begin{theorem}
		\label{thm:sharpAgmon} Suppose $\psi$ is an eigenfunction associated to the negative eigenvalue $-\lambda^{2}$ of the Schr\"{o}dinger
		operator $-\Delta+V$, where $V\in L^{\frac{3}{2},1}$:
		$$
		(-\Delta+V) \psi = -\lambda^2 \psi.
		$$
		Then 
		\begin{equation*}
		|\psi(x)|\les\left\langle x\right\rangle ^{-1}e^{-\lambda\left|x\right|}.\label{eq:sharpAgmon}
		\end{equation*}
	\end{theorem}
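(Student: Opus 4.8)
The plan is to combine the classical (non-sharp) exponential decay of Agmon with a bootstrap against the explicit resolvent kernel $G_\lambda(x,y)=\frac{e^{-\lambda|x-y|}}{4\pi|x-y|}$ of $-\Delta+\lambda^2$ on $\R^3$, the crucial structural fact being that, as an eigenfunction, $\psi$ solves the fixed-point equation $\psi=-(-\Delta+\lambda^2)^{-1}(V\psi)$, i.e.
\[
\psi(x) = -\int_{\R^3} \frac{e^{-\lambda|x-y|}}{4\pi|x-y|}\, V(y)\psi(y)\dd y .
\]
I will work with two weighted $L^\infty$ spaces: the target space $X=\{f:\ \|f\|_X:=\sup_x \langle x\rangle e^{\lambda|x|}|f(x)|<\infty\}$, and the auxiliary spaces $Y_\mu=\{f:\ \|f\|_{Y_\mu}:=\sup_x e^{\mu|x|}|f(x)|<\infty\}$ for $\mu<\lambda$. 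The goal is to show that $\psi$, which a priori lies only in $\bigcap_{\mu<\lambda}Y_\mu$, in fact lies in $X$.

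First I would record two preliminaries. (i) Since $V\in L^{3/2,1}$ lies in the global Kato class, a standard elliptic/Kato-class bootstrap on $(-\Delta+\lambda^2)\psi=-V\psi$ gives $\psi\in L^\infty$ with $\psi(x)\to0$ as $|x|\to\infty$. (ii) The classical Agmon estimate then yields $\psi\in Y_\mu$ for every $\mu<\lambda$; I would either cite this or sketch it from the boundedness of the conjugated resolvent $e^{\mu|x|}(-\Delta+\lambda^2)^{-1}e^{-\mu|x|}$ on $L^2$ when $\mu<\lambda$, together with the smallness of $\psi$ at infinity.

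The main step is a contraction argument. Fix a large radius $R$ and split $V=V_1+V_2$ with $V_1=V\chi_{\{|x|\le R\}}$ and $\|V_2\|_{L^{3/2,1}}$ as small as desired, and set $T_jf=-(-\Delta+\lambda^2)^{-1}(V_jf)$. I would prove: (a) $T_2$ is a contraction on $X$, with $\|T_2\|_{X\to X}\lesssim\|V_2\|_{L^{3/2,1}}$, by using the triangle inequality $e^{-\lambda|x-y|}e^{-\lambda|y|}\le e^{-\lambda|x|}$ to extract the exponential weight and then estimating $\int\frac{|V_2(y)|}{|x-y|\langle y\rangle}\dd y\lesssim\langle x\rangle^{-1}\|V_2\|_{L^{3/2,1}}$ — this comes from splitting the $y$-integral at $|y|\sim|x|$ and applying Hölder in Lorentz spaces, with $\frac1{|x-y|}\in L^{3,\infty}$ and $\langle y\rangle^{-1}\in L^{3,\infty}$; (b) by the same mechanism $T_2$ is a contraction on each $Y_\mu$, since $e^{-\lambda|x-y|}e^{-\mu|y|}\le e^{-\mu|x|}e^{-(\lambda-\mu)|x-y|}$ reduces matters to $\sup_x\int\frac{e^{-(\lambda-\mu)|x-y|}}{|x-y|}|V_2(y)|\dd y\lesssim\|V_2\|_{L^{3/2,1}}$; (c) since $V_1f$ is compactly supported, $T_1$ maps $Y_\mu$ into $X$ — for $|x|>2R$ one uses $e^{-\lambda|x-y|}\le e^{\lambda R}e^{-\lambda|x|}$ and $|x-y|\gtrsim|x|$, while for $|x|\le2R$ the Lorentz–Young inequality $L^{3/2,1}\ast L^{3,\infty}\to L^\infty$ gives a uniform bound and there $\langle x\rangle e^{\lambda|x|}\lesssim_R1$. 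With (a)–(c), the affine map $f\mapsto T_1\psi+T_2f$ is a contraction on $X$ (by (a),(c)), hence has a unique fixed point $\phi\in X$; it is also a contraction on $Y_\mu$ (by (b),(c) and $\psi\in Y_\mu$), whose fixed point there is $\psi$ itself since $\psi=T_1\psi+T_2\psi$. As $X\hookrightarrow Y_\mu$ continuously, $\phi=\psi$, so $\psi\in X$, which is exactly $|\psi(x)|\lesssim\langle x\rangle^{-1}e^{-\lambda|x|}$.

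I expect the main obstacle to be the Lorentz-space bookkeeping in step (a): one must show that $\int\frac{|V_2(y)|}{|x-y|\langle y\rangle}\dd y$ not only is small but actually carries the extra factor $\langle x\rangle^{-1}$ \emph{uniformly} in $x$, which is precisely what upgrades the bare rate $e^{-\lambda|x|}$ to $\langle x\rangle^{-1}e^{-\lambda|x|}$; the roughness of the potential ($V\in L^{3/2,1}$, rather than bounded with compact support) forces one to use the endpoint Lorentz–Hölder estimate here rather than a crude $L^\infty$ bound. A secondary point worth stressing is that the non-sharp Agmon input (ii) is genuinely needed: iterating the Green's-function identity alone does not improve the exponential rate, and it is the smallness of $V$ at infinity (the $V_2$ piece) that allows the argument to be closed at the sharp rate $\lambda$.
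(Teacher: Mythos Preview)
Your approach is essentially the paper's: the same resolvent identity $\psi=-R_0(-\lambda^2)V\psi$, the same splitting $V=V_1+V_2$ with $\|V_2\|_{L^{3/2,1}}$ small, the same weighted space $X=e^{-\lambda|x|}\langle x\rangle^{-1}L^\infty$, and the same key inequality $\int\frac{|V_2(y)|}{|x-y|\langle y\rangle}\dd y\lesssim\|V_2\|_{L^{3/2,1}}\langle x\rangle^{-1}$ proved by splitting the integration at $|y|\sim|x|/2$.

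One remark: your claim that the non-sharp Agmon input (ii) is ``genuinely needed'' is not correct, and the paper's proof shows why. Your own argument works verbatim with $Y_\mu$ replaced by $L^\infty$: the operator $T_2$ is a contraction on $L^\infty$ (same Kato-norm estimate, with $\mu=0$), $T_1$ maps $L^\infty$ into $X$ (only compact support of $V_1f$ is used in your step (c)), and $\psi\in L^\infty$ is already known from your preliminary (i). So the affine map $f\mapsto T_1\psi+T_2f$ is a contraction on both $X$ and $L^\infty\supset X$, and uniqueness in $L^\infty$ forces $\psi=\phi\in X$. The paper phrases this as invertibility of $I+R_0(-\lambda^2)V_2$ on $X$ via the Neumann series, which coincides with the Neumann series in $L^\infty$; either way, the classical Agmon estimate is not an ingredient.
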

	One can also use the Kato norm instead of the $L^{3/2, 1}$ norm.
	\begin{proof}
		One can write 
		\begin{equation}
		\psi=-R_{0}\left(-\lambda^{2}\right)V\psi,\label{eq:resol}
		\end{equation}
		where $R_{0}$ is the free resolvent. By the standard method \cite{Agmon},
		one can show that $\psi\in L^{\infty}$. For smooth potentials, one
		can find a proof via the wave equation in \cite{C1}. 
		
		Write 
		\begin{equation}
		V=V_{1}+V_{2}\label{eq:decV}
		\end{equation}
		where $V_{1}$ is bounded with compact support and the $L^{\frac{3}{2},1}$
		norm of $V_{2}$ is small. From \eqref{eq:resol} and \eqref{eq:decV},
		\begin{equation}
		\psi+R_{0}\left(-\lambda^{2}\right)V_{2}\psi=-R_{0}\left(-\lambda^{2}\right)V_{1}\psi.\label{eq:decoV}
		\end{equation}
		Consider the space 
		\[
		e^{-\lambda\left|x\right|}\left\langle x\right\rangle ^{-1}L_{x}^{\infty}:=\left\{ f|\,\,e^{\lambda\left|x\right|}\left\langle x\right\rangle f\in L_{x}^{\infty}\right\} .
		\]
		
		Clearly
		\[
		-R_{0}\left(-\lambda^{2}\right)V_{1}\psi\in e^{-\lambda\left|x\right|}\left\langle x\right\rangle ^{-1}L_{x}^{\infty},
		\]
		since $V_{1}$ is smooth and compactly supported. We only have to
		show that $\left(I+R_{0}\left(-\lambda^{2}\right)V_{2}\right)$ is
		invertible in $e^{-\lambda\left|x\right|}\left\langle x\right\rangle ^{-1}L_{x}^{\infty}$.
		The invertiblity follows from the smallness of $V_{2}$ which implies
		the smallness of the operator norm of $R_{0}\left(-\lambda^{2}\right)V_{2}$
		in $\B(e^{-\lambda\left|x\right|}\left\langle x\right\rangle ^{-1}L_{x}^{\infty})$.
		
		To be more precise, with the explicit expression for $R_{0}\left(-\lambda^{2}\right)$,
		it suffices to show 
		\[
		\int e^{-\lambda\left|x-y\right|}\frac{1}{\left|x-y\right|}V_{2}\left(y\right)e^{-\lambda\left|y\right|}\frac{1}{\left\langle y\right\rangle }\,dy\leq Ce^{-\lambda\left|x\right|}\left\langle x\right\rangle ^{-1}
		\]
		for some small $C$ independent of $x$. This reduces to showing 
		\[
		\int\frac{1}{\left|x-y\right|}V_{2}\left(y\right)\frac{1}{\left\langle y\right\rangle }\,dy\leq C\left\langle x\right\rangle ^{-1}.
		\]
		The Japanese brackets are unimportant since originally, we know our
		function is in $L^{\infty}$. 
		
		If $\left|y\right|>\frac{1}{2}\left|x\right|$, then $\frac{1}{\left\langle y\right\rangle }\lesssim\frac{1}{\left\langle x\right\rangle }$
		and hence 
		\begin{align}
		\int\frac{1}{\left|x-y\right|}V_{2}\left(y\right)\frac{1}{\left\langle y\right\rangle }\,dy & \lesssim\frac{1}{\left\langle x\right\rangle }\sup_{x\in\mathbb{R}^{3}}\int\frac{1}{\left|x-y\right|}\left|V_{2}\left(y\right)\right|\,dy\nonumber \\
		& \leq\left\Vert V_{2}\right\Vert _{L^{\frac{3}{2},1}}\left\langle x\right\rangle ^{-1}.\label{eq:reg1}
		\end{align}
		When $\left|y\right|\leq\frac{1}{2}\left|x\right|$, then $\frac{1}{\left|x-y\right|}\leq2\frac{1}{\left|x\right|}$
		and therefore,
		\begin{align}
		\int\frac{1}{\left|x-y\right|}V_{2}\left(y\right)\frac{1}{\left|y\right|}\,dy\lesssim & \frac{1}{\left|x\right|}\int\frac{1}{\left|y\right|}\left|V_{2}\left(y\right)\right|\,dy\nonumber \\
		\leq & \frac{1}{\left|x\right|}\sup_{x\in\mathbb{R}^{3}}\int\frac{1}{\left|x-y\right|}\left|V_{2}\left(y\right)\right|\,dy\nonumber \\
		& \leq\left\Vert V_{2}\right\Vert _{L^{\frac{3}{2},1}}\left|x\right|^{-1}.\label{eq:reg2}
		\end{align}
		By \eqref{eq:reg1} and \eqref{eq:reg2}, we can conclude that the operator
		norm of $R_{0}\left(-\lambda^{2}\right)V_{2}$ in $e^{-\lambda\left|x\right|}\left\langle x\right\rangle ^{-1}L_{x}^{\infty}$
		is small provided $\left\Vert V_{2}\right\Vert _{L^{\frac{3}{2},1}}$
		is sufficiently small and $\left(I+R_{0}\left(-\lambda^{2}\right)V_{2}\right)$
		is invertible in $e^{-\lambda\left|x\right|}\left\langle x\right\rangle ^{-1}L_{x}^{\infty}$.
		From \eqref{eq:decoV},
		\[
		\psi=\left(I+R_{0}\left(-\lambda^{2}\right)V_{2}\right)^{-1}\left(-R_{0}\left(-\lambda^{2}\right)V_{1}\psi\right),
		\]
		and hence
		\[
		\left|\psi\right|\lesssim\left\langle x\right\rangle ^{-1}e^{-\lambda\left|x\right|}
		\]
		as we claimed.
	\end{proof}
	\section{Comparison of Sobolev spaces}\label{sec:sob}
	In this appendix, we show that fractional integration with respect to powers of $H$ behaves similarly to that with respect to powers of the Laplacian.
	
	\begin{lemma}	\label{lem:sob1}
		\label{lem:compare}For $0\leq s\leq1$, 
		\begin{equation*}
		\left(-\Delta+1\right)^{-s}\left(H+1\right)^{s}P_{c},\:P_{c}\left(H+1\right)^{-s}\left(-\Delta+1\right)^{s}\in\mathcal{B}\left(L^{p}\right)\label{eq:61}
		\end{equation*}
		when $\frac{3}{3-2s}<p<\infty$, and 
		
		\begin{equation*}
		\left(-\Delta+1\right)^{s}\left(H+1\right)^{-s}P_{c},\:P_{c}\left(H+1\right)^{s}\left(-\Delta+1\right)^{-s}\in\mathcal{B}\left(L^{p}\right)\label{eq:62}
		\end{equation*}
		when $1<p<\frac{3}{2s}$.
		
		Moreover, 
		\begin{equation*}
		\left(-\Delta+1\right)^{-1}\left(H+1\right)P_{c},\,P_{c}\left(H+1\right)^{-1}\left(-\Delta+1\right)\in\mathcal{B}\left(L^{\infty}\right),\label{eq:63}
		\end{equation*}
		\begin{equation*}
		\left(-\Delta+1\right)\left(H+1\right)^{-1}P_{c},\:P_{c}\left(H+1\right)\left(-\Delta+1\right)^{-1}\in\mathcal{B}\left(L^{1}\right).\label{eq:64}
		\end{equation*}
	\end{lemma}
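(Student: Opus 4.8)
For Lemma~\ref{lem:sob1}, the plan is to reduce to the two endpoints $s=0$ and $s=1$, treat those by direct computation, and obtain the range $0<s<1$ by Stein's complex interpolation applied to analytic families of the operators in the statement. First I would record the reductions. Writing $P_c=I-P_p$, the sharp Agmon bounds of Appendix~\ref{sec:agmon} show that $P_p$ is a finite sum of operators $\psi_k\otimes\overline{\psi_k}$ with $|\psi_k(x)|\lesssim\langle x\rangle^{-1}e^{-\lambda_k|x|}$, so $P_p$, and likewise $(H+1)^{\pm1}P_p$ and $(-\Delta+1)^{\pm1}P_p$, map $L^1\to L^\infty$ and are bounded on every $L^p$, $1\le p\le\infty$; hence the contributions of $P_p$ to all the operators below are harmless, and $P_c\in\mathcal B(L^p)$ for $1<p<\infty$ from the $L^p$ theory of $H$ in \cite{becgol1}. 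Since the operators in the statement come in adjoint pairs (up to $p\leftrightarrow p'$), it suffices to bound $(-\Delta+1)^{-s}(H+1)^{s}P_c$ on $L^p$ for $\tfrac{3}{3-2s}<p<\infty$ and $(-\Delta+1)^{s}(H+1)^{-s}P_c$ on $L^p$ for $1<p<\tfrac{3}{2s}$, together with the $L^\infty$ and $L^1$ statements at $s=1$.

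Next I would dispatch $s=1$ via the algebraic identities $(-\Delta+1)^{-1}(H+1)P_c=P_c+(-\Delta+1)^{-1}VP_c$ and $(-\Delta+1)(H+1)^{-1}P_c=P_c-V(H+1)^{-1}P_c$. For the first: $f\in L^\infty$ gives $VP_cf\in L^{3/2,1}$ since $\|Vg\|_{L^{3/2,1}}\lesssim\|V\|_{L^{3/2,1}}\|g\|_{L^\infty}$, and convolution against the kernel of $(-\Delta+1)^{-1}$, which lies in $L^{3,\infty}$, carries $L^{3/2,1}$ into $L^\infty$ by the Lorentz Hölder inequality $\|h_1h_2\|_{L^1}\lesssim\|h_1\|_{L^{3,\infty}}\|h_2\|_{L^{3/2,1}}$; interpolating with the elementary $L^2$ bound covers $2\le p\le\infty$, in particular $L^\infty$ and $3<p<\infty$. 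For the second identity I would use the mapping $(H+1)^{-1}P_c:L^1\to L^{3,\infty}$: since $-1$ lies strictly below the continuous spectrum $[0,\infty)$ of $H|_{\operatorname{Ran}P_c}$ and $H$ has no zero-energy resonance, the Born series $(H+1)^{-1}P_c=\bigl(I+(-\Delta+1)^{-1}V\bigr)^{-1}(-\Delta+1)^{-1}P_c$ converges in $\mathcal B(L^{3,\infty})$ (here $V\in L^{3/2,1}$ makes $(-\Delta+1)^{-1}V$ bounded on $L^{3,\infty}$ via $L^{3/2,1}\cdot L^{3,\infty}\subset L^1$ and $L^1\ast L^{3,\infty}\subset L^{3,\infty}$), so $V(H+1)^{-1}P_c$ maps $L^1\to L^{3,\infty}\to L^1$; taking adjoints gives $P_c(H+1)^{-1}(-\Delta+1)\in\mathcal B(L^\infty)$ and $P_c(H+1)(-\Delta+1)^{-1}\in\mathcal B(L^1)$.

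For the interior range I would interpolate the analytic families $z\mapsto(-\Delta+1)^{-z}(H+1)^{z}P_c$ and $z\mapsto(-\Delta+1)^{z}(H+1)^{-z}P_c$ on the strip $0\le\Re z\le1$. On $\Re z=0$ both are bounded on $L^p$ for every $p\in(1,\infty)$ with polynomially growing norm, because $(-\Delta+1)^{\mp i\sigma}$ is a Mihlin--Hörmander multiplier and $(H+1)^{\pm i\sigma}P_c$ is bounded on $L^p$ by the square-function equivalence of Theorem~\ref{thm:BG} together with a standard spectral multiplier argument, see \cite{becgol1}. On $\Re z=1$, factoring $(-\Delta+1)^{-1-i\sigma}=(-\Delta+1)^{-i\sigma}(-\Delta+1)^{-1}$ and $(H+1)^{1+i\sigma}P_c=(H+1)P_c\,(H+1)^{i\sigma}P_c$ (and symmetrically for the second family) and inserting the $s=1$ bounds gives boundedness on $L^p$ for $3<p<\infty$, respectively $1<p<3/2$. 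Stein interpolation read off at $\Re z=s$ then produces exactly the ranges $\tfrac{3}{3-2s}<p<\infty$ and $1<p<\tfrac{3}{2s}$, and duality supplies the remaining operators.

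The step I expect to be most delicate is the $L^1/L^\infty$ endpoint at $s=1$: one must know that $(H+1)^{-1}P_c$ has a kernel controlled by a Born series converging in the Lorentz space $L^{3,\infty}$ — which is precisely where the hypotheses $V\in L^{3/2,1}$ and the absence of a threshold resonance at $0$ are used — and one must check that the (possibly negative, possibly $\le-1$) eigenvalues of $H$ are correctly excised by $P_c$, so that $(H+1)^{-1}P_c$ is genuinely a bounded operator; everything else is routine interpolation and Lorentz-space bookkeeping.
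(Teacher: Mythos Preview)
Your proposal is essentially the paper's own proof: reduce to the endpoints, use the algebraic identity $(-\Delta+1)^{-1}(H+1)P_c=(I+R_0(-1)V)P_c$ at $s=1$, invoke Mihlin/spectral-multiplier bounds for the imaginary powers at $\Re z=0$ and $\Re z=1$, then apply Stein complex interpolation and duality. The paper is terser (it cites \cite{hong} and simply writes $(H+1)^{-1}P_c(-\Delta+1)=P_c(I+R_0(-1)V)^{-1}$ for the second identity), while you add the $P_p$ bookkeeping and make explicit that $(H+1)^{i\sigma}P_c\in\mathcal B(L^p)$ comes from \cite{becgol1}.

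One point to tighten: you say the Born series for $(H+1)^{-1}P_c$ ``converges in $\mathcal B(L^{3,\infty})$''. For general $V\in L^{3/2,1}$ this Neumann series need not converge; what you actually want is that $R_0(-1)V$ is compact on $L^{3,\infty}$ (it factors through multiplication $L^{3,\infty}\to L^1$ followed by convolution with the exponentially decaying kernel), so that $(I+R_0(-1)V)^{-1}$ exists by the Fredholm alternative provided $-1$ is not an eigenvalue of $H$. The threshold condition at $0$ is irrelevant here; what matters is that $-1\notin\sigma_p(H)$, which is implicit once one works with $P_c$ (or, as in the nonlinear section of the paper, once negative eigenvalues are excluded outright). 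The paper's own proof is equally brief on this point, but since you flagged it as the delicate step, it is worth naming the correct mechanism.
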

The proof follows that of Lemma 1.14 in \cite{hong}.	
	\begin{proof}
		Define 
		\begin{equation*}
		R_{0}\left(z\right)=\left(-\Delta-\lambda\right)^{-1}.\label{eq:65}
		\end{equation*}
		Then $R_{0}\left(z\right)$ is analytic on $\mathbb{C}\backslash[0,\infty)$,
		with the explicit kernel 
		\begin{equation*}
		R_{0}\left(z\right)=\frac{1}{4\pi}\frac{e^{-\sqrt{-z}\left|x-y\right|}}{\left|x-y\right|}\label{eq:66},
		\end{equation*}
		where $\sqrt{-z}$ is the main branch of the square root.
		
		We will use $R_{0}\left(-1\right).$ It is clear that 
		\begin{equation*}
		\left(-\Delta+1\right)^{-1}\left(H+1\right)P_{c}=\left(I+R_{0}\left(-1\right)V\right)P_{c}\in\mathcal{B}\left(L^{\infty}\right)\cap B\left(L^{3,\infty}\right).\label{eq:67}
		\end{equation*}
		Therefore using the boundedness of the Mihlin multiplier, 
		\begin{equation*}
		\left(-\Delta+1\right)^{-1-i\sigma}\left(H+1\right)^{1+i\sigma}P_{c}\label{eq:68}
		\end{equation*}
		is bounded on $L^{p}$ with $3<p<\infty$ and on $L^{3,\infty}$
		with norm of at most exponential growth. 
		
		Also, by the boundedness of the Mihlin multiplier, 
		\begin{equation*}
		\left(-\Delta+1\right)^{-i\sigma}\left(H+1\right)^{i\sigma}P_{c}\label{eq:69}
		\end{equation*}
		is bounded on $L^{p}$ with $1<p<\infty$ with norms of at most exponential
		growth.
		
		Using the complex interpolation, for $0\leq s\leq1$, we can conclude
		that 
		\begin{equation*}
		\left(-\Delta+1\right)^{-s}\left(H+1\right)^{s}P_{c}\in\mathcal{B}\left(L^{p}\right)\label{eq:610}
		\end{equation*}
		with $\frac{3}{3-2s}<p<\infty$.
		
		Similarly, 
		\begin{equation*}
		\left(H+1\right)^{-1}P_{c}\left(-\Delta+1\right)=P_{c}\left(I+R_{0}\left(-1\right)V\right)^{-1}\in\mathcal{B}\left(L^{\infty}\right)\bigcap B\left(L^{3,\infty}\right).\label{eq:611}
		\end{equation*}
		Then the same argument as above gives us 
		
		\begin{equation*}
		\left(H+1\right)^{s}P_{c}\left(-\Delta+1\right)^{-s}\in\mathcal{B}\left(L^{p}\right)\label{eq:612}
		\end{equation*}
		with $\frac{3}{3-2s}<p<\infty$.
		
		The remaining estimates are obtained by duality.
	\end{proof}
	The above lemma enables us to interchange $-\Delta+1$
	and $H+1$ in the definition of the Sobolev space $H^{s}$
	with $-\frac{3}{2}<s<\frac{3}{2}$. To be more precise, we have
	\begin{lemma}
		\label{lem:sob}For $-\frac{3}{2}<s<\frac{3}{2}$, 
		\begin{equation*}
		H^{s}=\left\{ f\in\mathcal{S}':\,\left(-\Delta+1\right)^{\frac{s}{2}}f\in L^{2}\right\} =\left\{ f\in\mathcal{S}':\,\left(H+1\right)^{\frac{s}{2}}f\in L^{2}\right\} .\label{eq:613}
		\end{equation*}
	\end{lemma}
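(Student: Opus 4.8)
The plan is to deduce the second equality---the first one being the definition of the Bessel potential space $H^{s}$---from Lemma \ref{lem:sob1} applied with $p=2$, by showing the two norms $\|(-\Delta+1)^{s/2}\,\cdot\,\|_{L^{2}}$ and $\|(H+1)^{s/2}\,\cdot\,\|_{L^{2}}$ are equivalent. Set $\sigma=s/2$, so the hypothesis $-\tfrac{3}{2}<s<\tfrac{3}{2}$ reads $|\sigma|<\tfrac{3}{4}$; for $0\le\sigma<\tfrac{3}{4}$ one has $\tfrac{3}{3-2\sigma}<2<\tfrac{3}{2\sigma}$, which is exactly the range in which Lemma \ref{lem:sob1} applies at $p=2$. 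Hence
\[
(-\Delta+1)^{-\sigma}(H+1)^{\sigma}P_{c},\ \ (-\Delta+1)^{\sigma}(H+1)^{-\sigma}P_{c},\ \ P_{c}(H+1)^{-\sigma}(-\Delta+1)^{\sigma},\ \ P_{c}(H+1)^{\sigma}(-\Delta+1)^{-\sigma}
\]
are all bounded on $L^{2}$.

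First I would dispose of the point spectrum. Under the standing hypotheses ($V\in L^{3/2,1}$, no threshold bound state), $H$ has only finitely many eigenvalues, with eigenfunctions $\psi_{1},\dots,\psi_{N}$ that are bounded and, by Theorem \ref{thm:sharpAgmon}, exponentially decaying; from the eigenvalue equation $-\Delta\psi_{j}=(\mu_{j}-V)\psi_{j}\in L^{2}$ one also gets $\psi_{j}\in H^{2}$, hence $\psi_{j}\in H^{t}$ for every $|t|<\tfrac{3}{2}$. Thus $P_{p}=I-P_{c}=\sum_{j}\langle\,\cdot\,,\psi_{j}\rangle\psi_{j}$ is a finite-rank operator bounded on each $H^{t}$ with $|t|<\tfrac{3}{2}$, and for $f\in H^{s}$ one has $\|(-\Delta+1)^{\sigma}P_{p}f\|_{L^{2}}+\|(H+1)^{\sigma}P_{p}f\|_{L^{2}}\lesssim\|f\|_{H^{s}}$; the same holds with the roles of the two operators exchanged. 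So the point part is harmless on both sides, and it suffices to compare the two norms on $g=P_{c}f$.

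For such $g$, using that $P_{c}$ commutes with $H$, I would write, when $\sigma\ge0$,
\[
(H+1)^{\sigma}g=\big[P_{c}(H+1)^{\sigma}(-\Delta+1)^{-\sigma}\big](-\Delta+1)^{\sigma}g,\qquad (-\Delta+1)^{\sigma}g=\big[(-\Delta+1)^{\sigma}(H+1)^{-\sigma}P_{c}\big](H+1)^{\sigma}g,
\]
and the mirror identities with $|\sigma|$ in place of $\sigma$ (so the negative powers always land on operators covered by Lemma \ref{lem:sob1}) when $\sigma<0$; the $L^{2}$-boundedness of the bracketed operators then gives $\|(H+1)^{\sigma}g\|_{L^{2}}\sim\|(-\Delta+1)^{\sigma}g\|_{L^{2}}$. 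Combined with the previous paragraph, this yields the equality of the two spaces together with equivalence of norms. The main points needing care are bookkeeping: justifying the composition of fractional powers at the level of tempered distributions, via density of $\mathcal S$ together with $(-\Delta+1)^{-\sigma}(-\Delta+1)^{\sigma}=\mathrm{Id}$ and $(H+1)^{-\sigma}(H+1)^{\sigma}P_{c}=P_{c}$, and the finite-rank point-spectrum accounting above; the complex-interpolation and Mihlin-multiplier input that makes sense of the imaginary powers is already contained in Lemma \ref{lem:sob1}, which I would cite rather than reprove.
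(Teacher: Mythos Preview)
Your proposal is correct and follows exactly the approach the paper indicates: the paper gives no detailed proof of Lemma~\ref{lem:sob}, only the remark preceding it that Lemma~\ref{lem:sob1} ``enables us to interchange $-\Delta+1$ and $H+1$,'' and you carry this out by applying Lemma~\ref{lem:sob1} at $p=2$ with exponent $|\sigma|=|s|/2<3/4$. Your treatment is in fact more complete than the paper's, since you explicitly handle the finite-rank point-spectrum contribution $P_p$ (using the regularity and decay of eigenfunctions) that the paper leaves implicit.
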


\end{document}